\DeclareFontFamily{U}{rsfs}{} \DeclareFontShape{U}{rsfs}{n}{it}{<->rsfs10}{}
\DeclareSymbolFont{mscr}{U}{rsfs}{n}{it}
\DeclareSymbolFontAlphabet{\scr}{mscr}
\def\mathscr{\scr}
\declaretheorem[name=Theorem,numberwithin=section]{thm}
\declaretheorem[name=Proposition,numberlike=thm]{prop}
\declaretheorem[name=Lemma,numberlike=thm]{lem}
\declaretheorem[name=Corollary,numberlike=thm]{cor}
\declaretheorem[name=Conjecture,numberlike=thm]{conj}
\declaretheorem[name=Definition,style=definition,qed=$\blacktriangle$,numberlike=thm]{dfn}
\declaretheorem[name=Example,style=definition,qed=$\blacktriangle$,numberlike=thm]{ex}
\declaretheorem[name=Remark,style=definition,qed=$\blacktriangle$,numberlike=thm]{rem}
\declaretheorem[name=Assumption,style=definition,numberlike=thm]{ass}
\numberwithin{equation}{section}
\numberwithin{table}{section}
\begin{document}
\def\e#1\e{\begin{equation}#1\end{equation}}
\def\ea#1\ea{\begin{align}#1\end{align}}
\def\eq#1{{\rm(\ref{#1})}}
\def\dim{\mathop{\rm dim}\nolimits}
\def\Fix{\mathop{\rm Fix}}
\def\Ker{\mathop{\rm Ker}}
\def\Coker{\mathop{\rm Coker}}
\def\id{\mathop{\rm id}\nolimits}
\def\SO{\mathop{\rm SO}\nolimits}
\def\ad{\mathop{\rm ad}\nolimits}
\def\Hom{\mathop{\rm Hom}\nolimits}
\def\End{\mathop{\rm End}\nolimits}
\def\Hol{\mathop{\rm Hol}\nolimits}
\def\Diff{\mathop{\rm Diff}\nolimits}
\def\Re{\mathop{\rm Re}}
\def\Im{\mathop{\rm Im}}
\def\SU{\mathop{\rm SU}}
\def\Sp{\mathop{\rm Sp}}
\def\Spin{\mathop{\rm Spin}}
\def\Gt{\mathop{\rm G_2}}
\def\GL{\mathop{\rm GL}}
\def\ind{\mathop{\rm ind}}
\def\area{\mathop{\rm area}}
\def\U{{\rm U}}
\def\vol{\mathop{\rm vol}\nolimits}
\def\inc{{\rm inc}}
\def\bs{\boldsymbol}
\def\ge{\geqslant\nobreak}
\def\le{\leqslant\nobreak}
\def\O{{\mathbin{\cal O}}}
\def\cA{{\mathbin{\cal A}}}
\def\cB{{\mathbin{\cal B}}}
\def\cC{{\mathbin{\cal C}}}
\def\cD{{\mathbin{\scr D}}}
\def\cE{{\mathbin{\cal E}}}
\def\cF{{\mathbin{\cal F}}}
\def\cG{{\mathbin{\cal G}}}
\def\cH{{\mathbin{\cal H}}}
\def\cI{{\mathbin{\cal I}}}
\def\cJ{{\mathbin{\cal J}}}
\def\cK{{\mathbin{\cal K}}}
\def\cL{{\mathbin{\cal L}}}
\def\cM{{\mathbin{\cal M}}}
\def\cN{{\mathbin{\cal N}}}
\def\cO{{\mathbin{\cal O}}}
\def\cP{{\mathbin{\cal P}}}
\def\cS{{\mathbin{\cal S}}}
\def\cT{{\mathbin{\cal T}}}
\def\cU{{\mathbin{\cal U}}}
\def\cQ{{\mathbin{\cal Q}}}
\def\cW{{\mathbin{\cal W}}}
\def\C{{\mathbin{\mathbb C}}}
\def\CP{{\mathbin{\mathbb{CP}}}}
\def\H{{\mathbin{\mathbb H}}}
\def\N{{\mathbin{\mathbb N}}}
\def\P{{\mathbin{\mathbb P}}}
\def\Q{{\mathbin{\mathbb Q}}}
\def\R{{\mathbin{\mathbb R}}}
\def\CP{{\mathbin{\mathbb{CP}}}}
\def\RP{{\mathbin{\mathbb{RP}}}}
\def\Z{{\mathbin{\mathbb Z}}}
\def\sF{{\mathbin{\mathscr F}}}
\def\al{\alpha}
\def\be{\beta}
\def\ga{\gamma}
\def\de{\delta}
\def\io{\iota}
\def\ep{\epsilon}
\def\la{\lambda}
\def\ka{\kappa}
\def\th{\theta}
\def\ze{\zeta}
\def\up{\upsilon}
\def\vp{\varphi}
\def\si{\sigma}
\def\om{\omega}
\def\De{\Delta}
\def\La{\Lambda}
\def\Si{\Sigma}
\def\Th{\Theta}
\def\Om{\Omega}
\def\Ga{\Gamma}
\def\Up{\Upsilon}
\def\pd{\partial}
\def\ts{\textstyle}
\def\st{\scriptstyle}
\def\sst{\scriptscriptstyle}
\def\w{\wedge}
\def\sm{\setminus}
\def\bu{\bullet}
\def\op{\oplus}
\def\ot{\otimes}
\def\ov{\overline}
\def\ul{\underline}
\def\bigop{\bigoplus}
\def\bigot{\bigotimes}
\def\iy{\infty}
\def\es{\emptyset}
\def\ra{\rightarrow}
\def\Ra{\Rightarrow}
\def\Longra{\Longrightarrow}
\def\ab{\allowbreak}
\def\longra{\longrightarrow}
\def\hookra{\hookrightarrow}
\def\dashra{\dashrightarrow}
\def\t{\times}
\def\ci{\circ}
\def\ti{\tilde}
\def\d{{\rm d}}
\def\ha{{\ts\frac{1}{2}}}
\def\md#1{\vert #1 \vert}
\def\bmd#1{\big\vert #1 \big\vert}
\def\ms#1{\vert #1 \vert^2}
\def\nm#1{\Vert #1 \Vert}
\def\bnm#1{\big\Vert #1 \big\Vert}
\def\an#1{\langle #1 \rangle}
\def\ban#1{\big\langle #1 \big\rangle}
\def\dx{\d_{\scriptscriptstyle X}}
\def\lapx{\Delta_{\scriptscriptstyle X}}
\def\dsx{\d^*_{\scriptscriptstyle X}}
\def\stx{\ast_{\scriptscriptstyle X}}
\def\volx{\vol_{\scriptscriptstyle X}}
\def\gx{g_{\scriptscriptstyle X}}
\def\Dx{\mathcal D_{\scriptscriptstyle X}}
\def\Dsx{\mathcal D^*_{\scriptscriptstyle X}}
\title{A new construction of compact torsion-free \\ $\Gt$-manifolds by gluing families of \\ Eguchi--Hanson spaces}
\author{Dominic Joyce \\ {\it Mathematical Institute, University of Oxford} \\ \tt{joyce@maths.ox.ac.uk} \and
Spiro Karigiannis \\ {\it Department of Pure Mathematics, University of Waterloo} \\ \tt{karigiannis@uwaterloo.ca} }
\date{}
\maketitle

\begin{abstract}
We give a new construction of compact Riemannian 7-manifolds with holonomy $\Gt$. Let $M$ be a torsion-free $\Gt$-manifold (which can have holonomy a proper subgroup of $\Gt$) such that $M$ admits an involution $\iota$ preserving the $\Gt$-structure. Then $M/\an{\iota}$ is a $\Gt$-orbifold, with singular set $L$ an  associative submanifold of $M$, where the singularities are locally of the form $\R^3 \times (\R^4 / \{\pm 1\})$. We resolve this orbifold by gluing in a family of Eguchi--Hanson spaces, parametrized by a nonvanishing closed and coclosed $1$-form $\lambda$ on~$L$.

Much of the analytic difficulty lies in constructing appropriate closed $\Gt$-structures with sufficiently small torsion to be able to apply the general existence theorem of the first author. In particular, the construction involves solving a family of elliptic equations on the noncompact Eguchi--Hanson space, parametrized by the singular set $L$. We also present two generalizations of the main theorem, and we discuss several methods of producing examples from this construction.
\end{abstract}

\setcounter{tocdepth}{2}
\tableofcontents

\section{Introduction}
\label{nc1}

Compact $\Gt$ holonomy manifolds have much in common with the better understood compact Calabi--Yau manifolds. In particular, they are both Ricci-flat compact Riemannian manifolds, and they both admit parallel spinors, making them candidates for \emph{supersymmetric compactification spaces} in physics \cite{AcGu}. Both Calabi--Yau metrics, which are of holonomy $\SU(n)$, and holonomy $\Gt$ metrics on compact manifolds, are in some sense \emph{transcendental} objects, in the sense that explicit formulas for such metrics are not expected to exist. Instead, one proves existence of such metrics by establishing that some nonlinear elliptic partial differential equation on the manifold has a smooth solution.

However, Calabi--Yau manifolds can be, at least partially, studied fruitfully using methods of algebraic geometry. Such tools are not available for general torsion-free $\Gt$-manifolds, as they are $7$-dimensional objects. Moreover, there is no analogue of a Calabi--Yau Theorem in $\Gt$ geometry. That is, we are far from knowing useful sufficient conditions for a compact smooth oriented spin $7$-manifold to admit $\Gt$ holonomy metrics. Because of this, there are far fewer known examples of compact $\Gt$ holonomy manifolds.

The first construction of compact $\Gt$ holonomy manifolds was due to the first author, originally presented in \cite{Joyc1, Joyc2} and extended in \cite{Joyc3}. The construction involves the resolution of orbifold singularities obtained by taking the quotient of the flat $7$-torus $T^7$ by a finite group action preserving the canonical flat $\Gt$-structure. One then very carefully defines a closed $\Gt$-structure $\vp$ on the resolved manifold, with sufficiently small torsion, and invokes a theorem \cite[Th.~11.6.1]{Joyc1} of the first author (this is Theorem \ref{nc2thm3} below) to conclude that there exists a torsion-free $\Gt$-structure $\tilde \vp$ in the same cohomology class as $\vp$. This generalizes the familiar Kummer-type construction of the Calabi--Yau metric on a $K3$ surface \cite{Topi}, and is an example of a geometric gluing construction. Several hundred distinct topological types, distinguishable by their second and third Betti numbers $b^2$ and $b^3$, were produced from this construction. 

Somewhat later, a second construction of compact $\Gt$ holonomy manifolds was given by Kovalev \cite{Kova}, based on an idea of Donaldson \cite[Method 3, page 303]{Joyc3}. Kovalev's construction involves the \emph{twisted connect sum} of two asymptotically cylindrical $\Gt$-manifolds built from asymptotically cylindrical Calabi--Yau manifolds. This construction is also a geometric gluing. The main difficulty of the construction is in finding suitable building blocks that satisfy the correct matching conditions to be able to carry out the gluing. More recently, Corti--Haskins--Nordstr\"om--Pacini \cite{CHNP1,CHNP2} generalized the Kovalev construction, producing tens of thousands of topologically distinct compact $\Gt$ holonomy manifolds. Moreover, further work by Crowley--Goette--Nordstr\"om in~\cite{CGN} then resulted in the first examples of inequivalent torsion-free $\Gt$-structures on the same underlying smooth manifold.

In the present paper we present a new construction of smooth compact $\Gt$ holonomy manifolds, based on an idea due to the first author \cite[Method 2, page 303]{Joyc3}. This construction is also essentially a geometric gluing, however it differs from the two earlier constructions in a significant way:

\begin{itemize}
\item In the new construction of the present paper, there are three pieces being glued together, as opposed to two, and two of the three pieces being glued in \emph{do not come initially equipped} with torsion-free $\Gt$-structures. This does not happen in the earlier constructions of the first author or of Kovalev and Corti--Haskins--Nordstr\"om--Pacini. In those cases, the pieces being glued together came already equipped with torsion-free $\Gt$-structures, and the main analytic difficulty was in controlling the size of the torsion in the gluing annulus. In the present construction we need to work hard to construct closed $\Gt$-structures with sufficiently small torsion on two of the three pieces to be able to apply Theorem \ref{nc2thm3}.
\end{itemize}

In fact, our construction actually produces a smooth compact torsion-free $\Gt$-manifold $N$, starting from a smooth compact torsion-free $\Gt$-manifold $M$ which admits a $\Gt$-involution $\iota$, defined in \S\ref{nc27}. In practice, the initial manifold $M$ would be of the form $M = \cS^1 \times Y$ for a Calabi--Yau $3$-fold $Y$. An antiholomorphic isometric involution on $Y$ induces a $\Gt$-involution on $M$. This is explained in \S\ref{nc27}.

Our construction can be very roughly summarized as follows. We take the quotient $M / \an{\io}$ of $M$ by the action of $\iota$. The fixed point set $L$ of $\iota$ is a smooth compact $3$-dimensional totally geodesic associative submanifold of $M$. The space $M / \an{\io}$ is a compact orbifold, with singular set $L$. Locally, the orbifold singularities are of the form $\R^3 \times (\R^4 \slash \{ \pm 1 \})$. We then cut out a tubular neighbourhood of $L$ in $M / \an{\io}$, and glue in a smooth family of Eguchi--Hanson spaces, smoothly parametrized by $L$, to obtain a smooth compact $7$-manifold $N$. We show that $N$ admits a $1$-parameter family of $\Gt$-structures $(\vp_t^N, g_t^N)$ that are closed and, for small $t$, have torsion small enough in a precise sense to be able to invoke Theorem \ref{nc2thm3} to establish the existence of a torsion-free $\Gt$-structure on $N$. When the fundamental group of $N$ is finite, this will be a holonomy $\Gt$ metric on $N$.

As mentioned above, the main technical difficulties arise in constructing closed $\Gt$-structures with sufficiently small torsion on two of the three pieces being glued together. These three pieces are roughly described as follows:
\begin{enumerate}[(i)]
\item the interior of the complement of a neighbourhood of $L$ in~$M / \an{\io}$;
\item an annulus in the quotient $\nu/ \{\pm 1\}$ of the normal bundle $\nu$ of $L$ in $M$ by the action of $\{ \pm 1\}$ on fibres;
\item the product of $L$ with a neighbourhood of the ``bolt'' $\cS^2$ in the Eguchi--Hanson space~$T^*\cS^2$.
\end{enumerate}
The pieces (ii) and (iii) do not come equipped with torsion-free $\Gt$-structures. On both pieces, there is a natural way to define a $\Gt$-structure, but that is not even closed. In both cases, one can then naturally correct these $\Gt$-structures to closed versions. But in both cases the torsion is still too large to be able to use these closed $\Gt$-structures to construct a $\Gt$-structure on $N$ that satisfies the hypotheses of Theorem \ref{nc2thm3}. Thus, in both cases we must perform a \emph{further correction}. For (ii), this correction involves modifying both the exponential map of $L$ in $M$ and the connection on $\nu$, both of which are ingredients used in constructing the initial $\Gt$-structure on (ii). For (iii), this correction involves solving a family of elliptic equations on the noncompact Eguchi--Hanson space, smoothly parametrized by the points in $L$.

A key point is that in order to perform our construction, we require the existence of a nonvanishing harmonic $1$-form $\lambda$ on $L$. This is a very strong assumption. (An explanation for the necessity of this assumption is given in Remark~\ref{nc6rem}.) In the case when $M = \cS^1 \times Y$, the submanifold $L$ is the disjoint union of two copies of a single special Lagrangian submanifold of $Y$. Its metric is induced from the Calabi--Yau metric on $Y$, which exists but cannot be made explicit. We discuss this point in \S\ref{nc7}, where we present several applications of the theorem to situations that could produce new examples. Some of these examples are obtained by using generalizations of our theorem described in~\S\ref{nc65}--\S\ref{nc66}.

Here is our main result, which is Theorem \ref{nc6thm} below. The various objects and terminology used in the theorem are made precise in the paper.

\begin{thm} Let\/ $(M,\vp,g)$ be a compact torsion-free $\Gt$-manifold, and let\/ $\io: M \ra M$ be a nontrivial involution preserving $(\vp,g),$ so that the fixed locus $L$ of\/ $\io$ is a compact associative $3$-fold in $M,$ as described in {\rm\S\ref{nc27}} below.

Suppose $L$ is nonempty, and suppose there exists a closed, coclosed, nonvanishing $1$-form $\la$ on $L$. That is, $\la\in\Om^1(L)$ with\/ $\d\la=0$ and $\d^*\la=0,$ where $\d^*$ is defined using $g\vert_L,$ and\/ $\la\vert_x\ne 0$ in $T_x^*L$ for all\/~$x\in L$.

Then there exists a compact\/ $7$-manifold\/ $N$ defined as a resolution of singularities $\pi:N\ra M/\an{\io}$ of the $7$-orbifold\/ $M/\an{\io}$ along its singular locus $L\subset M/\an{\io},$ by gluing in a bundle $\si:P\ra L$ along $L$, with fibre the Eguchi--Hanson space $X$ described in {\rm\S\ref{nc25},} where $P$ is constructed using $\la$. The preimage $\pi^{-1}(L)$ is a $5$-submanifold\/ $Q$ of\/ $N,$ and\/ $\pi\vert_Q:Q\ra L$ is a smooth bundle with fibre\/ $\cS^2$. The fundamental group of\/ $N$ satisfies $\pi_1(N)\!\cong\!\pi_1(M/\an{\io}),$ and the Betti numbers~are 
\begin{equation*}
b^k(N)=b^k(M/\an{\io})+b^{k-2}(L).
\end{equation*}

There exists a smooth family $(\ti\vp_t^N, \ti g_t^N)$ of torsion-free $\Gt$-structures on $N$ for $t \in (0,\ep],$ with\/ $\ep>0$ small, such that\/ $(\ti \vp_t^N, \ti g_t^N) \ra \pi^*(\vp,g)$ in $C^0$ away from $Q$ as $t \ra 0,$ and for each\/ $x\in L$ the fibre $\pi^{-1}(x) \cong \cS^2$ with metric $\ti g_t^N \vert_{\pi^{-1}(x)}$ approximates a small round\/ $2$-sphere with area $\pi t^2\bmd{\la\vert_x}$ for small\/ $t$. The metrics $\ti g_t^N$ on $N$ have holonomy $\Gt$ if and only if\/ $M/\an{\io}$ has finite fundamental group.

\label{nc1thm1}	
\end{thm}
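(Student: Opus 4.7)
The plan is to construct, for each small $t>0,$ an explicit closed $\Gt$-structure $\vp_t^N$ on $N$ whose torsion is small enough to invoke Theorem~\ref{nc2thm3}, which then produces a nearby torsion-free $\ti\vp_t^N$ cohomologous to $\vp_t^N.$ The smooth $7$-manifold $N$ itself is built by excising a tubular neighbourhood of $L$ in $M/\an{\io}$ and gluing in a bundle $\si\colon P\ra L$ with fibre the Eguchi--Hanson space $X=T^*\cS^2,$ where $P$ is obtained from the normal bundle $\nu\ra L$ (so that $\nu/\{\pm 1\}$ is resolved fibrewise by $X$). The $1$-form $\la$ controls the fibrewise hyperk\"ahler structure on $P$: since $X$ carries a twistor $\cS^2$ of compatible hyperk\"ahler structures, the unit section $\la/\bmd{\la}$ of $T^*L$ selects a smooth fibrewise complex structure, and $\bmd{\la}_x$ rescales the bolt. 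The claims on $\pi_1(N)$ and on Betti numbers then follow from Van Kampen and Mayer--Vietoris applied to the decomposition of $N,$ using that $P$ deformation retracts onto the $\cS^2$-bundle $Q=\pi^{-1}(L)$ over~$L.$

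To define $\vp_t^N,$ I patch three local models with cut-off functions on the two annular overlap regions. On piece~(i), the complement of a neighbourhood of $L$ in $M/\an{\io},$ take $\pi^*\vp.$ On piece~(ii), the annular region in $\nu/\{\pm 1\},$ take the natural $\Gt$-structure built from the exponential map of $L$ and a chosen connection on $\nu,$ correcting both the exponential map and the connection by $O(t^2)$-small perturbations so that the dominant torsion terms, which come from the curvature of $\nu$ and the second fundamental form of $L,$ cancel to leading order. On piece~(iii), a tubular neighbourhood of the bolts in $P,$ take the fibrewise rescaled Eguchi--Hanson form assembled via $\la,$ corrected by a fibrewise-defined tensor $\eta_t$ solving the elliptic problem described below. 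By design, both models agree to leading order with the flat $\Gt$-form on $L\times\R^4/\{\pm 1\}$ in the overlap, so that $\vp_t^N$ is smooth and closed.

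The main obstacle will be piece~(iii). The naive fibrewise Eguchi--Hanson $\Gt$-form is closed on each fibre $X_x$ and hyperk\"ahler in the fibre direction, but upon assembly over $L$ it acquires a torsion contribution generated by the $x$-variation of the hyperk\"ahler triple, by the curvature of the chosen connection on $\nu,$ and by $\d\la$ and $\d^*\la.$ Killing the leading torsion amounts to solving, at each $x\in L,$ a linear elliptic equation
\e
\cL_x\,\eta_x=\rho_x
\e
on the noncompact Eguchi--Hanson space, where $\cL_x$ is a Laplace-type operator on an appropriate form bundle over $X$ and $\rho_x$ decays at infinity. The analytic input is the Fredholm theory of $\cL_x$ on weighted H\"older or Sobolev spaces on $X,$ together with an explicit identification of the finite-dimensional $L^2$-kernel of $\cL_x^*$ in terms of parallel harmonic forms on $X.$ The hypotheses $\d\la=0$ and $\d^*\la=0$ enter precisely here: they force $\rho_x$ to be $L^2$-orthogonal to $\ker\cL_x^*,$ so a solution $\eta_x$ in the correct weighted space exists and depends smoothly on $x,$ yielding a smooth section $\eta_t$ over $P.$ (Remark~\ref{nc6rem} will indicate that without a nowhere-zero harmonic $\la,$ the cokernel pairing is generically nontrivial at some $x\in L$ and the construction fails.)

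Once $\vp_t^N$ is in hand, the remainder is quantitative but routine. I verify that the torsion, together with the injectivity radius and curvature bounds of $g_t^N,$ satisfy the inequalities of Theorem~\ref{nc2thm3} with the required positive power of $t$ as $t\ra 0,$ and apply that theorem to obtain a torsion-free $\ti\vp_t^N$ cohomologous to $\vp_t^N$ with $\ti\vp_t^N-\vp_t^N$ small in $C^0.$ The claimed $C^0$ convergence $\ti\vp_t^N\ra\pi^*\vp$ away from $Q$ is then immediate on piece~(i) by construction, and the area $\pi t^2\bmd{\la}_x$ of the small $2$-sphere $\pi^{-1}(x)$ follows from the explicit scaling of the Eguchi--Hanson bolt in the model on piece~(iii), preserved to leading order under the small deformation produced by Theorem~\ref{nc2thm3}. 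Finally, $\pi_1(N)\cong\pi_1(M/\an{\io}),$ so the holonomy dichotomy is the standard fact that a compact torsion-free $\Gt$-manifold has full holonomy $\Gt$ precisely when its fundamental group is finite.
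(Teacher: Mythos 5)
Your outline reproduces the architecture of the paper's proof (three-piece gluing, fibrewise Eguchi--Hanson resolution selected by $\la/\bmd{\la}$ and rescaled by $\bmd{\la}$, corrections on pieces (ii) and (iii), then Theorem \ref{nc2thm3}), and the topological claims are fine even though the paper computes the Betti numbers by a relative-cohomology diagram chase rather than Mayer--Vietoris. But there is a concrete misplacement of the key hypothesis. You claim that $\d\la=0$ and $\d^*\la=0$ enter as orthogonality of the error term $\rho_x$ to the cokernel of the fibrewise elliptic operator on $X$. That is not where they are needed, and the solvability argument you propose would not use them: on the Eguchi--Hanson space the relevant cokernels vanish unconditionally, because $H^1_{\rm cs}(X)=0$ and there are no decaying harmonic functions or decaying closed-and-coclosed self-dual $2$-forms (this is the content of \S\ref{nc53} and Steps One and Three of \S\ref{nc54}); no property of $\la$ is invoked there. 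The hypotheses $\d\la=0$ and $\d(*\la)=0$ are instead essential \emph{earlier}, in \S\ref{nc43}--\S\ref{nc44}: wedging the exact identity \eq{nc4eq8} with $\si^*(\la)$ and $\si^*(*\la)$ produces the closedness relations \eq{nc4eq9}--\eq{nc4eq10}, which are exactly what make the corrected forms $\ti\vp^P_t$ and $\ti\psi^P_t$ of \eq{nc4eq24} closed. Without them there is no closed $\Gt$-structure with controllable torsion on piece (iii) at all, and the whole gluing cannot start. If you carried out your plan as written you would discover that your ``cokernel pairing'' is vacuous and that you are missing the actual mechanism; the necessity argument in Remark \ref{nc6rem} (pushforward of $\ti\vp^N_t$ and $\Th(\ti\vp^N_t)$ along $\Pi:Q\ra L$) is a separate a posteriori check, not the place where the hypothesis does its work.

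Two smaller corrections. First, on piece (ii) the second fundamental form of $L$ cannot be a source of torsion since $L$ is totally geodesic (Proposition \ref{nc2prop3}); the obstruction to the naive model being torsion-free to the required order is the $O(tr)$ terms in the Taylor expansion of $\Up_t^*(\vp)$ and $\Up_t^*(*\vp)$, which are killed by adjusting the $O(r^2)$ part of $\Up$ and the connection $\ti\nabla^\nu$ (Lemmas \ref{nc3lem2}--\ref{nc3lem3}, via the surjectivity/isomorphism statements of Appendix \ref{ncA}). Second, the matching in the outer transition region is not automatic from ``agreement to leading order'': one needs the interpolating forms $\eta,\ze$ of \S\ref{nc35} with $\md{\eta}=O(r^3)$, $\md{\d\eta}=O(r^2)$, and the two cutoff radii $t^{-1/9}$ and $t^{-4/5}$ must be tuned so that the $L^2$ and $L^{14}$ norms of $\Th(\vp^N_t)-\psi^N_t$ beat the exponents $\tfrac{7}{2}+\al$ and $-\tfrac{1}{2}+\al$ in Theorem \ref{nc2thm3}; this bookkeeping is where the construction could still fail even with all the right ingredients in place.
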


In \S\ref{nc65}--\S\ref{nc66} we generalize Theorem \ref{nc1thm1}, replacing $M/\an{\io}$ by a more general $G_2$-orbifold, and twisting the 1-form $\la$ by a principal $\Z_2$-bundle on~$L$.

The paper is organized as follows. In \S\ref{nc2} we discuss some background material. The material in \S\ref{nc3}--\S\ref{nc5} is developed to prove our main theorem. Specifically, in \S\ref{nc3} we study $\Gt$-structures and correction forms on the normal bundle $\nu$ of $L$ in $M$, in \S\ref{nc4} we construct $\Gt$-structures and correction forms on the resolution $P$ of $\nu/\{\pm1\}$, and in \S\ref{nc5} we state and prove a theorem for the further correction on piece (iii) mentioned above. We then use the material from \S\ref{nc3}--\S\ref{nc5} to prove our main theorem in \S\ref{nc6}. Section \ref{nc7} applies our theorem to several situations where new examples can be obtained, including examples that would arise as consequences of the SYZ conjecture. Finally, in \S\ref{nc8} we briefly outline some directions for future study. An appendix presents two calculations that are needed in~\S\ref{nc34}.
\smallskip

\noindent {\bf Notation.} All manifolds and tensors are assumed to be 
smooth unless explicitly stated otherwise. Similarly, all bundles are at least smooth fibre bundles, although some of them will be vector bundles. The term \emph{nonvanishing} means the same thing as \emph{nowhere zero} or \emph{nowhere vanishing}. If $E$ is a vector bundle over $M$, then $\Gamma^{\infty} (E)$ denotes the space of smooth sections of $E$.

We use $v \cdot \beta$ to denote the interior product of a vector $v$ with a form $\beta$. The symbol $\cdot$ is also used to denote various natural bilinear pairings throughout, and occasionally in \S\ref{nc6} to denote ordinary multiplication in some particularly complicated expressions.

As is customary, we use $\square$ to denote the end of a proof. Less common, however, is our use of $\blacktriangle$ to denote the end of a definition, example, or remark. We do this because several definitions and remarks extend over many paragraphs, and it would otherwise be unclear to the reader where they ended.
\smallskip

\noindent {\bf Convention.} There are two sign conventions in $\Gt$ geometry. The convention we choose to use in the present paper is the one used by Bryant \cite{Brya} and the first author \cite{Joyc3}, but differs from the convention used by Bryant--Salamon \cite{BrSa} or Harvey--Lawson \cite{HaLa}. A detailed discussion of sign conventions and orientations in $\Gt$ geometry can be found in~\cite{Kari3}.
\smallskip

\noindent {\bf Acknowledgements.} The authors thank Xenia de la Ossa, Jason Lotay, and Johannes Nordstr\"om for helpful conversations. The first author was supported by a grant from the Simons Foundation, the `Simons Collaboration on Special Holonomy in Geometry, Analysis and Physics', 2016--2020. The second author was supported by a European Commission Marie Curie Fellowship number MIF1-CT-2006-039113 in 2007--2008, when this project began, and is currently supported by an NSERC Discovery Grant.

\section{Background material}
\label{nc2}

\subsection{\texorpdfstring{$\Gt$-structures on 7-manifolds}{G₂-structures on 7-manifolds}}
\label{nc21}

We now introduce the holonomy group $\Gt$, following \cite[\S 10.1]{Joyc3}.

\begin{dfn} Let $(x_1,\dots,x_7)$ be the standard
coordinates on $\R^7$. Define a 3-form $\vp_0$ on 
$\R^7$ by
\ea
\vp_0 & = \d x_1\!\w\!\d x_2\!\w\!\d x_3 - \d x_1\!\w\!\d x_4\!\w\!\d x_5- \d x_1\!\w\!\d x_6\!\w\!\d x_7 - \d x_2\!\w\!\d x_4\!\w\!\d x_6
\nonumber\\
&\qquad+ \d x_2\!\w\!\d x_5\!\w\!\d x_7 - \d x_3\!\w\!\d x_4\!\w\!\d x_7- \d x_3\!\w\!\d x_5\!\w\!\d x_6.
\label{nc2eq1}
\ea
The subgroup of $\GL(7,\R)$ preserving $\vp_0$ is the {\it 
exceptional Lie group\/} $\Gt$. It is a compact, semisimple, 14-dimensional 
Lie group, a subgroup of $\SO(7)$. It also preserves the Euclidean metric 
\e
g_0=(\d x_1)^2+\cdots+(\d x_7)^2
\label{nc2eq2}
\e
on $\R^7$, the orientation on $\R^7$, and the Hodge dual 4-form 
\ea 
*\vp_0 & = \d x_4\!\w\!\d x_5\!\w\!\d x_6\w\!\d x_7 - \d x_2\!\w\!\d x_3\!\w\!\d x_6\w\!\d x_7- \d x_2\!\w\!\d x_3\!\w\!\d x_4\w\!\d x_5 
\nonumber\\
& \qquad {} - \d x_1\!\w\!\d x_3\!\w\!\d x_5\w\!\d x_7
+ \d x_1\!\w\!\d x_3\!\w\!\d x_4\w\!\d x_6 
\nonumber \\
& \qquad {}
- \d x_1\!\w\!\d x_2\!\w\!\d x_5\w\!\d x_6 - \d x_1\!\w\!\d x_2\!\w\!\d x_4\w\!\d x_7.
\label{nc2eq3}
\ea

A {\it $\Gt$-structure} on a 7-manifold $M$ is a principal subbundle $Q$ of the frame bundle of $M$, with structure group $\Gt$. Each $\Gt$-structure $Q$ gives rise to a 3-form $\vp$, a metric $g$, a 4-form $*\vp$, and an orientation on $M$, such that every tangent space of $M$ admits an isomorphism with $\R^7$ identifying $\vp,g,*\vp$ and the orientation with $\vp_0,g_0,*\vp_0$ and the standard orientation on $\R^7$ respectively. Here $*\vp$ comes from $\vp$ by the Hodge star determined by $g$ and the orientation on $M$. Conversely, $\vp$ determines $Q$ uniquely. By an abuse of notation, we will refer to the pair $(\vp,g)$ as a $\Gt$-{\it structure}. We call the triple $(M,\vp,g)$ a $\Gt$-{\it manifold}. (In contrast to \cite[Ch.~10]{Joyc3}, we do not require $(\vp,g)$ to be torsion-free in a $\Gt$-manifold $(M,\vp,g)$, in the sense below.)
\label{nc2def1}
\end{dfn}

Sometimes we write $g,*\vp$ as $g_\vp,*_\vp\vp$ to emphasize that they are determined by $\vp$. Note that $g_\vp,*_\vp\vp$ are {\it nonlinear\/} functions of $\vp$, since although $*_\vp$ is linear, it depends on $g_\vp$, and so on $\vp$. Here is \cite[Prop.s 10.1.3 \& 10.1.5]{Joyc3}:

\begin{thm} Let\/ $(M,\vp,g)$ be a $\Gt$-manifold. The following are equivalent:
\begin{itemize}
\setlength{\itemsep}{0pt}
\setlength{\parsep}{0pt}
\item[{\rm(i)}] $\Hol(g)\subseteq \Gt,$ and\/ $\vp$ is the induced\/ $3$-form,
\item[{\rm(ii)}] $\nabla\vp=0$ on $M,$ where $\nabla$ is the
Levi-Civita connection of\/ $g,$ and
\item[{\rm(iii)}] $\d\vp=0$ and $\d(*_\vp\vp)=0$ on $M$.
\end{itemize}
If these hold then $g$ is Ricci-flat.
\label{nc2thm1}
\end{thm}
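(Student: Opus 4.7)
The plan is standard: prove (i)$\Leftrightarrow$(ii) by the holonomy principle, note (ii)$\Ra$(iii) is immediate, and handle the nontrivial direction (iii)$\Ra$(ii) by decomposing the intrinsic torsion of the $\Gt$-structure into irreducible $\Gt$-representations. Ricci-flatness follows separately from a curvature-theoretic argument. For (i)$\Leftrightarrow$(ii), by Definition \ref{nc2def1} the group $\Gt$ is the stabilizer of $\vp_0$ in $\GL(7,\R)$, and a $\Gt$-structure on $M$ is equivalent to a global $3$-form $\vp$ pointwise modelled on $\vp_0$. The standard holonomy principle asserts that $\Hol(g)\subseteq \Gt$ if and only if there exists a $\nabla$-parallel tensor on $M$ whose pointwise stabilizer is $\Gt$; such a tensor is exactly $\vp$. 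For (ii)$\Ra$(iii): since $\nabla$ is torsion-free, $\d\vp$ is the skew-symmetrization of $\nabla\vp$, and $*_\vp\vp$ is built algebraically from $\vp$ and the parallel metric $g_\vp$, hence is also parallel. Thus both $\d\vp=0$ and $\d(*_\vp\vp)=0$.

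The main obstacle is (iii)$\Ra$(ii), for which I would use the Fern\'andez--Gray classification of $\Gt$-structures. The intrinsic torsion $\nabla\vp$ lives in $T^*M\ot\La^3_7 T^*M,$ where $\La^3_7\subset\La^3T^*M$ is the $7$-dimensional $\Gt$-irreducible complement of the line spanned by $\vp$. Under $\Gt$ this representation decomposes as $\bs{1}\oplus\bs{7}\oplus\bs{14}\oplus\bs{27},$ giving four torsion components $\tau_0,\tau_1,\tau_2,\tau_3$ that collectively determine $\nabla\vp$. A direct (but somewhat involved) computation, obtained by expanding $\d\vp\in\Om^4(M)$ and $\d(*_\vp\vp)\in\Om^5(M)$ in the $\Gt$-irreducible decompositions of $\La^4T^*M$ and $\La^5T^*M,$ shows schematically that
\[
\d\vp=\tau_0\,*_\vp\vp+3\tau_1\w\vp+*_\vp\tau_3,\qquad \d(*_\vp\vp)=4\tau_1\w *_\vp\vp+\tau_2\w\vp.
\]
The key point is that the pair $(\d\vp,\d(*_\vp\vp))$ determines all four of $\tau_0,\tau_1,\tau_2,\tau_3$; hence their simultaneous vanishing forces every $\tau_i=0,$ so $\nabla\vp=0$. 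This representation-theoretic bookkeeping is where essentially all the content lies.

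For Ricci-flatness, once $\Hol(g)\subseteq\Gt,$ the Ambrose--Singer theorem says the curvature tensor $R$ of $g$ takes values in $\mathfrak{g}_2\subset\mathfrak{so}(7)$ at each point. Contracting the first Bianchi identity against the parallel form $\vp$ and using that $R$ is $\mathfrak{g}_2$-valued produces an algebraic identity forcing the Ricci tensor to vanish (this is essentially Bonan's argument). Equivalently, one may apply the Weitzenb\"ock formula to the parallel form $\vp$ and read off the vanishing of the Ricci contraction. Either approach is short; the hard work is concentrated in the torsion analysis above.
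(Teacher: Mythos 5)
Your proposal is correct and follows the standard route; the paper itself gives no proof of this theorem, simply citing \cite[Prop.s 10.1.3 \& 10.1.5]{Joyc3}, whose argument rests on exactly the same ingredients (the holonomy principle for (i)$\Leftrightarrow$(ii), the Fern\'andez--Gray torsion decomposition for (iii)$\Rightarrow$(ii), and Bonan's curvature argument for Ricci-flatness). One small slip worth fixing: $\La^3_7$ is not the complement of $\an{\vp}$ in $\La^3T^*M$ (that complement is $\La^3_7\op\La^3_{27}$, of dimension $34$); the correct statement is that the intrinsic torsion lies in $T^*M\ot(\mathfrak{so}(7)/\mathfrak{g}_2)\cong T^*M\ot\La^3_7$, which is the $\bs{1}\op\bs{7}\op\bs{14}\op\bs{27}$ space your four components $\tau_0,\tau_1,\tau_2,\tau_3$ parametrize.
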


We call $\nabla\vp$ the {\it torsion} of the $\Gt$-structure $(\vp,g)$. If $(M,\vp,g)$ is a $\Gt$-manifold with $(\vp,g)$ torsion-free, we call $(M,\vp,g)$ a {\it torsion-free $\Gt$-manifold}. Here is \cite[Prop.~10.2.2 \& Th.~10.4.4]{Joyc3}:

\begin{thm} Let\/ $(M,\vp,g)$ be a compact torsion-free $\Gt$-manifold. Then $\Hol(g)=\Gt$ if and only if\/ $\pi_1(M)$ is finite. In this case the moduli space of metrics with holonomy $\Gt$ on $M,$ up to diffeomorphisms isotopic to the identity, is a smooth manifold of dimension~$b^3(M)$.
\label{nc2thm2}
\end{thm}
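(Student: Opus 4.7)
The plan is to address the two assertions with different toolkits: Bochner plus holonomy representation theory for the first, deformation theory for the second.

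For the equivalence $\Hol(g)=\Gt \Leftrightarrow \pi_1(M)$ finite, I would work through parallel 1-forms. Since $g$ is Ricci-flat by Theorem \ref{nc2thm1}, the Bochner identity on compact $M$ forces every harmonic 1-form to be parallel, so $b^1(M)$ equals the dimension of the space of parallel 1-forms, which by the holonomy principle equals the dimension of the $\Hol(g)$-fixed subspace of $\R^7$. Two representation-theoretic facts then pin this down: $\Gt$ itself fixes no nonzero vector in $\R^7$ (irreducibility), while every proper closed connected subgroup of $\Gt$ does fix a nonzero vector (since $\Gt$ acts transitively on $\cS^6$ with stabilizer $\SU(3)$, so any smaller subgroup lies inside some $\SU(3)$). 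Hence $b^1(M) = 0 \Leftrightarrow \Hol(g) = \Gt$. To relate $b^1(M)$ to $\pi_1(M)$ I would invoke the Cheeger--Gromoll splitting theorem on compact Ricci-flat $M$: a finite cover is isometric to $T^k \times N$ with $N$ simply connected, and $\pi_1(M)$ is virtually $\Z^k$. Thus $\pi_1(M)$ is finite iff $k = 0$ iff $M$ admits no parallel 1-form iff $b^1(M) = 0$, completing the chain.

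For the moduli statement, fix $(\vp,g)$ torsion-free and consider perturbations $\vp + \eta$ with $\eta \in \Omega^3(M)$ small. The torsion-free conditions $\d\vp = 0$ and $\d *_\vp \vp = 0$ are nonlinear through the Hodge star; using the $\Gt$-decomposition $\Omega^3(M) = \Omega^3_1 \oplus \Omega^3_7 \oplus \Omega^3_{27}$ one computes an explicit formula for the linearization of $\vp' \mapsto *_{\vp'}\vp'$ at $\vp$, which acts as a known scalar multiple of the Hodge star on each irreducible summand. Since the infinitesimal diffeomorphism action is $X \mapsto \cL_X \vp = \d(X \cdot \vp)$, mapping into exact 3-forms, I would impose the gauge slice $\d^*\eta = 0$. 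Combined with $\d\eta = 0$, this says $\eta$ is harmonic; moreover, on a torsion-free $\Gt$-manifold the Hodge Laplacian commutes with the projections onto the three irreducible summands, so each of $\eta_1, \eta_7, \eta_{27}$ is separately harmonic, and the explicit linearization formula for $*_\vp$ then shows the linearized second torsion-free equation is automatically satisfied. Thus the Zariski tangent space to the gauge-fixed moduli is canonically $\cH^3(M,g) \cong H^3(M;\R)$. To upgrade to a smooth manifold I would apply the elliptic implicit function theorem in Banach completions to the combined nonlinear map $\eta \mapsto (\d\eta,\, \d *_{\vp+\eta}(\vp+\eta),\, \d^*\eta)$ at $\eta = 0$; ellipticity on the gauge slice plus elliptic regularity deliver smoothness. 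An Ebin-type slice argument, using transversality of the $\Diff_0(M)$-orbit to the harmonic slice, identifies this smooth manifold with the moduli of torsion-free $\Gt$-structures modulo isotopic diffeomorphisms, and the period map $\vp \mapsto [\vp] \in H^3(M;\R)$ becomes a local diffeomorphism, yielding dimension $b^3(M)$.

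The main obstacle is verifying vanishing of the linearized obstruction in the implicit function step, i.e., that the cokernel of the linearization at $\vp$ on the gauge slice is trivial. This reduces via Hodge decomposition to showing that the linearization of $*_\vp$ preserves closedness when applied to harmonic 3-forms, which is exactly the $\Gt$-decomposition computation above, combined with the commutation of the Hodge Laplacian with the $\Gt$-projectors on a torsion-free $\Gt$-manifold. By contrast, the Bochner plus Cheeger--Gromoll portion in the first part is a routine assembly of standard tools, the one subtlety being the classification of closed subgroups of $\Gt$ showing that every proper subgroup has a nonzero fixed vector in $\R^7$.
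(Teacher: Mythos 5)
The paper does not prove this statement; it simply cites \cite[Prop.~10.2.2 \& Th.~10.4.4]{Joyc3}. Your treatment of the second assertion essentially reconstructs Joyce's deformation argument --- the gauge slice $\d^*\eta=0$, the commutation of the Hodge Laplacian with $\pi_1,\pi_7,\pi_{27}$ on a torsion-free $\Gt$-manifold, the explicit linearization \eq{nc2eq6} of $\Th$, and the elliptic implicit function theorem --- and is sound in outline.

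The first assertion, however, has two genuine gaps. First, the claim that every proper closed connected subgroup of $\Gt$ fixes a nonzero vector in $\R^7$ is false, and the inference you offer (transitivity on $\cS^6$ with stabilizer $\SU(3)$, hence every proper subgroup lies inside some $\SU(3)$) is invalid: transitivity locates point stabilizers, not arbitrary subgroups. For example $\SO(4)\subset\Gt$, the stabilizer of a coassociative $4$-plane, and the principal $\SO(3)\subset\Gt$ acting irreducibly on $\R^7$, are both proper connected subgroups with no fixed vector. The fact that is actually needed --- and it is stronger than irreducibility of $\Gt$ --- is that among connected subgroups of $\Gt$ that can occur as the \emph{restricted} holonomy $\Hol^0(g)$ of a Ricci-flat metric, namely $\{1\},\SU(2),\SU(3),\Gt$ by Berger's list combined with the de~Rham splitting, only $\Gt$ has no fixed vector.

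Second, and more seriously, your chain conflates $\Hol(g)$ with $\Hol^0(g)$. Bochner gives $b^1(M)=\dim(\R^7)^{\Hol(g)}$, using the \emph{full} holonomy group; Cheeger--Gromoll produces a finite cover $\bar M\cong T^k\times N$ with $k=\dim(\R^7)^{\Hol^0(g)}$, governed by the \emph{restricted} holonomy. These differ when $\Hol(g)$ is disconnected, so your equivalence ``$k=0$ iff $M$ admits no parallel $1$-form'' is not correct as stated: parallel $1$-forms on $\bar M$ need not descend to $M$. Already in the compact Ricci-flat category the flat Hantzsche--Wendt $3$-manifold has $b^1=0$ and infinite $\pi_1$, with $\Hol=\Z_2^2$ fixing no vector while $\Hol^0=\{1\}$ fixes everything. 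The correct route argues directly with $\Hol^0$: $\pi_1(M)$ finite $\Leftrightarrow$ $\tilde M$ compact $\Leftrightarrow$ no $\R^k$ factor in the de~Rham decomposition of $\tilde M$ $\Leftrightarrow$ $\Hol^0(g)$ fixes no vector $\Leftrightarrow$ $\Hol^0(g)=\Gt$ (by the Berger classification above) $\Leftrightarrow$ $\Hol(g)=\Gt$, the last step because $\Gt$ is connected and $\Hol^0\subseteq\Hol\subseteq\Gt$. A detour through $b^1(M)$ does not close this loop without a separate argument, which you have not supplied, that the $\Hol(g)$- and $\Hol^0(g)$-fixed subspaces of $\R^7$ agree for torsion-free $\Gt$-manifolds.
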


\subsection{\texorpdfstring{Exterior forms on $\Gt$-manifolds}{Exterior forms on G₂-manifolds}}
\label{nc22}

Exterior forms on a $\Gt$-manifold have a natural decomposition:

\begin{prop} Let\/ $(M,\vp,g)$ be a $\Gt$-manifold. Then\/ $\La^kT^*M$ has a natural orthogonal splitting into components as follows, where\/ $\La^k_l$ is a vector subbundle of rank\/ $l$ corresponding to an irreducible representation of\/~{\rm$\Gt$:}
\begin{equation*}
\begin{array}{llll}
{\rm(i)} & \La^1T^*M=\La^1_7,
&{\rm(ii)} &\La^2T^*M=\La^2_7\op\La^2_{14}, \\
{\rm(iii)} &\La^3T^*M=\La^3_1\op\La^3_7\op \La^3_{27},
& {\rm(iv)} &\La^4T^*M=\La^4_1\op\La^4_7\op\La^4_{27},\\
{\rm(v)} &\La^5T^*M=\La^5_7\op\La^5_{14}, 
& {\rm(vi)} &\La^6T^*M=\La^6_7.
\end{array}
\end{equation*}
The Hodge star\/ $*$ of\/ $g$ gives an isometry between\/ $\La^k_l$ and $\La^{7-k}_l$. Here
\begin{align*}
\La^2_{14} & = \Ker\bigl((*\vp \w \cdot) :\La^2T^*M \to \La^6T^*M\bigr),\quad \La^3_1 = \an{\vp}, \quad \La^4_1 = \an{*\vp}, \\
\La^4_7 & = \Im\bigl( (\vp \w \cdot) :T^*M \to \La^4T^*M\bigr), \quad \La^5_7 = \Im\bigl( (*\vp \w \cdot):T^*M \to \La^5T^*M\bigr).
\end{align*}

\label{nc2prop1}
\end{prop}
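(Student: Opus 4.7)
The argument is essentially pointwise: at each $p \in M$ the $\Gt$-structure gives an isomorphism $T_pM \cong \R^7$ identifying $\vp_p, g_p, *\vp_p$ with $\vp_0, g_0, *\vp_0$, and this isomorphism is canonical up to the action of $\Gt$ itself. Hence the decomposition of $\La^k T^*M$ into $\Gt$-subbundles is equivalent to the decomposition of $\La^k (\R^7)^*$ into irreducible $\Gt$-subrepresentations, and one then only has to assemble the pointwise decomposition into a smooth orthogonal splitting. My plan would be to carry out the representation-theoretic decomposition of $\La^k(\R^7)^*$ under $\Gt$, using the known facts that $\Gt$ is a compact simple Lie group of dimension $14$, has no nontrivial irreducible representations of dimension less than $7$, and that its lowest-dimensional irreducibles have dimensions $1, 7, 14, 27$.

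With that in hand, the strategy for exhibiting the summands is to write down natural $\Gt$-equivariant maps whose images or kernels are easy to identify, then read off the dimensions and check irreducibility. I would first handle $\La^1_7 = T^*M$, which is the standard representation. For $\La^2$, since $\La^2 T^*M \cong \mathfrak{so}(TM)$ as $\Gt$-modules, and $\mathfrak{so}(7) = \mathfrak{g}_2 \op \mathfrak{g}_2^\perp$ where $\dim \mathfrak{g}_2 = 14$ and $\mathfrak{g}_2^\perp \cong \R^7$, one obtains $\La^2_7 \op \La^2_{14}$ of the stated dimensions. To identify $\La^2_{14}$ intrinsically one checks that the $\Gt$-equivariant map $\alpha \mapsto \alpha \w *\vp : \La^2 \ra \La^6$ has image the $7$-dimensional $\La^6_7$, hence $14$-dimensional kernel, which must be $\La^2_{14}$. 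For $\La^3$, $\vp$ itself spans a trivial line $\La^3_1$; the map $v \mapsto v \cdot (*\vp) : TM \ra \La^3 T^*M$ is injective and $\Gt$-equivariant, yielding a $7$-dimensional $\La^3_7$; and the orthogonal complement of $\La^3_1 \op \La^3_7$ has dimension $35 - 8 = 27$, which one identifies as $\La^3_{27}$. The intrinsic description $\La^4_7 = \Im(\vp \w \cdot)$ and $\La^5_7 = \Im(*\vp \w \cdot)$ can be checked by computing directly that these maps $T^*M \ra \La^4 T^*M$ and $T^*M \ra \La^5 T^*M$ are $\Gt$-equivariant and injective (the latter essentially because $\alpha \w *\vp = *(\vp \w *\alpha)$ up to sign for $\alpha \in \La^1$, reducing injectivity to a pointwise computation on $\R^7$).

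The remaining claims follow by Hodge duality: since $\Gt \subset \SO(7)$ the Hodge star $*$ is $\Gt$-equivariant and an isometry, so it carries any irreducible subbundle of $\La^k T^*M$ to an irreducible subbundle of $\La^{7-k} T^*M$ of the same dimension. Matching dimensions then determines $\La^{4}_{1} = \an{*\vp}$, $\La^{4}_{27}$, $\La^5_7, \La^5_{14}$, $\La^6_7$ uniquely, which gives both the decomposition and the stated isometries $\La^k_l \cong \La^{7-k}_l$. Orthogonality of the summands is automatic for distinct irreducibles by Schur's lemma applied to the $\Gt$-invariant inner product induced by $g$.

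The main obstacle, and the only point that is not immediate from abstract representation theory, is verifying that the $27$-dimensional summands $\La^3_{27}$ and $\La^4_{27}$ are in fact irreducible $\Gt$-representations and not, say, a further direct sum of smaller pieces. The cleanest way I would handle this is to use the well-known $\Gt$-isomorphism $S^2_0 (\R^7)^* \cong \La^3_{27}$, given explicitly at the $\vp_0$ level by $h \mapsto h \diamond \vp_0$ where $(h \diamond \vp_0)_{ijk} = h_i{}^l (\vp_0)_{ljk} + h_j{}^l (\vp_0)_{ilk} + h_k{}^l (\vp_0)_{ijl}$, together with the fact that the traceless symmetric square of the standard $7$-dimensional representation of $\Gt$ is irreducible of dimension $27$; equivalently, this follows by decomposing $\R^7 \ot \R^7 = \R \op \Lambda^2_7 \op \Lambda^2_{14} \op S^2_0$ and observing that $S^2_0$ cannot contain any of the irreducibles of dimension $\le 14$ on dimensional grounds. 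Once this irreducibility is in place, the rest of the proposition follows by gluing the pointwise decompositions into a smooth orthogonal direct sum of subbundles, which is automatic since each projection $\La^k T^*M \to \La^k_l$ is built from $\vp$, $g$, and $*$, all of which are smooth on $M$.
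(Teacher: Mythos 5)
The paper offers no proof of this proposition: it is quoted as standard background on the representation theory of $\Gt$, so there is no internal argument to compare against. Your outline is the standard proof and is essentially correct in structure: reduce pointwise to the decomposition of $\La^k(\R^7)^*$ under $\Gt$, exhibit each summand as the image or kernel of an equivariant map built from $\vp_0$, $*\vp_0$ and $g_0$, transfer between complementary degrees by the ($\Gt$-equivariant, isometric) Hodge star, and observe that orthogonality of non-isomorphic irreducible summands and smoothness of the resulting subbundles are automatic because the projections are constructed from $\vp$, $g$ and $*$.

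Two points need tightening. First, the irreducibility of the $27$-dimensional summand — which you correctly identify as the only nontrivial input — is not closed by the justification you give. Writing $V=\R^7$, knowing $V\ot V=\R\op\La^2_7\op\La^2_{14}\op S^2_0V^*$ with $\dim S^2_0=27$ does not exclude $S^2_0$ containing further copies of the irreducibles of dimension $1$, $7$ or $14$ ``on dimensional grounds'': decompositions such as $27=7+20$ or $27=14+7+1+\cdots$ are not ruled out by counting dimensions. What is actually needed is the multiplicity-one statement that $\mathbf 1$, $V$ and the adjoint representation each occur exactly once in $V\ot V$ (the trivial summand is the metric, since $V$ is irreducible of real type; the copy of $V$ is the cross product and lies in $\La^2V$; the adjoint representation is $\mathfrak{g}_2\subset\La^2V$), after which $S^2_0V$ contains no irreducible of dimension $\le 14$, and since the next smallest irreducible of $\Gt$ has dimension $27$, irreducibility follows. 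Alternatively one invokes the Weyl dimension formula for the representation of highest weight twice that of $V$. Second, the parenthetical identity ``$\al\w *\vp=*(\vp\w *\al)$ up to sign'' is garbled: for a $1$-form $\al$, $*\al$ is a $6$-form, so $\vp\w *\al$ is a $9$-form and vanishes identically. The identity you want is $*(\al\w *\vp)=\pm\,\al^\sharp\cdot\vp$, from which injectivity of $\al\mapsto\al\w *\vp$ follows because $v\cdot\vp\ne 0$ for every $v\ne 0$ (check one $v$ and use transitivity of $\Gt$ on the unit sphere). Neither issue affects the architecture of the argument, but the first is a genuine gap as written.
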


Write $\Om^k=\Ga^\iy(\La^kT^*M)$ and $\Om^k_l=\Ga^\iy(\La^k_l)$ for the vector spaces of smooth sections, so that we have splittings $\Om^2=\Om^2_7\op\Om^2_{14}$, $\Om^3=\Om^3_1\op\Om^3_7\op\Om^3_{27}$, and so on. Write $\pi_l:\Om^k\ra\Om^k_l$ for the projections to the factors in these splittings. So, for instance, if $\xi\in\Om^2$ is a 2-form on $M$ then~$\xi=\pi_7(\xi)+\pi_{14}(\xi)$.

These splittings are crucial in understanding the linearization of the map $\Theta$ that takes a $\Gt$-structure $\vp$ to the dual $4$-form $*_{\vp} \vp$, which we now discuss. 

\begin{dfn} Let $M$ be a 7-manifold. We call a 3-form $\vp$ on $M$ a {\it positive\/ $3$-form\/} if for every $p\in M$, there exists an isomorphism between $T_pM$ and $\R^7$ that identifies $\vp\vert_p$ and the 3-form $\vp_0$ of \eq{nc2eq1}. Each positive 3-form $\vp$ determines a unique $\Gt$-structure $(\vp,g)$ on $M$. Similarly, call a 4-form $\psi$ on $M$ a {\it positive\/ $4$-form\/} if for every $p\in M$, there exists an isomorphism between $T_pM$ and $\R^7$ that identifies $\psi\vert_p$ and the 4-form $*\vp_0$ of \eq{nc2eq3}.

Write points of $\La^kT^*M$ as $(x,\al)$ for $x\in M$ and $\al\in\La^kT_x^*M$. Write $\La^3_+T^*M$ and $\La^4_+T^*M$ for the subsets of $(x,\al)$ in $\La^3T^*M$, $\La^4T^*M$ with $\al$ positive (that is, $\al$ is identified with $\vp_0,*\vp_0$ in \eq{nc2eq1}, \eq{nc2eq3} by some isomorphism $T_xM\cong\R^7$). Then $\La^3_+T^*M\ra M$, $\La^4_+T^*M\ra M$ are fibre bundles (but not vector bundles), whose sections are positive 3- and 4-forms.

As the stabilizer group of $\vp_0$ in $\GL(7,\R)$ is $\Gt$, we see that $\La^3_+T^*M\ra M$ is a bundle with fibre $\GL(7,\R)/\Gt$, which has dimension $49-14=35$. Since $\La^3T^*M$ has rank $\binom{7}{3}=35$, we see that $\La^3_+T^*M$ is an open submanifold in $\La^3T^*M$ (note that it is not a vector subbundle). The stabilizer group of $*\vp_0$ in $\GL(7,\R)$ is $\Gt\t\{\pm 1\}$, so again $\La^4_+T^*M$ is open in~$\La^4T^*M$.

Define $\Th:\Ga^\iy(\La^3_+T^*M)\ra\Ga^\iy(\La^4_+T^*M)$ by $\Th(\vp)=*_\vp\vp$, where $*_\vp\vp$ is the Hodge dual form defined using the metric $g_\vp$ and orientation induced by the $\Gt$-structure $(\vp,g_\vp)$ associated to $\vp$. Then $\Th$ is a smooth, nonlinear map.
\label{nc2def2}
\end{dfn}

Here is \cite[Prop.~10.3.5]{Joyc3}, slightly rewritten.

\begin{prop} There exist universal constants $\ep,C>0$ such that the following holds. Let\/ $(M,\vp,g)$ be a $\Gt$-manifold, and\/ $\xi$ be a $3$-form on $M$ with $\nm{\xi}_{C^0}\le\ep$. Then\/ $\vp+\xi$ is a positive $3$-form, and
\e 
\Th(\vp+\xi)=*_\vp\vp+*_\vp\ts\bigl(\frac{4}{3}\pi_1(\xi)+\pi_7(\xi)-\pi_{27}(\xi)\bigr)+F_\vp(\xi),
\label{nc2eq4}
\e
where the nonlinear function $F_\vp:\bigl\{\xi\in\Om^3:\nm{\xi}_{C^0}\le\ep\bigr\}\ra\Om^4$ satisfies
\begin{equation} 
F_\vp(0)=0, \quad \bmd{F_\vp(\xi)} \le C\ms{\xi}, \quad \bmd{\nabla F_\vp (\xi)} \le C\bigl(\ms{\xi}\md{\nabla\vp} + \md{\xi}\md{\nabla \xi}\bigr).
\label{nc2eq5}
\end{equation}
Here all norms and the covariant derivatives are taken with respect to\/~$g$.
\label{nc2prop2}
\end{prop}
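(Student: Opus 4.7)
The plan is to exploit that $\Th$ is defined pointwise from the algebraic data of $\vp$ and is $G_2$-equivariant, so the whole statement reduces to an analytic fact about a single smooth map on a fixed open subset of $\La^3\R^{7*}$, with the constants $\ep$ and $C$ then universal automatically.

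First I would establish positivity. Since $\La^3_+T^*M$ is open in $\La^3T^*M$, there is a $g_0$-ball of radius $\ep>0$ around $\vp_0$ in $\La^3\R^{7*}$ consisting of positive forms. At each point $p\in M$, choosing an oriented orthonormal $G_2$-frame identifying $(\vp\vert_p,g\vert_p)$ with $(\vp_0,g_0)$ transports this ball, yielding that $\vp+\xi$ is pointwise positive whenever $\nm{\xi}_{C^0}\le\ep$.

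Next I would compute $\d\Th_\vp(\xi)$. By $G_2$-equivariance, $\d\Th_{\vp_0}$ acts as a scalar times $*_{\vp_0}$ on each irreducible summand $\La^3_1,\La^3_7,\La^3_{27}$ of $\La^3\R^{7*}$, so it suffices to compute three eigenvalues. The eigenvalue on $\La^3_1$ follows from the homogeneity $\Th(t\vp_0)=t^{4/3}\Th(\vp_0)$, which comes from $g_{t\vp_0}=t^{2/3}g_0$ combined with the scaling of Hodge star by $t^{1/3}$ on $3$-forms in dimension $7$; expanding to first order in $t-1$ gives eigenvalue $4/3$. The eigenvalues on $\La^3_7$ and $\La^3_{27}$ come out to $1$ and $-1$ respectively, verified by direct pointwise computation on a model element of each type (for instance, $v\cdot(*\vp_0)$ spanning $\La^3_7$). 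These assemble into the explicit linear term in \eq{nc2eq4}.

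For the remainder, set $F_\vp(\xi):=\Th(\vp+\xi)-*_\vp\vp-\d\Th_\vp(\xi)$, so $F_\vp(0)=0$ and $\d F_\vp\vert_0=0$. By Taylor's theorem applied pointwise to the smooth model map on the $\ep$-ball, $\bmd{F_\vp(\xi)}\le C\ms{\xi}$, where $C$ is the supremum of the Hessian of $\Th$ over that ball, independent of $(M,\vp,g)$ by $G_2$-invariance. For the covariant derivative, since $F_\vp(\xi)(p)$ is a fixed smooth function of the pair $(\vp(p),\xi(p))$, the chain rule gives $\nabla F_\vp(\xi)=(\partial_\xi F)(\vp,\xi)\cdot\nabla\xi+(\partial_\vp F)(\vp,\xi)\cdot\nabla\vp$. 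The first term is $O(\md{\xi}\md{\nabla\xi})$ because $\partial_\xi F$ vanishes to first order at $\xi=0$, and the second is $O(\ms{\xi}\md{\nabla\vp})$ because $F$ itself vanishes to second order at $\xi=0$ uniformly in $\vp$; together these yield \eq{nc2eq5}. The main subtlety throughout is universality of $\ep$ and $C$, which is delivered exactly by transporting every pointwise computation to $(\R^7,\vp_0,g_0)$ via oriented orthonormal $G_2$-frames, so that all bounds reduce to a single analytic computation on the fixed model space.
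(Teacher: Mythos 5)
Your argument is correct and is essentially the proof of the cited result [Joyc3, Prop.~10.3.5]; the paper itself gives no proof, only the citation. The three ingredients — openness of $\La^3_+T^*M$ transported by $\Gt$-frames for positivity, Schur's lemma plus the homogeneity $\Th(t\vp)=t^{4/3}\Th(\vp)$ and two model computations for the eigenvalues $\frac{4}{3},1,-1$, and pointwise Taylor expansion combined with the chain rule for the remainder estimates — are exactly the standard route, and the universality of $\ep$ and $C$ follows as you say from reducing every pointwise bound to the fixed model $(\R^7,\vp_0,g_0)$.
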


Equation \eq{nc2eq4} computes the derivative $D_\vp\Th:\Om^3\ra\Om^4$ of $\Th$ at $\vp$: we have
\e
D_\vp\Th = *_\vp\ci\ts\bigl(\frac{4}{3}\pi_1+\pi_7-\pi_{27}\bigr).
\label{nc2eq6}
\e

\subsection{\texorpdfstring{An existence theorem for torsion-free $\Gt$-manifolds}{An existence theorem for torsion-free G₂-manifolds}}
\label{nc23}

An important technical tool that we will require is the following theorem of the first author, which says that if one can find a $\Gt$-structure $(\vp,g)$ with $\d\vp=0$ on a compact 7-manifold $M$, whose torsion is sufficiently small in a certain sense, then there exists a {\it torsion-free\/} $\Gt$-structure $(\ti \vp,\ti g)$ on $M$ which is close to $(\vp,g)$, and in the same de Rham cohomology class. It was first used in \cite{Joyc1,Joyc2} to construct the first compact examples of manifolds with $\Gt$-holonomy.

\begin{thm}[Joyce {\cite[Th.~11.6.1]{Joyc3}}] Let\/ $\al,K_1,K_2,$ and\/ $K_3$ be any positive constants. Then there exist $\ep\in(0,1]$ and\/ $K_4>0,$ such that whenever $0<t \le\ep,$ the following holds.

Let\/ $M$ be a compact\/ $7$-manifold, with a\/ $\Gt$-structure $(\vp,g)$ satisfying $\d\vp=0$. Suppose there is a closed\/  $4$-form $\psi$ on $M$ such that:
\begin{itemize}
\setlength{\itemsep}{0pt}
\setlength{\parsep}{0pt}
\item[{\rm(i)}] $\bnm{\Th(\vp)-\psi}{}_{C^0} \le K_1 \, t^{\al},$ $\bnm{\Th(\vp)-\psi}{}_{L^2}\le K_1 \, t^{\frac{7}{2} + \al},$ and \\
$\bnm{\d(\Th(\vp)-\psi)}{}_{L^{14}} \le K_1 \, t^{-\frac{1}{2} + \al}$.
\item[{\rm(ii)}] the \begin{bfseries}injectivity radius\end{bfseries} $\mathrm{inj}$ of\/ $g$ satisfies $\mathrm{inj} \ge K_2 \, t$.
\item[{\rm(iii)}] the \begin{bfseries}Riemann curvature\end{bfseries} tensor $\mathrm{Rm}$ of\/ $g$ satisfies $\nm{\mathrm{Rm}}_{C^0} \le K_3 \, t^{-2}$.
\end{itemize}
Then there exists a smooth, \begin{bfseries}torsion-free\end{bfseries} $\Gt$-structure $(\ti \vp,\ti g)$ on $M$ such that\/ $\nm{\ti\vp-\vp}_{C^0}\le K_4 \,t^{\al}$ and\/ $[\ti\vp]=[\vp]$ in $H^3(M,\R)$. Here all norms are computed using the original metric $g$.
\label{nc2thm3}
\end{thm}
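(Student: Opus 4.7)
The plan is to prove Theorem \ref{nc1thm1} by explicit gluing. First I construct $N$ topologically. Using the exponential map, identify a tubular neighbourhood of $L$ in $M$ with a disc bundle in the normal bundle $\nu\to L$; since $\iota$ fixes $L$ and preserves $g$, it acts as $-1$ on fibres, so a neighbourhood of $L$ in $M/\an{\iota}$ becomes a disc bundle in $\nu/\{\pm 1\}$. The Eguchi--Hanson space $X=T^*\cS^2$ is asymptotic to $\R^4/\{\pm 1\}$, so I can build a fibre bundle $\sigma\colon P\to L$ with fibre $X$ and glue an annular neighbourhood of the zero section in $P$ to an annulus in $\nu/\{\pm 1\}$ to obtain the smooth manifold $N$. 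The topological data of $P$, in particular the way the bolt $\cS^2\subset X$ sits in each fibre, is encoded by the nonvanishing $1$-form $\la$ via the $\SO(3)$-reduction of the frame bundle of $\nu$ it determines. A Mayer--Vietoris argument on the cover $N=\pi^{-1}(M/\an{\iota}\setminus\mathrm{nbhd}(L))\cup P$ with intersection an $\cS^3/\{\pm 1\}$-bundle over $L$ yields $\pi_1(N)\cong\pi_1(M/\an{\iota})$ and $b^k(N)=b^k(M/\an{\iota})+b^{k-2}(L)$ (the $k-2$ shift coming from the Thom isomorphism for the $\cS^2$-bundle $Q\to L$).

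Next I build, for each small $t>0$, a closed $\Gt$-structure $\vp_t^N$ on $N$ with small torsion, along with a closed $4$-form $\psi_t^N$ to play the role of $\psi$ in Theorem~\ref{nc2thm3}. I cover $N$ by three overlapping regions corresponding to pieces (i)--(iii) of the introduction. On piece (i), take $\vp_t^N=\pi^*\vp$, which is already torsion-free. On piece (ii), the $\Gt$-structure obtained naively from the identifications $(T\nu)/\{\pm1\}\cong TM$ is not even closed; I apply the constructions of \S\ref{nc3} to modify the exponential map and the connection on $\nu$ and add correction forms, producing a closed $\Gt$-structure asymptotic to $\pi^*\vp$ at the outer end. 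On piece (iii), which looks like $L\times X$ near the bolt, I start from the product of $\vp\vert_L$-type data with the hyperk\"ahler Eguchi--Hanson form scaled by $t$; §\ref{nc4} adds a first correction making it closed, and §\ref{nc5} solves the family of linear elliptic equations on the noncompact Eguchi--Hanson fibres (parametrized smoothly by $x\in L$, with $\la\vert_x$ determining the asymptotic boundary data) needed to match the two closed $\Gt$-structures to sufficient order across the annular overlap. A partition of unity subordinate to the cover then yields a global closed $\vp_t^N$, and a parallel construction supplies the closed $4$-form $\psi_t^N$ approximating $*_{\vp_t^N}\vp_t^N$.

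The decisive step is verifying the hypotheses of Theorem~\ref{nc2thm3} for some fixed $\alpha\in(0,1)$, uniformly in $t$. The Eguchi--Hanson rescaling gives $\inj(g_t^N)\gtrsim t$ and $\|\mathrm{Rm}(g_t^N)\|_{C^0}\lesssim t^{-2}$, so (ii) and (iii) hold automatically. For (i), on piece (i) $\Theta(\vp_t^N)-\psi_t^N=0$; on pieces (ii) and (iii) the error is supported in transition annuli of radius $\sim t^{\mu}$ for some chosen $\mu\in(0,1)$, and the correction forms from \S\ref{nc3}--\S\ref{nc5} are designed precisely so that pointwise the error is of order $t^{2-2\mu}$-and-better, which when integrated against the $7$-dimensional volume element and differentiated yields the required $C^0$, $L^2$ and $\d(\cdot)$-in-$L^{14}$ bounds with a uniform exponent $\alpha>0$. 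This is the main obstacle: the correction equation solved in \S\ref{nc5} is what allows enough decay to produce a positive $\alpha$.

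Granted the hypotheses, Theorem~\ref{nc2thm3} produces a torsion-free $\Gt$-structure $\ti\vp_t^N$ with $[\ti\vp_t^N]=[\vp_t^N]$ and $\|\ti\vp_t^N-\vp_t^N\|_{C^0}\le K_4 t^{\alpha}$, which gives the family $(\ti\vp_t^N,\ti g_t^N)$. Smoothness in $t$ follows from the smooth $t$-dependence of $\vp_t^N$ and the implicit function theorem inherent in the proof of Theorem~\ref{nc2thm3}. The $C^0$-convergence $\ti\vp_t^N\to\pi^*\vp$ away from $Q$ follows because on piece (i) $\vp_t^N=\pi^*\vp$ exactly and $\|\ti\vp_t^N-\vp_t^N\|_{C^0}\to 0$; the stated area $\pi t^2|\la\vert_x|$ of each fibre $\pi^{-1}(x)$ comes from the fact that the bolt $\cS^2\subset X$ has area $\pi$ in the unit Eguchi--Hanson metric, the scaling by $t$ gives a factor $t^2$, and $|\la\vert_x|$ sets the hyperk\"ahler parameter at $x\in L$. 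Finally, the holonomy statement follows from Theorem~\ref{nc2thm2}: $\Hol(\ti g_t^N)=\Gt$ iff $\pi_1(N)$ is finite, and by the topological computation above this is equivalent to $\pi_1(M/\an{\iota})$ being finite.
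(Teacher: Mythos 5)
Your proposal does not address the stated theorem at all. The statement you were asked to prove is Joyce's analytical existence theorem (Theorem~\ref{nc2thm3}, cited from \cite[Th.~11.6.1]{Joyc3}): given a compact $7$-manifold with a \emph{closed} $\Gt$-structure $(\vp,g)$ and a closed $4$-form $\psi$ satisfying the three norm bounds (i) and the geometric bounds (ii)--(iii), produce a nearby torsion-free $\Gt$-structure $(\ti\vp,\ti g)$ in the same cohomology class. What you have instead sketched is the construction of the pair $(\vp_t^N,\psi_t^N)$ on the resolution $N$ of $M/\an{\io}$ and the verification that they satisfy the hypotheses of Theorem~\ref{nc2thm3} --- that is, you have sketched the proof of Theorem~\ref{nc1thm1} (equivalently Theorem~\ref{nc6thm}), the main theorem of this paper. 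Worse, your sketch explicitly \emph{invokes} Theorem~\ref{nc2thm3} as its final step (``Granted the hypotheses, Theorem~\ref{nc2thm3} produces a torsion-free $\Gt$-structure\ldots''), so as an argument for Theorem~\ref{nc2thm3} itself it is circular.

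The genuine content of Theorem~\ref{nc2thm3} is a nonlinear elliptic iteration with $t$-dependent constants, none of which appears in your proposal. One sets up the equation $\d\bigl(\Th(\vp+\d\eta)\bigr)=0$ for a $2$-form $\eta$ (together with $\d(\vp+\d\eta)=0$, which is automatic), rewrites it via Proposition~\ref{nc2prop2} as a linear elliptic equation with a small quadratic error $F_\vp$, and solves it by an inductive scheme producing a sequence $\eta_j$ whose convergence and whose uniform a~priori bounds are controlled by Sobolev embedding and elliptic regularity constants that must be made uniform in $t$. Hypothesis~(ii) (injectivity radius $\ge K_2 t$) and hypothesis~(iii) (curvature $\le K_3 t^{-2}$) are exactly the conditions needed so that, after rescaling by $t$, one has uniform control of those constants; the three norm bounds in~(i) ($C^0$, $L^2$, $L^{14}$ of $\Th(\vp)-\psi$ and of its derivative) are the specific quantities whose smallness makes the iteration converge and the limiting $\ti\vp$ land within $K_4 t^\al$ of $\vp$. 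None of this machinery is present in your proposal. Note also that this paper does not itself prove Theorem~\ref{nc2thm3}; it is imported from \cite[Ch.~11.5--11.6]{Joyc3} as a black box (see Remark~\ref{nc2rem1} for the minor rewriting).
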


\begin{rem} We have rewritten the theorem slightly: \cite[Th.~11.6.1]{Joyc3} is expressed in terms of the 3-form $\chi = \vp - *_g \psi$ rather than the 4-form $\Th(\vp)-\psi$, and also \cite[Th.~11.6.1]{Joyc3} fixes $\al=\ha$, which is sufficient for the applications in \cite{Joyc3}, but \cite[bottom of p.~296]{Joyc3} explains how to generalize to all~$\al>0$.
\label{nc2rem1}
\end{rem}

\subsection{\texorpdfstring{$\Gt$-manifolds and hyperK\"ahler 4-manifolds}{G₂-manifolds and hyperKähler 4-manifolds}}
\label{nc24}

Let us identify $\R^7$ with coordinates $(x_1,\ldots,x_7)$ with $\R^3\op\H$ with coordinates $((x_1,x_2,x_3),(y_1,\ldots,y_4))$, where $\H=\R^4$ is the quaternions,~by
\begin{equation*}
(x_1,\ldots,x_7)\cong((x_1,x_2,x_3),(y_1,y_2,y_3,y_4))=((x_1,x_2,x_3),(x_4,x_5,x_6,x_7)).
\end{equation*}
Then in equations \eq{nc2eq1}--\eq{nc2eq3} we have 
\e
\begin{aligned}
\vp_0&=\d x_1\w\d x_2\w\d x_3-\d x_1\w\om_0^I-\d x_2\w\om_0^J-\d x_3\w\om_0^K,\\
g_0&=(\d x_1)^2+(\d x_2)^2+(\d x_3)^2+h_0,\\ 
*\vp_0&= \vol_{\H} - \d x_2\w\d x_3\w\om_0^I-\d x_3\w\d x_1\w\om_0^J-\d x_1\w\d x_2\w\om_0^K,\\
\end{aligned}
\label{nc2eq7}
\e
where
\ea
\om_0^I &= \d y_1\w\d y_2+\d y_3\w\d y_4, \qquad \om_0^J = \d y_1\w\d y_3+\d y_4\w\d y_2, 
\nonumber\\
\om_0^K &= \d y_1\w\d y_4+\d y_2\w\d y_3, \qquad h_0 = (\d y_1)^2+(\d y_2)^2+(\d y_3)^2+(\d y_4)^2, 
\nonumber\\
\vol_\H &= \d y_1\!\w\!\d y_2\!\w\!\d y_3\!\w\!\d y_4 = \ha\om_0^I\!\w\!\om_0^I = \ha\om_0^J\!\w\!\om_0^J \!=\! \ha\om_0^K\!\w\!\om_0^K.
\label{nc2eq8}
\ea
Here $h_0$ is the Euclidean metric on $\H$, with volume form $\vol_\H$. As in \cite[Ch.~7]{Joyc3}, the metric $h_0$ is {\it hyperK\"ahler}, so it is K\"ahler with respect to three different complex structures $I,J,K$ on $\H$. Here $\om_0^I,\om_0^J,\om_0^K$ are the K\"ahler forms of $I,J,K$ for $h_0$. Explicitly, we have $\om_0^I (v,w) = h_0(Iv, w)$ and similarly for $J,K$. It is clear that the three K\"ahler forms are all \emph{self-dual} with respect to $h_0$ and the standard orientation. The three complex structures satisfy the quaternion multiplication relations when acting on tangent vector fields:
\e
\begin{aligned}
& I^2=J^2=K^2=-\mathbf{1}, \qquad IJ=-JI=K, \\
& JK=-KJ=I, \qquad KI=-IK=J.
\end{aligned}
\label{nc2eq9}
\e
In fact, $\H$ has an entire $\cS^2$ family of complex structures, with respect to each of which $h_0$ is a K\"ahler metric. Explicitly, $J_{\bf u} = u_1 I + u_2 J + u_3 K$ is such a complex structure, for any $\mathbf{u} = (u_1, u_2, u_3) \in \R^3$ satisfying $u_1^2 + u_2^2 + u_3^2 = 1$, with K\"ahler form~$\om_0^{J_{\bf u}}=u_1\om_0^I+u_2\om_0^J+u_3\om_0^K$.

Given a complex structure $I$ acting on tangent vectors, we define its dual map acting on 1-forms, also denoted $I$, by $(I\alpha)(v) = \alpha(Iv)$. In particular, with this definition it follows that when acting on 1-forms, we have:
\e
\begin{aligned}
& I^2=J^2=K^2=-\mathbf{1}, \qquad IJ=-JI=-K, \\
& JK=-KJ=-I, \qquad KI=-IK=-J.
\label{nc2eq10}
\end{aligned}
\e
Explicitly, here we have
\ea
I(\d y_1)&=-\d y_2, & \!I(\d y_2)&=\d y_1, & I(\d y_3)&=-\d y_4, & \!I(\d y_4)&=\d y_3, 
\nonumber\\
J(\d y_1)&=-\d y_3, & \!J(\d y_2)&=\d y_4, & J(\d y_3)&=\d y_1, & \!J(\d y_4)&=-\d y_2, 
\nonumber\\
K(\d y_1)&=-\d y_4, & \!K(\d y_2)&=-\d y_3, & K(\d y_3)&=\d y_2, & \!K(\d y_4)&=\d y_1.\!\!
\label{nc2eq11}
\ea
We note here for future use that, from the orthogonality of $I$, the self-duality $\ast \omega_I = \omega_I$, and the relation $\om_0^I (v,w) = h_0(Iv, w)$ it follows that
\e
\ast ( \alpha \wedge \omega_I) = - I \alpha \qquad \text{for any 1-form $\alpha$},
\label{nc2eq12}
\e
and similarly for $J,K$. Equation \eq{nc2eq12} is a purely linear algebraic fact that holds on any hyperK\"ahler $4$-manifold.

The Lie subgroup of $\GL(4,\R)$ acting on $\H=\R^4$ preserving $\om_0^I,\om_0^J,\om_0^K$ is $\SU(2)\!\cong\!\Sp(1)$. So the action of $\SU(2)$ on $\R^7\cong\R^3\op\H$ which is trivial on $\R^3$ and left multiplication on $\H$ preserves $\vp_0,*\vp_0$ by \eq{nc2eq7}--\eq{nc2eq8}. This induces an embedding of Lie subgroups $\SU(2)\hookra \Gt$, since $\Gt$ is the subgroup of $\GL(7,\R)$ fixing~$\vp_0$. 

Riemannian 4-manifolds with holonomy $\SU(2)$ are known as {\it Calabi--Yau\/ $2$-folds}, or {\it hyperK\"ahler $4$-manifolds}, as in \cite[Ch.s 6 \& 7]{Joyc3} for instance. By the general theory of Riemannian holonomy, the inclusion $\SU(2)\hookra \Gt$ above means that if $X$ is a hyperK\"ahler $4$-manifold, with metric $h$, volume form $\vol_X$, complex structures $I,J,K$, and K\"ahler form $\om^I,\om^J,\om^K$, then the product 7-manifold $M=\R^3\t X$ is a torsion-free $\Gt$-manifold, with 3-form $\vp$, metric $g$ and 4-form $\psi$ given by
\e
\begin{split}
\vp&=\d x_1\w\d x_2\w\d x_3-\d x_1\w\om^I-\d x_2\w\om^J-\d x_3\w\om^K,\\
g&=(\d x_1)^2+(\d x_2)^2+(\d x_3)^2+h,\\ 
\psi&=\vol_X-\d x_2\w\d x_3\w\om^I-\d x_3\w\d x_1\w\om^J-\d x_1\w\d x_2\w\om^K,
\end{split}
\label{nc2eq13}
\e
as in \eq{nc2eq7}. The $\Gt$-manifolds of this type have holonomy $\SU(2)\subset \Gt$.

\subsection{The Eguchi--Hanson space}
\label{nc25}

Following our discussion of hyperK\"ahler 4-manifolds in \S\ref{nc24}, we now describe a particular example of a hyperK\"ahler 4-manifold that plays a crucial role in our construction. The Eguchi--Hanson space was originally discovered in \cite{EgHa1, EgHa2} but it can be described from several points of view. These descriptions include: the resolution (blow-up) of $\C^2/\Z_2$ at the origin, which describes it as a complex manifold; the simplest case of both the Calabi construction \cite{Cala} of hyperK\"ahler metrics on $T^* \CP^n$ and the Stenzel construction \cite{Sten} of Calabi--Yau metrics on $T^* S^n$; and the hyperK\"ahler quotient construction of Eguchi--Hanson space. A good survey of hyperK\"ahler manifolds in general and the Eguchi--Hanson space in particular, discussing various different approaches, can be found in~\cite{Danc}.

Write the Eguchi--Hanson space as $X$, with blow-up map $B:X\ra\C^2/\{\pm 1\}$, and complex structure $I$ on $X$, $\C^2$, and $\C^2/\{\pm 1\}$. The exceptional divisor of the blow-up is written $Y=B^{-1}(0)$, where $Y\cong\CP^1$. Write $r:\C^2\ra[0,\iy)$ for the radius function $r(z_1,z_2)=(\ms{z_1}+\ms{z_2})^{1/2}$. Also write $r:\C^2/\{\pm 1\}\ra[0,\iy)$ for the function descending from $r:\C^2\ra[0,\iy)$, and write $r:X\ra[0,\iy)$ in place of $r\ci B$, so that~$Y=r^{-1}(0)\subset X$.

\begin{rem} The map $B:X\ra \C^2/\{\pm 1\}$ is {\it not a smooth map\/} in the sense of orbifolds. This means that given some smooth function or tensor $T$ on $\C^2$ which is invariant under $\{\pm 1\}$, then $T$ descends to a smooth function or tensor on $\C^2/\{\pm 1\}$ in the orbifold sense, but the pullback $B^*(T)$ {\it may not be smooth (or defined) on the exceptional divisor\/} $Y=B^{-1}(0)$.

However, since $B$ is defined using complex geometry, holomorphic objects on $\C^2/\{\pm 1\}$ do pull back to holomorphic (and hence smooth) objects on $X$. For example, the holomorphic functions $z_1^2$, $z_2^2$, and $z_1z_2$ on $\C^2/\{\pm 1\}$ pull back to holomorphic functions on $X$. Although $r^2:\C^2/\{\pm 1\}\ra\R$ is smooth in the orbifold sense, its pullback $r^2:X\ra\R$ is not smooth on $X$. But $r^4:X\ra\R$ is smooth, since 
\begin{equation*}
r^4=\ms{z_1^2}+\ms{z_2^2}+2\ms{z_1z_2},
\end{equation*}
where $z_1^2$, $z_2^2$, and $z_1z_2$ are holomorphic, and hence smooth, on $X$. Near $Y$, we can think of $r^4$ as the squared distance to $Y$ in $X$.

In \S\ref{nc4} we will work with a bundle $\si:P\ra L$ with Eguchi--Hanson space fibres $X_x$ for $x\in L$. We will have to be careful that the metrics and forms we define on $P$ extend smoothly over the exceptional divisors $Y_x$ in the fibres~$X_x$.
\label{nc2rem2}
\end{rem}

On $\R^4=\C^2$ use real coordinates $(y_1,y_2,y_3,y_4)$ and complex coordinates $(z_1,z_2)$ with $z_1=y_1+iy_2$, $z_2=y_3+iy_4$, so that $r^2=y_1^2+y_2^2+y_3^2+y_4^2=\ms{z_1}+\ms{z_2}$. We have a Euclidean metric $h_0$, complex structures $I,J,K$ where $z_1,z_2$ are holomorphic with respect to $I$, and K\"ahler forms $\om^I_0,\om^J_0,\om^K_0$, where
\begin{equation*}
\begin{aligned}
h_0&=\d y_1^2+\d y_2^2+\d y_3^2+\d y_4^2=\ms{\d z_1}+\ms{\d z_2},\\
\om^I_0&=\d y_1\w\d y_2+\d y_3\w\d y_4=\ts\frac{i}{2}(\d z_1\w\d\bar z_1+\d z_2\w\d\bar z_2),\\
\end{aligned}
\end{equation*}
and
\begin{equation*}
\begin{aligned}
& \om^J_0=\d y_1\w\d y_3+\d y_4\w\d y_2, \qquad \om^K_0=\d y_1\w\d y_4+\d y_2\w\d y_3, \\
& \om^J_0+i\om^K_0=\d z_1\w\d z_2.
\end{aligned}
\end{equation*}
Since $I(\d z_1)=i\d z_1$ and $I(\d z_2)=i\d z_2$, we have
\begin{equation*}
I\d y_1=-\d y_2,\quad I\d y_2=\d y_1,\quad I\d y_3=-\d y_4,\quad I\d y_4=\d y_3.	
\end{equation*}
Then we may also write
\e
\om^I_0=\ts-\frac{1}{4}\d[I (\d (\ts r^2))],
\label{nc2eq14}
\e
where $r^2=y_1^2+y_2^2+y_3^2+y_4^2:\C^2\ra\R$ is the K\"ahler potential of $\om_0$ on $(\C^2,I)$.

For $a>0$, define $f_a:(0,\iy)\ra\R$ by
\e
\begin{split}
f_a(r)&=\sqrt{r^4+a^2}-a\log \biggl(\frac{\sqrt{r^4+a^2}+a}{r^2}\biggr)\\
&=\sqrt{r^4+a^2}+2a\log r-a\log \Bigl(\sqrt{r^4+a^2}+a\Bigr).
\end{split}
\label{nc2eq15}
\e
Then $f_a(r)=af_1(a^{-1/2}r)$. There is a unique Ricci-flat K\"ahler metric $h_a$ on $(X,I)$ with K\"ahler form $\om^I_a$, where analogous to \eq{nc2eq14}, on $X\sm Y$ we have
\e
\om^I_a\vert_{X\sm Y}=\ts-\frac{1}{4}\d[I (\d f_a(r))],
\label{nc2eq16}
\e

The function $r^2$ is the K\"ahler potential of the Euclidean metric $h_0$ on $\C^2/\{\pm 1\}$. We need to understand the difference between the metrics $h_a$ and $h_0$ near infinity.  Define a smooth function $G_a:(0,\iy)\ra\R$ by
\e
G_a(r)=f_a(r)-r^2.
\label{nc2eq17}
\e
Then from the first line of \eq{nc2eq15} we can show that for large $r$ we have 
\e
\nabla^kG_a(r)=O(r^{-2-k}) \;\>\text{as $r\ra\iy$, for all $k\ge 0$,}
\label{nc2eq18}
\e
where $\nabla$ is the Levi-Civita connection of $h_0$. Equation \eq{nc2eq18} when $k=2$ implies that $h_a=h_0+O(r^{-4})$ as $r\ra\iy$. That is, $h_a$ is {\it Asymptotically Locally Euclidean} (ALE). See \cite[Chapters 7--8]{Joyc3} for more background on ALE metrics.

The expression \eq{nc2eq16} defines $\om^I_a$ on $X \sm Y$. We will show how it in fact extends smoothly over $Y$ in a unique way. From the second line of \eq{nc2eq15}, write
\e
f_a(r)=2a\log r+H_a(r^4),
\label{nc2eq19}
\e
where $H_a:(-a^2,\iy)\ra\R$ is the smooth function given by
\begin{equation*}
H_a(u)=\sqrt{u+a^2}-a\log\Bigl(\sqrt{u+a^2}+a\Bigr).
\end{equation*}
Note that as $r^4:X\ra\R$ is smooth by Remark \ref{nc2rem2} and takes values in $[0,\iy)\subset (-a^2,\iy)$, this implies that $H_a(r^4)$ is a smooth function on all of $X$, including~$Y$.

Define 2-forms $\om^J,\om^K$ on $X$ by $\om^J+i\om^K=\pi^*(\om^J_0+i\om^K_0)$. These are smooth by Remark \ref{nc2rem2}, because $\om^J_0+i\om^K_0$ is holomorphic on $\C^2/\{\pm 1\}$. There are unique complex structures $J_a,K_a$ on $X$ such that $h_a$ is K\"ahler with respect to $J_a,K_a$ with K\"ahler forms $\om^J,\om^K$. Then $(X,h_a)$ is hyperK\"ahler, with complex structures $(I,J_a,K_a)$ and K\"ahler forms $(\om^I_a,\om^J,\om^K)$. Note that $h_a,J_a,K_a,\om^I_a$ depend on $a\in(0,\iy)$, but $I,\om^J,\om^K$ do not. As $a\ra 0$, these $h_a$, $(I,J_a,K_a)$, $(\om^I_a,\om^J,\om^K)$ approach the Euclidean hyperK\"ahler structure on~$\C^2/\{\pm 1\}$.

There is a natural holomorphic map $\pi:X\ra Y=\CP^1$, acting by $\pi(x)=[z_1,z_2]$ if $B(x)=(z_1,z_2)$ for $x\in X\sm Y$, which realizes $X$ as the total space of the holomorphic line bundle $\cO (-2) = T^*\CP^1$ over $\CP^1$. On $X\sm Y$ we have
\e
-\ts\frac{1}{4}\d[I (\d (2\log r))]=\pi^*(\om_{\CP^1})\vert_{X\sm Y},
\label{nc2eq20}
\e
where $\om_{\CP^1}$ is the K\"ahler form of the usual Fubini--Study metric on $\CP^1$. Combining \eq{nc2eq16}, \eq{nc2eq19} and \eq{nc2eq20} shows that
\e
\om^I_a=a\,\pi^*(\om_{\CP^1})\ts-\frac{1}{4}\d[I (\d H_a(r^4))].
\label{nc2eq21}
\e
Note that although \eq{nc2eq16} only makes sense on $X\sm Y$ since $f_a(r(x))\ra -\iy$ as $x\ra y \in Y$ in $X$, equation \eq{nc2eq21} is valid on all of $X$, because both $\pi^*(\om_{\CP^1})$ and $H_a(r^4)$ are smooth on~$X$.

\begin{rem} The submanifold $Y$ of $X$, corresponding to the zero section $\CP^1 \cong \cS^2$ in $X = T^* \CP^1$, is sometimes called the \emph{bolt} of $X$. It is well known that the Fubini--Study metric on $\CP^1$ is isometric to the round metric on $S^2$ of radius $\frac{1}{2}$. It follows from \eq{nc2eq21} that the area of the bolt is $\pi a$.
\label{nc2rem3}
\end{rem}

By the material of \S\ref{nc24}, we see that we have a torsion-free $\Gt$-structure $(\vp_a,g_a)$ on $\R^3\t X$ for $a>0$, with 4-form $\psi_a$, given by
\begin{align*}
\vp_a&=\d x_1\w\d x_2\w\d x_3-\d x_1\w\om_a^I-\d x_2\w\om^J-\d x_3\w\om^K,\\
g_a&=(\d x_1)^2+(\d x_2)^2+(\d x_3)^2+h_a,\\ 
\psi_a&=\vol_X-\d x_2\w\d x_3\w\om_a^I-\d x_3\w\d x_1\w\om^J-\d x_1\w\d x_2\w\om^K.
\end{align*}

We can generalize this by rescaling distances in $X$ by $t>0$, replacing $h,\om_a^I,\om^J,\om^K,\vol_\H$ by $t^2h,t^2\om_a^I,t^2\om^J,t^2\om^K,t^4\vol_\H$, respectively, giving a family of torsion-free $\Gt$-structures $(\vp_{a,t},g_{a,t})$ on $\R^3\t X$ for $a,t>0$ with 4-forms~$\psi_{a,t}$:
\ea
\vp_{a,t}&=\d x_1\w\d x_2\w\d x_3-t^2\d x_1\w\om_a^I-t^2\d x_2\w\om^J-t^2\d x_3\w\om^K,
\nonumber\\
g_{a,t}&=(\d x_1)^2+(\d x_2)^2+(\d x_3)^2+t^2h_a,
\label{nc2eq22}\\ 
\psi_{a,t}&=t^4\vol_X-t^2\d x_2\w\d x_3\w\om_a^I-t^2\d x_3\w\d x_1\w\om^J-t^2\d x_1\w\d x_2\w\om^K.
\nonumber
\ea
These $(\vp_{a,t},g_{a,t}),\psi_{a,t}$ will be our local models in \S\ref{nc4}.

\subsection{\texorpdfstring{$\Gt$-manifolds and Calabi--Yau 3-folds}{G₂-manifolds and Calabi--Yau 3-folds}}
\label{nc26}

Let us identify $\R^7$ with coordinates $(x_1,\ldots,x_7)$ with $\R\op\C^3$ with coordinates $(x,(z_1,z_2,z_3))$ by
\begin{equation*}
(x_1,\ldots,x_7)\cong(x,(z_1,z_2,z_3))=(x_4,(x_1+ix_5,x_2+ix_6,x_3+ix_7)).
\end{equation*}
Then in equations \eq{nc2eq1}--\eq{nc2eq3} we have 
\begin{gather}
\vp_0=\d x\!\w\om_0 +\Re\Om_0, \quad g=\d x^2+h_0, \quad *\vp_0=-\d x\w\Im\Om_0+\ha \om_0\w\om_0,
\nonumber\\
\text{where}\quad
\om_0=\ts\frac{i}{2}\bigl(\d z_1\w\d\bar z_1+\d z_2\w\d\bar z_2+\d z_3\w\d\bar z_3\bigr),
\nonumber\\
\Om_0=\d z_1\w\d z_2\w\d z_3,\quad\text{and}\quad h_0=\ms{\d z_1}+\md{\d z_2}+\ms{\d z_3},
\label{nc2eq23}
\end{gather}
so that $\om_0$ is the standard K\"ahler form, $\Om_0$ the holomorphic volume form, and $h_0$ the K\"ahler metric on $\C^3$, normalized by the equation $\vol_{\C^3}=\frac{1}{6}\om_0^3=\frac{i}{8}\,\Om_0\w\bar\Om_0$. The Lie subgroup of $\GL(6,\R)$ acting on $\R^6\cong\C^3$ preserving both $\om_0$ and $\Om_0$ is $\SU(3)$. So the action of $\SU(3)$ on $\R^7\cong\R\op\C^3$ which is trivial on $\R$ and as usual on $\C^3$ preserves $\vp_0,*\vp_0$ by \eq{nc2eq23}. This induces an embedding of Lie subgroups $\SU(3)\hookra \Gt$, since $\Gt$ is the subgroup of $\GL(7,\R)$ fixing~$\vp_0$. 

Riemannian 6-manifolds with holonomy $\SU(3)$ are known as {\it Calabi--Yau\/ $3$-folds}, as in \cite[Ch.~6]{Joyc3} for instance. By the general theory of Riemannian holonomy, the inclusion $\SU(3)\hookra \Gt$ above means that if $Y$ is a Calabi--Yau 3-fold, with Ricci-flat K\"ahler metric $h$, complex structure $J$, K\"ahler form $\om$, and holomorphic $(3,0)$-form $\Om$, normalized by $\vol_Y=\frac{1}{6}\om^3=\frac{i}{8}\,\Om\w\bar\Om$ and 
\begin{equation*} 
h(v,w)=\om(v,Jw)
\end{equation*}
for vector fields $v$ and $w$ on $Y$, and $\cS^1=\R/\Z$ is the circle with coordinate $x\in\R/\Z$ and metric $\d x^2$, then the product 7-manifold $M=\cS^1\t Y$ is a torsion-free $\Gt$-manifold, with 3-form $\vp$, metric $g$ and 4-form $\psi$ given by
\begin{equation} 
\vp=\Re\Om+\d x\w\om, \qquad g= \d x^2+h,\qquad \psi=-\d x\w\Im\Om+\ha\om\w\om,
\label{nc2eq24}
\end{equation}
as in \eq{nc2eq23}. The $\Gt$-manifolds of this type have holonomy $\SU(3)\subset \Gt$.

\subsection{\texorpdfstring{$\Gt$-involutions and their fixed point sets}{G₂-involutions and their fixed point sets}}
\label{nc27} 
 
We now define what we mean by a $\Gt$-involution $\iota$, which we need to construct the orbifold $M/\an{\iota}$.

\begin{dfn} Let $(M,\vp)$ be a $\Gt$-manifold, and $\iota : M \to M$ be a diffeomorphism of $M$. We call $\iota$ a \emph{$\Gt$-involution} if $\iota$ is an involution which preserves the $\Gt$-structure. This means $\iota^2 = \mathbf{1}$, where $\mathbf{1}$ is the identity map of $M$, and $\iota^*(\vp) = \vp$. Such a map $\iota$ also clearly satisfies $\iota^*(g) = g$ and $\iota^*(\psi) = \psi$.
\label{nc2def3}
\end{dfn}

\begin{ex} Suppose that $M = \cS^1 \times Y$, with $Y$ a Calabi--Yau $3$-fold, as described in \S\ref{nc26}, and $\cS^1=\R/\Z$. An \emph{anti-holomorphic isometric involution} $\tau$ of $Y$ is a diffeomorphism satisfying $\tau^2 = \mathbf{1}$ (involution), $\tau^*(g) = g$ (isometry), and $\tau^*(J) = - J$ (anti-holomorphic). It follows that $\tau^*(\omega) = -\omega$ and $\tau^*(\Omega) = \bar \Omega$. Define an involution on $M=\cS^1 \times Y$ by $\iota (x,y) = (-x, \tau(y))$. From equation \eq{nc2eq24} we see that
\begin{equation*}
\iota^*(\vp) \!=\! \Re(\tau^*(\Omega)) \!+\! (-\d x) \!\wedge\! \tau^*(\omega)\! =\! \Re(\bar \Omega) \!+\! (-\d x) \wedge (-\omega) \!=\! \Re(\Omega) \!+\! \d x  \wedge \omega \!=\! \vp,
\end{equation*}
so that such an $\iota$ is a $\Gt$-involution.
\label{nc2ex1}
\end{ex}

We will need to understand the fixed point set of a $\Gt$-involution. The following is \cite[Prop.~10.8.1]{Joyc3}, except for $L$ totally geodesic, which is obvious.

\begin{prop} Let\/ $(M,\vp)$ be a connected, compact\/ $\Gt$-manifold, and let\/ $\iota$ be a $\Gt$-involution of\/ $M$. Let\/ $L$ be the fixed point set of\/ $\iota$. That is, $L = \{ p \in M:\, \iota(p) = p\}$. Suppose that\/ $\iota$ is not the identity ($L\neq M$), and that\/ $\iota$ has at least one fixed point\/ ($L\neq\es$). Then $L$ is a smooth, orientable $3$-dimensional compact submanifold of\/ $M$ which is totally geodesic, and, with respect to a canonical orientation, is \begin{bfseries}associative\end{bfseries}.
\label{nc2prop3}
\end{prop}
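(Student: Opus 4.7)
The plan is to linearize at a fixed point $p\in L$ and exploit the rigidity of involutions inside $\Gt.$ The differential $\d\io_p$ is a nontrivial orthogonal involution of $(T_pM,g|_p)$ preserving $\vp|_p,$ so after fixing an isomorphism $(T_pM,\vp|_p)\cong(\R^7,\vp_0)$ it becomes a nontrivial element of order two in $\Gt\subset\SO(7).$ The key input is that, up to $\Gt$-conjugation, the only such element is the involution $\si_0$ acting as $+\id$ on $\R^3$ and $-\id$ on $\H$ in the splitting $\R^7=\R^3\op\H$ of \S\ref{nc24}; from \eq{nc2eq7} one sees immediately that $\si_0^*\vp_0=\vp_0,$ since every monomial of $\vp_0$ contains an even number of $\H$-factors. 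Consequently $\d\io_p$ has a $3$-dimensional $(+1)$-eigenspace and a $4$-dimensional $(-1)$-eigenspace.

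By standard results on fixed sets of smooth involutions, $L$ is therefore a smooth submanifold of $M$ near $p$ of dimension $3,$ with $T_pL=\Ker(\d\io_p-\id),$ and it is compact since it is closed in the compact manifold $M.$ Under the identification $T_pM\cong\R^3\op\H$ above, $\vp|_{T_pL}$ corresponds to $\d x_1\w\d x_2\w\d x_3,$ which is nowhere zero and equals the volume form of the induced Euclidean metric on $\R^3.$ Hence $\vp|_L$ is a nowhere-vanishing $3$-form on $L$ which orients $L$ canonically and coincides with the Riemannian volume form of $g|_L$ for this orientation; this is precisely the condition that $L$ be calibrated by $\vp,$ i.e.\ associative.

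Total geodesy is then immediate from the fact that $\io$ is an isometry: for $v\in T_pL$ the curves $\ga(s)=\exp_p(sv)$ and $\io\ci\ga$ are both geodesics of $(M,g)$ with initial data $(p,v)$ (the latter because $\d\io_p|_{T_pL}=\id$), so $\io\ci\ga=\ga$ by uniqueness and $\ga$ stays in $L.$ The main obstacle in the whole argument is the $\Gt$-classification of involutions used in the first paragraph. One can either cite the classification of involutions in compact simple Lie groups, which identifies $\Gt/\SO(4)$ as the unique nontrivial inner involution quotient, with centralizer $\SO(4)\cong(\SU(2)\t\SU(2))/\{\pm 1\},$ or argue directly: decompose $\vp_0=\vp_0^{(3,0)}+\vp_0^{(1,2)}$ relative to the $\pm 1$-eigenspace splitting of $\d\io_p$ (terms of odd $(-1)$-degree must vanish), use $\d\io_p\in\SO(7)$ to force $\dim V_+$ odd, and rule out $\dim V_+\in\{1,5\}$ by checking that Hitchin's formula $g_\vp(u,v)\,\vol=\tfrac{1}{6}(u\cdot\vp)\w(v\cdot\vp)\w\vp$ reconstructing $g_{\vp_0}$ from $\vp_0$ fails to be positive-definite in those block structures. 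The remaining case $\dim V_+=7$ would give $\d\io_p=\id,$ which for an isometry forces $\io=\id$ near $p,$ and by connectedness of $M$ globally, contradicting $L\neq M.$
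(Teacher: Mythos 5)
Your proof is correct and is essentially a self-contained reconstruction of the argument behind $\text{[Joyc3, Prop.\ 10.8.1]}$, which the paper simply cites (deeming only the totally geodesic claim ``obvious,'' via exactly the isometry/geodesic-uniqueness argument you give). Both routes you offer for the key step --- citing that $\Gt$ has a unique nontrivial conjugacy class of involutions with centralizer $\SO(4)$, or the direct argument that $\dim V_+$ is odd and that for $\dim V_+\in\{1,5\}$ the bilinear form $(u\cdot\vp)\w(v\cdot\vp)\w\vp$ vanishes identically on $V_-\times V_-$ (since $\vp^{(1,2)}$ carries a repeated $V_+^*$-factor, resp.\ $\Lambda^2 V_-^*$ is one-dimensional) and hence cannot reconstruct a definite metric --- are sound, as is the handling of the degenerate case $\d\io_p=\id$ via rigidity of isometries on a connected manifold.
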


\begin{rem} An associative submanifold is calibrated with respect to $\vp$, in the sense of Harvey--Lawson~\cite{HaLa}. In particular, we note here for future reference the important fact that the normal bundle $\nu$ of an associative submanifold $L$ of $(M, \vp)$ is always topologically trivial. This fact is well-known, but hard to find in the literature, so we outline the argument here. We use the algebraic properties of the octonions (see~\cite{HaLa}). The fibre of the normal bundle $\nu_p$ over a point $p \in L$ is a coassociative $4$-plane in $T_p M$, and thus the cross product $s_p \times X_p$ of a normal vector $s_p \in \nu_p$ with a tangent vector $X_p \in T_p L$ always lies in the normal space $\nu_p$. Let $\{ e_1, e_2, e_3 \}$ be a global frame for $L$, which exists since all $3$-manifolds are parallelizable. Let $s$ be a non-vanishing section of $\nu$, which exists because $\nu$ is an $\R^4$-bundle over a $3$-manifold. Then the set $\{ s, s \times e_1, s \times e_2, s \times e_3 \}$ is a global frame for $\nu$, so $\nu$ is topologically trivial.
\label{assocrmk}
\end{rem}

\begin{ex} Consider the $\Gt$-involution on $\cS^1 \times Y$ of Example \ref{nc2ex1}. If $(x,y)$ is a fixed point of $\iota$, then $x=-x$ in $\R/\Z$ and $\tau(y)=y$. This means $x=0$ or $x=\ha$, and $y$ is a fixed point of $\tau$. Since $\tau^*(\omega) = - \omega$ and $\tau^*(\Im(\Omega)) = - \Im(\Omega)$, we see that on the fixed point set $L$ of $\tau$, which by Proposition \ref{nc2prop3} is a $3$-dimensional submanifold of $Y$, the forms $\omega$ and $\Im(\Omega)$ both vanish. Thus $L$ is a special Lagrangian submanifold of $Y$, and the fixed point set of $\iota$ in $M$ is two copies of $L$, namely $\{0 \} \times L$ and~$\{\ha\} \times L$.
\label{nc2ex2}
\end{ex}

\section{\texorpdfstring{$\Gt$-structures on the normal bundle $\nu$ of $L$ in~$M$}{G₂-structures on the normal bundle ν of L in M}}
\label{nc3}

From here until the end of \S\ref{nc64} we will work in the following situation.

\begin{ass} Suppose we are given a compact, torsion-free $\Gt$-manifold\/ $(M,\vp,g),$ and a nontrivial involution $\io:M\ra M$ with\/ $\io^*(\vp)=\vp,$ so that\/ $\io^*(g)=g$. Write $L$ for the fixed locus of\/ $\io$ in $M$. Then $L$ is an associative $3$-fold in $(M,\vp,g)$ by Proposition\/ {\rm\ref{nc2prop3},} and is compact as $M$ is. 

We suppose $L$ is nonempty, but not necessarily connected. Suppose we are given a closed, coclosed, nonvanishing $1$-form $\la$ on $L$. That is, $\la\in\Om^1(L)$ with\/ $\d\la=\d^*\la=0,$ where $\d^*$ is defined using $g\vert_L,$ and\/ $\la\vert_x\ne 0$ in $T_x^*L$ for all\/~$x\in L$. (See Remark~\ref{nc6rem} for a justification of the necessity of this assumption.)
\label{nc3ass}	
\end{ass}

\subsection{\texorpdfstring{Choosing a tubular neighbourhood of $L$}{Choosing a tubular neighbourhood of L}}
\label{nc31}

Let $(M,\vp,g),\io,L$ be as in Assumption \ref{nc3ass}. Write $\nu\ra L$ for the normal bundle of $L$ in $M$, a rank 4 real vector bundle in the exact sequence
\e
\xymatrix@C=30pt{ 0 \ar[r] & TL \ar[r] & TM\vert_L \ar[r] & \nu \ar[r] & 0. }
\label{nc3eq1}
\e
There is a unique vector bundle isomorphism
\e
TM\vert_L\cong \nu\op TL
\label{nc3eq2}
\e
compatible with \eq{nc3eq1} such that the subbundles $\nu$, $TL$ in $TM\vert_L$ are orthogonal with respect to $g$. Write $g_L\in\Ga^\iy(S^2T^*L)$ for the restriction of $g$ to $L$, and $h_\nu\in\Ga^\iy(S^2\nu^*)$ for the restriction of $g$ to the factor $\nu$ in \eq{nc3eq2}, so that
\begin{equation*}
g\vert_L=h_\nu\op g_L\qquad\text{in $\Ga^\iy(S^2T^*M\vert_L)$.}
\end{equation*}

Write $\nabla^g$ for the Levi-Civita connection of $g$, and $\nabla^\nu$ for the connection on $\nu\ra L$ induced from the restriction of $\nabla^g$ to $TM\vert_L$ using \eq{nc3eq2}, and $\nabla^{g_L}$ for the Levi-Civita connection of $g_L$. Recall by Proposition \ref{nc2prop3} that $L$ is totally geodesic in $(M,g)$, hence the second fundamental form of $L$ in $M$ vanishes. Consequently, suppose $v,w\in\Ga^\iy(TL)$, and $\al\in\Ga^\iy(\nu)$. Choose vector fields $\ti v,\ti w\in\Ga^\iy(TM)$ with $\ti v\vert_L\cong 0\op v$, $\ti w\vert_L\cong \al\op w$ under the isomorphism \eq{nc3eq2}. Then under \eq{nc3eq2} we have
\e
(\nabla^g_{\ti v}\ti w)\vert_L\cong (\nabla^\nu_v\al)\op(\nabla^{g_L}_vw).
\label{nc3eq3}
\e

We will choose data $U_R\subset\nu$ and $\Up:U_R\ra M$ as in the next definition:

\begin{dfn} Write points of $\nu$ as $(x,\al)$ for $x\in L$ and $\al\in\nu_x$. Fix $R>0$, and define
\e
U_R=\bigl\{(x,\al)\in\nu:\md{\al}_{h_\nu}<R\bigr\},
\label{nc3eq4}
\e
so that $U_R$ is a tubular neighbourhood of the zero section in $\nu$. Write $\pi:U_R\ra L$ for the projection $\pi:(x,\al)\mapsto x$. We will choose a smooth map $\Up:U_R\ra M$ satisfying the following conditions:
\begin{itemize}
\setlength{\itemsep}{0pt}
\setlength{\parsep}{0pt}
\item[(i)] $\Up$ is a diffeomorphism with an open neighbourhood of $L$ in $M$.
\item[(ii)] $\Up(x,0)=x$ for $x\in L$.
\item[(iii)] $\Up(x,-\al)=\io\ci\Up(x,\al)$ for all $(x,\al)\in U_R$.  (Compatibility with the involution.)
\item[(iv)] Identify $L$ with the zero section $\bigl\{(x,0):x\in L\bigr\}$ in $U_R\subset\nu$. Then there is a natural identification $TU_R\vert_L\cong \nu\op TL$. Also (i),(ii) imply that $\Up_*\vert_L:TU_R\vert_L\ra TM\vert_L$ is a vector bundle isomorphism. The composition of these isomorphisms $\nu\op TL\cong TU_R\vert_L\cong TM\vert_L$ must agree with \eq{nc3eq2}. Another way to say this is that the induced pushforward $\Up_* : T U_R \to T M$ restricted to the zero section of $T U_R$ is the identity map on $T_x L \oplus \nu_x$.
\end{itemize}
\end{dfn}

One way to define such a map $\Up$ is using {\it exponential normal coordinates} along $L$: for each $(x,\al)$ in $U_R$ there is a unique geodesic $\ga:[0,1]\ra M$ in $(M,g)$ with $\ga(0)=x$, $\dot\ga(0)=\al$ and length $\md{\al}_{h_\nu}$, and we set $\Up(x,\al)=\ga(1)$. Then provided $R>0$ is small enough, conditions (i)--(iv) hold. However, in Proposition \ref{nc3prop1} we will choose $U_R,\Up$ to satisfy an extra condition, which exponential normal coordinates may not satisfy.
\label{nc3def1}	

\subsection{\texorpdfstring{Power series decomposition of $\vp,*\vp,g$ on $\nu$}{Power series decomposition of φ,*φ,g on ν}}
\label{nc32}

Use the notation of \S\ref{nc31}. Let $V$ denote the vertical subbundle of $T \nu$. We have $V \cong \pi^* \nu$ as a bundle over $\nu$, where $\pi : \nu \to L$ is the projection. The total space of $\nu$ admits a 1-parameter family of diffeomorphisms given by dilation in the fibres. Explicitly, given $t \in \R$, the dilation map $t : \nu \to \nu$ given by $t (x, \alpha) = (x, t \alpha)$ is a diffeomorphism if $t \neq 0$. The vector field $\de$ on $\nu$ whose flow is this family is called the {\it dilation vector field} on $\nu$. It is a vertical vector field, and with respect to the isomorphism $V \cong \pi^* \nu$ we have $\de\vert_{(x,\al)}\cong\al$ at each $(x,\al)\in\nu$. 

Define smooth maps $\Up_t$ for $t\in\R$ by
\e
\begin{aligned}
\Up_t&:U_{\md{t}^{-1}R}\to M, & \Up_t&:(x,\al)\mapsto \Up(x,t\al), & 0&\ne t\in\R,\\
\Up_0&:\nu\to M, & \Up_0&:(x,\al)\mapsto \Up(x,0\al)=x, & t&=0,	
\end{aligned}
\label{nc3eq5}
\e
where $U_{\md{t}^{-1}R}$ is as in \eq{nc3eq4} with $\md{t}^{-1}R$ in place of $R$, so that $\Up_1=\Up$. Note that $\Up_t = \Up \circ t$ is the composition of $\Up$ with dilation by $t$.

Consider the pullbacks $\Up_t^*(\vp)$, $\Up_t^*(*\vp)$, $\Up_t^*(g)$. These are defined on $\nu$ for $t=0$ and on $U_{\md{t}^{-1}R}$ for $t\ne 0$, where $\lim_{t\ra 0}U_{\md{t}^{-1}R}=\nu$. Since $\Up_t$ depends smoothly on $t\in\R$, so do $\Up_t^*(\vp)$, $\Up_t^*(*\vp)$, $\Up_t^*(g)$. Thus we can consider the Taylor series of $\Up_t^*(\vp)$, $\ab\Up_t^*(*\vp)$, $\ab\Up_t^*(g)$ in $t$ at~$t=0$. 

From Definition \ref{nc3def1}(iii) and $\io^*(\vp)=\vp$ we see that $\Up_{-t}^*(\vp)=\Up_t^*(\vp)$, and similarly for $*\vp,g$, so there are no odd powers of $t$ in the Taylor series. Hence we may write
\ea
\Up_t^*(\vp)&\sim \ts\sum\limits_{n=0}^\iy t^{2n}\vp^{2n},
\label{nc3eq6}\\
\Up_t^*(*\vp)&\sim \ts\sum\limits_{n=0}^\iy t^{2n}\psi^{2n},
\label{nc3eq7}\\
\Up_t^*(g)&\sim \ts\sum\limits_{n=0}^\iy t^{2n}g^{2n}.
\label{nc3eq8}
\ea

Here $\vp^{2n},\psi^{2n},g^{2n}$ are defined and smooth on all of $\nu$, and `$\sim$' in \eq{nc3eq6} means that on any compact subset $S\subseteq\nu$, so that $S\subseteq U_{\md{t}^{-1}R}$ for sufficiently small $t$, we have
\begin{equation*}
\sup\nolimits_S\Big\vert \Up_t^*(\vp)-\ts\sum\limits_{n=0}^k t^{2n}\vp^{2n}\Big\vert=o(t^{2k}) \qquad \text{as $t\ra 0$ for all $k=0,1,\ldots,$}	
\end{equation*}
and similarly for \eq{nc3eq7}--\eq{nc3eq8}.

\begin{rem} In fact as $\vp$ satisfies an elliptic equation, there is a unique real analytic structure on $M$ such that $\vp,*\vp,g$ are real analytic. If we choose $\Up$ also to be real analytic, then on any compact subset $S\subseteq\nu$, the sums in \eq{nc3eq6}--\eq{nc3eq8} converge absolutely for small enough $t$, and the equations hold exactly. But we will not need this.
\label{nc3rem1}
\end{rem}

Note that $\vp^{2n},\psi^{2n},g^{2n}$ depend on the choice of map $\Up$.

Since $\d\vp=0$ and $\d(*\vp)=0$ we see from \eq{nc3eq6}--\eq{nc3eq7} that
\e
\d\vp^{2n} = 0 \text{\, and \,} \d\psi^{2n} = 0 \qquad\text{for all $n=0,1,\ldots.$}
\label{nc3eq9}
\e

Let $s \neq 0$ and consider the dilation $s : \nu \to \nu$. Then $\Up_{ts} = \Up_t \circ s$, so $s^* \Up_t^* = \Up_{st}^*$. It follows that
\begin{equation*}
\sum_{n=0}^{\iy} t^{2n} s^* \vp^{2n} \sim s^* \Up_t^*(\vp) = \Up_{ts}^*(\vp) \sim \sum_{n=0}^{\iy} t^{2n} s^{2n} \vp^{2n}
\end{equation*}
and similarly for $*\vp$ and $g$. Thus $\vp^{2n},\psi^{2n},g^{2n}$ are homogeneous of order $2n$ under dilations in $\nu$, so that
\begin{equation*}
\cL_\de\vp^{2n}=2n\,\vp^{2n},\quad \cL_\de\psi^{2n}=2n\,\psi^{2n},\quad \cL_\de g^{2n}=2n\,g^{2n},
\end{equation*}
where $\cL_\de$ is the Lie derivative. Using Cartan's formula $\cL_v\be=\d(v\cdot\be)+v\cdot(\d\be)$ for a vector field $v$ and exterior form $\be$, and equation \eq{nc3eq9}, we see that
\e
\vp^{2n}=\frac{1}{2n}\d(\de\cdot\vp^{2n}),\quad \psi^{2n}=\frac{1}{2n}\d(\de\cdot\psi^{2n})\quad \text{for $n=1,2,\ldots,$}
\label{nc3eq10}
\e
so the $\vp^{2n}$ and $\psi^{2n}$ are exact for $n>0$.

Now let $\ti\nabla^\nu$ be a connection on $\nu$. This could be the connection $\nabla^\nu$ defined in \S\ref{nc31}, but later we will want the freedom to choose a different connection. On $\nu$, the connection $\ti\nabla^\nu$ induces a splitting of vector bundles
\e
T\nu= V\op H,\quad\text{where $V\cong \pi^*(\nu)$ and $H\cong \pi^*(TL)$.}
\label{nc3eq11}
\e
Here $V$ and $H$ are the vertical and horizontal subbundles of the connection, respectively. The exact sequence
\begin{equation*}
\xymatrix@C=30pt{ 0 \ar[r] & \pi^*(\nu)\cong V \ar[r] & T\nu \ar[r] & \pi^*(TL)\cong H \ar[r] & 0 }
\end{equation*}
is independent of choices, but the embedding $H\subset T\nu$ splitting this exact sequence depends on the choice of $\ti\nabla^\nu$. Dual to \eq{nc3eq11} we have a splitting $T^*\nu=V^*\op H^*$, and hence a splitting
\e
\La^kT^*\nu= \bigop_{i+j=k, \;
0\le i\le 4,\; 0\le j\le 3}\La^iV^*\ot \La^jH^*.
\label{nc3eq12}
\e

Write $\vp^{2n}_{i,j},\psi^{2n}_{i,j}$ for the components of $\vp^{2n},\psi^{2n}$ in $\La^iV^*\ot \La^jH^*$ in the splitting \eq{nc3eq12}. We will call such forms {\it of type\/} $(i,j)$. Hence we have
\e
\vp^{2n}=\vp^{2n}_{0,3}+\vp^{2n}_{1,2}+\vp^{2n}_{2,1}+\vp^{2n}_{3,0},\quad
\psi^{2n}=\psi^{2n}_{1,3}+\psi^{2n}_{2,2}+\psi^{2n}_{3,1}+\psi^{2n}_{4,0}.
\label{nc3eq13}
\e
Note that the components $\vp^{2n}_{i,j},\psi^{2n}_{i,j}$ depend on the choice of connection $\ti\nabla^\nu$, although $\vp^{2n},\psi^{2n}$ do not.

Similarly we decompose $g^{2n}=g^{2n}_{0,2}+g^{2n}_{1,1}+g^{2n}_{2,0}$, with $g^{2n}_{0,2}\in S^2H^*$, $g^{2n}_{1,1}\in V^*\ot H^*$ and~$g^{2n}_{2,0}\in S^2V^*$.

In fact, for $n>0$ we can decompose $\vp^{2n},\psi^{2n}$ further. As the dilation vector field $\de$ lies in the $V$ factor in \eq{nc3eq11}, we see that $\de\cdot\vp^{2n}_{i,j}$ is of type $(i-1,j)$, and similarly for $\de\cdot\psi^{2n}_{i,j}$. Since closed 1-forms on $L$ pull back to closed sections of $H^*$,  we can show that if $\be=\be_{i,j}$ is a form of type $(i,j)$ then $\d\be=(\d\be)_{i+1,j}+(\d\be)_{i,j+1}+(\d\be)_{i-1,j+2}$ is the sum of terms of type $(i+1,j)$ and $(i,j+1)$ and $(i-1,j+2)$ only. That is, by taking the exterior derivative, the number of horizontal components can never decrease, and can never increase by more than two. Define forms $\dot\vp^{2n}_{i,j},\ddot\vp^{2n}_{i,j},\dddot\vp^{2n}_{i,j}$, and $\dot\psi^{2n}_{i,j},\ddot\psi^{2n}_{i,j},\dddot\psi^{2n}_{i,j}$ of type $(i,j)$ for $n>0$ by
\e
\begin{split}
\frac{1}{2n}\d(\de\cdot\vp^{2n}_{i,j})&=\dot\vp^{2n}_{i,j}+\ddot\vp^{2n}_{i-1,j+1}+\dddot\vp^{2n}_{i-2,j+2},\\
\frac{1}{2n}\d(\de\cdot\psi^{2n}_{i,j})&=\dot\psi^{2n}_{i,j}+\ddot\psi^{2n}_{i-1,j+1}+\dddot\psi^{2n}_{i-2,j+2}.
\end{split}
\label{nc3eq14}
\e
Then \eq{nc3eq10} implies that
\e
\vp^{2n}_{i,j}=\dot\vp^{2n}_{i,j}+\ddot\vp^{2n}_{i,j}+\dddot\vp^{2n}_{i,j},\qquad \psi^{2n}_{i,j}=\dot\psi^{2n}_{i,j}+\ddot\psi^{2n}_{i,j}+\dddot\psi^{2n}_{i,j}\quad\text{for $n>0$.}
\label{nc3eq15}
\e
By considering equation \eq{nc3eq14} for various values of $(i,j)$, we derive the following conditions:
\ea
\ddot\vp^{2n}_{3,0} & = \dddot\vp^{2n}_{2,1} = \dddot\vp^{2n}_{3,0} = 0, \quad \ddot\psi^{2n}_{4,0} = \dddot\psi^{2n}_{3,1} = \dddot\psi^{2n}_{4,0} = 0 & & \text{for $n>0$ ($j = -1$ or $-2$)}, 
\nonumber\\
\dot\vp^{2n}_{0,3} & = 0 \qquad \text{for $n>0$ ($i=0$, $j=3$)}.
\label{nc3eq16}
\ea

\subsection{Understanding the leading terms in the power series}
\label{nc33}

On the fibres of the rank 7 vector bundle $TM\vert_L$ over $L$ we have a $\Gt$-structure $(\vp,g)\vert_L$. Pulling back by $\pi:\nu\ra L$ gives a $\Gt$-structure $\pi^*((\vp,g)\vert_L)$ on $\pi^*(TM\vert_L)\ra \nu$. But \eq{nc3eq2} gives an isomorphism
\begin{equation*}
\pi^*(TM\vert_L)\cong \pi^*(\nu)\op\pi^*(TL),
\end{equation*}
and \eq{nc3eq11} an isomorphism 
\begin{equation*}
T\nu= V\op H\cong \pi^*(\nu)\op\pi^*(TL).
\end{equation*}
Combining these gives an isomorphism $T\nu\cong \pi^*(TM\vert_L)$. Let $(\vp^\nu,g^\nu)$ be the $\Gt$-structure on the fibres of $T\nu$ (that is, the $\Gt$-structure on $\nu$) identified with $\pi^*((\vp,g)\vert_L)$ by this isomorphism, and $\psi^\nu$ the corresponding 4-form $*_{g^\nu}(\vp^\nu)$. One can think of $(\vp^{\nu}, g^{\nu})$ as a $\Gt$-structure on $\nu$ that is ``constant'' on the fibres, since at each point $(x, \alpha) \in \nu$ it corresponds to the $\Gt$-structure $\vp(x)$ at $(x, 0) \in \nu$ via the canonical isomorphism $T_{(x, \alpha)} (\nu_x) \cong \nu_x$ between the tangent space of a vector space and the vector space itself.

Thus $(\nu, \vp^\nu, g^\nu)$ is a $\Gt$-manifold, which is generally not torsion-free, as we will see shortly. We will use the metric $g^\nu$ to measure the size of tensors on $\nu$. 

Write $\vp^\nu_{i,j},\psi^\nu_{i,j},g^\nu_{i,j}$ for the components of $\vp^\nu,\psi^\nu,g^\nu$ of type $(i,j)$. As $\vp^\nu,\psi^\nu,\ab g^\nu$ are unchanged under $-1:\nu\ra \nu$, the components with $i$ odd are zero, giving
\begin{equation*}
\vp^\nu=\vp^\nu_{0,3}+\vp^\nu_{2,1},\quad \psi^\nu=\psi^\nu_{2,2}+\psi^\nu_{4,0},\quad
g^\nu=g^\nu_{0,2}+g^\nu_{2,0}.
\end{equation*}
The fact that $g^{\nu}_{1,1} = 0$ says that $H$ is orthogonal to $V$. Since dilations rescale the $V$ factor in \eq{nc3eq11} but fix the $H$ factor, the effect of Lie derivative by the dilation vector field $\de$ is
\e
\cL_\de\vp^\nu_{i,j}=i\,\vp^\nu_{i,j},\quad
\cL_\de\psi^\nu_{i,j}=i\,\psi^\nu_{i,j},\quad
\cL_\de g^\nu_{i,j}=i\,g^\nu_{i,j}.
\label{nc3eq17}
\e

Return now to $\vp^{2n},\psi^{2n},g^{2n}$ defined in \S\ref{nc32}. Observe that $\md{\vp^{2n}_{i,j}}_{g^\nu}$, $\md{\psi^{2n}_{i,j}}_{g^\nu}$, $\md{g^{2n}_{i,j}}_{g^\nu}$ are continuous functions on $\nu$ which are homogeneous of some degree under dilations. Explicitly, since $t^* \vp^{2n}_{i,j} = t^{2n} \vp_{i,j}$, the function $\md{\vp^{2n}_{i,j}}_{g^{\nu}}$ is homogeneous of order $2n - i$ with respect to dilations, because the $g^{\nu}$-norm on vertical $k$-forms is homogeneous of degree $-k$ by \eq{nc3eq17}. Similarly $\md{\psi^{2n}_{i,j}}_{g^\nu}$ and $\md{g^{2n}_{i,j}}_{g^\nu}$ are also homogeneous of degree $2n - i$.

Write $r:\nu\ra[0,\iy)$ for the radius function, where $r(x,\al) = \md{\al}_{g^\nu}$. When restricted to the unit sphere bundle of $\nu$, which is compact, the continuous functions $\md{\vp^{2n}_{i,j}}_{g^\nu}$, $\md{\psi^{2n}_{i,j}}_{g^\nu}$, $\md{g^{2n}_{i,j}}_{g^\nu}$ are bounded. Thus, by their homogeneity, we deduce that
\e
\bmd{\vp^{2n}_{i,j}}_{g^\nu}=O(r^{2n-i}), \quad \bmd{\psi^{2n}_{i,j}}_{g^\nu}=O(r^{2n-i}), \quad \bmd{g^{2n}_{i,j}}_{g^\nu}=O(r^{2n-i}).
\label{nc3eq18}
\e
As the degree of homogeneity cannot be negative for $\vp^{2n}_{i,j},\psi^{2n}_{i,j},g^{2n}_{i,j}\ne 0$, by continuity at $r=0$, we have
\e
\vp^{2n}_{i,j}=0,\quad \psi^{2n}_{i,j}=0,\quad g^{2n}_{i,j}=0\quad\text{if $i>2n$.}
\label{nc3eq19}
\e
An immediate consequence of combining \eq{nc3eq19} and \eq{nc3eq14} is
\e
\begin{aligned}
\dot\vp^{2n}_{i,j} &= \ddot\vp^{2n}_{i-1,j+1} = \dddot\vp^{2n}_{i-2,j+2} = 0, && \text{for $n>0$, ($i> 2n$)},
\\
\dot\psi^{2n}_{i,j} & = \ddot\psi^{2n}_{i-1,j+1} = \dddot\psi^{2n}_{i-2,j+2} = 0, && \text{for $n>0$, ($i> 2n$)}.
\end{aligned}
\label{nc3eq19b}
\e
The terms in \eq{nc3eq18} which are homogeneous of $O(r^0)$ are those which may be nonzero at the zero section $L\subset\nu$. But at the zero section, $\vp^\nu,\psi^\nu,g^\nu$ agree with $\Up^*(\vp),\Up^*(\psi),\Up^*(g)$, that is, with $\Up_1^*(\vp),\Up_1^*(\psi),\Up_1^*(g)$. So \eq{nc3eq6}--\eq{nc3eq8} yield
\begin{equation*}
\vp^\nu_{i,j}=\vp^i_{i,j}, \quad \psi^\nu_{i,j}=\psi^i_{i,j}, \quad g^\nu_{i,j}=g^i_{i,j}, \quad \text{ (no sum over $i$).}
\end{equation*}

If $i>0$ then $\vp^i_{i,j}=\dot\vp^i_{i,j}+\ddot\vp^i_{i,j}+\dddot\vp^i_{i,j}$, but $\ddot\vp^i_{i,j}$, $\dddot\vp^i_{i,j}$ are respectively components of $\frac{1}{i}\d(\de\cdot\vp^i_{i+1,j-1})$ and $\frac{1}{i}\d(\de\cdot\vp^i_{i+2,j-2})$, which are zero by \eq{nc3eq19}. So $\ddot\vp^i_{i,j}=\dddot\vp^i_{i,j}=0$, and similarly $\ddot\psi^i_{i,j}=\dddot\psi^i_{i,j}=0$. Hence we have
\e
\vp^\nu=\vp^0_{0,3}+\dot\vp^2_{2,1},\qquad \psi^\nu=\dot\psi^2_{2,2}+\dot\psi^4_{4,0}.
\label{nc3eq20}
\e
Here $\vp^0_{0,3}=\pi^*(\vol_L)$ is the pullback to $\nu$ of the volume form $\vol_L$ on $L$, and is a closed 3-form on~$\nu$. 

We also find it useful to define $\vp^\nu_t,\psi^\nu_t,g^\nu_t$ on $\nu$ to be the pullbacks of $\vp^\nu,\psi^\nu,g^\nu$ under $t : \nu\ra\nu$ for $t>0$. This scales each component by the powers of $t$ in \eq{nc3eq6}--\eq{nc3eq8}, so that by \eq{nc3eq20} we have
\e
\vp^\nu_t=\vp^0_{0,3}+t^2\dot\vp^2_{2,1},\quad \psi_t^\nu=t^2\dot\psi^2_{2,2}+t^4\dot\psi^4_{4,0},\quad
g_t^\nu=g^0_{0,2}+t^2g^2_{2,0}.
\label{nc3eq21}
\e
Then $(\vp^\nu_t,g^\nu_t)$ is a $\Gt$-structure on $\nu$, with 4-form $\psi_t^\nu=*_{g^\nu_t}(\vp^\nu_t) = \Theta(\vp^\nu_t)$, and $\vp^\nu_t,\psi^\nu_t,g^\nu_t$ are the leading order approximations to $\Up_t^*(\vp),\Up_t^*(*\vp),\Up_t^*(g)$ near the zero section $L\subset\nu$, just as $\vp^\nu,\psi^\nu,g^\nu$ approximate $\Up^*(\vp),\ab\Up^*(*\vp),\ab\Up^*(g)$. If we estimate the size of tensors using $g^\nu_t$ rather than $g^\nu$, the answers differ by powers of $t$. For example, if $\be_{i,j}$ is a form on $\nu$ of type $(i,j)$ then
\e
\bmd{\be_{i,j}}_{g^\nu_t}=t^{-i} \, \bmd{\be_{i,j}}_{g^\nu}.
\label{nc3eq22}
\e

We may summarize all the work above, also using \eq{nc3eq19b}, with the following pair of equations:
\ea
& \Up_t^*(\vp) \sim \nonumber \\  
&\begin{aligned}
& & \vp^0_{0,3}{} + {} & t^2\dot\vp^2_{2,1} &\!\!\!\!\!\!\!\!\!\!\!\!\!\!\!\!\!\!\! \bigr] \; \, \vp^\nu_t = & O(1) \\
& & {}+{} & t^2\ddot\vp^2_{1,2} + t^2\dot\vp^2_{1,2} + t^4\dot\vp^4_{3,0} & \bigr] \; \; & O(tr) \\
& & {}+{} & t^2\dddot\vp^2_{0,3} + t^2\ddot\vp^2_{0,3} + t^4\ddot\vp^4_{2,1} + t^2\dot\vp^2_{0,3} + t^4\dot\vp^4_{2,1} & \bigr] \; \; & O(t^2r^2) \\
& & & {}\phantom{t^2\dddot\vp^2_{0,3} + t^2\ddot\vp^2_{0,3}} + t^4\dddot\vp^4_{1,2}\phantom{+t^2\dot\vp^2_{0,3}\,\,\,} + t^4\ddot\vp^4_{1,2} + t^4\dot\vp^4_{1,2} + t^6\dot\vp^6_{3,0} & \bigr] \; \; & O(t^3r^3)
\end{aligned}
\nonumber\\
& \qquad \qquad \, \, \, {}+{} \cdots,
\label{nc3eq23}\\
& \Up_t^*(*\vp) \sim \nonumber \\
&\begin{aligned}
& t^2\dot\psi^2_{2,2} + t^4\dot\psi^4_{4,0} + & \!\!\!\!\!\!\!\!\!\!\!\!\!\!\!\!\!\!\! \bigr] \; \; \psi^\nu_t = & O(1) \\
& t^2\ddot\psi^2_{1,3} + t^4\ddot\psi^4_{3,1} + t^2\dot\psi^2_{1,3} + t^4\dot\psi^4_{3,1} & \bigr] \; \; & O(tr) \\
& {}\phantom{t^2\ddot\psi^2_{1,3}} + t^4\dddot\psi^4_{2,2} \phantom{+t^2\dot\psi^2_{1,3}\,\,\,} + t^4\ddot\psi^4_{2,2} + t^4\dot\psi^4_{2,2} + t^6\dot\psi^6_{4,0} & \bigr] \; \; & O(t^2r^2) \\
& {}\phantom{t^2\ddot\psi^2_{1,3} + t^4\dddot\psi^4_{2,2} + t^2\dot\psi^2_{1,3}\kern.07em} + t^4\dddot\psi^4_{1,3} + t^4\ddot\psi^4_{1,3} + t^6\ddot\psi^6_{3,1} + t^4\dot\psi^4_{1,3} + t^6\dot\psi^6_{3,1} & \bigr] \; \; & O(t^3r^3) \\
\end{aligned}
\nonumber\\
& \qquad \qquad \, \, \, {}+{} \cdots,
\label{nc3eq24}
\ea
where the $O(\cdots)$ are measured using $g^\nu_t$. Explicitly, using \eq{nc3eq18} and \eq{nc3eq22} we have
\begin{equation*}
\md{t^a \al^{2n}_{i,j}}_{g^{\nu}_t} = t^a \md{\al^{2n}_{i,j}}_{g^{\nu}_t} = t^{a - i} \md{\al^{2n}_{i,j}}_{g^{\nu}} = t^{a-i} O(r^{2n - i}),
\end{equation*}
so the terms aligned horizontally in \eq{nc3eq23} and \eq{nc3eq24} have the same order $O(t^kr^k)$ when measured using $\md{\,.\,}_{g^\nu_t}$. The terms aligned vertically sum to a closed form, by \eq{nc3eq14}.

\subsection{\texorpdfstring{Choosing $\Up$ and $\ti\nabla^\nu$ to eliminate the $O(tr)$ error terms}{Choosing Υ,∇ν to eliminate the O(r) error terms}}
\label{nc34}

Taking $t=1$, it will be important later to eliminate the $O(tr)$ terms in \eq{nc3eq23}--\eq{nc3eq24}, as they would cause too large an error to apply Theorem \ref{nc2thm3}.

Recall that $\vp^{2n},\psi^{2n}$ depend on the choice of the map $\Up$, and the further decomposition into types $\vp^{2n}_{i,j}$ and $\psi^{2n}_{i,j}$ depend on the choice of connection $\tilde \nabla^{\nu}$ on $\nu$. In this section we will modify $\Up$ and $\tilde \nabla^{\nu}$ to eliminate the $O(tr)$ terms in~\eq{nc3eq23}--\eq{nc3eq24}.

In this section we will need several times to make use of a one-to-one correspondence between tensors on the total space $\nu$ that are homogeneous of a certain degree and sections of certain bundles over $L$. This correspondence is more general but we will only describe two particular cases we will need. Recall that we have canonical isomorphisms $V \cong \pi^* (\nu)$ and $H^* \cong \pi^*( T^* L)$ and, given a connection $\tilde \nabla^{\nu}$ on $\nu$, we also have isomorphisms $H \cong \pi^* (TL)$ and $V^* \cong \pi^* (\nu^*)$. Let $S^{k} (\nu^*)$ be the $k^{\text{th}}$ symmetric power of $\nu^*$, so sections of $S^{k} (\nu^*)$ are functions on the total space $\nu$ that are homogeneous of degree $k$ under dilations.
\smallskip

\noindent{\bf Case 1:} Let $\bar \al$ be a section of the bundle $S^{2n - i} (\nu^*) \otimes \Lambda^i (\nu^*) \otimes TL$. Then under these identifications $\bar \al$ corresponds to a section $v_{\bar \al}$ of $\Lambda^i (V^*) \otimes H$ which is homogeneous of degree $2n - i$. This correspondence can be seen explicitly as follows. Let $v$ be a vector field on $L$ and let $f_1, \ldots, f_i$ be sections of $\nu^*$. Then
\begin{equation*}
\bar \al= h \otimes (f_{1} \w \cdots \w f_{i}) \otimes v \longleftrightarrow v_{\bar \al}= h \, (\pi^* f_{1}) \w \cdots \w (\pi^* f_{i}) \otimes \pi^* (v)
\end{equation*}
for $h \in S^{2n - i}(\nu^*)$.
\smallskip

\noindent{\bf Case 2:} Let $\bar \al^{2n}_{i,j}$ be a section of the bundle $S^{2n - i} (\nu^*) \otimes \Lambda^i (\nu^*) \otimes \Lambda^j (T^* L)$. Then under these identifications $\bar \al^{2n}_{i,j}$ corresponds to a section of $\Lambda^i (V^*) \otimes \Lambda^j (H^*)$, that is an $(i,j)$ form on $\nu$, which is homogeneous of degree $2n - i$. Explicitly, if $e_1, \ldots, e_j$ are $1$-forms on $L$ and $f_1, \ldots, f_i$ are sections of $\nu^*$, then
\begin{equation*}
\begin{matrix}
\bar \al^{2n}_{i,j} = & h \otimes (f_{1} \w \cdots \w f_{i}) \otimes (e_{1} \w \cdots \w e_{j}) \hspace{0.7in} \\[0.4em]
\updownarrow & \\[0.4em]
\al^{2n}_{i,j} = & h \, (\pi^* f_{1}) \w \cdots \w (\pi^* f_{i}) \otimes (\pi^* e_1) \w \cdots \w (\pi^* e_j)
\end{matrix}
\end{equation*}
for $h \in S^{2n - i}(\nu^*)$. Let $\d^v = \d^{(1,0)}$ be the vertical derivative, the part of $\d$ that takes an $(i, j)$ form to an $(i+1, j)$ form. In this case we also need to understand the correspondence between $\d^v$ on the right hand side and a canonical operation on the left hand side. The easiest way to see this is in local coordinates. Let $x_1, \ldots, x_3$ be local coordinates on $L$ and let $y_1, \ldots, y_4$ be fibre coordinates for $\nu^*$. Locally, a decomposable $(i,j)$ form that is homogeneous of degree $2n-i$ will be of the form $h \, \d y_{a_1} \w \cdots \d y_{a_i} \w \d x_{b_1} \cdots \w \d x_{b_j}$ where $h(x,y)$ is homogeneous of degree $2n-i$ in $y_1, \ldots, y_4$. Noting that $\d^v x_b = 0$ and $\d^v y_a = \d y_a$, it is easy to see that $\d^v$ corresponds under the identification to the {\it partial antisymmetrization} map
\begin{equation*}
\w : S^{2n - i}(\nu^*) \otimes \Lambda^i (\nu^*) \otimes \Lambda^j (T^* L) \ra S^{2n - i - 1}(\nu^*) \otimes \Lambda^{i+1} (\nu^*) \otimes \Lambda^j (T^* L).
\end{equation*}
We write this map explicitly in a local frame when $2n - i = 1$ or $2n-i = 2$. Let $\be$ be a section of $\Lambda^i (\nu^*)$ and $\ga$ a $j$-form on $L$. Then we have
\begin{align*}
f_a \otimes \be \otimes \ga & \mapsto (f_a \w \be) \otimes \ga, & & 2n - i = 1, \\
(f_a f_b) \otimes \be \otimes \ga & \mapsto [f_a \otimes (f_b \w \be) + f_b \otimes (f_a \w \be)] \otimes \ga, & & 2n - i = 2.
\end{align*}

With these preparations out of the way, we begin with the following lemma.

\begin{lem} In the situation above, we have
\e
(D_{\vp^\nu}\Th) (\ddot\vp^2_{1,2}+\dot\vp^2_{1,2}+\dot\vp^4_{3,0})=\ddot\psi^2_{1,3}+\ddot\psi^4_{3,1}+\dot\psi^2_{1,3}+\dot\psi^4_{3,1},
\label{nc3eq25}
\e
where\/ $D_{\vp^\nu}\Th$ is as in equation \eq{nc2eq6} in\/~{\rm\S\ref{nc22}}.
\label{nc3lem1}
\end{lem}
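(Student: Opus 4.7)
The plan is to exploit the naturality of the Hodge-dual map $\Th$ under pullback and match the leading nontrivial homogeneous terms on each side of the tautological identity $\Th(\Up^*\vp) = \Up^*(\Th(\vp)) = \Up^*(*\vp)$. Writing $\xi := \Up^*\vp - \vp^\nu$, the expansion \eq{nc3eq23} with $t=1$ gives $\bmd{\xi}_{g^\nu} = O(r)$, so after possibly shrinking $R$ we have $\nm{\xi}_{C^0} \le \ep$ and Proposition \ref{nc2prop2} applies, yielding
\begin{equation*}
\Up^*(*\vp) \;=\; \Th(\vp^\nu + \xi) \;=\; \psi^\nu + D_{\vp^\nu}\Th(\xi) + F_{\vp^\nu}(\xi),
\end{equation*}
with $\bmd{F_{\vp^\nu}(\xi)} \le C\ms{\xi} = O(r^2)$; hence $\Up^*(*\vp) - \psi^\nu = D_{\vp^\nu}\Th(\xi) + F_{\vp^\nu}(\xi)$.

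The next step is to match the homogeneity-degree-$1$ components (under fibrewise dilation in $\nu$) on both sides. The operator $D_{\vp^\nu}\Th = *_{\vp^\nu}\ci\bigl(\ts\frac{4}{3}\pi_1+\pi_7-\pi_{27}\bigr)$ preserves homogeneity degree, because $\vp^\nu$ and $g^\nu$ are pullbacks from $L$ via $\pi:\nu\ra L$ and so coincide on each fibre $\nu_x$ under the canonical identification $T_{(x,\al)}\nu \cong \nu_x\op T_xL$; the pointwise operator $D_{\vp^\nu}\Th$ is therefore the same throughout a given fibre, and a short check shows it then commutes with fibrewise dilation at the level of tensor values and hence of $g^\nu$-norms. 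Meanwhile $F_{\vp^\nu}(\xi) = O(r^2)$ has trivial degree-$1$ part. Reading off from \eq{nc3eq23}--\eq{nc3eq24} the degree-$1$ parts of $\xi$ and of $\Up^*(*\vp)-\psi^\nu$, namely $\ddot\vp^2_{1,2} + \dot\vp^2_{1,2} + \dot\vp^4_{3,0}$ and $\ddot\psi^2_{1,3} + \ddot\psi^4_{3,1} + \dot\psi^2_{1,3} + \dot\psi^4_{3,1}$ respectively, and equating them yields \eq{nc3eq25}.

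The main step requiring care is the identification of precisely these degree-$1$ components. Since $\al^{2n}_{i,j}$ has $g^\nu$-norm of homogeneity degree $2n-i$, the condition $2n-i=1$ together with $i+j=3$ and $i\le 2n$ (by \eq{nc3eq19}) forces $(n,i,j)\in\{(1,1,2),(2,3,0)\}$ for $\vp$, and analogously $(1,1,3),(2,3,1)$ for $\psi$; no larger $n$ contributes because $i\le 3$ forces $n\le 2$. Within $\vp^2_{1,2} = \dot\vp^2_{1,2}+\ddot\vp^2_{1,2}+\dddot\vp^2_{1,2}$, the piece $\dddot\vp^2_{1,2}$ originates from $\vp^2_{3,0}$, which vanishes by \eq{nc3eq19} ($i=3>2=2n$), leaving $\dot\vp^2_{1,2}+\ddot\vp^2_{1,2}$; within $\vp^4_{3,0}$, both $\ddot\vp^4_{3,0}$ and $\dddot\vp^4_{3,0}$ originate from $\vp^4_{4,-1}$ and $\vp^4_{5,-2}$ (both zero since $j<0$), leaving $\dot\vp^4_{3,0}$. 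The parallel bookkeeping for $\psi$, using $\dddot\psi^2_{1,3}$ from $\psi^2_{3,1}$ and $\dddot\psi^4_{3,1}$ from $\psi^4_{5,-1}$ (both vanishing), produces the stated degree-$1$ expression for $\psi$. Beyond this bookkeeping, the argument is formal.
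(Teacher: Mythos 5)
Your proof is correct and follows essentially the same route as the paper's: both rest on the naturality $\Th(\Up^*\vp)=\Up^*(*\vp)$ and then match the leading $O(r)$ terms of \eq{nc3eq23}--\eq{nc3eq24} at $t=1$, the paper stating tersely that $D_{\vp^\nu}\Th$ must carry the leading term of $\Up^*(\vp)-\vp^\nu$ to that of $\Up^*(*\vp)-\psi^\nu$. Your version usefully makes explicit the two points the paper leaves implicit, namely that Proposition \ref{nc2prop2} controls the nonlinear remainder $F_{\vp^\nu}(\xi)=O(r^2)$, and that $D_{\vp^\nu}\Th$ preserves the degree of homogeneity of the $g^\nu$-norm because $(\vp^\nu,g^\nu)$ is fibrewise constant.
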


\begin{proof} We have $\Th(\vp^\nu)=\psi^\nu$ and $\Th(\Up^*(\vp))=\Up^*(*\vp)$, since $\Th(\vp)=*\vp$, and $\Th$ is an intrinsic object on any 7-manifold, and so commutes with pullback by the local diffeomorphism $\Up$. Equation \eq{nc3eq25} now follows from \eq{nc3eq23}--\eq{nc3eq24} for $t=1$ and small $r$, since $D_{\vp^\nu}\Th$ must map the leading term in $\Up^*(\vp)-\vp^\nu$ to the leading term in~$\Th(\Up^*(\vp))-\Th(\vp^\nu)$.	
\end{proof}

In \S\ref{nc31} we chose $\Up:U_R\ra M$. If we expand $\Up$ in a power series around $L\subset U_R$ in terms of order $O(r^k)$ for $k=0,1,\ldots,$ then the $O(r^0)$ and $O(r^1)$ terms are determined by Definition \ref{nc3def1}(ii),(iv), but the $O(r^2)$ and higher terms are essentially arbitrary, apart from the $\Z_2$-equivariance in Definition \ref{nc3def1}(iii).

The next lemma shows that by choosing the $O(r^2)$ terms in the expansion of $\Up$ correctly, we can ensure the vanishing of the fourth column in \eq{nc3eq23}. Note that the $O(r^2)$ terms in the expansion of $\Up$ determine the $O(r)$ terms in the expansion of $\Up^* (\vp)$, since the pullback $\Up^*$ involves one derivative of $\Up$.

\begin{lem} We can choose the map $\Up:U_R\ra M$ in {\rm\S\ref{nc31}} so that\/~$\dot\vp^4_{3,0} = \ddot\vp^4_{2,1} = \dddot\vp^4_{1,2} = 0$.
\label{nc3lem2}	
\end{lem}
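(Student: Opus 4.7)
By \eqref{nc3eq16}, $\vp^4_{3,0} = \dot\vp^4_{3,0}$. Specialising \eqref{nc3eq14} to $(i,j)=(3,0)$ and $n=2$ gives
\begin{equation*}
\tfrac{1}{4}\,\d(\de\cdot\vp^4_{3,0}) \;=\; \dot\vp^4_{3,0} \;+\; \ddot\vp^4_{2,1} \;+\; \dddot\vp^4_{1,2},
\end{equation*}
and the three summands on the right are of distinct types $(3,0),(2,1),(1,2)$. Hence all three vanish as soon as $\vp^4_{3,0} = 0$, and it suffices to arrange this by a suitable choice of $\Up$.

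Expand $\Up$ in powers of $\al$ along $L$. Definition~\ref{nc3def1}(ii),(iv) pin down the $O(r^0)$ and $O(r^1)$ jets, whereas the $O(r^k)$ jets for $k\ge 2$ are free modulo the $\io$-equivariance of Definition~\ref{nc3def1}(iii). Write $\Up(x,\al) = \exp_x\bigl(\al + \tfrac12 H(x)(\al,\al) + O(r^3)\bigr)$ in geodesic normal coordinates on $(M,g)$ based at $L$. The equivariance $\Up(x,-\al) = \io(\Up(x,\al))$, together with $\io_*|_\nu = -\mathbf{1}$ and $\io_*|_{TL} = +\mathbf{1}$, forces $H(x)(\al,\al) \in T_xL$ for all $\al$. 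Hence the $O(r^2)$ jet of $\Up$ is a section $H \in \Ga^\iy(S^2\nu^* \ot TL)$, which may be prescribed arbitrarily.

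A variation $H \mapsto H + \eta$, with $\eta \in \Ga^\iy(S^2\nu^* \ot TL)$, modifies $\d\Up$ at order $r$ and so shifts $\Up^*(\vp)$ at order $r$; by the homogeneity conventions of \S\ref{nc32}--\S\ref{nc33} this shift lies in the coefficient of $t^4$ at type $(3,0)$. The induced change in $\vp^4_{3,0}$ defines a pointwise-linear operator
\begin{equation*}
\Phi : S^2\nu^* \ot TL \;\ra\; \nu^* \ot \La^3\nu^*
\end{equation*}
on $L$, computable from $\vp^\nu$. In the flat model $\R^3\op\H$ with $\vp^\nu = \vp_0$ as in \eqref{nc2eq7}, $\Phi$ is given explicitly by contracting $\eta$ against the $(2,1)$-part $\vp^\nu_{2,1}= -\d x_1\!\w\!\om_0^I - \d x_2\!\w\!\om_0^J - \d x_3\!\w\!\om_0^K$ of $\vp^\nu$ (the cross-product pairing $TL\ot\nu\ra\nu$ induced by associativity of $L$). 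A direct pointwise linear-algebra check, which is one of the two appendix calculations, shows that $\Phi$ is surjective onto $\nu^* \ot \La^3\nu^*$.

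Given surjectivity, pick $\eta$ pointwise on $L$ so that $\Phi(\eta)$ cancels the $\vp^4_{3,0}$ arising from the initial $\Up$; the constant rank of $\Phi$ and standard partition-of-unity arguments ensure $\eta$ can be chosen smoothly on $L$. The modified map, still denoted $\Up$, satisfies Definition~\ref{nc3def1}(i)--(iv) and now has $\vp^4_{3,0} = 0$, whence $\dot\vp^4_{3,0} = \ddot\vp^4_{2,1} = \dddot\vp^4_{1,2} = 0$. The main obstacle is the pointwise surjectivity of $\Phi$, a purely algebraic statement about the $\Gt$-action on $\R^7 = \R^3 \op \H$ that is worked out in the appendix.
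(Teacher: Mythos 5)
Your overall strategy is the same as the paper's: you identify the freedom in $\Up$ at order $O(r^2)$ with a section of $S^2\nu^*\ot TL$, compute that varying it shifts $\dot\vp^4_{3,0}$ by the contraction of that section against $\dot\vp^2_{2,1}$, and reduce to a pointwise linear-algebra surjectivity; the final deduction that $\vp^4_{3,0}=0$ forces $\ddot\vp^4_{2,1}=\dddot\vp^4_{1,2}=0$ via \eq{nc3eq14} and \eq{nc3eq16} is also correct.

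However, there is a genuine error in the key algebraic claim. The operator $\Phi:S^2\nu^*\ot TL\ra\nu^*\ot\La^3\nu^*$ is \emph{not} surjective onto all of $\nu^*\ot\La^3\nu^*$: its image is contained in the kernel of the partial antisymmetrization map $\w:\nu^*\ot\La^3\nu^*\ra\La^4\nu^*$, a rank $15$ subbundle of the rank $16$ bundle $\nu^*\ot\La^3\nu^*$. (Concretely, $\Phi(\eta)=\sum_k 2h^k_{pq}\,f^p\ot\ast(J_kf^q)$ for $\eta=h^k_{pq}f^pf^q\ot e_k$, and antisymmetrizing gives $2h^k_{pq}\,\om_k(f^p,f^q)\vol=0$ since $h^k$ is symmetric and $\om_k$ is skew.) So the "direct pointwise linear-algebra check" you defer to the appendix proves surjectivity only onto $\Ker\w$, and your argument as written would fail for a general target. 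To repair it you must add the complementary observation that the obstruction $\dot\vp^{\prime 4}_{3,0}$ you are trying to cancel itself lies in $\Ker\w$: this holds because the $t^4$ column of \eq{nc3eq23} is closed and the only $(4,0)$ component of its exterior derivative is the vertical derivative of the $(3,0)$ term, so $\dot\vp^{\prime 4}_{3,0}$ is closed in the fibre directions, which under the identification with $\nu^*\ot\La^3\nu^*$ is exactly membership in $\Ker\w$. With the surjectivity statement corrected to "onto $\Ker\w$" and this closedness observation added, your proof goes through and coincides with the paper's.
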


\begin{proof} Let $R'$ and $\Up':U_{R'}\ra M$ be some fixed choice of $R,\Up$ in \S\ref{nc31}, and $\dot\vp^{\prime 4}_{3,0}$ the corresponding value for $\dot\vp^4_{3,0}$. Let $\bar\al$ be a smooth section of the rank 30 vector bundle $S^2\nu^*\ot TL\ra L$, to be chosen later. Under the identification ({\it Case 1}) described above with $n = 1$ and $i = 0$, the section $\bar\al$ corresponds to a vector field $v_{\bar\al}$ in $H\subset T\nu$ on $\nu$ which is homogeneous quadratic in the fibre directions. That is, $v_{\bar\al}$ is a horizontal vector field on $\nu$ such that the function $\md{v_{\bar\al}}_{g^{\nu}}$ is homogeneous of degree 2 under dilations.

For $0<R\le R'$ small, define $\Up:U_R\ra M$ by $\Up=\Up'\ci\exp(v_{\bar\al})$, where $\exp(v_{\bar\al}):U_R\ra\nu$ is the flow of the vector field $v_{\bar\al}$, and it maps $U_R$ diffeomorphically to an open subset of $U_{R'}$ provided $R$ is sufficiently small. As $v_{\bar\al}=O(r^2)$, we see that $\Up$ and $\Up'$ agree up to $O(r^2)$ near $L\subset U_R$, so Definition \ref{nc3def1}(ii),(iv) for $\Up'$ imply (ii),(iv) for $\Up$. Also $v_{\bar\al}$ is invariant under $-1:U_R\ra U_R$, so Definition \ref{nc3def1}(iii) for $\Up'$ implies (iii) for $\Up$. Hence $R,\Up$ satisfy Definition \ref{nc3def1}.

We now have a 3-form $\dot\vp^4_{3,0}$ on $\nu$ from the new $\Up$, so we can compare $\dot\vp^4_{3,0}$ and $\dot\vp^{\prime 4}_{3,0}$. They are precisely the $(3,0)$ components that are homogeneous of degree 1 of $\Up^* (\vp)$ and $\Up^{\prime*} (\vp)$, respectively. Because $\Up^*(\vp)=\exp(v_{\bar\al})^*(\Up^{\prime*}(\vp))$ with $v_{\bar\al}=O(r^2)$, we find that
\begin{align*}
\Up^*(\vp) & =\Up^{\prime*}(\vp) + \cL_{v_{\bar\al}}(\Up^{\prime*}(\vp)) + O(r^2) \\
& = \Up^{\prime*}(\vp)+ \d [v_{\bar\al} \cdot \Up^{\prime*}(\vp)] + O(r^2)
\end{align*}
since $\Up^{\prime*} (\vp)$ is closed. We take the type (3,0) components of both sides of the above equation and keep those terms that are homogeneous of degree 1. From \eq{nc3eq23} with $t=1$ the only component of $\Up^{\prime*} (\vp)$ for which $\d [v_{\bar\al} \cdot \Up^{\prime*}(\vp)]$ will have a $(3,0)$ component is $\dot\vp^2_{2,1}$, and indeed $[\d(v_{\bar\al}\cdot\dot\vp^2_{2,1})]_{3,0}$ is homogeneous of degree 1. Thus, we have
\e
\dot\vp^4_{3,0} = \dot\vp^{\prime 4}_{3,0} + [\d(v_{\bar\al}\cdot\dot\vp^2_{2,1})]_{3,0} = \dot\vp^{\prime 4}_{3,0} + \d^v (v_{\bar\al}\cdot\dot\vp^2_{2,1}).
\label{nc3eq26}
\e

Because $\dot\vp^4_{3,0},\dot\vp^{\prime 4}_{3,0}$ are sections of $\La^3V^*\ra\nu$ which are linear in the fibre directions, by the identification ({\it Case 2}) above we can identify $\dot\vp^4_{3,0},\dot\vp^{\prime 4}_{3,0}$ with sections $\bar{\dot\vp}{}^4_{3,0},\bar{\dot\vp}{}^{\prime 4}_{3,0}$ of the rank 16 vector bundle $\nu^*\ot\La^3\nu^*\ra L$. These sections $\bar{\dot\vp}{}^4_{3,0},\bar{\dot\vp}{}^{\prime 4}_{3,0}$ are not arbitrary, however. The fourth column of \eq{nc3eq23} is closed, and the only $(4,0)$ component of its exterior derivative arises from the vertical (fibre) derivative of the (3,0) term. That is, $\dot\vp^4_{3,0}$ and $\dot\vp^{\prime 4}_{3,0}$ are closed in the fibre directions. As explained above, under the identification this corresponds to $\bar{\dot\vp}{}^4_{3,0}$ and $\bar{\dot\vp}{}^{\prime 4}_{3,0}$ being in the kernel of $\w:\nu^*\ot\La^3\nu^*\ra\La^4\nu^*$, which is a rank 15 vector bundle on~$L$.

Translating via the identifications, equation \eq{nc3eq26} becomes
\begin{equation*}
\bar{\dot\vp}{}^4_{3,0}=\bar{\dot\vp}{}^{\prime 4}_{3,0}+\bar\al\cdot\bar{\dot\vp}{}^2_{2,1},
\end{equation*}
where $\bar{\dot\vp}{}^2_{2,1}$ is a section of $\La^2\nu^*\ot T^*L\ra L$, and `$\,\cdot\,$' is a natural bilinear product
\begin{equation*}
\bigl(S^2\nu^*\ot TL\bigr)\t \bigl(\La^2\nu^*\ot T^*L\bigr)\longra \nu^*\ot\La^3\nu^*,\end{equation*}
defined by combining the dual pairing of $TL,T^*L$ with the partial antisymmetrization map $S^2\nu^*\ot \La^2\nu^*\ra \nu^*\ot\La^3\nu^*$. A calculation shows that the map $\bar \al \mapsto \bar\al \cdot \bar{\dot\vp}{}^2_{2,1}$ is surjective onto the kernel of $\w:\nu^*\ot\La^3\nu^*\ra\La^4\nu^*$. (The details are given in Proposition \ref{ncAprop1} of Appendix \ref{ncA}.) Therefore we can choose $\bar\al$ such that $\bar\al\cdot\bar{\dot\vp}{}^2_{2,1}=-\bar{\dot\vp}{}^{\prime 4}_{3,0}$, so that $\bar{\dot\vp}{}^4_{3,0}=0$, and hence $\dot\vp^4_{3,0}=0$, as desired. Finally, since $\ddot\vp^4_{3,0}=\dddot\vp^4_{3,0}=0$ by \eq{nc3eq16} we have $\vp^4_{3,0}=0$ by \eq{nc3eq15}. Hence by \eq{nc3eq14} we deduce that $\ddot\vp^4_{2,1}=\dddot\vp^4_{1,2}=0$ as well.
\end{proof}

\begin{rem} Note that in the proof above $\bar\al$ lives in a rank 30 vector bundle, but $\bar{\dot\vp}{}^4_{3,0}$ lives in a rank 15 vector bundle, so we have used only half of the freedom in~$\bar\al$.
\label{nc3rem2}
\end{rem}

From now on, fix the map $\Up:U_R\ra M$ satisfying Lemma \ref{nc3lem2}. This fixes the decomposition of $\Up^*(\vp)$ into components $\vp^{2n}$ in \eq{nc3eq6}. In \S\ref{nc32} we chose a connection $\ti\nabla^\nu$ on $\nu\ra L$, which was used to define the splitting $T\nu=V\op H$, and hence the decompositions of $\vp^{2n}$ into components $\vp^{2n}_{i,j}$ and $\dot\vp^{2n}_{i,j},\ddot\vp^{2n}_{i,j},\dddot\vp^{2n}_{i,j}$ in \eq{nc3eq13} and \eq{nc3eq15}. The next lemma explores the effect of changing the choice of $\ti\nabla^\nu$. Note that changing $\ti\nabla^\nu$ {\it does not change the fact that\/  $\dot\vp^4_{3,0}=\ddot\vp^4_{2,1}=\dddot\vp^4_{1,2}=0$ in Lemma\/} \ref{nc3lem2}, since we see from \eq{nc3eq23} with $t=1$ that $\dot\vp^4_{3,0}=0$ if and only if $\md{\vp^4}_{g^\nu}=O(r^2)$, where this second condition is independent of $\ti\nabla^\nu$, and $\dot\vp^4_{3,0}=0$ implies $\dot\vp^4_{3,0}=\ddot\vp^4_{2,1}=\dddot\vp^4_{1,2}=0$ as above.

\begin{lem} In\/ {\rm\S\ref{nc32}} there is a unique choice of the connection $\ti\nabla^\nu$ on $\nu$ used to define the splitting $T\nu=V\op H,$ such that\/ $\ddot\vp^2_{1,2}=\dot\vp^2_{1,2}=\ddot\vp^2_{0,3}=0$.
\label{nc3lem3}	
\end{lem}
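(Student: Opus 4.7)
The plan is to parameterize connections on $\nu$ as $\ti\nabla^\nu + A$ for $A \in \Om^1(L;\End(\nu))$ relative to a fixed base, analyze how the $(1,2)$-component of $\vp^2$ transforms under such a change, and solve a pointwise linear equation for $A$. I would first reduce the three vanishings in the lemma to the single condition $\vp^2_{1,2}=0$ in the new decomposition: by \eq{nc3eq19b} applied with $i=3,\,2n=2$ one has $\dddot\vp^2_{1,2}=0$, and applying \eq{nc3eq14} to $\vp^2_{1,2}=0$ forces both $\dot\vp^2_{1,2}=0$ (the $(1,2)$-part of $\tfrac12\d(\de\cdot 0)=0$) and $\ddot\vp^2_{0,3}=0$ (the $(0,3)$-part), whence $\ddot\vp^2_{1,2}=\vp^2_{1,2}-\dot\vp^2_{1,2}-\dddot\vp^2_{1,2}=0$ as well.

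For the main computation, choose local coordinates $(x^i)$ on $L$ and linear fibre coordinates $(\xi_a)$ on $\nu$, so the old and new vertical coframes satisfy $f^a_{\rm new}=f^a_{\rm old}+A^a_{ib}\xi_b\,\d x^i$. Since $\vp^2_{3,0}=0$ by \eq{nc3eq19} and $\vp^2_{2,1}=\dot\vp^2_{2,1}=\vp^\nu_{2,1}$ is identified fibrewise with $-\sum_i\d x^i\w\om_i$ on $\nu_x\cong\H$ (from \eq{nc3eq20} and \eq{nc2eq7}, exploiting that $L$ is associative), a direct expansion of $\vp^2$ in the new coframe gives
\begin{equation*}
\vp^2_{1,2}(\mathrm{new}) = \vp^2_{1,2}(\mathrm{old}) + \om_i^{ab}\,A^b_{kd}\,\xi_d\,f^a_{\rm new}\w\d x^k\w\d x^i.
\end{equation*}
The problem therefore reduces to showing that the fibrewise linear map $\cL:T^*L\ot\End(\nu)\to\nu^*\ot\nu^*\ot\La^2T^*L$ sending $A$ to the coefficient $\om_i^{ab}A^b_{kd}$, antisymmetrized in $(k,i)$, is an isomorphism.

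Both source and target bundles have rank $3\cdot 4\cdot 4=48$, so it suffices to prove injectivity. Setting $V^{(i)}_d:=A^b_{id}e_b\in\nu_x$ and using $\om_i(u,v)=h_\nu(J_iu,v)$ with $(J_1,J_2,J_3)=(I,J,K)$, the condition $\cL(A)=0$ translates into $J_iV^{(k)}_d=J_kV^{(i)}_d$ for all $i,k\in\{1,2,3\}$ and $d$. Case $(i,k)=(1,3)$ gives $V^{(3)}_d=JV^{(1)}_d$, while chaining $(1,2)$ and $(2,3)$ via $IJ=K,\,JK=I$ produces $V^{(3)}_d=-JV^{(1)}_d$; subtracting forces $V^{(1)}_d=0$, and thus $V^{(i)}_d=0$ and $A=0$. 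This injectivity is where I expect the main difficulty: it works precisely because the associativity of $L$ endows $\nu$ with a pointwise hyperK\"ahler $\SU(2)$-structure that tightly interlocks $T^*L$ with $\om^I,\om^J,\om^K$. Once injectivity is established, the dimension count furnishes surjectivity, and $A=-\cL^{-1}(\vp^2_{1,2}(\mathrm{old}))$ is the unique solution, determining the unique connection $\ti\nabla^\nu$ of the lemma.
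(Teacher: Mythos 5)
Your proposal is correct and follows essentially the same route as the paper's proof: parameterize the change of connection by $\Ga$ (your $A$), show the $(1,2)$-component of $\vp^2$ shifts by $\Ga \cdot \bar{\dot\vp}{}^2_{2,1}$, prove that this pointwise linear map is a fibrewise isomorphism, and then derive the remaining two vanishings from \eq{nc3eq19}, \eq{nc3eq14}, and \eq{nc3eq15}. The only real difference is in the proof that the map is injective: the paper (Proposition \ref{ncAprop2}) introduces auxiliary quantities $t_{im}=\sum_p s^p_i(J_i)^p_m$ and derives $t_{jm}+t_{km}=0$ for $j\neq k$, whereas you chain two instances of the relation to get $V^{(3)}_d=JV^{(1)}_d$ and $V^{(3)}_d=-JV^{(1)}_d$; both are valid linear-algebra arguments exploiting the quaternionic relations among $I,J,K$ on $\nu$.
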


\begin{proof} Observe that $\ddot\vp^2_{1,2}+\dot\vp^2_{1,2}$ is a $(1,2)$ form on $\nu$ which is linear in the fibre directions. By the identification ({\it Case 2}) above we can identify $\ddot\vp^2_{1,2}+\dot\vp^2_{1,2}$ with a section $\bar{\ddot\vp}{}^2_{1,2}+\bar{\dot\vp}{}^2_{1,2}$ of $\nu^*\ot\nu^*\ot\La^2T^*L$ over $L$, where the first factor of $\nu^*$ is linear functions in the fibres of $\nu\ra L$, and the second factor is 1-forms on the fibres of $\nu\ra L$.

Suppose $\ti\nabla^\nu$ and $\ti\nabla^{\prime\nu}$ are two choices of $\ti\nabla^\nu$, yielding $\ddot\vp^2_{1,2}+\dot\vp^2_{1,2}$ and $\ddot\vp^{\prime 2}_{1,2}+\dot\vp^{\prime 2}_{1,2}$ above, and corresponding sections $\bar{\ddot\vp}{}^2_{1,2}+\bar{\dot\vp}{}^2_{1,2}$ and $\bar{\ddot\vp}{}^{\prime 2}_{1,2}+\bar{\dot\vp}{}^{\prime 2}_{1,2}$ of $\nu^*\ot\nu^*\ot\La^2T^*L$. Then we have $\ti\nabla^\nu=\ti\nabla^{\prime\nu}+\Ga$, for $\Ga$ a smooth section of $\nu\ot\nu^*\ot T^*L$ over $L$. In terms of local frames, if $e_1, \ldots, e_3$ is a local orthonormal frame for $T^* L$ and $f_1, \ldots, f_4$ is a local frame for $\nu^*$, then $\pi^* e_1, \ldots, \pi^* e_3$ is a local frame for $H^* = H^{\prime *}$, while the two connections $\ti \nabla^{\nu}$ and $\ti \nabla^{\prime \nu}$ induce different vertical frames, since $V^* \neq V^{\prime *}$. We denote these frames by $\pi^* f_1, \ldots, \pi^* f_4$ and $\pi^{\prime *} f_1, \ldots, \pi^{\prime *} f_4$, respectively, which is an abuse of notation as they are not actually pullbacks. The relation between these frames is
\begin{equation*}
{(\pi^{\prime *} f^p)} |_{(x, \al)} = {(\pi^{*} f^p)} |_{(x, \al)} + \sum_{k=1}^3 {\Ga^p_{qk}}(x) \al^q e_k
\end{equation*}
at the point $(x , \al^p f_p) \in \nu$. From \eq{nc3eq23}, we see that $\ddot\vp^2_{1,2}+\dot\vp^2_{1,2}$ is precisely the $(1,2)$ part of $\vp^2$, and $\dot \vp^2_{2,1}$ is the $(2,1)$ part of $\vp^2$. We expand $\vp^2 = \vp^2_{2,1} + \vp^2_{1,2} + \vp^2_{0,3} = \vp^{\prime 2}_{2,1} + \vp^{\prime 2}_{1,2} + \vp^{\prime 2}_{0,3}$ in terms of these frames, and apply the identification between forms on $\nu$ and sections of bundles over $L$. A short calculation gives
\e
\bar{\ddot\vp}{}^2_{1,2}+\bar{\dot\vp}{}^2_{1,2}=\bar{\ddot\vp}{}^{\prime 2}_{1,2}+\bar{\dot\vp}{}^{\prime 2}_{1,2}+\Ga\cdot \bar{\dot\vp}^2_{2,1},
\label{nc3eq27}
\e
where $\bar{\dot\vp}^2_{2,1}$ is the section of $\La^2\nu^*\ot T^*L$ corresponding to $\dot\vp^2_{2,1}$ under the identification of $\La^2V^*\ot H^*$ with $\La^2\nu^*\ot T^*L$. Here `$\,\cdot\,$' is the canonical bilinear pairing
\begin{equation*}
(\nu \otimes \nu^* \otimes T^* L) \times (\Lambda^2 \nu^* \otimes T^* L) \to \nu^* \otimes \nu^* \otimes \Lambda^2 T^* L
\end{equation*}
given by pairing the $\nu$ factor with the $\Lambda^2 \nu^*$ factor, and wedging the two $T^* L$ factors together.

A calculation shows that the map $\Ga \mapsto \Ga \cdot \bar{\dot\vp}{}^2_{2,1}$ is an isomorphism of vector bundles on $L$. (The details are given in Proposition \ref{ncAprop2} of Appendix \ref{ncA}.) Hence if we fix a reference connection $\ti\nabla^{\prime\nu}$ on $\nu$, giving some $\bar{\ddot\vp}{}^{\prime 2}_{1,2}+\bar{\dot\vp}{}^{\prime 2}_{1,2}$, there is a unique $\Ga$ such that \eq{nc3eq27} gives $\bar{\ddot\vp}{}^2_{1,2}+\bar{\dot\vp}{}^2_{1,2}=0$, and then $\ti\nabla^\nu=\ti\nabla^{\prime\nu}+\Ga$ is the unique choice of connection on $\nu$ with $\ddot\vp^2_{1,2}+\dot\vp^2_{1,2}=0$. 

By \eq{nc3eq19}, we have $\vp^2_{3,0} = 0$. Thus from \eq{nc3eq14} we get $\dddot\vp^2_{1,2} = 0$. Therefore by \eq{nc3eq15} we find that $\vp^2_{1,2}=\ddot\vp^2_{1,2}+\dot\vp^2_{1,2}$. Hence we conclude that in fact $\vp^2_{1,2}=0$, which by \eq{nc3eq14} again implies that $\dot\vp^2_{1,2}=\ddot\vp^2_{0,3}=0$. Thus $\ddot\vp^2_{1,2}+\dot\vp^2_{1,2}=0$ yields $\ddot\vp^2_{1,2}=0$, and the proof is complete.
\end{proof}

We now combine Lemmas \ref{nc3lem1}, \ref{nc3lem2}, and \ref{nc3lem3}:

\begin{prop} We can choose the data $\Up:U_R\ra M$ and\/ $\ti\nabla^\nu$ in {\rm\S\ref{nc31}--\S\ref{nc32}} such that
\e
\begin{split}
&\ddot\vp^2_{1,2}=\dot\vp^2_{1,2}=\dot\vp^4_{3,0}=\ddot\vp^2_{0,3}=\ddot\vp^4_{2,1}=\dddot\vp^4_{1,2}=0,\\
&\ddot\psi^2_{1,3}=\ddot\psi^4_{3,1}=\dot\psi^2_{1,3}=\dot\psi^4_{3,1}=\ddot\psi^4_{2,2}=\dddot\psi^4_{1,3}=0,\\
& (D_{\vp^\nu} \Th) (\dddot\vp^2_{0,3}+\dot\vp^2_{0,3}+\dot\vp^4_{2,1})=\dddot\psi^4_{2,2}+\dot\psi^4_{2,2}+\dot\psi^6_{4,0}.
\end{split}
\label{nc3eq28}
\e
\label{nc3prop1}	
\end{prop}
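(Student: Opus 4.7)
The plan is to apply Lemmas \ref{nc3lem2} and \ref{nc3lem3} sequentially to enforce the six $\vp$-vanishings, then to extract the $\psi$-conditions and the final linearization identity from Lemma \ref{nc3lem1} together with its analogue at one higher order in the expansions \eq{nc3eq23}--\eq{nc3eq24}.

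First I would use Lemma \ref{nc3lem2} to fix once and for all a map $\Up:U_R\ra M$ with $\dot\vp^4_{3,0}=\ddot\vp^4_{2,1}=\dddot\vp^4_{1,2}=0$. With $\Up$ now fixed, I would invoke Lemma \ref{nc3lem3} to select the unique $\ti\nabla^\nu$ for which $\ddot\vp^2_{1,2}=\dot\vp^2_{1,2}=\ddot\vp^2_{0,3}=0$. The observation stated before Lemma \ref{nc3lem3} shows that the second step does not undo the first, since $\dot\vp^4_{3,0}=0$ is equivalent to $\md{\vp^4}_{g^\nu}=O(r^2)$, a $\ti\nabla^\nu$-independent condition. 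All six $\vp$-vanishings therefore hold simultaneously.

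Next, write $\xi:=\Up^*\vp-\vp^\nu$, which now satisfies $\xi=O(r^2)$ (the $O(tr)$ row of \eq{nc3eq23} has been killed). Proposition \ref{nc2prop2} gives
\begin{equation*}
\Up^*(*\vp)-\psi^\nu = D_{\vp^\nu}\Th(\xi)+F_{\vp^\nu}(\xi), \qquad \bmd{F_{\vp^\nu}(\xi)}\le C\ms{\xi}=O(r^4).
\end{equation*}
Extracting the $O(r)$ part and using Lemma \ref{nc3lem1} with zero input recovers $\ddot\psi^2_{1,3}+\ddot\psi^4_{3,1}+\dot\psi^2_{1,3}+\dot\psi^4_{3,1}=0$, which splits by bidegree $(1,3)$ and $(3,1)$ into two sum identities. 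To refine these into individual vanishings, and to obtain the additional identities $\ddot\psi^4_{2,2}=\dddot\psi^4_{1,3}=0$ (which belong to the $O(r^2)$ row of \eq{nc3eq24}), I would combine two ingredients: the closed-column relations from \eq{nc3eq14} applied to $\psi$ (each vertical column in \eq{nc3eq24}, being of the form $\dot\psi^{2n}_{i,j}+\ddot\psi^{2n}_{i-1,j+1}+\dddot\psi^{2n}_{i-2,j+2}$, is closed by construction), and the fibre-homogeneity structure identifying $\dot\psi^{2n}_{i,j}$, $\ddot\psi^{2n}_{i,j}$, $\dddot\psi^{2n}_{i,j}$ as arising from distinct source bidegrees. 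Unfolding this bookkeeping through the rows of \eq{nc3eq24} isolates each $\psi$-component in turn.

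Finally, for the identity $(D_{\vp^\nu}\Th)(\dddot\vp^2_{0,3}+\dot\vp^2_{0,3}+\dot\vp^4_{2,1})=\dddot\psi^4_{2,2}+\dot\psi^4_{2,2}+\dot\psi^6_{4,0}$, I would match the $O(r^2)$ components of both sides of the Proposition~\ref{nc2prop2} expansion displayed above. Since $\xi=O(r^2)$, the quadratic remainder $F_{\vp^\nu}(\xi)=O(r^4)$ contributes nothing at this order, so the $O(r^2)$ content collapses to $D_{\vp^\nu}\Th$ applied to the $O(r^2)$ part of $\xi$. After inserting the vanishings already established, the LHS of \eq{nc3eq23} at $O(r^2)$ reduces to $\dddot\vp^2_{0,3}+\dot\vp^2_{0,3}+\dot\vp^4_{2,1}$, while the $O(r^2)$ part of $\Up^*(*\vp)-\psi^\nu$ reduces to $\dddot\psi^4_{2,2}+\dot\psi^4_{2,2}+\dot\psi^6_{4,0}$, giving the desired identity.

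The main obstacle will be Step 2, namely separating the single sum identity produced by Lemma \ref{nc3lem1} into individual $\psi$-vanishings: $D_{\vp^\nu}\Th$ is blind to the $\dot/\ddot/\dddot$ source labels, so untangling them requires the full combined strength of the bidegree splitting, the degree-$k$ homogeneity, and the closed-column relations in \eq{nc3eq24}. Everything else in the proof is bookkeeping on the power-series expansion.
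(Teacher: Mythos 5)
Your proposal is correct and takes essentially the same route as the paper: Lemmas \ref{nc3lem2} and \ref{nc3lem3} give the first line, Lemma \ref{nc3lem1} gives the sum identity, and the $O(r^2)$ analogue of the argument of Lemma \ref{nc3lem1} gives the third line. The ``main obstacle'' you flag dissolves in two short steps using exactly the ingredients you name: splitting the sum identity by bidegree, together with $\dddot\psi^2_{1,3}=0$ (its source $\psi^2_{3,1}$ vanishes by \eq{nc3eq19}) and $\dddot\psi^4_{3,1}=0$ (by \eq{nc3eq16}), upgrades the two bidegree identities via \eq{nc3eq15} to $\psi^2_{1,3}=\psi^4_{3,1}=0$; applying \eq{nc3eq14} to these vanishing components then kills every term of the corresponding columns separately by bidegree, in particular $\dot\psi^2_{1,3}=\dot\psi^4_{3,1}=\ddot\psi^4_{2,2}=\dddot\psi^4_{1,3}=0$, and back-substitution gives $\ddot\psi^2_{1,3}=\ddot\psi^4_{3,1}=0$.
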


\begin{proof} Lemmas \ref{nc3lem2} and \ref{nc3lem3} prove that we may choose $\Up,\ti\nabla^\nu$ such that the first line of \eq{nc3eq28} holds. Hence Lemma \ref{nc3lem1} shows that
\e 
\ddot\psi^2_{1,3}+\ddot\psi^4_{3,1}+\dot\psi^2_{1,3}+\dot\psi^4_{3,1}=0.
\label{nc3eq29}
\e
Since $\psi^2_{3,1} = 0$ by \eq{nc3eq19}, we get from \eq{nc3eq14} that $\dddot\psi^2_{1,3}=0$. We also have $\dddot\psi^4_{3,1}=0$ by \eq{nc3eq16}. Thus, taking the $(1,3)$ and $(3,1)$ components of \eq{nc3eq29} and using \eq{nc3eq15} we find
\e 
\psi^2_{1,3}=\ddot\psi^2_{1,3}+\dot\psi^2_{1,3}=0,\qquad \psi^4_{3,1}=\ddot\psi^4_{3,1}+\dot\psi^4_{3,1}=0.
\label{nc3eq30}
\e
Applying \eq{nc3eq14} to $\psi^2_{1,3} = 0$ and $\psi^4_{3,1} = 0$ gives $\dot\psi^2_{1,3}=0$ and $\dot\psi^4_{3,1}=\ddot\psi^4_{2,2}=\dddot\psi^4_{1,3}=0$, and feeding these back into \eq{nc3eq30} then gives $\ddot\psi^2_{1,3} = 0$ and $\ddot\psi^4_{3,1} = 0$. We have thus proved the second line of \eq{nc3eq28}.

 Now in \eq{nc3eq23}--\eq{nc3eq24} with $t=1$, all the $O(r)$ terms are zero, so the leading error terms are the $O(r^2)$ terms. Some of these $O(r^2)$ terms vanish by the first two lines of \eq{nc3eq28}, and the rest are on the left and right hand sides of the third line of \eq{nc3eq28}. Therefore the argument of Lemma \ref{nc3lem1} applied to the leading $O(r^2)$ errors proves the third line.
\end{proof}

From now on we fix the data $\Up,\ti\nabla^\nu$ as in Proposition \ref{nc3prop1}. The proposition, combined with equations \eq{nc3eq20}, \eq{nc3eq21}, and \eq{nc3eq23}--\eq{nc3eq24} implies:

\begin{cor} In the situation above, $\ti\vp^\nu=\vp^\nu+\dddot\vp^2_{0,3}$ and\/ $\ti\psi^\nu=\psi^\nu+\dddot\psi^4_{2,2}$ are closed forms on $\nu,$ where $\md{\dddot\vp^2_{0,3}}_{g^\nu}=O(r^2)$ and\/ $\md{\dddot\psi^4_{2,2}}_{g^\nu}=O(r^2)$.

Similarly, $\ti\vp^\nu_t=\vp^\nu_t+t^2\dddot\vp^2_{0,3}$ and\/ $\ti\psi^\nu_t=\psi^\nu_t+t^4\dddot\psi^4_{2,2}$ are closed forms on $\nu,$ where $\md{t^2\dddot\vp^2_{0,3}}_{g^\nu_t}=O(t^2r^2)$ and\/ $\md{t^4\dddot\psi^4_{2,2}}_{g^\nu_t}=O(t^2r^2)$.
\label{nc3cor}
\end{cor}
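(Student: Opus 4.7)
The plan is to exploit the remark just after \eqref{nc3eq24}: each \emph{column} in the displays \eqref{nc3eq23}--\eqref{nc3eq24} is a closed form. By \eqref{nc3eq14} the three pieces $\dot\vp^{2n}_{i,j},\ddot\vp^{2n}_{i-1,j+1},\dddot\vp^{2n}_{i-2,j+2}$ arising from a fixed parent $\vp^{2n}_{i,j}$ sum to the exact form $\frac{1}{2n}\d(\de\cdot\vp^{2n}_{i,j})$ (with the obvious vanishings when type indices fall out of range), and analogously for $\psi$. This furnishes a family of closed forms on $\nu$ indexed by $(2n,i,j)$, and I will assemble $\tilde\vp^\nu_t$ and $\tilde\psi^\nu_t$ by selecting just a few of them.

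I assemble $\tilde\vp^\nu_t$ from two closed forms. First, the $n=0$ piece $\vp^0_{0,3}=\pi^*(\vol_L)$ is closed, since $\vol_L$ is a top form on the 3-manifold $L$. Second, the column with parent $\vp^2_{2,1}$ is the closed form $t^2\cdot\frac{1}{2}\d(\de\cdot\vp^2_{2,1})=t^2\dot\vp^2_{2,1}+t^2\ddot\vp^2_{1,2}+t^2\dddot\vp^2_{0,3}$, and Proposition \ref{nc3prop1} kills $\ddot\vp^2_{1,2}$, so $t^2\dot\vp^2_{2,1}+t^2\dddot\vp^2_{0,3}$ is closed. Adding the two and comparing with the formula $\vp^\nu_t=\vp^0_{0,3}+t^2\dot\vp^2_{2,1}$ from \eqref{nc3eq21} shows that $\tilde\vp^\nu_t=\vp^\nu_t+t^2\dddot\vp^2_{0,3}$ is closed.

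The argument for $\tilde\psi^\nu_t$ is parallel, using the two columns with parents $\psi^2_{2,2}$ and $\psi^4_{4,0}$. Proposition \ref{nc3prop1} eliminates $\ddot\psi^2_{1,3}$ and $\ddot\psi^4_{3,1}$, while $\dddot\psi^2_{0,4}=0$ automatically since $\La^4H^*=0$ on a 3-dimensional base. The surviving terms assemble to $t^2\dot\psi^2_{2,2}+t^4\dot\psi^4_{4,0}+t^4\dddot\psi^4_{2,2}=\psi^\nu_t+t^4\dddot\psi^4_{2,2}=\tilde\psi^\nu_t$, again using \eqref{nc3eq21}. For the size estimates, \eqref{nc3eq18} immediately yields $\md{\dddot\vp^2_{0,3}}_{g^\nu}=O(r^2)$ and $\md{\dddot\psi^4_{2,2}}_{g^\nu}=O(r^2)$; applying the rescaling \eqref{nc3eq22} then gives $\md{t^2\dddot\vp^2_{0,3}}_{g^\nu_t}=t^2\cdot t^0\cdot O(r^2)=O(t^2r^2)$ and $\md{t^4\dddot\psi^4_{2,2}}_{g^\nu_t}=t^4\cdot t^{-2}\cdot O(r^2)=O(t^2r^2)$. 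The first claim of the corollary is the special case $t=1$. There is no real obstacle: the whole point of Proposition \ref{nc3prop1} was to kill the $O(tr)$ terms and the unwanted $O(t^2r^2)$ cross-terms so that this column bookkeeping collapses to a single residual summand in each case.
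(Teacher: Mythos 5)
Your proof is correct and follows essentially the same route as the paper, which simply asserts that the corollary follows from Proposition \ref{nc3prop1} together with \eqref{nc3eq20}, \eqref{nc3eq21} and the column structure of \eqref{nc3eq23}--\eqref{nc3eq24}; your column bookkeeping (each column is exact by \eqref{nc3eq14}, Proposition \ref{nc3prop1} and the vanishing of $(0,4)$-types kill the unwanted summands, and homogeneity plus \eqref{nc3eq22} give the $O(t^2r^2)$ estimates) is exactly the intended argument made explicit.
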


Here is the point of all this. We have constructed a $\Gt$-structure $(\vp^\nu_t,g^\nu_t)$ on $\nu$ with 4-form $\psi^\nu_t = \Theta(\vp^{\nu}_t) =*_{g^\nu_t}\vp^\nu_t$. To apply Theorem \ref{nc2thm3} in \S\ref{nc6}, we will need $(\vp^\nu_t,g^\nu_t)$ to have {\it small torsion}, in an appropriate sense. If $\d\vp^\nu_t=0$ and $\d\psi^\nu_t=0$ then $(\vp^\nu_t,g^\nu_t)$ would be torsion-free, by Theorem \ref{nc2thm1}. In general $\d\vp^\nu_t\ne 0$ and $\d\psi^\nu_t\neq 0$, but as $\d\ti\vp^\nu_t=0$ and $\d\ti\psi^\nu_t=0$ with $\ti\vp^\nu_t=\vp^\nu_t+t^2\dddot\vp^2_{0,3}$ and $\ti\psi^\nu_t=\psi^\nu_t+t^4\dddot\psi^4_{2,2}$, we can regard $\md{t^2\dddot\vp^2_{0,3}}_{g^\nu_t}$ and $\md{t^4\dddot\psi^4_{2,2}}_{g^\nu_t}$ as measuring the torsion of $(\vp^\nu_t,g^\nu_t)$, as if $t^2\dddot\vp^2_{0,3}=t^4\dddot\psi^4_{2,2}=0$ then $(\vp^\nu_t,g^\nu_t)$ would be torsion-free. Thus, Corollary \ref{nc3cor} basically says that the torsion of $(\vp^\nu_t,g^\nu_t)$ is~$O(t^2r^2)$.

\begin{rem} It would be interesting to determine an invariant geometric interpretation for the choices of $\Up$ in Lemma~\ref{nc3lem2} and of $\ti\nabla^\nu$ in Lemma~\ref{nc3lem3}.
\end{rem}

\subsection{\texorpdfstring{Comparing $\Up^*(\vp),\Up^*(*\vp)$ and $\ti\vp^\nu,\ti\psi^\nu$}{Comparing Υ*(φ),Υ*(*φ) and ~φᵗᵛ,~ψᵗᵛ}}
\label{nc35}

We now construct forms $\eta,\ze$ on $U_R$ which we will use in \S\ref{nc6} to interpolate between $\Up^*(\vp),\Up^*(*\vp)$ and $\ti\vp^\nu,\ti\psi^\nu$, respectively. Define a 2-form $\eta$ and a 3-form $\ze$ on $U_R$ by
\e
\begin{split}
\eta&=\int_0^1t^{-1}\bigl(\de\cdot\Up_t^*(\vp)\bigr)\d t\ts-\frac{1}{2}\de\cdot\vp^2_{2,1},\\
\ze&=\int_0^1t^{-1}\bigl(\de\cdot\Up_t^*(*\vp)\bigr)\d t-\ts\frac{1}{2}\de\cdot\psi^2_{2,2} -\frac{1}{4}\de\cdot\psi^4_{4,0}.
\end{split}
\label{nc3eq31}
\e
Here the integrals are well defined because $\de\cdot\Up_0^*(\vp)=0$ and $\de\cdot\Up_0^*(*\vp)=0$, so
$\de\cdot\Up_t^*(\vp)$ and $\de\cdot\Up_t^*(*\vp)$ are both $O(t)$.

Let $F_s : \nu \to \nu$ be the flow of the dilation vector field $\de$. Then $(\Up \circ F_s) (x, \al) = \Up (x, e^s \al) = \Up_{e^s} (x, \al)$. Thus $\Up_t = \Up \circ F_{\log t}$, so we have
\begin{equation*}
\frac{\d}{\d t} \Up^*_t (\vp) = \frac{\d}{\d t} (\Up \circ F_{\log t})^* (\vp) = t^{-1} \cL_{\de} (\Up_t^* (\vp))
\end{equation*}
for $t > 0$. Using this we compute
\e 
\begin{aligned}
\d\biggl[\int_0^1t^{-1}\bigl(\de\cdot\Up_t^*(\vp)\bigr)\d t\biggr]&=\int_0^1t^{-1} \cL_\de(\Up_t^*(\vp))\d t=\int_0^1\frac{\d}{\d t}(\Up_t^*(\vp))\d t\\
&=\Up_1^*(\vp)-\Up_0^*(\vp).
\end{aligned}
\label{nc3eq32}
\e
Substituting \eq{nc3eq31} into \eq{nc3eq32}, using both \eq{nc3eq14} and the first line of \eq{nc3eq28}, and noting that $\Up_1=\Up$, $\Up_0^*(\vp)=\vp_{0,3}^0$, we obtain
\begin{align*}
\d \eta & = \Up_1^* (\vp) - \Up_0^* (\vp) - \dot \vp^2_{2,1} - \ddot \vp^2_{1,2} - \dddot \vp^2_{0,3} \\
& = \Up^* (\vp) - \vp^0_{0,3} - \dot \vp^2_{2,1} - \dddot \vp^2_{0,3}.
\end{align*}
In the same way, this time using the second line of \eq{nc3eq28} and $\Up_0^*(\psi) = 0$, we obtain
\begin{align*}
\d \zeta & = \Up_1^* (\psi) - \Up_0^* (\psi) - \dot \psi^2_{2,2} - \ddot \psi^2_{1,3} - \dot \psi^4_{4,0} - \ddot \psi^4_{3,1} - \dddot \psi^4_{2,2} \\
& = \Up^* (\psi) - \dot \psi^2_{2,2} - \dot \psi^4_{4,0} - \dddot \psi^4_{2,2}.
\end{align*}
Using \eq{nc3eq20} and $\ti\vp^\nu=\vp^\nu+\dddot\vp^2_{0,3}$ and\/ $\ti\psi^\nu=\psi^\nu+\dddot\psi^4_{2,2}$ from Corollary \ref{nc3cor} we conclude that
\e
\d\eta=\Up^*(\vp)-\ti\vp^\nu\vert_{U_R},\quad\text{and similarly}\quad
\d\ze=\Up^*(*\vp)-\ti\psi^\nu\vert_{U_R}.
\label{nc3eq33}
\e

Substituting \eq{nc3eq6} into \eq{nc3eq31} and integrating term by term yields
\e
\begin{split}
\eta&\sim \int_0^1\bigl(\ts\sum\limits_{n=1}^\iy t^{2n-1}\de\cdot\vp^{2n}\bigr)\d t-\frac{1}{2}\de\cdot\vp^2_{2,1}\\
&=\ts\sum\limits_{n=1}^\iy \ts\frac{1}{2n}\de\cdot\vp^{2n}-\frac{1}{2}\de\cdot\vp^2_{2,1}
=\ts\frac{1}{4}\de\cdot\vp^4_{2,1}+\frac{1}{4}\de\cdot\vp^4_{1,2}+
\sum\limits_{n=3}^\iy \ts\frac{1}{2n}\de\cdot\vp^{2n},
\end{split}
\label{nc3eq34}
\e
interpreted as a power series around the zero section $L$ in $U_R$, where we have used $\vp^2_{3,0}=\vp^2_{1,2}=\vp^4_{3,0}=0$ and $\de\cdot\vp^2_{0,3}=\de\cdot\vp^4_{0,3}=0$. From \eq{nc3eq18}, \eq{nc3eq34} and $\md{\de}=O(r)$ we see that
\e
\md{\eta}_{g^\nu}=O(r^3) \quad\text{and}\quad \md{\d\eta}_{g^\nu}=\bmd{\Up^*(\vp)-\ti\vp^\nu\vert_{U_R}}_{g^\nu}=O(r^2)
\label{nc3eq35}
\e
for small $r$. Similarly using $\psi^2_{4,0} = \psi^2_{3,1} = \psi^2_{1,3} = \psi^4_{3,1} = 0$ we find that
\begin{align*}
\zeta&\sim \int_0^1\bigl(\ts\sum\limits_{n=1}^\iy t^{2n-1}\de\cdot\psi^{2n}\bigr)\d t-\frac{1}{2}\de\cdot\psi^2_{2,2} -\frac{1}{4}\de\cdot\psi^4_{4,0}\\
&=\ts\sum\limits_{n=1}^\iy \ts\frac{1}{2n}\de\cdot\psi^{2n}-\frac{1}{2}\de\cdot\psi^2_{2,2} -\frac{1}{4}\de\cdot\psi^4_{4,0}\\
&=\ts\frac{1}{4}\de\cdot\psi^4_{2,2}+\frac{1}{4}\de\cdot\psi^4_{1,3}+
\sum\limits_{n=3}^\iy \ts\frac{1}{2n}\de\cdot\psi^{2n},
\end{align*}
and thus analogously that
\e
\md{\ze}_{g^\nu}=O(r^3) \quad\text{and}\quad \md{\d\ze}_{g^\nu}=\bmd{\Up^*(*\vp)-\ti\psi^\nu\vert_{U_R}}_{g^\nu}=O(r^2).
\label{nc3eq36}
\e

\section{\texorpdfstring{$\Gt$-structures on the resolution $P$ of $\nu/\{\pm 1\}$}{G₂-structures on the resolution P of ν/{±1}}}
\label{nc4}

In this section we define a resolution $\rho:P\ra \nu/\{\pm 1\}$ of the orbifold $\nu/\{\pm 1\}$, and a family of $\Gt$-structures $(\vp^P_t,g^P_t)$ on $P$ for $t>0$, which are asymptotic at infinity in $P$ to the $\Gt$-structures $(\vp^{\nu/\{\pm 1\}}_t,g^{\nu/\{\pm 1\}}_t)$ on $\nu/\{\pm 1\}$ induced by the $\Gt$-structure $(\vp^\nu_t,g^\nu_t)$ on $\nu$ defined in \eq{nc3eq21} of \S\ref{nc33}. We then modify the pair $\vp^P_t$, $\psi^P_t = \Theta(\vp^P_t)$ to a pair of closed forms $\ti \vp^P_t$, $\ti \psi^P_t$. However, the torsion $\ti \psi^P_t - \Theta(\ti \vp^P_t)$ will still be too large, and we will correct it further in \S\ref{nc5}.

We continue in the situation of \S\ref{nc3}, with $\Up,\ti\nabla^\nu$ chosen as in Proposition \ref{nc3prop1}. This section will use only the following data from \S\ref{nc3}:
\begin{itemize}
\setlength{\itemsep}{0pt}
\setlength{\parsep}{0pt}
\item The compact 3-manifold $L$, and rank 4 real vector bundle $\pi:\nu\ra L$.
\item The fibre metrics $h_\nu$ and $g_L$ on the bundles $\nu$ and $TL$ over $L$, respectively, and the radius function $r:\nu\ra[0,\iy)$ defined using~$h_\nu$.
\item The connection $\ti\nabla^\nu$ on $\nu$, which induces a splitting $T\nu=V\op H$ with $V\cong\pi^*(\nu)$ and $H\cong \pi^*(TL)$, and a notion of type $(i,j)$ forms on~$\nu$.
\item The data $\vp^\nu_t=\vp^0_{0,3}+t^2\dot\vp^2_{2,1}$, $\psi_t^\nu=t^2\dot\psi^2_{2,2}+t^4\dot\psi^4_{4,0}$, $g_t^\nu=g^\nu_{0,2}+t^2g^\nu_{2,0}$ on $\nu$ from \eq{nc3eq21} for $t>0$, giving a $\Gt$-structure $(\vp^\nu_t,g^\nu_t)$ on $\nu$ with 4-form $\psi^\nu_t = \Theta (\vp^\nu_t)$.
\item The forms $\dddot\vp^2_{0,3}$ and $\dddot\psi^4_{2,2}$ on $\nu$, where $\ti\vp^\nu_t=\vp^\nu_t+t^2\dddot\vp^2_{0,3}$ and $\ti\psi^\nu_t=\psi^\nu_t+t^4\dddot\psi^4_{2,2}$ are closed forms on $\nu$.
\item The closed, coclosed, nonvanishing 1-form $\la$ on $L$ from Assumption \ref{nc3ass}.
\end{itemize}

\subsection{\texorpdfstring{Defining the resolution $P$ of $\nu/\{\pm 1\}$}{Defining the resolution P of ν/{±1}}}
\label{nc41}

Define a smooth function $A:L\ra(0,\iy)$ by $A=\md{\la}_{g_L}$, noting that $\la$ is nowhere vanishing, and define a 1-form $e_1=A^{-1} \la$ on $L$, so that $e_1$ is of unit length with respect to the metric $g_L$ on $L$, and $\la=A e_1$. It will be convenient for us to extend $e_1$ to an oriented orthonormal basis $(e_1,e_2,e_3)$ of 1-forms on $L$, but this may not be possible globally on $L$, as the rank 2 subbundle $\an{e_1}^\perp\subset T^*L$ may not be trivial. So choose an open subset $L'\subseteq L$ on which $\an{e_1}^\perp$ is trivial, and choose 1-forms $e_2,e_3$ on $L'$ such that $(e_1,e_2,e_3)$ is orthonormal with respect to $g_L$, and oriented with respect to the orientation on $L$ induced by $\vp$, so that since $L$ is associative in $(M, \vp, g)$, we have $\vp\vert_{L'}=e_1\w e_2\w e_3$. Then $*\la=A e_2\w e_3$. Later we will show that the important structures we define are independent of the choice of $e_2,e_3$, and so are well defined over all of $L$, not just over~$L'\subseteq L$.

Since $L$ is associative in $(M,\vp,g)$, at each point $x\in L'$ there exists an isomorphism $T_xM\cong\R^7$ identifying $\vp\vert_x,g\vert_x,*\vp\vert_x$ with $\vp_0,g_0,*\vp_0$ in \eq{nc2eq1}--\eq{nc2eq3} and identifying $T_xL$ with $\bigl\{(x_1,x_2,x_3,0,0,0,0):x_j\in\R\bigr\}\subset\R^7$. We can also choose this to identify $e_1\vert_x,e_2\vert_x,e_3\vert_x$ with $\d x_1,\d x_2,\d x_3$. Therefore from equations \eq{nc2eq7} and \eq{nc2eq13} in \S\ref{nc24}, we see that there are unique smooth sections $\hat\om^I,\hat\om^J,\hat\om^K$ of $\La^2\nu^*\ra L'$ such that
\e
\begin{aligned}
\vp\vert_{L'}&=e_1\!\w\! e_2\!\w\! e_3\!-\!e_1\!\w\!\hat\om^I\!-\!e_2\!\w\!\hat\om^J\!-\!e_3\!\w\!\hat\om^K &&\text{in $\Ga^\iy(\La^3T^*M\vert_{L'})$,}\\
*\vp\vert_{L'}&=\vol_\nu\!-e_2\!\w\! e_3\!\w\!\hat\om^I\!-\!e_3\!\w\! e_1\!\w\!\hat\om^J\!-\!e_1\!\w\! e_2\!\w\!\hat\om^K &&\text{in $\Ga^\iy(\La^4T^*M\vert_{L'})$,}
\end{aligned}
\label{nc4eq1}
\e
where $\hat\om^I,\hat\om^J,\hat\om^K$ are the Hermitian forms with respect to $h_\nu$ of almost complex structures $I,J,K: \nu\vert_{L'}\ra \nu\vert_{L'}$ on the fibres of $\nu$ satisfying the quaternion relations \eq{nc2eq9}. That is, at each $x\in L'$ there exist linear coordinates $(y_1,y_2,y_3,y_4)$ on $\nu\vert_{L'}$ such that $\hat\om^I\vert_x,\hat\om^J\vert_x,\hat\om^K\vert_x,h_\nu\vert_x,I\vert_x,J\vert_x,K\vert_x$ are given by equations \eq{nc2eq7}--\eq{nc2eq11} for~$\om_0^I,\om_0^J,\om_0^K,h_0,I,J,K$.

If $e'_2,e'_3$ are alternative choices for $e_2,e_3$ yielding $\hat\om^{\prime I},\hat\om^{\prime J},\hat\om^{\prime K},I',J',K'$ then we may write $e_2'=\cos\phi\, e_2+\sin\phi \,e_3$ and $e_3'=-\sin\phi\, e_2+\cos\phi\, e_3$ for some smooth $\phi:L'\ra\R/2\pi\Z$, and comparing \eq{nc4eq1} for $e_1,e_2,e_3$ and $e_1,e_2',e_3'$ we see that
\e
\begin{aligned}
\hat\om^{\prime I}&=\hat\om^I, & \hat\om^{\prime J}&=\cos\phi\, \hat\om^J+\sin\phi\,\hat\om^K, & \hat\om^{\prime K}&=-\sin\phi\, \hat\om^J+\cos\phi\,\hat\om^K,\\
I'&=I, & J'&=\cos\phi\, J+\sin\phi\,K, & K'&=-\sin\phi\, J+\cos\phi\,K.
\end{aligned}
\label{nc4eq2}
\e
Hence $\hat\om^I,I$ are independent of the choice of $e_2,e_3$ in the orthonormal basis of sections $(e_1,e_2,e_3)$ on $L'$. So by covering $L$ by open $L'\subseteq L$ on which such a basis $(e_1,e_2,e_3)$ exists, we construct global $\hat\om^I\in\Ga^\iy(\La^2\nu^*)$ and $I:\nu\ra\nu$ satisfying \eq{nc4eq1} on each $L'$ in the cover.

Now $I:\nu\ra\nu$ with $I^2=-\bs 1$ is a complex structure on the fibres of $\nu\ra L$, making $\nu$ into a rank 2 complex vector bundle, with fibre $\C^2$.

\begin{rem} The complex structure $I$ on $\nu$, identifying the $\R^4$ fibres with $\C^2$, can also be understood as follows. The unit $1$-form $e_1$ on $L$ is metric dual to a unit vector field, still denoted $e_1$, on $L$. Because $L$ is associative in $M$, it follows that \emph{cross product} with $e_1$ takes normal vectors to normal vectors, and squares to minus the identity. Thus $I = e_1 \times (\cdot)$. See also Remark~\ref{assocrmk}.
\label{nc4rem1}
\end{rem}

Form the quotient $\nu/\{\pm 1\}$, as a 7-dimensional orbifold, and write $\varpi:\nu\ra\nu/\{\pm 1\}$ and $\pi:\nu/\{\pm 1\}\ra L$ for the obvious projections and $0:L\ra\nu/\{\pm 1\}$ for the zero section. Then $\pi:\nu/\{\pm 1\}\ra L$ is a fibre bundle with fibre $\C^2/\{\pm 1\}$. 

As in \S\ref{nc25}, the Eguchi--Hanson space $X$ is the blow-up $B:X\ra\C^2/\{\pm 1\}$ of $\C^2/\{\pm 1\}$ at 0, with exceptional divisor $Y=B^{-1}(0)$ where $Y\cong\CP^1$. We can do this blow-up construction fibrewise over $L$. So let $\rho:P\ra\nu/\{\pm 1\}$ be the bundle blow-up of $\nu/\{\pm 1\}$ along the zero section $0(L)\subset\nu/\{\pm 1\}$, using the complex structure $I$ on the fibres $\C^2/\{\pm 1\}$ to define the blow-up. Write $\si=\pi\ci\rho:P\ra L$. Then $P$ is a smooth 7-manifold, and $\si:P\ra L$ a smooth map which is a bundle with fibre the Eguchi--Hanson space $X$.

Write $Q=\rho^{-1}(0(L))\subset P$ for the preimage of the zero section in $P$, and write $\inc : Q\hookra P$ for the inclusion, and $\Pi=\si\vert_Q:Q\ra L$. Then $Q$ is a smooth 5-manifold, an embedded submanifold of $P$ with embedding $\inc:Q\hookra P$, and $\Pi:Q\ra L$ is smooth and a fibre bundle with fibre $\CP^1$ the exceptional divisor $Y\subset X$ from \S\ref{nc25}. For each $x\in L$, write $P_x=\si^{-1}(x)\subset P$ and $Q_x=\Pi^{-1}(x)\subset Q$ for the fibres of $\si:P\ra L$ and $\Pi:Q\ra L$ over $x$, respectively. Then $P_x$ is diffeomorphic to the Eguchi--Hanson space $X$, and $Q_x\subset P_x$ to the exceptional divisor~$Y\subset X$.

As in \S\ref{nc25}, we have $Y\cong\CP^1=\P(\C^2)$, and there is a natural projection $\pi:X\ra Y$ which realizes $X$ as the total space of the complex line bundle $T^*Y\ra Y$. Similarly, there is a natural projection $\pi:P\ra Q$ with $\Pi\ci\pi=\si:P\ra L$, such that $\pi_x:P_x\ra Q_x$ is identified with $\pi:X\ra Y$ on the fibres over each $x\in L$. We summarize our work so far in the diagram
\begin{equation*}
\xymatrix@C=70pt@R=15pt{
*+[r]{P} \ar[r]_(0.6){\rho} \ar[dr]^(0.65){\si} \ar@<-.5ex>[d]_{\pi} & \nu/\{\pm 1\} \ar@<-.5ex>[d]_{\pi} & *+[l]{\nu} \ar@<-.5ex>[d]_{\pi} \ar[l]^(0.45){\varpi} \\
*+[r]{Q=\P_\C(\nu)} \ar@<-.5ex>[u]_{\inc} \ar[r]^{\Pi} & L \ar@<-.5ex>[u]_{0} \ar@{=}[r] & *+[l]{L.\!} \ar@<-.5ex>[u]_{0} }
\end{equation*}
Note that $P,Q,\rho,\si,\pi,\Pi$ on the left hand side depend on the complex structure $I$ on the fibres of $\nu$ used to define the blow-up.

\begin{rem} There is an equivalent way to define $P,Q$ using principal bundles. Corresponding to the rank 2 complex vector bundle $(\nu,I)\ra L$ there is a natural $\GL(2,\C)$-principal bundle $F_{\GL(2,\C)}\ra L$, where points of $F_{\GL(2,\C)}$ over $x\in L$ are isomorphisms $(\nu\vert_x,I\vert_x)\cong(\C^2,i)$. Then we may write
\e
P\cong (F_{\GL(2,\C)} \t X)/\GL(2,\C),\qquad Q\cong (F_{\GL(2,\C)} \t Y)/\GL(2,\C),
\label{nc4eq3}
\e
where $\GL(2,\C)$ acts on $X$ and $Y$ in the natural ways, and on $F_{\GL(2,\C)}$ by the principal bundle action. Equation \eq{nc4eq3} makes it clear that $P,Q$ are bundles over $L$ with fibres $X,Y$, and that the bundle blow-up procedure is well defined.

If we also use the Hermitian metric $h_\nu$ on the fibres of $(\nu,I)$ we can instead define a natural $\U(2)$-principal bundle $F_{\U(2)}\ra L$, where points of $F_{\U(2)}$ over $x\in L$ are isomorphisms $(\nu\vert_x,I\vert_x,h_\nu\vert_x)\cong(\C^2,i,h_0)$. Then as in \eq{nc4eq3} we have natural diffeomorphisms
\e
P\cong (F_{\U(2)} \t X)/\U(2),\qquad Q\cong (F_{\U(2)} \t Y)/\U(2).
\label{nc4eq4}
\e
This means that $\U(2)$-invariant tensors on $X$, including the Eguchi--Hanson metrics $h_a$ for $a>0$ in \S\ref{nc25}, can be carried over to~$P$.
\label{nc4rem2}
\end{rem}

\begin{rem} Note that $\rho\vert_{P\sm Q}:P\sm Q\ra (\nu/\{\pm 1\})\sm 0(L)$ is a diffeomorphism, since $B\vert_{X\sm Y}:X\sm Y\ra(\C^2/\{\pm 1\})\sm\{0\}$ is a diffeomorphism. Thus, if $T$ is any tensor on the total space of $\nu$ that is invariant under $\{\pm 1\}$, it descends to $T$ on $\nu/\{\pm 1\}$, and then pulls back to a smooth tensor $\rho\vert_{P\sm Q}^*(T)$ on $P\sm Q$.

However, as explained in Remark \ref{nc2rem2} for $B:X\ra\C^2/\{\pm 1\}$, it is {\it not\/} necessary that $\rho\vert_{P\sm Q}^*(T)$ extends smoothly to a tensor on all $P$, although if it does extend, the extension is unique, as $P\sm Q$ is open and dense in $P$.

For example, the radius function $r:\nu\ra[0,\iy)$ defined using $h_\nu$ is $\{\pm 1\}$-invariant, and so descends to $r:\nu/\{\pm 1\}\ra[0,\iy)$. We write
\begin{equation*}
\check r=r\ci\rho:P\ra[0,\iy)
\end{equation*}
for the radius function on $P$, so that $Q=\check r^{-1}(0)$. In Remark \ref{nc2rem2} we explained that although $r^2:\C^2\ra\R$ and $r^2:\C^2/\{\pm 1\}\ra\R$ are smooth (in the orbifold sense), $r^2:X\ra\R$ is {\it not\/} smooth, but $r^4:X\ra\R$ is. As $r^4:X\ra\R$ is $\U(2)$-invariant, it follows from \eq{nc4eq4} that $\check r^2:P\ra\R$ is not smooth, but $\check r^4:P\ra\R$ is smooth.

In what follows, we often define metrics, exterior forms, and tensors on $P$ by pullback along $\rho\vert_{P\sm Q}:P\sm Q\ra (\nu/\{\pm 1\})\sm 0(L)$, as above, but we must be careful to justify that these extend smoothly from $P\sm Q$ to $P$.
\label{nc4rem3}	
\end{rem}

\subsection{\texorpdfstring{Constructing a splitting $TP\cong\breve V\op\breve H$}{Constructing a splitting TP≅V⊕Ĥ}}
\label{nc42}

In \S\ref{nc32} we chose a connection $\ti\nabla^\nu$ on $\nu\ra L$ and defined a splitting $T\nu=V\op H$ in \eq{nc2eq16}, where $V\cong \pi^*(\nu)$ and $H\cong \pi^*(TL)$. Similarly, we will need a splitting $TP\cong\breve V\op\breve H$ for the fibre bundle $\si:P\ra L$, where the vertical subbundle $\breve V=\Ker(\d\si:TP\ra\si^*(TL))$ is the subbundle of $TP$ whose fibre at a point $p$ is the tangent space at $p \in P_{\sigma(p)}$ of the Eguchi--Hanson fibre $P_{\sigma(p)}$ of $P$ over $\sigma(p) \in L$, and the horizontal subbundle $\breve H$ has~$\breve H\cong\si^*(TL)$.

The splitting $T\nu=V\op H$ is $\{\pm 1\}$-invariant, so it descends to $T(\nu/\{\pm 1\})=V\op H$. For compatibility with \S\ref{nc3} we would like $\rho:P\ra\nu/\{\pm 1\}$ to map $\breve V,\breve H$ on $P$ to $V,H$ on $\nu/\{\pm 1\}$ near infinity in $P$. This works for $\breve V,V$ because $\rho$ is fibre-preserving and $\breve V, V$ are canonical. However, we cannot simply define $\breve H$ by pulling back $H$ by $\rho:P\ra\nu/\{\pm 1\}$, since as in Remark \ref{nc4rem3}, the resulting $\breve H$ might not be smooth at~$Q\subset P$.

As $\rho:P\ra\nu/\{\pm 1\}$ was defined using the complex structure $I$ on the fibres of $\nu$ in \S\ref{nc41}, it is the case that the pullback of $H$ on $\nu/\{\pm 1\}$ to $P\sm Q$ extends smoothly over $Q$ if $\ti\nabla^\nu I=0$. This is a consequence of the description of $P$ in \eq{nc4eq4}. But $\ti\nabla^\nu$ was determined uniquely in Lemma \ref{nc3lem3}, so we are \emph{not} free to choose $\ti\nabla^\nu$ with~$\ti\nabla^\nu I=0$.

Choose another connection $\check\nabla^\nu$ on $\nu\ra L$ compatible with $h_\nu$ and $I$, so that $\check\nabla^\nu h_\nu=\check\nabla^\nu I=0$. This can always be done by general principal bundle theory, and is not unique. Write $\check\nabla^\nu=\ti\nabla^\nu+\Ga$ for $\Ga\in\Ga^\iy(\End(\nu)\ot T^*L)$. Then $\ti\nabla^\nu+s \Ga$ for $s\in[0,1]$ is a 1-parameter family of connections interpolating between $\ti\nabla^\nu$ at $s=0$, and $\check\nabla^\nu$ at $s=1$. Choose a smooth function $a:\R\ra\R$ with $a(r)=1$ for $r\le \ha$, and $a(r)\in(0,1)$ for $r\in(\ha,1)$, and $a(r)=0$ for $r\ge 1$. Define a splitting $TP\cong \breve V\op\breve H$ by $\breve V=\Ker(\d\si:TP\ra\si^*(TL))$, and for $p\in P\sm Q$, define $\breve H\vert_p$ to be identified by $\d_p\rho:T_pP\ra T_{\rho(p)}(\nu/\{\pm 1\})$ with the horizontal subspace of the connection $\ti\nabla^\nu+(a\ci \check r(p)) \Ga$ on $\nu$ at $\rho(p)$ in~$\nu/\{\pm 1\}$.

Then $\breve H\vert_p$ depends smoothly on $p\in P\sm Q$, since $a\ci\check r:P\ra\R$ is smooth. Where $\check r\ge 1$ in $P$ we have $a\ci\check r=0$, so $\breve H$ is defined using $\ti\nabla^\nu$, and coincides with the pullback of $H$ in \S\ref{nc32} as we want. Where $\check r\le\ha$ in $P$ we have $a\ci\check r=1$, so $\breve H$ is defined using $\check\nabla^\nu$, and as $\check\nabla^\nu I=0$ we see that $\breve H$ extends smoothly over $Q$. Thus, we have defined a smooth splitting of vector bundles on $P$:
\begin{equation*}
TP=\breve V\op\breve H,\;\>\text{where $\breve V\vert_p=T_pP_{\si(p)}\subset T_pP$, $p\in P$, and $\breve H\cong \si^*(TL)$.}
\end{equation*}
As in \eq{nc3eq12}, this induces a splitting
\e
\La^kT^*P= \bigop_{i+j=k, \;
0\le i\le 4,\; 0\le j\le 3}\La^i\breve V^*\ot \La^j\breve H^*,
\label{nc4eq5}
\e
and we call a $k$-form on $P$ {\it of type\/} $(i,j)$ if it lies in the factor $\La^i\breve V^*\ot \La^j\breve H^*$ in \eq{nc4eq5}. We write $[\al]_{i,j}$ for the type $(i,j)$ component of a $k$-form $\al$, so~$\al=\sum_{i+j=k}[\al]_{i,j}$.

For each $x\in L$, the fibre $P_x$ of $\si:P\ra L$ over $x$ is isomorphic to the Eguchi--Hanson space $X$, which is a complex surface, and $\breve V_p=T_pP_x$ for $p\in P_x$. The complex structures on each $P_x$ induce a unique vector bundle isomorphism $I:\breve V\ra\breve V$ on $P$ with~$I^2=-\bs 1$.

\begin{rem} We address here a subtle point. Recall that in $TP=\breve V\op\breve H$, the vertical subbundle $\breve V=\Ker(\d\si:TP\ra\si^*(TL))$ is canonical, but the horizontal subbundle $\breve H$ depends on an arbitrary choice. However, in $T^*P=\breve V^*\op\breve H^*$, the factor $\breve H^*$ is canonical, because it is the annihilator $(\breve V)^\ci$, and in addition $\breve H^*$ is the pullback $\si^*(T^*L)$ of the cotangent bundle of $L$. But $\breve V^*$ depends on a choice, as it is~$(\breve H)^\ci$.

Let $x\in L$, so that $P_x\subset P$ is the fibre of $\si:P\ra L$ over $x$, a 4-submanifold of $P$. Then $(0,1)$-forms on $P$ restrict to zero on $P_x$, so $(i,j)$-forms for $j>0$ restrict to zero on $P_x$. If $\al$ is a $k$-form on $P$, then $\al=\sum_{i+j=k}[\al]_{i,j}$ implies~that
\e
[\al]_{k,0}\vert_{P_x}=\al\vert_{P_x}\qquad\text{in $k$-forms on $P_x$.}
\label{nc4eq6}
\e

Here is the subtle point. Although $[\al]_{k,0}$ depends on the choice of $\breve H$, its restriction $[\al]_{k,0}\vert_{P_x}$ does not. In fact, $[\al]_{k,0}$ is independent of $\breve H$ {\it when considered as a section of the vector bundle $\La^k\breve V^*$ associated to\/} $\breve V=\Ker(\d\si:TP\ra\si^*(TL))$. But {\it when considered as a $k$-form on\/} $P$, it does depend on $\breve H$, as the embedding $\La^k\breve V^*\hookra\La^kT^*P$ depends on~$\breve H$.
\label{nc4rem4}	
\end{rem}

\subsection{\texorpdfstring{Defining some forms and tensors on $P$}{Defining some forms and tensors on P}}
\label{nc43}

In this section we define some forms and tensors on $P$ that will be used to construct $\Gt$-structures $(\vp_t^P,g_t^P)$ in \S\ref{nc44}. Let $f_a:(0,\iy)\ra\R$ be the smooth function in \eq{nc2eq15} such that $f_a(r):\C^2/\{\pm 1\}\ra\R$ is the K\"ahler potential of the Eguchi--Hanson space $(X,h_a)$, where $f_a$ depends smoothly on $a\in(0,\iy)$. Let $A:L\ra(0,\iy)$ be the smooth function $A(x) = \md{\la_x}_{g_L}$ from \S\ref{nc41}, so that $A\ci\si:P\ra(0,\iy)$ is smooth, and $\check r\vert_{P\sm Q}:P\sm Q\ra(0,\iy)$ is smooth. Thus $f_{A\ci\si}(\check r):P\sm Q\ra(0,\iy)$ is a smooth function mapping $p\mapsto f_{A\ci\si(p)}(\check r(p))$. Note that $f_{A\ci\si(p)}(\check r(p))\ra -\iy$ as $p\ra Q$ in $P$, so $f_{A\ci\si}(\check r)$ \emph{does not} extend smoothly from $P\sm Q$ to~$P$. 

Thus, $\d(f_{A\ci\si}(\check r))$ is a closed 1-form on $P\sm Q$, and we denote by $[\d(f_{A\ci\si}(\check r))]_{1,0}$ its type (1,0)-component in $\breve V^*\subset\breve V^*\op\breve H^*=T^*P$. Applying the complex structure $I:\breve V\ra\breve V$ in \S\ref{nc42} gives another type (1,0) 1-form $I([\d(f_{A\ci\si}(\check r))]_{1,0})$ on $P\sm Q$, so we can take $\d$ of this, and divide into components of type (2,0), (1,1), and (0,2), which we do to define 2-forms $\check\om^I,\ka_{1,1},\ka_{0,2}$ of types (2,0), (1,1), (0,2) on $P\sm Q\subset P$ as follows:
\e
\begin{split}
\check\om^I&=-\ts\frac{1}{4}\bigl[\d\bigl(I([\d(f_{A\ci\si}(\check r))]_{1,0})\bigr)\bigr]_{2,0}\\
&=-\ts\frac{1}{4}(A\ci\si) \bigl[\d\bigl((A\ci\si)^{-1} I([\d(f_{A\ci\si}(\check r))]_{1,0})\bigr)\bigr]_{2,0},\\
\ka_{1,1}&=-\ts\frac{1}{4}\bigl[\d\bigl((A\ci\si)^{-1} I([\d(f_{A\ci\si}(\check r))]_{1,0})\bigr)\bigr]_{1,1},\\
\ka_{0,2}&=-\ts\frac{1}{4}\bigl[\d\bigl((A\ci\si)^{-1} I([\d(f_{A\ci\si}(\check r))]_{1,0})\bigr)\bigr]_{0,2}.\end{split}
\label{nc4eq7}
\e
By $(A \ci \si)^{-1}$ we mean the smooth function $P \to (0, \iy)$ given by
\begin{equation*}
(A \ci \si)^{-1} (p) = [ (A \ci \si)(p) ]^{-1} = \frac{1}{ A(\sigma(p)) }.
\end{equation*}
We can also write this as $A^{-1} \ci \si$ where $A^{-1} (x) = [A(x)]^{-1}$. The first two lines of \eq{nc4eq7} are equal because $\d((A\ci\si)^{-1})=\si^*(\d(A^{-1}))$ is a 1-form of type (0,1), and so makes no contribution to the type (2,0) component.

\begin{rem} The motivation for defining $\check\om^I$ as we did in \eq{nc4eq7} comes from \eq{nc2eq16}. If one thinks of restricting to a fixed fibre $P_x$ of $P$, then since we take only vertical (fibre) derivatives in the definition of $\check\om^I$ it corresponds to the hyperK\"ahler form $\om^I_a$ on $X \cong P_x$ in the domain $X \sm Y$. We make this more explicit at the end of this section. On the other hand, the forms $\ka_{1,1}$ and $\ka_{2,2}$ were defined as they were in \eq{nc4eq7} precisely so that
\e
\d\bigl[(A\ci\si)^{-1} \check\om^I+\ka_{1,1}+\ka_{0,2}\bigr]=0,
\label{nc4eq8}
\e
which is easy to see because $(A\ci\si)^{-1} \check\om^I+\ka_{1,1}+\ka_{0,2}$ is exact.
\label{nc4rem5}
\end{rem}

We now use the crucial assumption that $\la$ is closed and coclosed on $L$ to derive two important relations, namely \eq{nc4eq9} and \eq{nc4eq10}, which will be used to construct \emph{closed} 3- and 4-forms on $P$ in \S\ref{nc44}.

Since $\la=A e_1$ is a closed 1-form on $L$, taking the exterior product of \eq{nc4eq8} with $\si^*(\la)=(A\ci\si)\si^*(e_1)$, which is of type (0,1), implies that
\e
\d\bigl[\si^*(e_1)\w \check\om^I+\si^*(\la)\w\ka_{1,1}+\si^*(\la)\w\ka_{0,2}\bigr]=0,
\label{nc4eq9}
\e
with $\si^*(e_1)\w \check\om^I$, $\si^*(\la)\w\ka_{1,1}$, $\si^*(\la)\w\ka_{0,2}$ of types (2,1), (1,2), (0,3), respectively.

Similarly, for $e_1,e_2,e_3$ over $L'\subset L$ as in \S\ref{nc41}, so that $*\la=A e_2\w e_3$ is a closed 2-form on $L$, taking the exterior product of \eq{nc4eq8} with $\si^*(*\la)=(A\ci\si)\si^*(e_2)\w\si^*(e_3)$, which is of type (0,2), implies that
\e
\d\bigl[\si^*(e_2)\w\si^*(e_3)\w \check\om^I+\si^*(*\la)\w\ka_{1,1}\bigr]=0,
\label{nc4eq10}
\e
on $\si^{-1}(L')\sm Q$, where $\si^*(*\la)\w\ka_{0,2}=0$ automatically as it is a (0,4)-form.

We want to show that $\check\om^I,\ka_{1,1},\ka_{0,2}$ extend uniquely to smooth forms on all of $P$. Combining equations \eq{nc2eq19} and \eq{nc4eq7} gives the alternative expressions
\e
\begin{aligned}
\check\om^I&\!=\!-\ts\frac{1}{2}(A\!\ci\!\si)\bigl[\d\bigl(I([\d(\log\check r)]_{1,0})\bigr)\bigr]_{2,0}
\!-\!\frac{1}{4}\bigl[\d\bigl(I([\d(H_{A\ci\si}(\check r^4))]_{1,0})\bigr)\bigr]_{2,0},\\
\ka_{1,1}&\!=\!-\ts\frac{1}{2}\bigl[\d\bigl(I([\d(\log\check r)]_{1,0})\bigr)\bigr]_{1,1}\!\!-\!\frac{1}{4}\bigl[\d\bigl((A\!\ci\!\si)^{-1} I([\d(H_{A\ci\si}(\check r^4))]_{1,0})\bigr)\bigr]_{1,1},\\
\ka_{0,2}&\!=\!-\ts\frac{1}{2}\bigl[\d\bigl(I([\d(\log\check r)]_{1,0})\bigr)\bigr]_{0,2}\!\!-\!\ts\frac{1}{4}\bigl[\d\bigl((A\!\ci\!\si)^{-1} I([\d(H_{A\ci\si}(\check r^4))]_{1,0})\bigr)\bigr]_{0,2}.
\end{aligned}\!\!
\label{nc4eq11}
\e
Recall that $\pi:P\ra Q$ may be considered as a complex line bundle over $Q$. The connection $\check\nabla^\nu$ on $\nu\ra L$, which is compatible with $h_\nu$ and $I$, induces a $\U(1)$-connection on this line bundle $\pi:P\ra Q$, whose curvature is a closed 2-form $F_Q$ on $Q$ restricting to $-2\om_{\CP^1}$ in \eq{nc2eq20} on each fibre $Q_x\cong\CP^1$ of $\Pi:Q\ra L$. Calculating in a local trivialization of $(\nu,I)\ra L$ we can show that
\begin{equation*}
\pi^*(F_Q)\vert_{P\sm Q}=\d\bigl(I([\d(\log\check r)]_{1,0})\bigr)\qquad\text{where $\check r\le \ha$ on $P\sm Q$.}
\end{equation*}
Here $\check r\le\ha$ is needed so that $\breve H$ used to define $[\cdots]_{1,0}$ comes from~$\check\nabla^\nu$.

Thus \eq{nc4eq11} implies that where $\check r\le\ha$ on $P\sm Q$ we have
\e
\begin{split}
\check\om^I&=-\ts\frac{1}{2}(A\ci\si)\cdot[\pi^*(F_Q)]_{2,0}
-\ts\frac{1}{4}\bigl[\d\bigl(I([\d(H_{A\ci\si}(\check r^4))]_{1,0})\bigr)\bigr]_{2,0},\\
\ka_{1,1}&=-\ts\frac{1}{2}[\pi^*(F_Q)]_{1,1}-\frac{1}{4}\bigl[\d\bigl((A\ci\si)^{-1}\cdot I([\d(H_{A\ci\si}(\check r^4))]_{1,0})\bigr)\bigr]_{1,1},\\
\ka_{0,2}&=-\ts\frac{1}{2}[\pi^*(F_Q)]_{0,2}-\frac{1}{4}\bigl[\d\bigl((A\ci\si)^{-1}\cdot I([\d(H_{A\ci\si}(\check r^4))]_{1,0})\bigr)\bigr]_{0,2}.
\end{split}
\label{nc4eq12}
\e
Now each term on the right hand sides of \eq{nc4eq12} extends smoothly to $P$, since $\pi^*(F_Q)$ is smooth on $P$ and $\check r^4:P\ra\R$ is smooth as in Remark \ref{nc4rem3}, so $H_{A\ci\si}(\check r^4):P\ra\R$ is smooth. Therefore $\check\om^I,\ka_{1,1},\ka_{0,2}$ are defined as smooth 2-forms on $P$. Also \eq{nc4eq8}--\eq{nc4eq9} hold on all of $P$, by continuity, and \eq{nc4eq10} holds on~$\si^{-1}(L')$.

We now make Remark \ref{nc4rem5} more explicit. Let $x\in L$. By restricting \eq{nc4eq7} to $P_x$, using \eq{nc4eq6} twice, and the fact that $\d$ commutes with pullbacks we find that
\e
\check\om^I\vert_{P_x}=-\ts\frac{1}{4}\d\bigl(I(\d(f_{A(x)}(\check r)\vert_{P_x}))\bigr),
\label{nc4eq13}
\e
where there are now no projections $[\cdots]_{1,0},[\cdots]_{2,0}$, and we use $A\ci\si\vert_{P_x}\equiv A(x)$. By construction, $(P_x,I\vert_{P_x})$ is isomorphic as a complex surface to the Eguchi--Hanson space $X$. By \eq{nc4eq4}, this isomorphism is only canonical up to the action of $\U(2)$ on $X$. Comparing \eq{nc2eq16} and \eq{nc4eq13} shows that this isomorphism $P_x\cong X$ identifies $\check\om^I\vert_{P_x}$ with the K\"ahler form $\om_{A(x)}^I$ of the hyperK\"ahler metric $h_{A(x)}$ on $X$. Therefore if we define a section $\check g_{2,0}$ of $S^2\breve V^*\ra P$ by
\e
\check g_{2,0}(v,w)=\check\om^I(v,Iw)\qquad \text{for all $v,w\in\Ga^\iy(\breve V)$}
\label{nc4eq14}
\e 
then $\check g_{2,0}\vert_{P_x}$ is identified with the Eguchi--Hanson metric $h_{A(x)}$ on $X$ by the isomorphism $P_x\cong X$. That is, $\check g_{2,0}$ is a family of Eguchi--Hanson metrics on the fibres $P_x$ of~$\si:P\ra L$ at $x\in L$ .

\subsection{\texorpdfstring{A family of $\Gt$-structures $(\vp_t^P,g_t^P)$ on $P$}{A family of G₂-structures (φᵗᴾ,gᵗᴾ) on P}}
\label{nc44}

Now return to our choice in \S\ref{nc41} of open $L'\subseteq L$, and oriented orthonormal basis $(e_1,e_2,e_3)$ of 1-forms on $L'$. Recall that in \S\ref{nc41} we had defined $\la=A e_1$ for smooth $A:L\ra(0,\iy)$, and the data $\hat\om^I,\hat\om^J,\hat\om^K,I,J,K$ on $L'$. We will show that the important structures we define are independent of the choices of $L',e_2,e_3$.

In \S\ref{nc25} we noted that because $B:X\ra\C^2/\{\pm 1\}$ is a holomorphic blow-up, and $\om^J_0 + i\om^K_0 = \d z_1\w\d z_2$ is a holomorphic $(2,0)$-form on $\C^2/\{\pm 1\}$, the pullback $\om^J + i\om^K := B^*(\om^J_0 + i\om^K_0)$ is a holomorphic, and hence smooth, complex 2-form on $X$, so that $\om^J=B^*(\om^J_0)$ and $\om^K=B^*(\om^K_0)$ are smooth. Similarly, there are unique smooth sections $\check\om^J,\check\om^K$ of $\La^2\breve V^*$ on $\si^{-1}(L')\subseteq P$, such that for each $x\in L'$, under the identifications $\La^2\breve V^*\vert_{P_x}\cong \La^2T^*P_x$ and $\La^2T^*(\nu_x/\{\pm 1\})\cong\pi^*(\La^2\nu_x^*)$ we have $\check\om^J\vert_{P_x}\cong \rho^*(\pi^*(\hat\om^J\vert_x))$ and $\check\om^K\vert_{P_x}\cong \rho^*(\pi^*(\hat\om^K\vert_x))$.

Thus, for each $x\in L'$, we have 2-forms $\check\om^I\vert_{P_x},\check\om^J\vert_{P_x},\check\om^K\vert_{P_x}$ on $P_x$ which are identified with the 2-forms $\om^I_{A(x)},\om^J,\om^K$ on the Eguchi--Hanson space $X$ in \S\ref{nc25} by the isomorphism $P_x\cong X$. We have 
\begin{equation*}
\om^I_{A(x)}\w\om^I_{A(x)}=\om^J\w\om^J=\om^K\w\om^K
\end{equation*}
on $X$ by properties of hyperK\"ahler manifolds. Thus we deduce that
\e
\check\om^I\w\check\om^I=\check\om^J\w\check\om^J=\check\om^K\w\check\om^K\quad\text{in $\La^4\breve V^*$ on $\si^{-1}(L')\subseteq P$.}
\label{nc4eq15}
\e

Let $\de$ denote the dilation vector field on $\C^2/\{\pm 1\}$. Since the closed form $\om^J_0 + i \om^K_0 = \d z_1 \w \d z_2$ is homogeneous of order 2 under dilations, we deduce that on $\C^2/\{\pm 1\}$ we have
\e
\om^J_0+i\om^K_0=\ha\cL_\de(\om^J_0+i\om^K_0)=\ha\d[\de\cdot \om^J_0+i\de\cdot\om^K_0].
\label{nc4eq16}
\e
In complex coordinates, $\de = z_k \partial_{z_k} + \bar z_k \partial_{\bar z_k}$. Hence $\de\cdot \om^J_0+i\de\cdot\om^K_0=z_1\d z_2-z_2\d z_1$ is a holomorphic (1,0)-form on $\C^2/\{\pm 1\}$, so that $B^*(\de\cdot \om^J_0+i\de\cdot\om^K_0)$ is a holomorphic (and hence smooth) 1-form on $X$. Thus there are unique smooth sections $\mu^J,\mu^K$ of $\breve V^*$ on  $\si^{-1}(L')\subseteq P$, such that for each $x\in L'$, under the identifications $\La^2\breve V^*\vert_{P_x}\cong \La^2T^*P_x$ and $\La^2T^*(\nu_x/\{\pm 1\})\cong\pi^*(\La^2\nu_x^*)$ we have $\mu^J\vert_{P_x}\cong \ha\rho^*(\de\cdot\pi^*(\hat\om^J\vert_x))$ and $\mu^K\vert_{P_x}\cong \ha\rho^*(\de\cdot\pi^*(\hat\om^K\vert_x))$. Pulling back \eq{nc4eq16} to $P_x$ gives
\e 
\label{nc4eq17}
\check\om^J\vert_{P_x}=\d(\mu^J\vert_{P_x})\quad\text{and}\quad \check\om^K\vert_{P_x}=\d(\mu^K\vert_{P_x}).
\e
Now using \eq{nc4eq6} and the fact that restriction of 2-forms in $\La^2\breve V^*\subset\La^2T^*P$ to $P_x$ is an isomorphism $\La^2\breve V^*\vert_{P_x}\ra \La^2T^*P_x$, we see that \eq{nc4eq17} implies
\e
\check\om^J=[\d\mu^J]_{2,0} \quad\text{and}\quad \check\om^K=[\d\mu^K]_{2,0}\quad\text{on $\si^{-1}(L')\subseteq P$.}
\label{nc4eq18}
\e

On $\C^2/\{\pm 1\}$ we have $\om_0^J\w\om_0^J=\om_0^K\w\om_0^K$, so contracting with $\de$ implies that $(\de\cdot\om_0^J)\w\om_0^J=(\de\cdot\om_0^K)\w\om_0^K$. Pulling this equation back to $\si^{-1}(L')$ as for $\check\om^J,\check\om^K,\mu^J,\mu^K$ implies that
\e
\mu^J\w\check\om^J=\mu^K\w\check\om^K.
\label{nc4eq19}
\e
Finally, on $\C^2/\{\pm 1\}$ we also have $\d[(\de\cdot\om_0^J)\w\om_0^J]=2\om_0^J\w\om_0^J$, so arguing as for \eq{nc4eq18}, using \eq{nc4eq15} and \eq{nc4eq19} we deduce that on $\si^{-1}(L')$ we have
\e
[\d(\mu^J\w\check\om^J)]_{4,0}=[\d(\mu^K\w\check\om^K)]_{4,0}=\check\om^I\w\check\om^I=\check\om^J\w\check\om^J=\check\om^K\w\check\om^K.
\label{nc4eq20}
\e

Recall we defined $\check g_{2,0}$ in \eq{nc4eq14}. Now we define a tensor $\check g_{0,2}$ in $S^2\breve V^*\subset S^2T^*P$ and exterior forms $\check\vp_{0,3},\ab\ldots,\ab\th_{2,2}$ on $\si^{-1}(L')\subset P$, where subscripts $(\cdots)_{i,j}$ indicate a form of type $(i,j)$, as follows. The motivation for the definitions will be apparent when we derive \eq{nc4eq22} after the definitions. We define:
\ea
\check g_{0,2}&=\si^*(e_1)^2+\si^*(e_2)^2+\si^*(e_3)^2,
\nonumber\\
\check\vp_{0,3}&=\si^*(e_1)\w\si^*(e_2)\w\si^*(e_3)=\si^*(\vol_L), 
\nonumber\\ 
\check\vp_{2,1}&=-\si^*(e_1)\w\check\om^I-\si^*(e_2)\w\check\om^J-\si^*(e_3)\w\check\om^K,
\nonumber\\
\check\psi_{2,2}&=-\si^*(e_2)\w\si^*(e_3)\w\check\om^I-\si^*(e_3)\w\si^*(e_1)\w\check\om^J-\si^*(e_1)\w\si^*(e_2)\w\check\om^K, 
\nonumber\\ 
\begin{split}
\check\psi_{4,0}&=\ha\check\om^I\w\check\om^I=\ha\check\om^J\w\check\om^J=\ha\check\om^K\w\check\om^K,\\
\xi_{1,2}&=-\si^*(\la)\w\ka_{1,1}+[\d(\si^*(e_2)\w\mu^J+\si^*(e_3)\w\mu^K)]_{1,2}, 
\end{split}
\label{nc4eq21}\\
\xi_{0,3}&=-\si^*(\la)\w\ka_{0,2}+[\d(\si^*(e_2)\w\mu^J+\si^*(e_3)\w\mu^K)]_{0,3} 
\nonumber\\
\chi_{1,3}&=-\si^*(*\la)\w\ka_{1,1}-[\d(\si^*(e_3)\w\si^*(e_1)\w\mu^J+\si^*(e_1)\w\si^*(e_2) \w\mu^K)]_{1,3},
\nonumber\\
\th_{3,1}&=\ts\frac{1}{2}[\d(\mu^J\w\check\om^J)]_{3,1}=\frac{1}{2}[\d(\mu^K\w\check\om^K)]_{3,1},
\nonumber\\
\th_{2,2}&=\ts\frac{1}{2}[\d(\mu^J\w\check\om^J)]_{2,2}=\frac{1}{2}[\d(\mu^K\w\check\om^K)]_{2,2}.  
\nonumber
\ea
Here the alternate expressions for $\check\psi_{4,0},\th_{3,1},\th_{2,2}$ come from \eq{nc4eq15} and \eq{nc4eq19}. Using equations \eq{nc4eq9}, \eq{nc4eq10}, \eq{nc4eq18}, \eq{nc4eq20}, \eq{nc4eq21} and $\d\vol_L=0$ we claim that
\e
\begin{aligned}
\d\check\vp_{0,3}&=0, &
\d\bigl[\check\vp_{2,1}+\xi_{1,2}+\xi_{0,3}\bigr]&=0,\\
\d\bigl[\check\psi_{2,2}+\chi_{1,3}\bigr]&=0, &
\d\bigl[\check\psi_{4,0}+\th_{3,1}+\th_{2,2}\bigr]&=0.
\end{aligned}
\label{nc4eq22}
\e
The first equation in \eq{nc4eq22} is obvious. We will derive the second equation. First, we write
\begin{align*}
\xi_{1,2} + \xi_{0,3} & = - \si^*(\la) \w (\ka_{1,1} + \ka_{0,2}) + \d ( \si^* (e_2) \w \mu^J + \si^* (e_3) \w \mu^K) \\
& \qquad {} - [\d ( \si^* (e_2) \w \mu^J + \si^* (e_3) \w \mu^K)]_{2,1} \\
& = - \si^*(\la) \w (\ka_{1,1} + \ka_{0,2}) + \d (\cdots) + \si^*(e_2) \w \check\om^J + \si^*(e_3) \w \check\om^K,
\end{align*}
using \eq{nc4eq18} in the last step. Thus we find
\begin{equation*}
\d (\check\vp_{2,1} + \xi_{1,2} + \xi_{0,3}) = - \d ( \si^*(\la) \w (\ka_{1,1} + \ka_{0,2}) + \si^*(e_1) \w \check\om^I) = 0
\end{equation*}
by \eq{nc4eq9}. The remaining two equations in \eq{nc4eq22} are proved similarly. 

As in \S\ref{nc41}, we are working with an arbitrary choice of $e_2,e_3$ on an open $L'\subseteq L$ with $(e_1,e_2,e_3)$ an oriented orthonormal basis of $T^*L'$. If $e'_2,e'_3$ are alternative choices for $e_2,e_3$ then $e_2'=\cos\phi\, e_2+\sin\phi \,e_3$, $e_3'=-\sin\phi\, e_2+\cos\phi\, e_3$ for some smooth $\phi:L'\ra\R/2\pi\Z$. Equation \eq{nc4eq2} expresses the corresponding $\hat\om^{\prime I},\hat\om^{\prime J},\hat\om^{\prime K},I',J',K'$ in terms of $\hat\om^{\prime I},\hat\om^{\prime J},\hat\om^{\prime K},I',J',K'$. From the definitions of $\check\om^J,\check\om^K,\mu^J,\mu^K$ above, we see that they transform by
\begin{align*}
\check\om^{\prime J}&=\cos\phi\, \check\om^J+\sin\phi\,\check\om^K, & \check\om^{\prime K}&=-\sin\phi\, \check\om^J+\cos\phi\,\check\om^K,\\
\mu^{\prime J}&=\cos\phi\, \mu^J+\sin\phi\,\mu^K, & \mu^{\prime K}&=-\sin\phi\, \mu^J+\cos\phi\,\mu^K.
\end{align*}
We can now check that all of $\check g_{0,2}, \ldots, \th_{2,2}$ in \eq{nc4eq21} are independent of the choice of $e_2,e_3$. Thus by covering $L$ by open $L'\subseteq L$ on which such a basis $(e_1,e_2,e_3)$ exists, we construct global $\check g_{0,2},\ldots,\th_{2,2}$ on all of $P$  satisfying \eq{nc4eq21} over each $L'$ in the cover. Then \eq{nc4eq22} holds over each $L'$ in the cover, so \eq{nc4eq22} holds on~$P$.

In a similar way to \eq{nc3eq21}, for all $t>0$ define a 3-form $\vp_t^P$, 4-form $\psi_t^P$ and metric $g_t^P$ on $P$ by
\e
\vp^P_t=\check\vp_{0,3}+t^2\check\vp_{2,1},\quad \psi_t^P=t^2\check\psi_{2,2}+t^4\check\psi_{4,0},\quad
g_t^P=\check g_{0,2}+t^2\check g_{2,0}.
\label{nc4eq23}
\e
Then over $\si^{-1}(L')$ as above, by \eq{nc4eq21} we have
\begin{align*}
\vp^P_t&=\si^*(e_1)\!\w\!\si^*(e_2)\!\w\!\si^*(e_3)\!-\!t^2\si^*(e_1)\!\w\!\check\om^I\!-\!t^2\si^*(e_2)\!\w\!\check\om^J\!-\!t^2\si^*(e_3)\!\w\!\check\om^K,\\
\psi_t^P&=\ha t^4\check\om^I\w\check\om^I - t^2\si^*(e_2)\w\si^*(e_3)\w\check\om^I \\
& \qquad {} - t^2\si^*(e_3)\w\si^*(e_1)\w\check\om^J - t^2\si^*(e_1)\w\si^*(e_2)\w\check\om^K,\\
g_t^P&=\si^*(e_1)^2+\si^*(e_2)^2+\si^*(e_3)^2+t^2\check g_{2,0}.
\end{align*}
Comparing these with \eq{nc2eq22} and noting that on the fibres $P_x$ of $\si:P\ra L$, the $\check\om^I,\check\om^J,\check\om^K,\check g_{2,0}$ correspond to $\hat\om^I_{A(x)},\hat\om^J,\hat\om^K,h_{A(x)}$ on the Eguchi--Hanson space $X$, we see that $(\vp_t^P,g_t^P)$ is a $\Gt$-structure on $P$, with 4-form $\psi_t^P= \Theta(\vp_t^P) = *_{g_t^P} \vp_t^P$.

Finally, we define on $P$ the following forms:
\e
\ti\vp_t^P=\vp_t^P+t^2\xi_{1,2}+t^2\xi_{0,3},\qquad
\ti\psi_t^P=\psi_t^P+t^2\chi_{1,3}+t^4\th_{3,1}+t^4\th_{2,2}.
\label{nc4eq24}
\e
Using equations \eq{nc4eq22} and \eq{nc4eq23} we find that $\ti\vp_t^P,\ti\psi_t^P$ are {\it closed} forms. However, we emphasize that $\Theta(\ti\vp_t^P) \neq \ti\psi_t^P$.

\subsection{\texorpdfstring{Comparing $\vp_t^P,\psi_t^P,\ti\vp_t^P,\ti\psi_t^P$ and $\vp_t^\nu,\psi_t^\nu,\ti\vp_t^\nu,\ti\psi_t^\nu$}{Comparing φᵗᴾ,ψᵗᴾ,~φᵗᴾ,~ψᵗᴾ and φᵗᵛ,ψᵗᵛ,~φᵗᵛ,~ψᵗᵛ}}
\label{nc45}

In \S\ref{nc33}--\S\ref{nc34} we defined a $\Gt$-structure $(\vp_t^\nu,g_t^\nu)$ with 4-form $\psi_t^\nu=*_{g_t^\nu}\vp_t^\nu$ on $\nu$ for $t>0$, and closed 3- and 4-forms $\ti\vp^\nu_t=\vp^\nu_t+t^2\dddot\vp^2_{0,3}$ and $\ti\psi^\nu_t=\psi^\nu_t+t^4\dddot\psi^4_{2,2}$ on $\nu$. These are all invariant under $\{\pm 1\}$, and so they descend to $\nu/\{\pm 1\}$. In \S\ref{nc44} we defined a $\Gt$-structure $(\vp_t^P,g_t^P)$ with 4-form $\psi_t^P=*_{g_t^P}\vp_t^P$ on $P$ for $t>0$, and closed 3- and 4-forms $\ti\vp_t^P=\vp_t^P+t^2\xi_{1,2}+t^2\xi_{0,3}$ and $\ti\psi_t^P=\psi_t^P+t^2\chi_{1,3}+t^4\th_{3,1}+t^4\th_{2,2}$ on $P$. We will now compare $\vp_t^P,\psi_t^P,\ti\vp_t^P,\ti\psi_t^P$ with the pullbacks of $\vp_t^\nu,\psi_t^\nu,\ti\vp_t^\nu,\ti\psi_t^\nu$ by $\rho:P\ra\nu/\{\pm 1\}$ on the region $\check r>1$ in~$P$, where the connections are compatible.

By the definition of $\rho$ in \S\ref{nc41}, the following map is a diffeomorphism:
\e
\rho\vert_{\check r>1}:\bigl\{p\in P:\check r(p)>1\bigr\}\longra\bigl\{z\in\nu/\{\pm 1\}: r(z)>1\bigr\}.
\label{nc4eq25}
\e
We have splittings $T(\nu/\{\pm 1\})=V\op H$ from \S\ref{nc32} and $TP=\breve V\op\breve H$ from \S\ref{nc42}. Here $H$ is defined using a connection $\ti\nabla^\nu$ on $\nu$, and $\breve H$ is also defined using $\ti\nabla^\nu$ in the region $\check r>1$. Therefore \eq{nc4eq25} identifies $\breve V,\breve H$ with~$V,H$.

As in \S\ref{nc41}, work with an oriented orthonormal basis $(e_1,e_2,e_3)$ of 1-forms on $L'\subseteq L$. Then in forms on $\nu/\{\pm 1\}$ we have
\ea
\vp^0_{0,3}&=\pi^*(e_1)\w\pi^*(e_2)\w\pi^*(e_3)=\pi^*(\vol_L),
\nonumber\\
\dot\vp^2_{2,1}&=-\pi^*(e_1)\w\hat\om^I-\pi^*(e_2)\w\hat\om^J-\pi^*(e_3)\w\hat\om^K,
\nonumber\\
\dot\psi^2_{2,2}&=-\pi^*(e_2)\w\pi^*(e_3)\w\hat\om^I-\pi^*(e_3)\w\pi^*(e_1)\w\hat\om^J-\pi^*(e_1)\w\pi^*(e_2)\w\hat\om^K,
\nonumber\\ 
\dot\psi^4_{4,0}&=\ha\hat\om^I\w\hat\om^I=\ha\hat\om^J\w\hat\om^J=\ha\hat\om^K\w\hat\om^K.
\label{nc4eq26}
\ea
In forms on $\bigl\{p\in P:\check r(p)>1\bigr\}$ we claim that
\e
\begin{split}
\check\om^I&=\rho\vert_{\check r>1}^*(\hat\om^I)-\ts\frac{1}{4}\bigl[\d\bigl(I([\d(G_{A\ci\si}(\check r))]_{1,0})\bigr)\bigr]_{2,0},\\
\check\om^J&=\rho\vert_{\check r>1}^*(\hat\om^J), \quad \check\om^K=\rho\vert_{\check r>1}^*(\hat\om^K), \quad \si^*(e_i)=\rho\vert_{\check r>1}^*(\pi^*(e_i)).
\end{split}
\label{nc4eq27}
\e
Here $G_a(r)=f_a(r)-r^2$ as in \eq{nc2eq17}. The first equation of \eq{nc4eq27} follows from \eq{nc2eq17}, \eq{nc4eq13}, and $\hat\om^I=-\ts\frac{1}{4}\bigl[\d\bigl(I([\d(r^2)]_{1,0})\bigr)\bigr]{}_{2,0}$. The second and third equations follow from the definitions of $\check\om^J,\check\om^K$ and $\rho^*(V^*)=\breve V^*\subset T^*P$ where $\check r>1$ in $P$. The fourth equation holds as~$\si=\pi\ci\rho$.

Now define a 2-form $\tau_{1,1}$ of type $(1,1)$ and a 3-form $\up_{1,2}$ of type $(1,2)$ on the open subset of $P$ where $\check r>1$ by
\e
\begin{split}
\tau_{1,1}&=-\ts\frac{1}{4}(A\ci\si)^{-1}\si^*(\la)\w I([\d(G_{A\ci\si}(\check r))]_{1,0}),\\
\up_{1,2}&=\ts\frac{1}{4}(A\ci\si)^{-1}\si^*(*\la)\w I([\d(G_{A\ci\si}(\check r))]_{1,0}).
\end{split}
\label{nc4eq28}
\e

Combining equations \eq{nc4eq21}, \eq{nc4eq26}, \eq{nc4eq27}, and \eq{nc4eq28} one finds that
\e
\begin{split}
\check\vp_{0,3}\vert_{\check r>1}&=\rho\vert_{\check r>1}^*(\vp^0_{0,3}),	\\
\check\vp_{2,1}\vert_{\check r>1}&=\rho\vert_{\check r>1}^*(\dot\vp^2_{2,1})+[\d\tau_{1,1}]_{2,1}, \\
\check\psi_{2,2}\vert_{\check r>1}&=\rho\vert_{\check r>1}^*(\dot\psi^2_{2,2})+[\d\up_{1,2}]_{2,2}, \\
\check\psi_{4,0}\vert_{\check r>1}&=\rho\vert_{\check r>1}^*(\dot\psi^4_{4,0}),\\
\end{split}
\label{nc4eq29}
\e
and consequently from \eq{nc3eq21} and \eq{nc4eq23} we obtain
\e
\vp_t^P\vert_{\check r>1}=\rho\vert_{\check r>1}^*(\vp_t^\nu)+t^2[\d\tau_{1,1}]_{2,1}, \quad
\psi_t^P\vert_{\check r>1}=\rho\vert_{\check r>1}^*(\psi_t^\nu)+t^2[\d\up_{1,2}]_{2,2}.
\label{nc4eq30}
\e

Similarly, working with an oriented orthonormal basis $(e_1,e_2,e_3)$ on $L'\subseteq L$ as in \S\ref{nc41}, from the definitions and equations above we can show that
\ea
\ti\vp_t^P\vert_{\check r>1}&=\!\check\vp_{0,3}\!-\!
t^2\d\bigl[\ts\si^*(e_1)\w I[\d (\ts\frac{1}{4}f_{A\ci\si}(\check r^2))]_{1,0}\!-\!
\si^*(e_2)\!\w\!\mu^J\!-\!\si^*(e_3)\!\w\!\mu^K
\bigr],
\nonumber\\
\rho\vert_{\check r>1}^*(\ti\vp_t^\nu)&=\check\vp_{0,3}-
t^2\d\bigl[\ts\si^*(e_1)\w I[\ts\frac{1}{4}\d(\check r^2)]_{1,0}-
\si^*(e_2)\w\mu^J-\si^*(e_3)\w\mu^K\bigr],
\nonumber\\
\begin{split}
\ti\psi_t^P\vert_{\check r>1}&=\d\bigl[t^4(\ts\frac{1}{2}\mu^J\w\check\om^J)+t^2\bigl(
\si^*(e_2\w e_3)\w I[\ts\frac{1}{4}\d (f_{A\ci\si}(\check r^2))]_{1,0}\\
&\qquad -\si^*(e_3\w e_1)\w\mu^J-\si^*(e_1\w e_2)\w\mu^K\bigr)\bigr],
\end{split}
\label{nc4eq31}\\
\rho\vert_{\check r>1}^*(\ti\psi_t^\nu)&=\d\bigl[t^4(\ts\frac{1}{2}\mu^J\w\check\om^J)+t^2\bigl(
\si^*(e_2\w e_3)\w I[\ts\frac{1}{4}\d(\check r^2)]_{1,0}
\nonumber\\
&\qquad -\si^*(e_3\w e_1)\w\mu^J-\si^*(e_1\w e_2)\w\mu^K\bigr)\bigr].
\nonumber
\ea
Combining $(A\ci\si)^{-1}\si^*(\la)=e_1$, $(A\ci\si)^{-1}\si^*(*\la)=e_2\w e_3$ and
equations \eq{nc2eq17}, \eq{nc4eq28}, and  \eq{nc4eq31}, we see that
\e
\ti\vp_t^P\vert_{\check r>1}=\rho\vert_{\check r>1}^*(\ti\vp_t^\nu)+t^2\d\tau_{1,1}, \quad
\ti\psi_t^P\vert_{\check r>1}=\rho\vert_{\check r>1}^*(\ti\psi_t^\nu)+t^2\d\up_{1,2}.
\label{nc4eq32}
\e
In the region $\check r>1$, from Corollary \ref{nc3cor}, equations \eq{nc4eq24}, \eq{nc4eq29}, \eq{nc4eq30}, \eq{nc4eq32}, and the fact that $\rho\vert_{\check r>1}$ takes $\breve V,\breve H$ to $V,H$, we see that
\e
\begin{gathered}
\xi_{1,2}\vert_{\check r>1}=[\d\tau_{1,1}]_{1,2},\quad
\xi_{0,3}\vert_{\check r>1}=\rho\vert_{\check r>1}^*(\dddot\vp^2_{0,3})+[\d\tau_{1,1}]_{0,3},\\
\chi_{1,3}\vert_{\check r>1}=[\d\up_{1,2}]_{1,3},\quad 
\th_{3,1}\vert_{\check r>1}=0,
\quad
\th_{2,2}\vert_{\check r>1}=\rho\vert_{\check r>1}^*(\dddot\psi^4_{2,2}).
\end{gathered}
\label{nc4eq33}
\e

Using \eq{nc2eq18} and \eq{nc4eq28} we can show that where $\check r>1$ on $P$ we have
\e
\begin{split}
&\bmd{\nabla^k(t^2\tau_{1,1})}_{g_t^P}=O(t^{1-k}\check r^{-3-k}),\quad \bmd{\nabla^k(t^2\up_{1,2})}_{g_t^P}=O(t^{1-k}\check r^{-3-k}),\\
&\bmd{\nabla^k(t^2[\d\tau_{1,1}]_{i,3-i})}_{g_t^P}=O(t^{2-i-k}\check r^{-2-i-k}), \\
&\bmd{\nabla^k(t^2[\d\up_{1,2}]_{i,4-i})}_{g_t^P}=O(t^{2-i-k}\check r^{-2-i-k}),
\end{split}
\label{nc4eq34}
\e
for all $i=0,1,2$ and $k=0,1,\ldots,$ where $\nabla$ is the Levi-Civita connection of $g_t^P$. Thus, \eq{nc4eq30} and \eq{nc4eq32} imply that $\vp_t^P,\psi_t^P,\ti\vp_t^P,\ti\psi_t^P$ are asymptotic to $\rho^*(\vp_t^\nu),\rho^*(\psi_t^\nu),\rho^*(\ti\vp_t^\nu),\rho^*(\ti\psi_t^\nu)$ as $\check r\ra\iy$ in~$P$.

\subsection{\texorpdfstring{Estimating the torsion of $(\vp^P_t,g^P_t)$}{Estimating the torsion of (φᵗᴾ,gᵗᴾ)}}
\label{nc46}

Just as for $(\vp^\nu_t,g^\nu_t)$ in the discussion after Corollary \ref{nc3cor}, we have constructed a $\Gt$-structure $(\vp^P_t,g^P_t)$ on $P$ with 4-form $\psi^P_t=*_{g^P_t}\vp^P_t$. To apply Theorem \ref{nc2thm3} in \S\ref{nc6}, we will need $(\vp^P_t,g^P_t)$ to have {\it small torsion}, in an appropriate sense. If $\d\vp^P_t=0$ and $\d\psi^P_t=0$ then $(\vp^P_t,g^P_t)$ would be torsion-free, by Theorem \ref{nc2thm1}. In general $\d\vp^P_t\ne 0$ and $\d\psi^P_t\neq 0$, but as $\d\ti\vp^P_t=0$ and $\d\ti\psi^P_t=0$ with $\ti\vp_t^P=\vp_t^P+t^2\xi_{1,2}+t^2\xi_{0,3}$ and $\ti\psi_t^P=\psi_t^P+t^2\chi_{1,3}+t^4\th_{3,1}+t^4\th_{2,2}$ by \eq{nc4eq24}, we can regard $\md{t^2\xi_{1,2}}_{g_t^P},\md{t^2\xi_{0,3}}_{g_t^P},\md{t^2\chi_{1,3}}_{g_t^P},\md{t^4\th_{3,1}}_{g_t^P},\md{t^4\th_{2,2}}_{g_t^P}$ as measuring the torsion of $(\vp^P_t,g^P_t)$. Thus, the next proposition estimates the torsion of~$(\vp^P_t,g^P_t)$.

\begin{prop} In the situation of\/ {\rm\S\ref{nc44},} for all\/ $k=0,1,\ldots$ we have
\ea
\bmd{\nabla^k(t^2\xi_{1,2})}_{g_t^P}&= \begin{cases} O(t^{1-k}), & \check r\le 1, \\ O(t^{1-k}\check r^{-3-k}), & \check r\ge 1, \end{cases} 
\label{nc4eq35} \allowdisplaybreaks\\
\bmd{\nabla^k(t^2\xi_{0,3})}_{g_t^P}&=\begin{cases} O(t^{2-k}), & \check r\le 1, \\ O(t^{2-k}\check r^{2-k}), & \check r\ge 1, \end{cases} 
\label{nc4eq36} \allowdisplaybreaks\\
\bmd{\nabla^k(t^2\chi_{1,3})}_{g_t^P}&=\begin{cases} O(t^{1-k}), & \check r\le 1, \\ O(t^{1-k}\check r^{-3-k}), & \check r\ge 1, \end{cases} 
\label{nc4eq37} \allowdisplaybreaks\\
\bmd{\nabla^k(t^4\th_{3,1})}_{g_t^P}&= \begin{cases} O(t^{1-k}), & \check r\le 1, \\ 0, & \check r\ge 1, \end{cases} 
\label{nc4eq38} \allowdisplaybreaks\\
\bmd{\nabla^k(t^4\th_{2,2})}_{g_t^P}&=\begin{cases} O(t^{2-k}), & \check r\le 1, \\ O(t^{2-k}\check r^{2-k}), & \check r\ge 1. \end{cases} 
\label{nc4eq39}
\ea
\label{nc4prop}	
\end{prop}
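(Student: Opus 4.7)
The plan is to split the analysis along the boundary $\check r = 1$, since the correction forms admit quite different descriptions in the two regions and the $t$- and $\check r$-dependence of the bounds changes there.

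On the asymptotic region $\check r \ge 1$, I would use the identities \eq{nc4eq33}, which express each correction form as a component $[\d\tau_{1,1}]_{i,3-i}$ or $[\d\up_{1,2}]_{i,4-i}$ of the exterior derivatives of the explicit forms defined in \eq{nc4eq28}, possibly plus the pullback $\rho\vert_{\check r > 1}^*(\dddot\vp^2_{0,3})$ or $\rho\vert_{\check r > 1}^*(\dddot\psi^4_{2,2})$ from $\nu$. The $\d\tau_{1,1}$ and $\d\up_{1,2}$ contributions are controlled directly by \eq{nc4eq34}, producing \eq{nc4eq35} and \eq{nc4eq37} outright, together with a decaying $O(t^{2-k}\check r^{-2-k})$ term contributing to \eq{nc4eq36}, which is dominated by the growing pullback term on this region. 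The pullback pieces I would handle via Corollary \ref{nc3cor}, which gives $\md{\dddot\vp^2_{0,3}}_{g^\nu} = O(r^2)$ and $\md{\dddot\psi^4_{2,2}}_{g^\nu} = O(r^2)$; combined with the homogeneity of these forms under fibrewise dilations (so each covariant derivative with respect to $\nabla^{g^\nu}$ reduces the $r$-power by one) and the type conversion \eq{nc3eq22} from $g^\nu$- to $g^\nu_t$-norms, this yields the $O(t^{2-k}\check r^{2-k})$ bounds of \eq{nc4eq36} and \eq{nc4eq39}. Finally \eq{nc4eq38} for $\check r \ge 1$ is trivial since $\th_{3,1}\vert_{\check r > 1} = 0$ by \eq{nc4eq33}.

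On the interior region $\check r \le 1$, which is relatively compact in $P$, I would work directly with the defining formulas \eq{nc4eq21}. Each of $\xi_{1,2}, \xi_{0,3}, \chi_{1,3}, \th_{3,1}, \th_{2,2}$ is a smooth section of the corresponding type bundle $\La^i \breve V^* \ot \La^j \breve H^*$ over all of $P$, thanks to the smooth extension across $Q$ of $\check\om^I, \ka_{1,1}, \ka_{0,2}$ (and hence of $\mu^J, \mu^K$) argued in \S\ref{nc43}. Consequently, on the compact set $\{\check r \le 1\}$, their $g_1^P$-norms and successive $g_1^P$-covariant derivatives are uniformly bounded, independently of $t$. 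The entire $t$-dependence then enters through the anisotropic scaling $g_t^P = \check g_{0,2} + t^2 \check g_{2,0}$, under which a type $(i,j)$ form $\alpha$ satisfies $\md{\alpha}_{g_t^P} = t^{-i}\md{\alpha}_{g_1^P}$. Combining this with the prefactors $t^2, t^4$ in \eq{nc4eq24} and the types $(1,2), (0,3), (1,3), (3,1), (2,2)$ gives the bounds $O(t), O(t^2), O(t), O(t), O(t^2)$ at $k=0$, exactly matching the $\check r \le 1$ branches of \eq{nc4eq35}--\eq{nc4eq39}.

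The main technical obstacle will be the higher covariant derivatives $\nabla^k$ for $k \ge 1$, since the Levi--Civita connection $\nabla^{g_t^P}$ itself depends nontrivially on $t$ and differs from $\nabla^{g_1^P}$ by a $t$-dependent tensor. The key observation that makes this work is that each application of $\nabla^{g_t^P}$ in the $g_t^P$-norm costs at most one extra power of $t^{-1}$, arising from vertical differentiations (since a $g_t^P$-unit vertical vector is $t^{-1}$ times a $g_1^P$-unit vertical vector), while horizontal derivatives remain uniformly bounded. Carefully accounting for this on the compact region produces the $t^{-k}$ factors in \eq{nc4eq35}--\eq{nc4eq39}. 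At the interface $\check r \sim 1$ the two descriptions agree, since the $\d\tau_{1,1}$ and $\d\up_{1,2}$ contributions arise precisely from identifying the underlying forms $\check\om^I, \mu^J, \mu^K$ with their $\nu$-analogues in this region, so the estimates from the two regions patch together uniformly.
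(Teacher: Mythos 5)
Your proposal follows essentially the same two-region strategy as the paper's proof: on $\check r \le 1$ you use the type-weight scaling $\md{\al_{i,j}}_{g_t^P}=O(t^{-i})$ on the compact set, and on $\check r\ge 1$ you combine \eq{nc4eq33}--\eq{nc4eq34} with the bounds from Corollary~\ref{nc3cor} on the pulled-back forms $\dddot\vp^2_{0,3}$ and $\dddot\psi^4_{2,2}$, exactly as the paper does. Your proposal is in fact considerably more detailed than the paper's three-sentence proof, particularly in flagging the $k\ge 1$ subtlety on the compact region (that $\nabla^{g_t^P}$ itself depends on $t$ and one must account for the interplay between the $t$-dependent connection and the $t$-dependent norm), which the paper treats as understood and does not spell out.
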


\begin{proof} The $\check r\le 1$ estimates hold because if $\al_{i,j}$ is a form or tensor of type $(i,j)$ on $P$ then $\md{\al_{i,j}}_{g_t^P}=O(t^{-i})$ on the compact region $\check r\le 1$ in $P$, by the definition \eq{nc4eq23} of $g_t^P$. The $\check r\ge 1$ estimates follow from \eq{nc4eq33}--\eq{nc4eq34} and the estimates $\md{\nabla^k(t^2\dddot\vp^2_{0,3})}_{g^P_t}=O(t^{2-k}\check r^{2-k})$ and $\md{\nabla^k(t^4\dddot\psi^4_{2,2})}_{g^P_t}=O(t^{2-k}\check r^{2-k})$, which both follow from Corollary \ref{nc3cor}.
\end{proof}

\section{\texorpdfstring{Correcting for the leading-order errors on $P$}{Correcting for the leading-order errors on P}}
\label{nc5}

\subsection{The correction theorem}
\label{nc51}

Our goal in \S\ref{nc6} is to glue together the $\Gt$-structures $(\vp, g)$ on $M/\an{\io}$, and $(\ti\vp^\nu_t, \ti g^\nu_t)$ on $\nu/\{\pm 1\}$, and $(\ti\vp^P_t, \ti g^P_t)$ on $P$, to get a closed $\Gt$-structure $(\vp_t^N, g_t^N)$ with small torsion on a compact 7-manifold $N$, and then apply Theorem \ref{nc2thm3} to show that $(\vp_t^N, g_t^N)$ has a small deformation to a torsion-free $\Gt$-structure on $N$ for small $t$. To do this we must ensure that the 4-form $\Th(\vp_t^N)-\psi_t^N$ which is a substitute for the torsion of $(\vp_t^N, g_t^N)$ satisfies several estimates, including $\nm{\Th(\vp_t^N)-\psi_t^N}_{L^2}\le Kt^{\frac{7}{2}+\al}$ for~$\al,K>0$.

Now in Proposition \ref{nc4prop}, the bounds for $\md{t^2\xi_{0,3}}_{g_t^P},$ $\md{t^4\th_{2,2}}_{g_t^P}$ will contribute $O(t^4)$ to $\nm{\Th(\vp_t^N)-\psi_t^N}_{L^2}$, which is good, as $4>\frac{7}{2}$. However, the bounds for $\md{t^2\xi_{1,2}}_{g_t^P}$, $\md{t^2\chi_{1,3}}_{g_t^P}$, $\md{t^4\th_{3,1}}_{g_t^P}$ would contribute $O(t^3)$ to $\nm{\Th(\vp_t^N)-\psi_t^N}_{L^2}$, which is bad, as the error is too large to apply Theorem \ref{nc2thm3}. In Theorem \ref{nc5thm} we construct forms on $P$ which will be used in \S\ref{nc6} to cancel out the leading-order error terms in $t^2\xi_{1,2},t^2\chi_{1,3},t^4\th_{3,1}$, and so make $\nm{\Th(\vp_t^N)-\psi_t^N}_{L^2}$ small enough to apply Theorem \ref{nc2thm3}.

\begin{thm} There exist\/ $2$-forms $\al_{0,2},\al_{2,0}$ and\/ $3$-forms\/ $\be_{0,3},\be_{2,1}$ on $P,$ where $\al_{i,j},\be_{i,j}$ are of type $(i,j),$ satisfying for all\/ $t>0$ the equation
\e
\begin{aligned}
& (D_{\vp^P_t} \Th) \bigl( t^2 [\d\al_{0,2}]_{1,2} + t^4 [\d\al_{2,0}]_{3,0} + t^2\xi_{1,2} \bigr) \\
& \qquad = t^2 \d \be_{0,3} + t^4 [ \d \be_{2,1} ]_{3,1} + t^2 \chi_{1,3} + t^4 \th_{3,1}.
\end{aligned}
\label{nc5eq1}
\e
Note that in \eq{nc5eq1} we have $\d \be_{0,3} = [ \d \be_{0,3} ]_{1,3}$ automatically, so all the derivatives in \eq{nc5eq1} are in fact vertical (fibre) derivatives. Moreover, for $\gamma > 0$ sufficiently small and for all $k \geq 0$, these forms satisfy the following estimates:
\ea
\bmd{\nabla^k(t^2\al_{0,2})}_{g_t^P} & = \begin{cases} O(t^{2-k}), & \check r \le 1, \\ O(t^{2-k} \check r^{-2-k+\ga}), & \check r \ge 1, \end{cases} 
\label{nc5eq2}
\allowdisplaybreaks\\
\bmd{\nabla^k(t^2[\d\al_{0,2}]_{i,3-i})}_{g_t^P} & = \begin{cases} O(t^{2-i-k}), & \check r \le 1,\; i=0,1, \\[4pt] O(t^{2-i-k} \check r^{-2-i-k+\ga}), & \check r\ge 1,\; i=0,1, \end{cases} 
\label{nc5eq3}
\allowdisplaybreaks\\
\bmd{\nabla^k(t^4\al_{2,0})}_{g_t^P} & = \begin{cases} O(t^{2-k}), & \check r \le 1, \\ O(t^{2-k} \check r^{-2-k+\ga}), & \check r \ge 1, \end{cases} 
\label{nc5eq4}
\allowdisplaybreaks\\
\bmd{\nabla^k(t^4[\d\al_{2,0}]_{i,3-i})}_{g_t^P} & = \begin{cases} O(t^{4-i-k}), & \check r \le 1,\; i=1,2,3, \\[4pt] O(t^{4-i-k} \check r^{-i-k+\ga}), & \check r \ge 1,\; i=1,2,3, \end{cases} 
\label{nc5eq5}
\allowdisplaybreaks\\
\bmd{\nabla^k(t^2\be_{0,3})}_{g_t^P} & = \begin{cases} O(t^{2-k}), & \check r \le 1, \\ O(t^{2-k}\check r^{-2-k+\ga}), & \check r \ge 1, \end{cases} 
\label{nc5eq6}
\allowdisplaybreaks\\
\bmd{\nabla^k(t^4\be_{2,1})}_{g_t^P} & = \begin{cases} O(t^{2-k}), & \check r \le 1, \\ O(t^{2-k}\check r^{-2-k+\ga}), & \check r \ge 1, \end{cases} 
\label{nc5eq7}
\allowdisplaybreaks\\
\bmd{\nabla^k(t^4[\d\be_{2,1}]_{i,j})}_{g_t^P} & = \begin{cases} O(t^{4-i-k}), & \check  r \le 1,\; i=1,2,3, \\[4pt] 
O(t^{4-i-k} \check r^{-i-k+\ga}), & \check r \ge 1,\; i=1,2,3, \end{cases} 
\label{nc5eq8}
\ea

If we take $\al_{2,0}$ and\/ $\be_{2,1}$ to be self-dual in the fibre directions, that is, $\al_{2,0}\in\Ga^\iy(\La^2_+V^*)$ and\/ $\be_{2,1}\in\Ga^\iy(\La^2_+V^*\ot H^*),$ then $\al_{0,2},\al_{2,0},\be_{0,3},\be_{2,1}$ are unique.
\label{nc5thm}	
\end{thm}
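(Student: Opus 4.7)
The plan is to solve \eqref{nc5eq1} fibrewise on $\si:P\to L$, exploiting the fact already noted in the statement that every differential on both sides is vertical. This reduces the global equation on $P$ to a family of elliptic equations on the Eguchi--Hanson space $X$ parametrized smoothly by $x\in L$, and is the point at which the main analytic content of the paper enters.

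First I would split \eqref{nc5eq1} according to the type decomposition \eqref{nc4eq5}. The input $t^2[\d\al_{0,2}]_{1,2}+t^2\xi_{1,2}$ has type $(1,2)$, while $t^4[\d\al_{2,0}]_{3,0}$ has type $(3,0)$. Applying $D_{\vp^P_t}\Th=*_{\vp^P_t}\ci(\tfrac{4}{3}\pi_1+\pi_7-\pi_{27})$ as in \eqref{nc2eq6} and reading off the $(1,3)$ and $(3,1)$ pieces separates \eqref{nc5eq1} into two independent scalar/tensor equations, one determining $\al_{0,2},\be_{0,3}$ and one determining $\al_{2,0},\be_{2,1}$. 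The crucial observation is that when restricted to a fibre $P_x\cong X$ equipped with the rescaled hyperK\"ahler structure $(t^2 h_{A(x)},t^2\om^I_{A(x)},t^2\om^J,t^2\om^K)$, the $\Gt$-projections $\pi_1,\pi_7,\pi_{27}$ collapse to the standard self-dual/anti-self-dual and trace decomposition of 2-forms on $X$ tensored with the trivial $\R^3$ factor from \eqref{nc2eq22}. So on each fibre the equation becomes an elliptic equation for a 2-form on $X$, driven by fibrewise data extracted from $\xi_{1,2},\chi_{1,3},\th_{3,1}$.

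Second, I would solve these fibre equations by ALE Fredholm theory on $X$. The leading operators are the self-dual Laplacian on $\La^2_+ T^*X$ (acting on the fibrewise part of $\al_{2,0}$) and a conformally scaled scalar Laplacian (acting on the fibrewise part of $\al_{0,2}$, which is linear in horizontal 2-forms with coefficients in functions on $X$). Both are Fredholm between weighted H\"older spaces $C^{k,\alpha}_\delta(X)$ away from the discrete set of exceptional weights, and for weights $\delta=-1+\ga$ with $\ga>0$ small they are surjective onto the complement of a finite-dimensional harmonic kernel. The right-hand data decays like $O(r^{-3})$ at infinity on $X$, as can be read off from \eqref{nc4eq33}--\eqref{nc4eq34}, so one obtains solutions with decay $O(r^{-1+\ga})$ and $k$-th derivative decay $O(r^{-1-k+\ga})$. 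The self-duality normalization for $\al_{2,0},\be_{2,1}$ kills the relevant kernel (consisting essentially of constant multiples of $\om^I_{A(x)},\om^J,\om^K$), which pins down uniqueness. Smoothness in $x\in L$ follows from smooth dependence of both the operator and the data on $x$, via Fredholm stability.

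Third, to recover the $t$-dependent bounds \eqref{nc5eq2}--\eqref{nc5eq8} I would combine the fibrewise decay with the scaling $t:X\to X$ which relates $g^P_t$ on $P_x$ to the reference Eguchi--Hanson metric; a type $(i,j)$ form scales as $t^{-i}$ under $\md{\,.\,}_{g^P_t}$ by \eqref{nc3eq22}, and on the end $\check r\ge 1$ one has $\check r\sim t\cdot r$. Multiplying out the fibre estimates then gives the stated mixed bounds, with the $\check r\le 1$ bounds coming for free from compactness. The main obstacle will be making the ALE Fredholm step genuinely quantitative and uniform in $x\in L$: pinning down the kernel of the fibre operator precisely enough that self-duality really selects a unique solution, verifying that the inhomogeneous data always lies in the image (here the closedness and coclosedness of $\la$, which drove \eqref{nc4eq9}--\eqref{nc4eq10} and hence the very existence of the closed 4-form $\ti\psi^P_t$, should force the necessary compatibility), and controlling the solution operator smoothly in $x$ so that the assembled forms on $P$ inherit the required decay with the correct powers of $t$.
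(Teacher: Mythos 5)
Your overall skeleton---reduce to a family of elliptic problems on the Eguchi--Hanson fibres, apply Lockhart--McOwen ALE Fredholm theory, use self-duality to fix uniqueness, and rescale to get the $t$-dependence---is the same as the paper's, but the proposal leaves the central difficulty unresolved and contains two concrete errors. First, the system does \emph{not} decouple as you claim: by the explicit formulas for $D_{\vp_t}\Th$ on types $(3,0)$ and $(1,2)$ (equations \eq{nc5eq22}--\eq{nc5eq23}), \emph{both} $[\d\al_{0,2}]_{1,2}$ and $[\d\al_{2,0}]_{3,0}$ contribute to \emph{both} the $(1,3)$ and the $(3,1)$ components of the output, so the equations for $(\al_{0,2},\be_{0,3})$ and $(\al_{2,0},\be_{2,1})$ are genuinely coupled. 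Second, your rate bookkeeping is off by one: the fibre data sits at rate $-3+\ga$ (so its divergence at $-4+\ga$), and the Laplacian, respectively a first-order equation $\dx h=\frac12 A$, then produces solutions at rate $-2+\ga$, not $-1+\ga$; with your weight $\de=-1+\ga$ you cannot recover the stated $\check r^{-2-k+\ga}$ decay in \eq{nc5eq2}--\eq{nc5eq8}.

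The more serious gap is the solvability of \eq{nc5eq1} itself, which is a first-order, overdetermined equation: one must show that the $4$-form $(D_{\vp_t}\Th)(t^2\dx\al_{0,2}+t^4\dx\al_{2,0}+t^2\xi_{1,2})-t^2\chi_{1,3}-t^4\th_{3,1}$ is $\dx$-exact on each fibre. You correctly flag this as ``the main obstacle'' but the mechanism is not merely ``closedness and coclosedness of $\la$.'' The paper's route is: (i) first solve the \emph{second-order} fibrewise Laplace system \eq{nc5eq42}--\eq{nc5eq43}, which does reduce to scalar Laplacians and is solvable by (LM3); (ii) prove that the resulting residual $5$-form $\upsilon$ of \eq{nc5eq24} lies in $\Om^5_{14}$, which uses the closedness of $\ti\vp^P_t,\ti\psi^P_t$ together with the $\Gt$-identity $\vp_t\w\ast_t\d\vp_t=\psi_t\w\ast_t\d\psi_t$ to derive the relations \eq{nc5eq15} among $\xi_i,\chi,\th_i$; (iii) observe that $\upsilon=2(\dx w)\w\vp_t$ for a decaying vertical $1$-form $w$, so $\dx w$ is anti-self-dual and decays fast enough that Stokes' theorem forces $\dx w=0$, whence the first-order equation holds and Step Three integrates it using the Hodge decomposition of Proposition \ref{nc5prop3} (which needs $H^1_{\rm cs}(X)=0$). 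Without an argument of this kind your Fredholm surjectivity claim does not apply, because the equation you need to solve is not the elliptic equation you propose to solve. A smaller point: the kernel that self-duality eliminates is the $L^2$ anti-self-dual harmonic $2$-form on $X$ (dual to the bolt), not multiples of $\om^I,\om^J,\om^K$, which do not decay.
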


The remainder of \S\ref{nc5} is taken up with the proof of Theorem \ref{nc5thm}. In \S\ref{nc52} we establish several preliminary results needed for the proof. In \S\ref{nc53} we collect various facts about analysis on the Eguchi--Hanson fibres. Finally, in \S\ref{nc54} we complete the proof of Theorem \ref{nc5thm} in three steps.

\subsection{Preliminary results needed for the correction theorem}
\label{nc52}

In this section we collect several preliminary results that will be used to prove Theorem \ref{nc5thm}, including several results that use properties of $\Gt$-structures.

As in \S\ref{nc41}, we make an arbitrary choice of $e_2,e_3$ on an open $L'\subseteq L$ with $(e_1,e_2,e_3)$ an oriented orthonormal basis of $T^*L'$. All our computations in this section will be independent of this choice. We now introduce some notation that will be used only within \S\ref{nc5}, to simplify our expressions.  First, we will abuse notation and write $e_k$ for $\sigma^* (e_k)$, thinking of $e_k$ as a horizontal $1$-form on $P$ over $L'$. Moreover, over $L'$, a vertical $k$-form on $P$ can be thought of as a $k$-form on $X$ smoothly parametrized by~$x \in L'$. 

We will use $\dx,\dsx,\stx,\gx$, and $\volx$ to denote the exterior derivative, its formal adjoint, the Hodge star, the metric, and the volume form on the fibre $P_x \cong X$ over $x \in L'$. Note that $\dx = \d^{1,0}$ is the $\mathrm{(1,0)}$ part of $\d$. That is, $\dx$ is the exterior derivative in the vertical (fibre) direction. We have $\stx^2 = (-1)^k$ on $\Omega^k (P_x)$ and $\dsx = - \stx \! \dx \, \stx$ on $\Omega^{\bullet} (P_x)$. Finally, we will denote the hyperK\"ahler forms $\check \om^I,\check \om^J,\check \om^K$ by $\om_1,\om_2,\om_3$, respectively, and their associated complex structures $I,J,K$, by $J_1,J_2,J_3$. More precisely, the forms $\omega_1,\omega_2$, and $\omega_3$ are a hyperK\"ahler triple on $P_x \cong X$ for each $x \in L'$. Recall from \eq{nc2eq8} that $J_1 J_2 = -J_3 = -J_2 J_1$ on 1-forms, and cyclic permutations of this.

\begin{lem} Let\/ $\alpha \in \Omega^1 (P_x)$ and\/ $\gamma \in \Omega^3 (P_x)$. Then we have
\e
\omega_i \w (\stx \dx \alpha) = - \big( \dsx (J_i \alpha) \big) \volx, \qquad \stx (\dx \gamma) = \dsx (\stx \gamma). 
\label{nc5eq9}
\e
Moreover, if\/ $f$ is any function, then $\dsx (J_k \dx f) = 0$.
\label{nc5lem1}
\end{lem}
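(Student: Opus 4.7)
All three claims will follow from the pointwise hyperK\"ahler identity \eq{nc2eq12}, which on each Eguchi--Hanson fibre $P_x$ reads $\stx(\alpha\w\omega_i)=-J_i\alpha$ for any $1$-form $\alpha$, together with closedness $\dx\omega_i=0$ and the standard dimension-$4$ Hodge facts ($\stx^2=-1$ on $3$-forms, $\stx^2=+1$ on $2$- and $4$-forms, and $\dsx=-\stx\dx\stx$ on $1$-forms). My first step will be to apply $\stx$ to both sides of \eq{nc2eq12} and use $\stx^2=-1$ on $3$-forms to obtain the dual version
\[
\alpha\w\omega_i=\stx(J_i\alpha),
\]
which is the workhorse for the rest of the argument.

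For the first identity, I will exploit self-duality of $\omega_i$: on a $4$-manifold, $\omega_i\w\beta=\omega_i\w\stx\beta$ for any $2$-form $\beta$, since $\omega_i$ annihilates the anti-self-dual part of $\beta$. Applied to $\beta=\dx\alpha$, this replaces the left-hand side by $\omega_i\w\dx\alpha=\dx(\alpha\w\omega_i)$ (using $\dx\omega_i=0$); the dual identity then rewrites this as $\dx\stx(J_i\alpha)$, which by $\dsx=-\stx\dx\stx$ on $1$-forms (together with $\stx f=f\volx$ on $0$-forms) collapses to $-(\dsx(J_i\alpha))\volx$, as required. The second identity is a quick Hodge manipulation: applying $\dsx=-\stx\dx\stx$ to the $1$-form $\stx\gamma$ and then using $\stx^2\gamma=-\gamma$ for the $3$-form $\gamma$ yields $\dsx(\stx\gamma)=-\stx\dx(\stx^2\gamma)=\stx\dx\gamma$.

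The final claim follows as a corollary of the dual identity: setting $\alpha=\dx f$ gives $\stx(J_k\dx f)=\dx f\w\omega_k$, and applying $\dx$ annihilates the right-hand side by $\dx^2 f=0$ and $\dx\omega_k=0$, whence $\dsx(J_k\dx f)=-\stx\dx\stx(J_k\dx f)=0$. There is no genuine obstacle in this lemma; the only care required is bookkeeping the signs of $\stx^2$ in each degree, and confirming that \eq{nc2eq12}, established in \S\ref{nc24} for the flat Euclidean model on $\H$, transports pointwise to each $P_x$ because at every point the hyperK\"ahler triple $(\omega_1,\omega_2,\omega_3)$ is identified with the model $(\omega_0^I,\omega_0^J,\omega_0^K)$ by a linear isometry of tangent spaces.
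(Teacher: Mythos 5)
Your proof is correct and follows essentially the same route as the paper: reduce to the pointwise identity \eq{nc2eq12} on each Eguchi--Hanson fibre, combine it with $\dx\omega_i=0$, self-duality $\stx\omega_i=\omega_i$, and the standard dimension-$4$ Hodge identities, and derive the third claim by substituting $\alpha=\dx f$. The only cosmetic differences are that you drop the superfluous $\stx$ via the pairing of self-dual with anti-self-dual parts rather than the Hodge swap $\omega_i\w\stx\beta=\beta\w\stx\omega_i$, and that you extract the ``dual version'' $\alpha\w\omega_i=\stx(J_i\alpha)$ explicitly rather than implicitly inserting $\stx\stx$ mid-computation as the paper does.
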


\begin{proof}
The second equation is immediate from $\stx^2 = -1$ on 3-forms and $\dsx = -\stx \! \dx \, \stx$. For the first equation, using $\stx \omega_i = \omega_i$ and $\dx \omega_i = 0$, and \eq{nc2eq12} we compute
\begin{align*}
\omega_i \w (\stx \dx \alpha) & = (\dx \alpha) \w (\stx \omega_i) = (\dx \alpha) \w \omega_i = \dx( \alpha \w \omega_i) \\
& = - \dx (\stx \stx (\alpha \w \omega_i)) = \dx \stx (J_i \alpha) = \stx (\stx \dx \stx) (J_i \alpha) \\
& = -\stx \big( \dsx (J_i \alpha) \big) = -\big( \dsx (J_i \alpha) \big) \volx
\end{align*}
as claimed. Finally, from \eq{nc5eq9}, for any function $f$ we have $- \dsx (J_k \dx f) \w \volx = \om_k \w \stx (\dx^2 f) = 0$, so $\dsx (J_k \dx f) = 0$.
\end{proof}

Consider now the $\Gt$-structure $(\vp_t^P, g_t^P)$ of \eq{nc4eq23}. In the remainder of \S\ref{nc5} we will drop the superscript $P$ for simplicity. Thus we write
\e
\begin{aligned}
\vp_t & = e_1 \w e_2 \w e_3 - t^2 (\om_1 \w e_1 + \om_2 \w e_2 + \om_3 \w e_3), \\
\psi_t & = t^4 \volx - t^2 (\om_1 \w e_2 \w e_3 + \om_2 \w e_3 \w e_1 + \om_3 \w e_1 \w e_2), \\
g_t & = (e_1)^2 + (e_2)^2 + (e_3)^2 + t^2 \gx.
\end{aligned}
\label{nc5eq10}
\e
At a point in $P$ lying over $x \in L'$, the metric $g_t^P$ is the Riemannian product of the flat metric on $\R^3 \cong \mathrm{span} \{ e_1, e_2, e_3 \}$ with the metric $\gx$. Let $\ast_t$ and $\vol_t$ denote the Hodge star and volume form of $(\vp_t, g_t)$. From $\vol_t = t^4 e_1 \w e_2 \w e_3 \w \volx$ it is easy to deduce that
\e
\ast_t ( \alpha \w \beta ) = (-1)^{kl} t^{4-2k} (\stx \alpha) \w (\ast_{\R^3} \beta) \quad \begin{matrix} \text{when $\alpha$ is a vertical $k$-form} \\
\text{and $\beta$ is an $l$-form on $\R^3$.} 
\end{matrix}
\label{nc5eq11}
\e

Recall that in \S\ref{nc44} we defined a closed $3$-form $\ti\vp_t^P$ and a closed $4$-form $\ti\psi_t^P$ by
\e
\ti\vp_t^P=\vp_t+t^2\xi_{1,2}+t^2\xi_{0,3},\qquad
\ti\psi_t^P=\psi_t+t^2\chi_{1,3}+t^4\th_{3,1}+t^4\th_{2,2}.
\label{nc5eq12}
\e

Consider the forms $\xi_{1,2}$, $\chi_{1,3}$, and $\th_{3,1}$, of equation \eq{nc5eq12}. With respect to the local frame $(e_1, e_2, e_3)$ we write
\e
\begin{aligned}
\xi_{1,2} & = \xi_1 \w e_2 \w e_3 + \xi_2 \w e_3 \w e_1 + \xi_3 \w e_1 \w e_2, \\
\chi_{1,3} & = \chi \w e_1 \w e_2 \w e_3, \\
\th_{3,1} & = \th_1 \w e_1 + \th_2 \w e_2 + \th_3 \w e_3,
\end{aligned}
\label{nc5eq13}
\e
where $\xi_1,\xi_2,\xi_3,\chi$ are vertical $1$-forms, and $\th_1,\th_2,\th_3$ are vertical $3$-forms. Over the open set $L'$, they can thus be viewed as forms on $X$ smoothly parametrized by $x \in L'$. In this section we will write $[(p,q)]$ for a form of type $(p,q)$ whose explicit expression will turn out to be irrelevant for us. Taking $\d$ of the expressions in \eq{nc5eq13}, we have
\e
\begin{aligned}
\d \xi_{1,2} & = \underbrace{(\dx \xi_1) \w e_2 \w e_3 + (\dx \xi_2) \w e_3 \w e_1 + (\dx \xi_3) \w e_1 \w e_2}_{\text{ type (2,2) }} + [(1,3)], \\
\d \chi_{1,3} & = \underbrace{(\dx \chi) \w e_1 \w e_2 \w e_3}_{\text{ type (2,3) }}, \\
\d \th_{3,1} & = \underbrace{(\dx \th_1) \w e_1 + (\dx \th_2) \w e_2 + (\dx \th_3) \w e_3}_{\text{type (4,1) }} + [(3,2)] + [(2,3)].
\end{aligned}
\label{nc5eq14}
\e

\begin{prop} The forms $\xi_1,\xi_2,\xi_3,\chi,\th_1,\th_2,\th_3$ satisfy the following three relations:
\e
\begin{aligned}
\dsx(\stx \theta_1) + \dsx(J_1 \chi) & = \dsx(J_2 \xi_3) - \dsx(J_3 \xi_2), \\
\dsx(\stx \theta_2) + \dsx(J_2 \chi) & = \dsx(J_3 \xi_1) - \dsx(J_1 \xi_3), \\
\dsx(\stx \theta_3) + \dsx(J_3 \chi) & = \dsx(J_1 \xi_2) - \dsx(J_2 \xi_1).
\end{aligned}
\label{nc5eq15}
\e
\label{nc5prop1}
\end{prop}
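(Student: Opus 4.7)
My plan is to derive the identities \eq{nc5eq15} from the joint closedness of $\tilde\vp^P_t$ and $\tilde\psi^P_t$ established in \eq{nc4eq22}, by extracting the appropriate $(i,j)$-type components of these conditions and repackaging them using Lemma \ref{nc5lem1}. First, I would translate each codifferential identity in \eq{nc5eq15} into an equivalent vertical $4$-form identity: the formula $\om_i \w \stx \dx \alpha = -\dsx(J_i\alpha) \volx$ converts every term $\dsx(J_i \cdot)$ into a wedge product with $\om_i$, while $\dsx(\stx \theta_i) = \stx \dx \theta_i$ turns the first term into $\dx \theta_i$ after multiplication by $\volx$. The identity \eq{nc5eq15} for cyclic $(i,j,k)$ thus reduces to the vertical $4$-form equation
\[
\dx \theta_i - \om_i \w \stx \dx \chi = -\om_j \w \stx \dx \xi_k + \om_k \w \stx \dx \xi_j.
\]

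To establish this, I would extract three kinds of fibrewise equations. Using the explicit expressions \eq{nc5eq10}, \eq{nc5eq13}, and \eq{nc5eq14}, together with the fact that each $\om_l$ is closed on its fibre so that $\d\om_l = [\d\om_l]_{2,1}$ carries only horizontal derivatives, I would compute: the $(2,2)$-component of $\d\tilde\vp^P_t = 0$, which expresses each $\dx \xi_i$ in terms of the components $\om_l^{(m)}$ of $[\d\om_l]_{2,1}$ (writing $[\d\om_l]_{2,1} = \sum_m \om_l^{(m)} \w e_m$) and the frame derivatives $\d e_m$; the $(2,3)$-component of $\d\tilde\psi^P_t = 0$, which analogously determines $\dx \chi$; and the $(4,1)$-component of $\d\tilde\psi^P_t = 0$, which gives $\dx \theta_l = -\om_i \w \om_i^{(l)}$ valid for every $i$, reflecting the fact that $\d\volx = \om_i \w \d\om_i$ is independent of~$i$.

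The final step is to substitute these formulas into the vertical $4$-form identity displayed above and verify it algebraically. The essential tools are the hyperK\"ahler relations $\om_i \w \om_j = 2\delta_{ij} \volx$, the self-duality $\stx \om_i = \om_i$, and the pairing $\alpha \w \stx \beta = \langle \alpha,\beta\rangle_{\gx} \volx$ for vertical $2$-forms. The main obstacle is the algebraic bookkeeping: terms involving the frame derivatives $\d e_m$ and the off-diagonal components $\om_l^{(m)}$ with $l \ne m$ must cancel between the two sides, while the diagonal contributions combine correctly by suitably choosing $i$ in the identity $\dx \theta_l = -\om_i \w \om_i^{(l)}$. This cancellation ultimately reflects the compatibility enforced on $\xi_{1,2}, \xi_{0,3}, \chi_{1,3}, \theta_{3,1}, \theta_{2,2}$ by their joint definition in \eq{nc4eq21} and the simultaneous closedness relations \eq{nc4eq22}.
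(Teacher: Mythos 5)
Your proposal is correct, but it takes a genuinely different route from the paper. The paper's proof never computes $\dx\xi_i$, $\dx\chi$, or $\dx\th_i$ explicitly: instead it invokes the universal first-order $\Gt$-identity $\vp_t\w(\ast_t\d\vp_t)=\psi_t\w(\ast_t\d\psi_t)$ of \eq{nc5eq16} (the statement that $\pi_7(\d\vp)$ and $\pi_7(\d\psi)$ encode the same torsion component), substitutes the closedness relations \eq{nc5eq12}--\eq{nc5eq14} into both sides, and reads off \eq{nc5eq15} from the $t^4[(4,2)]$ terms. You instead derive explicit fibrewise formulas for $\dx\xi_i$, $\dx\chi$, $\dx\th_i$ from the componentwise closedness relations and verify the resulting vertical $4$-form identity directly; your reduction of \eq{nc5eq15} to $\dx\th_i-\om_i\w\stx\dx\chi=-\om_j\w\stx\dx\xi_k+\om_k\w\stx\dx\xi_j$ via Lemma \ref{nc5lem1} is exactly the paper's intermediate step read backwards, and I have checked that the algebraic verification does close up: writing $[\d\om_l]_{2,1}=\sum_m\om_l^{(m)}\w e_m$ and $\d e_p=\sum c^p_{jk}e_j\w e_k$, one finds $\dx\xi_i=\sum_p c^p_{jk}\om_p-\om_j^{(k)}+\om_k^{(j)}$, $\dx\chi=\sum_p(\om_p^{(p)}+\la_p\om_p)$ with $\la_p$ built from the $c$'s, and $\dx\th_i=-\om_m\w\om_m^{(i)}$, after which all $c$-terms cancel and the remaining inner products collapse to $-\langle\om_m,\om_m^{(i)}\rangle\volx$ on both sides. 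Your approach is more computational but self-contained (it avoids the external reference for \eq{nc5eq16}); the paper's is softer and would apply verbatim to closed modifications of any $\Gt$-structure of product type.

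Two points need care in executing your plan. First, to isolate $\dx\chi$ you must use the \emph{separated} relation $\d[\check\psi_{2,2}+\chi_{1,3}]=0$ from \eq{nc4eq22}, not the $(2,3)$-component of the combined $\d\ti\psi^P_t=0$: the latter also contains $[\d\th_{3,1}]_{2,3}$ and $[\d\th_{2,2}]_{2,3}$, which involve horizontal derivatives of $\th_1,\th_2,\th_3$ and of $\th_{2,2}$ that are not controlled by your other equations. Second, besides the diagonal relation $\om_i\w\om_i=2\volx$ and its horizontal derivative, you also need the horizontal derivative of the off-diagonal orthogonality $\om_a\w\om_b=0$ for $a\ne b$, which yields $\langle\om_a,\om_b^{(m)}\rangle=-\langle\om_b,\om_a^{(m)}\rangle$; this antisymmetry is what makes the mixed inner products such as $\langle\om_j,\om_i^{(j)}\rangle$ cancel against $\langle\om_i,\om_j^{(j)}\rangle$ in the final identity. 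With these two refinements your argument is complete.
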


\begin{proof}
We have $\psi_t = \ast_t \vp_t = \Theta (\vp_t)$, and therefore the pair $(\vp_t, \psi_t)$ satisfies the fundamental relation
\e
\vp_t \w (\ast_t \d \vp_t) = \psi_t \w (\ast_t \d \psi_t).
\label{nc5eq16}
\e
Equation \eq{nc5eq16} is satisfied by any pair $(\vp, \psi) \in \Omega^3_+ \oplus \Omega^4_+$ with $\psi = \Theta(\vp)$, and is a characterization of the fact that the components $\pi_7 (\d \vp)$ and $\pi_7 (\d \psi)$ of the torsion that are $7$-dimensional can be identified by a $\Gt$-equivariant isomorphism. A proof of \eq{nc5eq16} can be found in \cite[Theorem 2.23]{Kari1}. (Note that even though \cite{Kari1} uses the opposite orientation convention, the identity \eq{nc5eq16} is independent of the choice of orientation convention.) We will show that \eq{nc5eq16} implies the three equations in \eq{nc5eq15}.

Since $\ti \vp_t^P$ and $\ti \psi_t^P$ are both closed, taking $\d$ of both expressions in \eq{nc5eq12} and substituting \eq{nc5eq14} gives
\begin{align*}
\d \vp_t & = - t^2 \d \xi_{1,2} - t^2 \d \xi_{0,3} \\
& = - t^2 \underbrace{\big( (\dx \xi_1) \w e_2 \w e_3 + (\dx \xi_2) \w e_3 \w e_1 + (\dx \xi_3) \w e_1 \w e_2 \big)}_{\text{ type (2,2) }} + t^2 [(1,3)]
\end{align*}
and
\begin{align*}
\d \psi_t & = - t^2 \d \chi_{1,3} - t^4 \th_{3,1} - t^4 \th_{2,2} \\
& = - t^2 \underbrace{\big( (\dx \chi) \w e_1 \w e_2 \w e_3 \big)}_{\text{ type (2,3) }} - t^4 \underbrace{\big( (\dx \th_1) \w e_1 + (\dx \th_2) \w e_2 + (\dx \th_3) \w e_3 \big)}_{\text{ type (4,1) }} \\
& \qquad {}+ t^4 [(3,2)] + t^4 [(2,3)].
\end{align*}
Applying $\ast_t$ to both expressions and using \eq{nc5eq11} yields
\begin{align*}
\ast_t (\d \vp_t) & = - t^2 \underbrace{\big( (\stx \dx \xi_1) \w e_1 + (\stx \dx \xi_2) \w e_2 + (\stx \dx \xi_3) \w e_3 \big)}_{\text{ type (2,1) }} + t^4 [(3,0)], \\
\ast_t (\d \psi_t) & = - \underbrace{\big( (\stx \dx \th_1) \w e_2 \w e_3 + (\stx \dx \th_2) \w e_3 \w e_1 + (\stx \dx \th_3) \w e_1 \w e_2 \big)}_{\text{ type (0,2) }} \\
& \qquad {}- t^2 \underbrace{(\stx \dx \chi)}_{\text{ type (2,0) }} + t^2 [(1,1)] + t^4 [(2,0)].
\end{align*}
Computing $\vp_t \w (\ast_t \d \vp_t)$ and $\psi_t \w (\ast_t \d \psi_t)$ using \eq{nc5eq10} and keeping track of types, we find
\begin{align*}
\vp_t \w (\ast_t \d \vp_t) & = t^4 \underbrace{\Big( \sum_{i,j=1}^3 \omega_i \w (\stx \dx \xi_j) \w e_i \w e_j \Big)}_{\text{ type (4,2) }} + t^4 [(3,3)], \\
\psi_t \w (\ast_t \d \psi_t) & = - t^4 \underbrace{\Big( \sum_{\substack{i,j,k \\ \text {cyclic}}} (\stx \dx \th_k) \w \volx \w e_i \w e_j \Big)}_{\text{ type (4,2) }} \\
& + t^4 \underbrace{\Big( \sum_{\substack{i,j,k \\ \text {cyclic}}} (\stx \dx \chi) \w \om_k \w e_i \w e_j \Big)}_{\text{ type (4,2) }}  + t^6 [(4,2)] + t^4 [(3,3)].
\end{align*}
Equating the terms of the form $t^4 [(4,2)]$ above, the fundamental relation \eq{nc5eq16} yields the three equations
\begin{align*}
\om_2 \w (\stx \dx \xi_3) - \om_3 \w (\stx \dx \xi_2) & = - (\stx \dx \th_1) \w \volx + (\stx \dx \chi) \w \om_1, \\
\om_3 \w (\stx \dx \xi_1) - \om_1 \w (\stx \dx \xi_3) & = - (\stx \dx \th_2) \w \volx + (\stx \dx \chi) \w \om_2, \\
\om_1 \w (\stx \dx \xi_2) - \om_2 \w (\stx \dx \xi_1) & = - (\stx \dx \th_3) \w \volx + (\stx \dx \chi) \w \om_3.
\end{align*}
Applying \eq{nc5eq9} now gives \eq{nc5eq15} as claimed.
\end{proof}

We will also need explicit formulas for $D_{\vp_t} \Th$ acting on forms of type $(3,0)$ and $(1,2)$.

\begin{lem} Let\/ $\pi_7$ be the projection $\pi_7 : \Omega^3 \to \Omega^3_7$ with respect to the $\Gt$-structure $\vp_t$ of\/ \eq{nc5eq10}. If\/ $\gamma_{3,0} = \eta$ is a vertical\/ $3$-form, then
\ea
& \pi_7 (\gamma_{3,0}) = 
\label{nc5eq17}\\
& \frac{1}{4} \eta - \frac{t^{-2}}{4} \big( (J_1 \stx \eta) \w e_2 \w e_3 + (J_2 \stx \eta) \w e_3 \w e_1 + (J_3 \stx \eta) \w e_1 \w e_2 \big).
\nonumber
\ea
If\/ $\gamma_{1,2} = \ze_1 \w e_2 \w e_3 + \ze_2 \w e_3 \w e_1 + \ze_3 \w e_1 \w e_2$ where the $\ze_k$'s are vertical $1$-forms, then
\ea
& \pi_7 (\gamma_{1,2}) = 
\label{nc5eq18}\\
& {} - \frac{t^2}{4} \big( (\stx J_1 \ze_1) + (\stx J_2 \ze_2) + (\stx J_3 \ze_3) \big) + \frac{1}{4} (\ze_1 + J_3 \ze_2 - J_2 \ze_3) \w e_2 \w e_3 
\nonumber\\
& \qquad {} + \frac{1}{4} (- J_3 \ze_1 + \ze_2 + J_1 \ze_3) \w e_3 \w e_1 + \frac{1}{4} ( J_2 \ze_1 - J_1 \ze_2 + \ze_3) \w e_1 \w e_2.
\nonumber
\ea

\label{nc5lem2}
\end{lem}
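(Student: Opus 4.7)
The plan is to exploit the standard characterization $\La^3_7 = \{\iota_v\psi_t : v \in TP\}$ from Proposition \ref{nc2prop1}, which expresses $\La^3_7$ as the image of the injective bundle map
\begin{equation*}
\Phi:T^*P\longra\La^3T^*P,\qquad \Phi(\al) := \iota_{\al^{\sharp_t}}\psi_t,
\end{equation*}
where $\sharp_t$ denotes the metric dual with respect to $g_t$. The first step is to compute $\Phi$ explicitly on the two natural classes of $1$-forms in our product setting. For a vertical $1$-form $\eta$, the product structure $g_t = g_{\R^3} + t^2\gx$ gives $\eta^{\sharp_t} = t^{-2}\eta^{\sharp_X}$, and using the identity $\iota_w\om_k = -J_k(w^{\flat_X})$ (a consequence of $(Jw)^\flat = -J(w^\flat)$) together with \eq{nc5eq10}, a direct contraction yields
\begin{equation*}
\Phi(\eta) = t^2\stx\eta + (J_1\eta)\w e_2\w e_3 + (J_2\eta)\w e_3\w e_1 + (J_3\eta)\w e_1\w e_2.
\end{equation*}
For $\al = e_k$ horizontal, one checks directly that $\Phi(e_k)$ is of pure type $(2,1)$ (a linear combination of $\om_l\w e_m$). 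A short norm computation using the $g_t$-orthonormality of $\{e_i\}$ and the $g_X$-orthogonality of the hyperK\"ahler triple $\om_1,\om_2,\om_3$ then yields the key identity $|\Phi(\al)|^2_{g_t}=4\,|\al|^2_{g_t}$, and hence by polarization $\langle\Phi(\al),\Phi(\be)\rangle_{g_t}=4\langle\al,\be\rangle_{g_t}$ for all $\al,\be\in\Om^1(P)$.

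Since $\Phi$ is thus a conformal isomorphism $\Om^1\cong\Om^3_7$, the orthogonal projection satisfies $\pi_7(\ga)=\Phi(\al_\ga),$ where $\al_\ga\in\Om^1$ is characterized by
\begin{equation*}
\langle\ga,\Phi(\be)\rangle_{g_t}=4\langle\al_\ga,\be\rangle_{g_t}\qquad\text{for all }\be\in\Om^1.
\end{equation*}
The strategy is to test this against the two types of $\be$ (vertical and horizontal) to pin down $\al_\ga$, then substitute $\al_\ga$ back into the explicit formula for $\Phi$.

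For $\ga_{3,0}=\eta$, the fact that $\Phi(e_k)$ has type $(2,1)$ forces the horizontal part of $\al_\ga$ to vanish, and for vertical $\be$ only the type-$(3,0)$ piece $t^2\stx\be$ of $\Phi(\be)$ pairs nontrivially with $\eta$. Using $\langle\eta,\stx\be\rangle_X = -\langle\stx\eta,\be\rangle_X$ (an immediate consequence of $\stx^2=-1$ on $\Om^{\rm odd}(X)$ together with the isometry property of $\stx$), the pairing equation collapses to $\al_\ga = -\tfrac{1}{4} t^{-2}\stx\eta$. Substituting this vertical $1$-form back into $\Phi$, the $(3,0)$ term $t^2\stx(-\tfrac{1}{4} t^{-2}\stx\eta)$ simplifies to $\tfrac{1}{4}\eta$ via $\stx\stx\eta=-\eta$, while the three $(1,2)$ terms reproduce exactly \eq{nc5eq17}.

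For $\ga_{1,2}=\ze_1\w e_2\w e_3+\ze_2\w e_3\w e_1+\ze_3\w e_1\w e_2$, again $\Phi(e_k)$ of type $(2,1)$ forces the horizontal part of $\al_\ga$ to vanish. For vertical $\be$ only the $(1,2)$-part of $\Phi(\be)$ contributes, and using that each $J_k$ is an isometry of $T^*X$ with $J_k^{-1}=-J_k$ (so $\langle J_ku,v\rangle_X=-\langle u,J_kv\rangle_X$) one reads off $\al_\ga=-\tfrac{1}{4}(J_1\ze_1+J_2\ze_2+J_3\ze_3)$. Plugging this back into the formula for $\Phi$ and expanding each $J_k(J_l\ze_l)$ using the quaternion relations \eq{nc2eq10} (for instance $J_1J_2=-J_3$, $J_1J_3=J_2$, $J_2J_3=-J_1$, and cyclic) collapses the cross-terms to produce exactly \eq{nc5eq18}. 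The main obstacle is pure bookkeeping: one must keep straight the distinction between $J_k$ acting on vectors versus $1$-forms (through $(Jw)^\flat=-J(w^\flat)$) and the sign $\stx^2=-1$ on odd-degree forms on the $4$-manifold $X$; apart from these sign subtleties the proof is elementary linear algebra performed fibrewise.
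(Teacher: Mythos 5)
Your proof is correct and is essentially the same as the paper's: both identify $\Om^3_7$ as the image of the standard conformal embedding of $1$-forms (your $\Phi(\al)=\iota_{\al^{\sharp_t}}\psi_t$ agrees, up to sign, with the paper's $\al\mapsto\ast_t(\vp_t\w\al)$), determine the same intermediate $1$-form ($-\tfrac{1}{4}t^{-2}\stx\eta$, resp.\ $-\tfrac{1}{4}(J_1\ze_1+J_2\ze_2+J_3\ze_3)$), and substitute it back into the same explicit formula. The only difference is packaging: the paper invokes the identity $\ast_t(\vp_t\w\ast_t(\vp_t\w\al))=-4\al$ from \cite[Prop.~A.3]{Kari1} to write $\pi_7=-\tfrac14\ast_t(\vp_t\w\ast_t(\vp_t\w\cdot))$, whereas you derive the conformal factor $4$ by a direct norm computation and use the adjoint characterization of the orthogonal projection, which amounts to the same calculation.
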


\begin{proof} The $\Gt$-structure $\vp_t$ satisfies
\e
\ast_t (\vp_t \w \ast_t (\vp_t \w \alpha) ) = - 4 \alpha \qquad \text{ for any $1$-form $\alpha$}.
\label{nc5eq19}
\e
This identity in fact holds for any $\Gt$-structure \cite[Proposition A.3]{Kari1}. (Note that even though \cite{Kari1} uses the opposite orientation convention, the identity \eq{nc5eq19} is independent of the choice of orientation convention.) If $\gamma \in \Omega^3$, then $\pi_7 \gamma = \ast_t (\vp_t \w \alpha)$ for some $1$-form $\alpha$. Since $\vp_t \w \gamma = \vp_t \w \pi_7 \gamma$, applying equation \eq{nc5eq19} yields
\begin{equation*}
\pi_7 \gamma = - \ts\frac{1}{4} \ast_t ( \vp_t \w \ast_t (\vp_t \w \gamma) ).
\end{equation*}
Let $\gamma_{3,0} = \eta$ be a vertical $3$-form. Then \eq{nc5eq10} and \eq{nc5eq11} give
\e
\ast_t (\vp_t \w \gamma_{3,0}) = t^{-2} \stx \eta.
\label{nc5eq20}
\e
Similarly if $\gamma_{1,2} = \ze_1 \w e_2 \w e_3 + \ze_2 \w e_3 \w e_1 + \ze_3 \w e_1 \w e_2$ where the $\ze_k$'s are vertical $1$-forms, we obtain
\e
\ast_t (\vp_t \w \gamma_{1,2}) = - \stx( \om_1 \w \ze_1 + \om_2 \w \ze_2 + \om_3 \w \ze_3) = J_1 \ze_1 + J_2 \ze_2 + J_3 \ze_3,
\label{nc5eq21}
\e
where we have also used \eq{nc2eq12}. Note that the right hand sides of both \eq{nc5eq20} and \eq{nc5eq21} are forms of type (1,0). If $\beta$ is of type (1,0), then again using \eq{nc5eq11} we compute
\begin{align*}
\ast_t (\vp_t \w \beta) & = \ast_t \big( e_1 \w e_2 \w e_3 \w \beta - t^2( \om_1 \w e_1 + \om_2 \w e_2 + \om_3 \w e_3) \w \beta \big) \\
& = t^2 (\stx \beta) - \stx(\om_1 \w \beta) \w e_2 \w e_3 \\
& \qquad{} - \stx(\om_2 \w \beta) \w e_3 \w e_1 - \stx(\om_3 \w \beta) \w e_1 \w e_2 \\
& = t^2 (\stx \beta) + (J_1 \beta) \w e_2 \w e_3 + (J_2 \beta) \w e_3 \w e_1 + (J_3 \beta) \w e_1 \w e_2.
\end{align*}
By \eq{nc5eq20} and \eq{nc5eq19}, to compute $\pi_7 (\gamma_{3,0})$ we substitute $\beta = - \frac{1}{4} t^{-2} \stx \eta$ into the above expression. This yields \eq{nc5eq17}, since $\stx^2 \eta = - \eta$. Similarly, by \eq{nc5eq21} and \eq{nc5eq19}, to compute $\pi_7 (\gamma_{1,2})$ we substitute $\beta = - \frac{1}{4} (J_1 \ze_1 + J_2 \ze_2 + J_3 \ze_3)$ into the above expression. This yields \eq{nc5eq18}, where we have also used \eq{nc2eq10}.
\end{proof}

\begin{cor} Let\/ $D_{\vp_t} \Th$ be the linearization of\/ $\Th$ at $\vp_t$ as in \eq{nc2eq6}. If\/ $\gamma_{3,0} = \eta$ is a vertical\/ $3$-form, then
\ea
& (D_{\vp_t} \Th) (\gamma_{3,0}) =
\label{nc5eq22} \\
& \qquad{} -\frac{t^{-2}}{2} (\stx \eta) \w e_1 \w e_2 \w e_3 + \frac{1}{2} \big( (J_1 \eta) \w e_1 + (J_2 \eta) \w e_2 + (J_3 \eta) \w e_3 \big).
\nonumber
\ea
If\/ $\gamma_{1,2} = \ze_1 \w e_2 \w e_3 + \ze_2 \w e_3 \w e_1 + \ze_3 \w e_1 \w e_2$ where the $\ze_k$'s are vertical\/ $1$-forms, then
\e
\begin{aligned}
& (D_{\vp_t} \Th) (\gamma_{1,2}) =\\
& \frac{1}{2} ( J_1 \ze_1 + J_2 \ze_2 + J_3 \ze_3 ) \w e_1 \w e_2 \w e_3 + \frac{t^2}{2} \stx (-\ze_1 + J_3 \ze_2 - J_2 \ze_3) \w e_1 \\
& \qquad {} + \frac{t^2}{2} \stx (- J_3 \ze_1 - \ze_2 + J_1 \ze_3) \w e_2 + \frac{t^2}{2} \stx ( J_2 \ze_1 - J_1 \ze_2 - \ze_3) \w e_3.
\end{aligned}
\label{nc5eq23}
\e
\label{nc5cor1}
\end{cor}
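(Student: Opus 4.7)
\medskip
\noindent\textbf{Proof plan.}
The plan is to reduce both formulas to direct computations using Proposition \ref{nc2prop2} and Lemma \ref{nc5lem2}. First I would observe that for $\gamma$ either of type $(3,0)$ or of type $(1,2)$, the singlet component $\pi_1 \gamma$ vanishes. Indeed $\pi_1 \gamma = \tfrac{1}{7}\langle \gamma, \vp_t\rangle_{g_t} \vp_t$, but the $\Gt$-structure $\vp_t$ of \eq{nc5eq10} is a sum of $(0,3)$ and $(2,1)$ pieces only, and since $g_t$ respects the type decomposition $T^*P = \breve V^* \oplus \breve H^*$ (forms of different bidegree are $g_t$-orthogonal), the inner product vanishes on forms of type $(3,0)$ or $(1,2)$.

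With $\pi_1 \gamma = 0$ we have $\pi_{27} \gamma = \gamma - \pi_7 \gamma$, and \eq{nc2eq6} collapses to
\[
D_{\vp_t}\Th(\gamma) \;=\; *_{\vp_t}\bigl(\pi_7 \gamma - \pi_{27}\gamma\bigr) \;=\; 2 *_{\vp_t} \pi_7 \gamma - *_{\vp_t} \gamma,
\]
so it suffices to compute $*_{\vp_t} \pi_7 \gamma$ and $*_{\vp_t} \gamma$ using the explicit $\pi_7$-formulas \eq{nc5eq17}--\eq{nc5eq18} and the Hodge star formula \eq{nc5eq11}. For $\gamma_{3,0} = \eta$, applying \eq{nc5eq11} gives $*_{\vp_t} \eta = t^{-2} (\stx \eta) \w e_1 \w e_2 \w e_3$, while each summand $(J_i \stx \eta) \w e_j \w e_k$ appearing in $\pi_7 \eta$ is of type $(1,2)$ and is sent by $*_{\vp_t}$ to $t^2 (\stx J_i \stx \eta) \w e_i$. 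For $\gamma_{1,2} = \sum \zeta_i \w e_j \w e_k$, the $(3,0)$ pieces $\stx J_i \zeta_i$ in $\pi_7 \gamma_{1,2}$ are vertical $3$-forms and so map to $t^{-2} (\stx \stx J_i \zeta_i) \w e_1 \w e_2 \w e_3$, while the $(1,2)$ pieces pair off against the corresponding terms in $-*_{\vp_t} \gamma_{1,2} = -t^2 \sum(\stx \zeta_i)\w e_i$ and give the cyclic combinations in \eq{nc5eq23}.

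The one nontrivial linear-algebra input, and the only place one can easily sign-err, is the identity
\[
\stx J_i \stx = -J_i \quad \text{on vertical $1$-forms and on vertical $3$-forms}.
\]
This follows from two facts about the fibre, which is an oriented Riemannian $4$-manifold: first, $J_i$ is an orientation-preserving isometry (because $\omega_i$ is self-dual and compatible with $\gx$), so under the algebra-homomorphism extension $J_i$ commutes with $\stx$ on all forms; second, in dimension $4$ Riemannian signature $\stx^2 = (-1)^{k(4-k)}$, which equals $-1$ on $k=1$ and $k=3$. Combining these, $\stx J_i \stx \eta = J_i \stx^2 \eta = - J_i \eta$ on $1$- and $3$-forms, which converts the $(\stx J_i \stx \eta) \w e_i$ terms in the $(3,0)$ computation into the advertised $+\tfrac{1}{2}(J_i \eta)\w e_i$, and likewise converts $\stx \stx J_i \zeta_i = - J_i \zeta_i$ into the $+\tfrac{1}{2}(J_i \zeta_i)\w e_1 \w e_2 \w e_3$ coefficient in the $(1,2)$ computation.

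The remainder is bookkeeping: substitute the $\pi_7$-expression from Lemma \ref{nc5lem2}, apply $*_{\vp_t}$ to each summand using \eq{nc5eq11}, collect terms by bidegree, use the $\stx J_i \stx = -J_i$ identity to rewrite, and subtract $*_{\vp_t} \gamma$. The powers of $t$ track automatically from \eq{nc5eq11}: $(3,0)$ forms scale as $t^{-2}$, $(1,2)$ forms scale as $t^2$ (and in the second formula the cross-terms between $\pi_7$ and the bare $\gamma_{1,2}$ both arrive with a prefactor $t^2$, yielding the final $\tfrac{t^2}{2}$ coefficients). I do not expect any real obstacle beyond sign-vigilance in the $\stx^2 = -1$ on $1$- and $3$-forms step.
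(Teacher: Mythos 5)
Your proposal is correct and follows essentially the same route as the paper: both reduce $D_{\vp_t}\Th$ to $2\ast_t\pi_7-\ast_t$ on forms of type $(3,0)+(1,2)$ by showing $\pi_1$ vanishes there, then substitute the explicit $\pi_7$-formulas of Lemma \ref{nc5lem2}, apply \eq{nc5eq11}, and use the commutation of $\stx$ with the $J_i$ together with $\stx^2=-1$ on odd-degree forms. The only (immaterial) difference is that you kill $\pi_1\gamma$ via $g_t$-orthogonality of $\gamma$ with $\vp_t$, whereas the paper uses $\pi_1\gamma=\frac{1}{7}\ast_t(\gamma\w\psi_t)$ and the type of $\psi_t$.
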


\begin{proof} From \eq{nc2eq6} we get $D_{\vp_t} \Th = \frac{7}{3} \ast_t \pi_1 + 2 \ast_t \pi_7 - \ast_t$. If $\gamma \in \Omega^3$, then $\pi_1 \gamma = f \vp_t$ for some function $f$. Then $\gamma \w \psi_t = (\pi_1 \gamma) \w \psi_t = 7 f \vol_t$, so $\pi_1 \gamma = \frac{1}{7} \ast_t (\gamma \w \psi_t)$. Since $\psi_t$ is of type (4,0) + (2,2), it follows that $\pi_1 = 0$ on forms of type $(3,0) + (1,2)$. Hence $D_{\vp_t} \Th = 2 \ast_t \pi_7 - \ast_t$ on forms of type $(3,0) + (1,2)$. Equations \eq{nc5eq22} and \eq{nc5eq23} now follow from \eq{nc5eq17} and \eq{nc5eq18} using \eq{nc5eq11} and the fact that $\stx$ commutes with $J_1$, $J_2$, and $J_3$.
\end{proof}

\begin{prop} Let\/ $\al_{0,2}$ and\/ $\al_{2,0}$ be $(0,2)$ and $(2,0)$ forms, respectively. Then the $5$-form
\e
\upsilon = \dx \big[ (D_{\vp_t} \Theta) (t^2 \dx \al_{0,2} + t^4 \dx \al_{2,0} + t^2 \xi_{1,2}) - t^2 \chi_{1,3} - t^4 \th_{3,1} \big]
\label{nc5eq24}
\e
lies in $\Omega^5_{14}$ with respect to $\vp_t$.
\label{nc5prop2}
\end{prop}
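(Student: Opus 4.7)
The plan is to verify the characterization $\pi_7(\upsilon) = 0$ (with respect to $\vp_t$), which for a $5$-form is equivalent to $(\ast_t \upsilon) \w \vp_t = -\upsilon$ by the identity $\pi_7 \gamma = \tfrac{1}{3}\bigl(\gamma + (\ast_t \gamma) \w \vp_t\bigr)$ obtained by Hodge-dualizing the formula $\pi_7(\xi) = \tfrac{1}{3}(\xi + \ast(\xi \w \vp))$ for $2$-forms. Since $\Phi$ is of type $(1,3) + (3,1)$, the $5$-form $\upsilon = \dx \Phi$ has only components of type $(2,3)$ and $(4,1)$. A direct pointwise computation using \eq{nc5eq11} and the hyperK\"ahler identity $\om_i \w \om_j = 2 \delta_{ij} \volx$ shows that a $(2,3) + (4,1)$ form $\alpha \w e_1 \w e_2 \w e_3 + \sum_i f_i \volx \w e_i$ lies in $\Omega^5_{14}$ if and only if the fiberwise self-dual part $\alpha_+ = \sum_i c_i \om_i$ of $\alpha$ satisfies $f_i = 2 t^2 c_i$ for $i = 1, 2, 3$; remarkably, both the $(2,3)$ and $(4,1)$ sides of $(\ast_t \upsilon) \w \vp_t = -\upsilon$ collapse to this single coupling.

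Using Corollary \ref{nc5cor1} I next compute $\Phi = \Phi_{1,3} + \Phi_{3,1}$ explicitly. Writing $\al_{0,2} = a_1 \, e_2 \w e_3 + a_2 \, e_3 \w e_1 + a_3 \, e_1 \w e_2$ locally over $L'$, one finds
\begin{equation*}
\Phi_{1,3} = \tfrac{t^2}{2} \Bigl[ \sum_i J_i \dx a_i - \stx \dx \al_{2,0} + \sum_i J_i \xi_i - 2 \chi \Bigr] \w e_1 \w e_2 \w e_3,
\end{equation*}
and an analogous, more involved expression for $\Phi_{3,1}$ whose $e_1$-coefficient is $\tfrac{t^4}{2}\bigl[\stx(-\dx a_1 + J_3 \dx a_2 - J_2 \dx a_3) + J_1 \dx \al_{2,0} + \stx(-\xi_1 + J_3 \xi_2 - J_2 \xi_3)\bigr] - t^4 \th_1$, with cyclic permutations for $e_2, e_3$. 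Applying $\dx$ yields $[\upsilon]_{2,3}$ and $[\upsilon]_{4,1}$; for the latter I use the fiberwise identity $\dx \eta = (\dsx \stx \eta)\volx$ for a $3$-form $\eta$ on $X$, together with Lemma \ref{nc5lem1} (in particular $\dsx(J_k \dx f) = 0$) to simplify the resulting coefficients.

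Because $\al_{0,2}$ and $\al_{2,0}$ are arbitrary, the membership $\upsilon \in \Omega^5_{14}$ must hold separately for the three contributions to $\upsilon$ depending on $\al_{0,2}$, on $\al_{2,0}$, and on the fixed data $(\xi_{1,2}, \chi_{1,3}, \th_{3,1})$. The $\al_{0,2}$ contribution reduces to the K\"ahler identity that the fiber-self-dual part of $\dx J_i \dx f$ equals $\tfrac{1}{2}(\Delta f) \om_i$ on each Eguchi--Hanson fiber, which makes its $(2,3)$ and $(4,1)$ parts automatically satisfy the coupling $f_i = 2 t^2 c_i$. The $\al_{2,0}$ contribution reduces to an analogous Hodge-theoretic identity, using $\dx \stx \eta = -\stx \dsx \eta$ for $3$-forms $\eta$ on $X$ to relate $\stx \dsx \dx \al_{2,0}$ and $\dsx J_i \stx \dx \al_{2,0}$. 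The nontrivial part is the $(\xi, \chi, \th)$ contribution, and this is where Proposition \ref{nc5prop1} enters decisively: the three relations \eq{nc5eq15} allow one to replace $\dsx(\stx \th_i)$ by $\dsx(J_j \xi_k) - \dsx(J_k \xi_j) - \dsx(J_i \chi)$ (cyclic $i, j, k$), and after this substitution the $(4,1)$ coefficients match exactly the fiber-self-dual part of the $(2,3)$-contribution with the required factor $2 t^2$.

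The main obstacle is the sheer algebraic bookkeeping, with many terms propagating through $\dx, \stx, \dsx,$ and the complex structures $J_k$, and with sign conventions requiring care. However, the cancellation is structurally forced: Proposition \ref{nc5prop1} is itself a direct consequence of the fundamental identity $\Theta(\vp_t) = \psi_t$ on the $\pi_7$ level, and this same identity is precisely what constrains $\upsilon$ to lie in $\Omega^5_{14}$ rather than merely in $\Omega^5$. This explains why the particular combination of terms in the definition of $\upsilon$ is exactly right for the conclusion to hold.
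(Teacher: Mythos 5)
Your proposal is correct and follows essentially the same route as the paper: the same computation of the $(1,3)$ and $(3,1)$ parts via Corollary \ref{nc5cor1}, the same reduction to a fibrewise condition on each Eguchi--Hanson fibre (your coupling $f_i = 2t^2 c_i$ is exactly the paper's condition $\dsx(\stx B_i + J_i A)=0$ in \eq{nc5eq29}, rewritten via \eq{nc5eq9}), and the same two inputs $\dsx(J_k\dx f)=0$ from Lemma \ref{nc5lem1} and the relations \eq{nc5eq15} of Proposition \ref{nc5prop1}. The only cosmetic differences are your use of the Hodge-dual characterization $\pi_7(\upsilon)=\frac13(\upsilon+(\ast_t\upsilon)\w\vp_t)$ in place of $\psi_t\w(\ast_t\upsilon)=0$, and the organization of the verification into three separate contributions.
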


\begin{proof}
We need to show that $\psi_t \w (\ast_t \upsilon) = 0$. We write
\begin{equation*}
\al_{0,2} = a_1 e_2 \w e_3 + a_2 e_3 \w e_1 + a_3 e_1 \w e_2, \qquad \al_{2,0} = b,
\end{equation*}
where $a_1,a_2,a_3$ are functions and $b$ is a vertical $2$-form. Then we have
\ea
\dx \al_{0,2} & = [\d \al_{0,2}]_{1,2} = (\dx a_1) \w e_2 \w e_3 + (\dx a_2) \w e_3 \w e_1 + (\dx a_3) \w e_1 \w e_2, 
\nonumber\\
\dx \al_{2,0} & = [\d \al_{2,0}]_{3,0} = \dx b.
\label{nc5eq25}
\ea
For $k = 1, 2, 3,$ let
\e 
\ze_k = \dx a_k + \xi_k,
\label{nc5eq26}
\e
where $\xi_k$ was defined in \eq{nc5eq13}. Using the expressions in \eq{nc5eq25} and \eq{nc5eq13} we compute using Corollary \ref{nc5cor1} that
\begin{align*}
& (D_{\vp_t} \Theta)(t^2 \dx \al_{0,2} \!+\! t^4 \dx \al_{2,0} \!+\! t^2 \xi_{1,2}) = (D_{\vp_t} \Theta) \Big( t^2 \sum_{\substack{i,j,k \\ \text {cyclic}}} \ze_k \w e_i \w e_k + t^4 (\dx b) \Big) \\
& = \frac{t^2}{2} (J_1 \ze_1 + J_2 \ze_2 + J_3 \ze_3) \w e_1 \w e_2 \w e_3 + \frac{t^4}{2}  \sum_{\text{$i,j,k$ cyclic}\!\!\!\!\!\!\!\!\!\!\!\!\!\!\!\!} \stx (- \ze_i + J_k \ze_j - J_j \ze_k) \w e_i \\
& \quad {}- \frac{t^2}{2} (\stx \dx b) \w e_1 \w e_2 \w e_3 + \frac{t^4}{2} \big( (J_1 \dx b) \w e_1 + (J_2 \dx b) \w e_2 + (J_3 \dx b) \w e_3 \big).
\end{align*}
Collecting terms above and using \eq{nc5eq13} to express $\chi_{1,3}$ and $\th_{3,1}$ in terms of $\chi,\th_1,\th_2,\th_3$, we obtain
\e
\begin{aligned}
& (D_{\vp_t} \Theta)(t^2 \dx \al_{0,2} + t^4 \dx \al_{2,0} + t^2 \xi_{1,2}) - t^2 \chi_{1,3} - t^4 \th_{3,1} \\
& = \frac{t^2}{2} A \w e_1 \w e_2 \w e_3 + \frac{t^4}{2} \sum_{k=1}^3 B_k \w e_k,
\end{aligned}
\label{nc5eq27}
\e
where
\e
\begin{aligned}
A & = J_1 \ze_1 + J_2 \ze_2 + J_3 \ze_3 - \stx \dx b - 2 \chi, \\
B_1 & = \stx(- \ze_1 + J_3 \ze_2 - J_2 \ze_3) + J_1 \dx b - 2 \th_1, \\
B_2 & = \stx(- \ze_2 + J_1 \ze_3 - J_3 \ze_1) + J_2 \dx b - 2 \th_2, \\
B_3 & = \stx(- \ze_3 + J_2 \ze_1 - J_1 \ze_2) + J_3 \dx b - 2 \th_3.
\end{aligned}
\label{nc5eq28}
\e
Note that $A$ is a vertical 1-form and each $B_k$ is a vertical 3-form. By \eq{nc5eq24}, the $5$-form $\upsilon$ is $\dx$ of \eq{nc5eq27}. Thus we have
\begin{equation*}
\upsilon = \frac{t^2}{2} (\dx A) \w e_1 \w e_2 \w e_3 + \frac{t^4}{2} \sum_{k=1}^3 (\dx B_k) \w e_k,
\end{equation*}
and therefore using \eq{nc5eq11} and \eq{nc5eq10}, we find
\begin{align*}
\ast_t \upsilon & = \frac{t^2}{2} (\stx \dx A) + \frac{1}{2} (\stx \dx B_1) \w e_2 \w e_3 \\
& \qquad + \frac{1}{2} (\stx \dx B_2) \w e_3 \w e_1 + \frac{1}{2} (\stx \dx B_3) \w e_1 \w e_2, \\
\psi_t \w (\ast_t \upsilon) & = \frac{t^4}{2} \sum_{\substack{i,j,k \\ \text {cyclic}}} (\stx \dx B_k) \w \volx \w e_i \w e_j - \frac{t^4}{2} \sum_{\substack{i,j,k \\ \text {cyclic}}} (\stx \dx A) \w \om_k \w e_i \w e_j.
\end{align*}
Hence, using \eq{nc5eq9} we conclude that
\begin{equation*}
\psi_t \w (\ast_t \upsilon) = \frac{t^4}{2} \sum_{\substack{i,j,k \\ \text {cyclic}}} \big( \dsx (\stx B_k) + \dsx (J_k A) \big) \volx \w e_i \w e_j.
\end{equation*}
Thus, we find that $\upsilon$ is in $\Omega^5_{14}$ with respect to $\vp_t$ if and only if
\e
\dsx( \stx B_k + J_k A ) = 0 \qquad \text{ for all $k = 1, 2, 3$}.
\label{nc5eq29}
\e
Substituting the expressions \eq{nc5eq28} for $A,B_1,B_2,B_3$ we find that
\begin{align*}
\stx B_1 + J_1 A & = \ze_1 - J_3 \ze_2 + J_2 \ze_3 + J_1 \stx \! \dx b - 2 \stx \! \th_1 \\
& \qquad{} - \ze_1 - J_3 \ze_2 + J_2 \ze_3 - J_1 \stx \! \dx b - 2 J_1 \chi \\
& = 2 J_2 \ze_3 - 2 J_3 \ze_2 - 2 \stx \! \th_1 - 2 J_1 \chi,
\end{align*}
and cyclic permutations of this. Cancelling the factor of 2, the conditions \eq{nc5eq29} become
\begin{equation*}
\dsx ( J_2 \ze_3 - J_3 \ze_2 - \stx \! \th_1 - J_1 \chi) = 0,
\end{equation*}
and cyclic permutations of this. Recall that $\ze_k = \dx a_k + \xi_k$. From Lemma \ref{nc5lem1}, we have $\dsx (J_i \dx a_j) = 0$ for any $i,j$ and thus the conditions above become
\begin{align*}
\dsx( J_2 \xi_3 - J_3 \xi_2 - \stx \th_1 - J_1 \chi ) & = 0, \\
\dsx( J_3 \xi_1 - J_1 \xi_3 - \stx \th_2 - J_2 \chi ) & = 0, \\
\dsx( J_1 \xi_2 - J_2 \xi_1 - \stx \th_3 - J_3 \chi ) & = 0.
\end{align*}
But these are precisely the conditions \eq{nc5eq15} of Proposition \ref{nc5prop1}. Thus the proof is complete.
\end{proof}

\subsection{\texorpdfstring{Analytic results on the Eguchi--Hanson fibre $P_x \cong X$}{Analytic results on the Eguchi--Hanson fibre Pₓ≅X}}
\label{nc53}

Before we can give the proof of Theorem \ref{nc5thm}, in this section we collect some analytic results about the Eguchi--Hanson fibre $P_x \cong X$. First, the Riemannian manifold $(X, \gx)$ is an example of an \emph{asymptotically conical} (AC) Riemannian manifold, whose asymptotic cone is the cone over $\R \mathbb P^3$. More precisely, it is an \emph{asymptotically locally Euclidean} (ALE) Riemannian manifold, because the asymptotic cone $\R^4 / \{ \pm 1 \}$ is flat. We begin by stating some general results that are valid for any AC manifold, specialized to the case of dimension 4. Then we use these facts to establish results that require the stronger ALE property, as well as the specific topology of $X$. Finally the last result we collect in this section uses the fact that $(X, \gx)$ is also hyperK\"ahler.

On an AC manifold $(X, \gx)$, there exist reasonably nice \emph{Hodge-theoretic} results for forms with appropriate decay at infinity. The results we use are from Lockhart \cite{Lock} and Lockhart--McOwen \cite{LoMc}. We will not give detailed definitions of the weighted Sobolev spaces that we use, but rather only list the results we will need. A comprehensive summary of this theory using the same notation that we use here can be found in \cite[Section 4]{KaLo}. Although \cite{KaLo} is written for dimension 7, it is a simple matter to translate the results to dimension 4.

Specifically, the Lockhart--McOwen facts we will need are the following. Here all norms and covariant derivatives are with respect to the metric $\gx$ on $X$. Let $L^2_{k, \lambda} (E)$ denote the \emph{weighted Sobolev space} of sections of a tensor bundle $E$ over $X$ with rate $\lambda$. When $E = \Lambda^p T^*X$ is the bundle of $p$-forms on $X$, we will abbreviate our notation and write
\begin{equation*}
\Omega^p_{k, \lambda} = L^2_{k, \lambda} (\Lambda^p T^*X).
\end{equation*}
Similarly we write
\begin{equation*}
(\Omega^{2}_{\pm})_{k ,\lambda} = L^2_{k, \lambda} (\Lambda^2_{\pm} T^*X)
\end{equation*}
for the corresponding weighted Sobolev spaces of self-dual (anti-self-dual) $2$-forms on $X$.

\begin{enumerate}[(LM1)]
\item Let $r$ be the function on $X \setminus Y$ from \S\ref{nc25}. A smooth section $f$ of $E$ that lies in $L^2_{k, \lambda} (E)$ satisfies $\md{\nabla^k f} = O(r^{\lambda-k})$ as $r \to \infty$ for all $k \geq 0$. Conversely, a smooth section $f$ of $E$ such that $\md{\nabla^k f} = O(r^{\lambda - k})$ for all $k \geq 0$ as $r \to \infty$ lies in $L^2_{k, \lambda + \gamma} (E)$ for any $k \geq 0$ and any $\gamma > 0$. Moreover, we have $L^2_{k, \lambda} (E) \subseteq L^2_{k, \lambda'} (E)$ if $\lambda \leq \lambda'$.

\item For any rate $\lambda$, and any $k \geq 0$, the Laplacian $\lapx$ on $\Lambda^0 T^*X$ induces a continuous linear map
\begin{equation*}
(\lapx)_{k+2, \lambda} : \Omega^0_{k+2, \lambda} \to \Omega^0_{k, \lambda - 2},
\end{equation*}
and for generic (``noncritical'') rates, this map is Fredholm. At such rates, we have
\begin{equation*}
\Coker (\lapx)_{k+2, \lambda} \cong \Ker (\lapx)_{k, -2 - \lambda}.
\end{equation*}
Moreover, by elliptic regularity if $(\lapx)_{k+2, \lambda} f$ is smooth, then $f$ is.

\item In dimension 4 the rate $\lambda = -2$ is always critical for the Laplacian on functions, but there are no critical rates in the interval $(-2,0)$. Take $\gamma \in (0,2)$ so that $\lambda = -2 + \gamma$ is noncritical. Then by (LM2), the cokernel of $(\lapx)_{k+2, -2 + \gamma}$ is isomorphic to the kernel of $(\lapx)_{k, - \gamma}$. By elliptic regularity, elements in this kernel are smooth, and since by (LM1) they are $O(r^{- \gamma})$ as $r \to \infty$ with $\gamma > 0$, we conclude by the maximum principle that the kernel is trivial. Similarly $\Ker (\lapx)_{-2 + \gamma} = 0$ because $\gamma < 2$. Therefore, for $\gamma \in (0,2)$ the map
\begin{equation*}
(\lapx)_{k+2, -2 + \gamma} : \Omega^0_{k+2, -2 + \gamma} \to \Omega^0_{k, -4 + \gamma}
\end{equation*}
is an isomorphism.

\item The maps $\dx$ and $\dsx$ on $X$ induce continuous linear maps
\begin{equation*}
(\dx)_{k+1, \lambda} : \Omega^p_{k+1, \lambda} \to \Omega^{p+1}_{k, \lambda - 1}, \qquad
(\dsx)_{k+1, \lambda}  : \Omega^p_{k+1, \lambda} \to \Omega^{p-1}_{k, \lambda - 1}. 
\end{equation*}
for any rate $\lambda$ and any $k \geq 0$.

\item Elements of $\Omega^p_{k, \lambda}$ for $\lambda < -2$ are in the usual $L^2$ space. In particular, by (LM4) the spaces $\dx (\Omega^{p-1}_{k+1, \lambda + 1})$ and $\dsx (\Omega^{p+1}_{k+1, \lambda + 1})$ are $L^2$-orthogonal subspaces of $\Omega^p_{k, \lambda}$ if $\lambda < -2$.

\item If $w \in \Omega^1_{k+1, \lambda+1}$ is smooth, then $w \w \dx w$ is a $3$-form that is $O(r^{2 \lambda + 1})$ as $r \to \infty$. If $2 \lambda + 1 + 3 < 0$, equivalently $\lambda < -2$, then we can apply Stokes' Theorem to the exact $4$-form $\dx w \w \dx w$ to deduce that $\int_X (\dx w \w \dx w) = 0$. See, for example, the proof of~\cite[Lem.~4.68]{KaLo}.

\item Define the operator $\Dx : \Omega^0 (X) \oplus \Omega^2_+ (X) \to \Omega^1 (X)$ by $\Dx (f, \beta) = \dx f + \dsx \beta$. This operator is elliptic, and has formal adjoint $\Dsx: \Omega^1(X) \to \Omega^0(X) \oplus \Omega^2_+ (X)$ given by $\Dsx \alpha = (\dsx \alpha, \pi_+ \dx \alpha)$. For any rate $\lambda$, and any $k \geq 0$, it induces a continuous linear map
\begin{equation*}
(\Dx)_{k+1, \lambda} : \Omega^0_{k+1, \lambda} \oplus (\Omega^2_+)_{k+1, \lambda} \to \Omega^1_{k, \lambda - 1},
\end{equation*}
and for generic (noncritical) rates, this map is Fredholm. At such rates, we have
\begin{equation*}
\Coker (\Dx)_{k+1, \lambda} \cong \Ker (\Dsx)_{k, -3 - \lambda}.
\end{equation*}
Moreover, by elliptic regularity if $(\Dx)_{k+1, \lambda} (f, \beta)$ is smooth, then $(f, \beta)$ is smooth.

\item For the operators $(\lapx)_{k+2, \lambda}$, $(\Dx)_{k+1, \lambda}$, and $(\Dsx)_{k+1, \lambda}$ of (LM2) and (LM7), the kernel or cokernel can only change as we vary $\lambda$ if we cross a critical rate. The critical rates are those for which there exist nontrivial sections, which are homogeneous of order $\lambda$ with respect to dilations, in the kernel of the corresponding operators on the cone.

\item Let $\mathcal H^p_{\lambda} = \{ \alpha \in \Omega^p_{k, \lambda} : \dx \alpha = 0 \text{ and } \dsx \alpha = 0 \}$ which by elliptic regularity consists of smooth elements so is independent of $k$. Then we have
\begin{equation*}
\mathcal H^1_{-2} \cong H^1_{\mathrm{cs}} (X), \quad \mathcal H^2_{-2} \cong \Im ( H^2_{\mathrm{cs}} (X) \to H^2 (X) ), \quad \mathcal H^3_{-2} \cong H^3 (X),
\end{equation*}
where $H^p (X)$ and $H^p_{\text{cs}} (X)$ denote the $p^{\text{th}}$ de Rham cohomology and $p^{\text{th}}$ compactly supported de Rham cohomology of $X$, respectively. The map $H^2_{\mathrm{cs}} (X) \to H^2 (X)$ is the obvious one. See \cite[Th.~6.5.2]{Lota} for a more general statement.
\end{enumerate}

\begin{cor} Let\/ $(X, \gx)$ be an AC Riemannian manifold of dimension $4$. Let\/ $w \in \Omega^1_{k+1, \lambda + 1}$ such that\/ $\dx w \in (\Omega^2_-)_{k, \lambda}$. If\/ $\lambda < -2$ then~$\dx w = 0$.
\label{nc5cor2}
\end{cor}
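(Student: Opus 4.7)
The plan is to exploit the anti-self-duality of $\dx w$ together with the $L^2$-integrability that is built into the hypothesis $\lambda < -2$, and derive a vanishing theorem by integration by parts. The argument is essentially a weighted version of the classical observation that an anti-self-dual exact 2-form on a closed 4-manifold must vanish.

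First I would unpack what the hypotheses give. Since $\dx w$ is anti-self-dual, $\stx \dx w = -\dx w$, and so pointwise
\[
\md{\dx w}^2 \volx \;=\; \dx w \w \stx \dx w \;=\; -\, \dx w \w \dx w.
\]
Thus, up to sign, the pointwise norm squared of $\dx w$ equals the topological density $\dx w \w \dx w = \dx(w \w \dx w)$. So the corollary reduces to showing that this exact 4-form integrates to zero.

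Next I would invoke property (LM6) directly. By assumption $w \in \Omega^1_{k+1, \lambda+1}$ with $\lambda < -2$; the hypothesis of (LM6) is exactly $\lambda < -2$, and the conclusion supplied there is that Stokes' theorem applies to the exact 4-form $\dx(w \w \dx w) = \dx w \w \dx w$, giving $\int_X \dx w \w \dx w = 0$. Combining this with the previous display yields
\[
\int_X \md{\dx w}^2 \volx \;=\; -\int_X \dx w \w \dx w \;=\; 0,
\]
and hence $\dx w = 0$ almost everywhere. Elliptic regularity (via (LM7) or directly since $w$ is smooth by the Sobolev setup used in $\Omega^1_{k+1, \lambda+1}$ together with (LM2)) makes $\dx w$ continuous, so $\dx w \equiv 0$.

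There is no real obstacle here: the only subtlety is verifying that the decay rate of $w$ is strong enough for Stokes' theorem on the noncompact end, and this is exactly what the cited property (LM6) is designed to supply. One should remark that the condition $\lambda < -2$ is sharp for this form of the argument: the integrand $w \w \dx w$ decays like $r^{2\lambda+1}$ on a 3-dimensional cross-section at infinity, so the boundary term at radius $R$ has size $O(R^{2\lambda+4})$, which vanishes as $R \to \infty$ precisely when $\lambda < -2$.
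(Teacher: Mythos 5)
Your proof is correct and is essentially identical to the paper's: both exploit anti-self-duality to write $\md{\dx w}^2\volx = -\dx w\w\dx w$ and then apply (LM6) (the paper also explicitly cites (LM5) for $L^2$-integrability) to conclude the integral vanishes. Your closing remark on the sharpness of $\lambda<-2$ is a nice observation but not needed.
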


\begin{proof} By hypothesis we have
\e 
\stx \! \dx w = - \dx w.
\label{nc5eq30}
\e
By (LM5), the $2$-form $\dx w$ is in $L^2$. Moreover, we can apply (LM6) and \eq{nc5eq30} to deduce that
\begin{equation*}
\nm{\dx w}_{L^2} = \int_X \dx w \w \stx \dx w = - \int_X \dx w \w \dx w = 0
\end{equation*}
as claimed.
\end{proof}

\begin{lem} Let\/ $(X, \gx)$ be the Eguchi--Hanson space. There are no critical rates for the operator $\Dsx$ in the interval\/~$[-2, 0]$.
\label{nc5lem3}
\end{lem}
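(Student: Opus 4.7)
The plan is to invoke (LM8) to reduce the question to the asymptotic cone $\R^4/\{\pm 1\}$ of Eguchi--Hanson, show that any $\{\pm 1\}$-invariant homogeneous solution of $\Dsx \omega = 0$ on $\R^4 \sm \{0\}$ is automatically flat-harmonic as a 1-form, classify such homogeneous harmonic 1-forms by spherical harmonics, and then eliminate all rates in $[-2, 0]$ by a parity argument under $x \mapsto -x$. By (LM8), the critical rates for $\Dsx$ are precisely those $\lambda$ for which there exists a nontrivial 1-form $\omega$ on $\R^4 \sm \{0\}$, invariant under $x \mapsto -x$ and homogeneous of degree $\lambda$ (meaning $|\omega| = O(r^\lambda)$), satisfying $\dsx \omega = 0$ and $\pi_+ \dx \omega = 0$.

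The first key step is to verify $\Delta \omega = 0$, where $\Delta = \dx \dsx + \dsx \dx$ is the Hodge Laplacian. From $\dsx \omega = 0$ we get $\dx \dsx \omega = 0$ immediately. The condition $\pi_+ \dx \omega = 0$ says $\dx \omega$ is anti-self-dual, so $\stx \dx \omega = -\dx \omega$; applying $\dsx = -\stx \dx \stx$ (valid on 2-forms in dimension 4) together with $\dx \dx \omega = 0$ gives
\begin{equation*}
\dsx (\dx \omega) = -\stx \dx \stx (\dx \omega) = \stx \dx (\dx \omega) = 0.
\end{equation*}
Hence $\Delta \omega = 0$.

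Next I would exploit that on flat $\R^4$ the Hodge Laplacian on 1-forms acts componentwise in Cartesian coordinates, so writing $\omega = \sum_{i=1}^{4} f_i\, \d x_i$ each $f_i$ is a harmonic function on $\R^4 \sm \{0\}$ that is homogeneous of degree $\lambda$. By separation of variables, homogeneous harmonic functions of degree $\mu$ on $\R^4 \sm \{0\}$ are spanned by $r^{\mu} Y_k(\xi)$, where $Y_k$ is a spherical harmonic of degree $k$ on $S^3$ and $\mu(\mu+2) = k(k+2)$, forcing $\mu = k$ or $\mu = -k-2$ for some integer $k \ge 0$. The only such $\mu$ lying in $[-2,0]$ are $\mu = 0$ (with $k=0$, $f_i$ constant) and $\mu = -2$ (with $k=0$, $f_i$ a multiple of $r^{-2}$).

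The parity argument closes the proof. Under $x \mapsto -x$, $\d x_i \mapsto -\d x_i$ and $r^\mu Y_k$ picks up a factor $(-1)^k$, so $\{\pm 1\}$-invariance of $\omega$ requires each $f_i$ to satisfy $f_i(-x) = -f_i(x)$, equivalently $k$ odd. Both candidates $\mu = 0$ and $\mu = -2$ correspond to $k = 0$, hence admit no nontrivial $\{\pm 1\}$-invariant $\omega$; so $\Dsx$ has no critical rates in $[-2, 0]$. The main obstacle is really just the harmonicity step, which relies on the dimension-four interplay between coclosedness and anti-self-duality; the rest is routine spherical-harmonic bookkeeping.
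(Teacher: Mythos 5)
Your proof is correct and follows essentially the same route as the paper's: reduce to the cone via (LM8), upgrade to full Hodge-harmonicity using coclosedness plus anti-self-duality of $\dx\omega$, trivialize by parallel $1$-forms so that the problem becomes scalar, and observe that the only homogeneous rates in $[-2,0]$ correspond to degree-zero spherical harmonics, which cannot produce $\{\pm 1\}$-invariant $1$-forms. The only superficial difference is that you carry out the spherical-harmonic classification explicitly, whereas the paper cites (LM3) for the open interval $(-2,0)$ and treats the endpoints $\lambda = -2, 0$ separately by the same parity observation.
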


\begin{proof} By (LM8), we must show that there do not exist any nonzero elements homogeneous of order $\lambda \in [-2, 0)$ in the kernel of the corresponding operator on the cone. Let $\alpha$ be such a $1$-form on the cone, homogeneous of order $\lambda$. Then $\d^* \alpha = 0$ and $\pi_+ \d \alpha = 0$. Thus $\d \d^* \alpha = 0$ and $\d^* \d \alpha = - \d^* \ast \d \alpha = 0$. So $\alpha$ is a harmonic $1$-form. The harmonic $1$-forms on the cone $\R^4 / \{\pm 1 \}$ correspond to the $\{\pm 1\}$-invariant harmonic $1$-forms on $\R^4$. But $\R^4$ admits a global trivialization by parallel $1$-forms. So any $1$-form on $\R^4$ is of the form $\alpha = \sum_{j=1}^4 f_j \d x_j$ and $\Delta \alpha = \sum_{j=1}^4 (\Delta f_j) \d x_j$. Hence the critical rates of $\Dsx$ are a \emph{subset} of the critical rates for $\Delta$ on functions on $\R^4$, which is also a cone over $\cS^3$. By (LM3), there are no critical rates in $(-2,0)$ for the Laplacian on functions, hence none for $\Dsx$ in that interval. 

It remains to consider the end points $\lambda = -2,0$. It is easy to check that the only harmonic functions on a metric cone in dimension $4$ that are homogeneous of order $\lambda = -2$ or $\lambda = 0$ are of the form $c r^{\lambda}$ for $c\in\R$. Therefore the only harmonic $1$-forms on $\R^4$ that are homogeneous of order $\lambda = -2$ or $\lambda = 0$ will be of the form $r^{\lambda} v$ for some parallel $1$-form $v$. But the nonzero parallel $1$-forms on $\R^4$ are not $\{ \pm 1 \}$-invariant, hence they do not descend to $\R^4 / \{ \pm 1 \}$. Thus $\lambda = -2$ and $\lambda = 0$ are also not critical rates for~$\Dsx$.
\end{proof}

\begin{prop} Let\/ $(X, \gx)$ be the Eguchi--Hanson space. For any $k \geq 0$ and for $\gamma > 0$ sufficiently small, we have
\begin{align*}
\Omega^1_{k, -3 + \gamma} & = \dx (\Omega^0_{k+1, -2 + \gamma}) \oplus \dsx ( (\Omega^2_+)_{k+1, -2 + \gamma}), \\
\Omega^3_{k, -3 + \gamma} & = \dsx (\Omega^4_{k+1, -2 + \gamma}) \oplus \dx ( (\Omega^2_+)_{k+1, -2 + \gamma}),
\end{align*}
where in both cases the summands are $L^2$-orthogonal. Moreover, any smooth closed form in $\Omega^1_{k, -3 + \gamma}$ or $\Omega^3_{k, -3 + \gamma}$ is necessarily $\dx$ of a smooth form in $\Omega^0_{k+1, -2 + \gamma}$ or $(\Omega^2_+)_{k+1, -2 + \gamma},$ respectively. 
\label{nc5prop3}
\end{prop}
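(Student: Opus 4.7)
The plan is to deduce both decompositions from the fact that the elliptic operator $\Dx\colon \Omega^0\oplus\Omega^2_+\to\Omega^1$ of (LM7) induces an isomorphism
\begin{equation*}
(\Dx)_{k+1,-2+\gamma}\colon \Omega^0_{k+1,-2+\gamma}\oplus(\Omega^2_+)_{k+1,-2+\gamma}\longra \Omega^1_{k,-3+\gamma}
\end{equation*}
for all sufficiently small $\gamma>0$. Granting this, the $L^2$-orthogonality of the two summands is immediate: their images lie in $\Omega^1_{k,-3+\gamma}\subset L^2$ by (LM5), and integration by parts as in (LM6) gives $\langle \dx f,\dsx\be\rangle_{L^2}=\langle f,(\dsx)^2\be\rangle_{L^2}=0$, with boundary terms vanishing by the decay rates. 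The 3-form decomposition then follows by applying the Hodge star $\stx$, which is an isometric isomorphism $\Omega^1\cong\Omega^3$ intertwining the two decompositions: $\stx$ exchanges $\dx(\Omega^0)\leftrightarrow \dsx(\Omega^4)$ and $\dsx(\Omega^2_+)\leftrightarrow \dx(\Omega^2_+)$ (up to sign), using $\stx=+\id$ on $\Omega^2_+$ and $\dsx=-\stx\dx\,\stx$.

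For surjectivity, (LM7) gives $\Coker(\Dx)_{k+1,-2+\gamma}\cong\Ker(\Dsx)_{k,-1-\gamma}$. Since $-1-\gamma\in[-2,0]$ for $\gamma$ small, Lemma \ref{nc5lem3} combined with (LM8) yields $\Ker(\Dsx)_{k,-1-\gamma}=\Ker(\Dsx)_{k,-2}$. Any $\al$ in this latter kernel satisfies $\dsx\al=0$ and $\pi_+\dx\al=0$, hence (as observed in the proof of Lemma \ref{nc5lem3}) is weakly harmonic; moreover $\dx\al$ is then an ASD $2$-form of rate $-3$, which is in $L^2$, and integration by parts gives $\nm{\dx\al}_{L^2}^2=\langle\al,\dsx\dx\al\rangle_{L^2}=0$. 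Thus $\al$ is strongly harmonic and lies in $\mathcal H^1_{-2}$. By (LM9) and Poincar\'e duality, $\mathcal H^1_{-2}\cong H^1_{\mathrm{cs}}(X)\cong H^3(X)=0$, since $X\cong T^*\cS^2$ is homotopy equivalent to $\cS^2$. Hence the cokernel vanishes.

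For injectivity, suppose $(f,\be)\in\Ker(\Dx)_{k+1,-2+\gamma}$. Applying $\Dsx$ yields $\Dsx\Dx(f,\be)=(\lapx f,\,\pi_+\dx\dsx\be)=0$, so $f$ is harmonic; by (LM3), $(\lapx)_{k+2,-2+\gamma}$ is an isomorphism for $\gamma\in(0,2)$, forcing $f=0$ and hence $\dsx\be=0$. Since $\be$ is self-dual, $\dsx\be=-\stx\dx\be$ then gives $\dx\be=0$, so $\be$ is a self-dual harmonic $2$-form. \textbf{The main obstacle is to conclude $\be=0$}, because at rate $-2+\gamma$ the form $\be$ is not a priori in $L^2$. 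Here we use that $(X,\gx)$ is hyperK\"ahler, so $W_+=0$ and the Bochner--Weitzenbock formula reduces to $\lapx\be=\nabla^*\nabla\be$ on $\Omega^2_+$. For $\gamma>0$ small enough, the decay of $\be$ and $\nabla\be$ is sufficient for the integration by parts $\nm{\nabla\be}_{L^2}^2=\langle\be,\nabla^*\nabla\be\rangle_{L^2}$ to hold with vanishing boundary terms, so $\nabla\be=0$ and $\be$ is parallel. Since $\La^2_+T^*X$ is globally trivialized by the parallel frame $(\check\om^I,\check\om^J,\check\om^K)$, the form $\be$ is a constant $\R$-linear combination of these; but none of these decay at infinity, so $\be=0$.

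Finally, if a smooth $\al\in\Omega^1_{k,-3+\gamma}$ is closed, writing $\al=\dx f+\dsx\be$ gives $\dx\dsx\be=\dx\al=0$, and integration by parts (legitimate by the decay) yields $\nm{\dsx\be}_{L^2}^2=\langle\dx\dsx\be,\be\rangle_{L^2}=0$, so $\al=\dx f$; smoothness of $f$ then follows from elliptic regularity for $\Dx$ in (LM7). The 3-form analogue follows verbatim after applying $\stx$.
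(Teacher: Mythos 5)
Your proof is correct and follows essentially the same route as the paper's: surjectivity of $(\Dx)_{k+1,-2+\gamma}$ via (LM7), Lemma \ref{nc5lem3}, (LM8), Corollary \ref{nc5cor2} (which your Stokes argument reproves), and $\mathcal H^1_{-2}\cong H^1_{\mathrm{cs}}(X)=0$ from (LM9); directness of the sum from (LM5); the $3$-form case by applying $\stx$; and the closedness statement by the same $L^2$ pairing. The only difference is that you also prove injectivity of $(\Dx)_{k+1,-2+\gamma}$ (via the Weitzenb\"ock formula on $\Omega^2_+$), which is correct but not needed, since the stated direct-sum decomposition of the two \emph{images} already follows from surjectivity together with their $L^2$-orthogonality.
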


\begin{proof} Consider (LM7) for rate $\lambda = -2 + \gamma$. For $\gamma > 0$ sufficiently small, this rate will be noncritical, and $\Coker (\Dx)_{k+1, -2 + \gamma} \cong \Ker (\Dsx)_{k, -1 - \gamma}$. By Lemma \ref{nc5lem3} and (LM8), we have that
\begin{equation*}
\Ker (\Dsx)_{k,-1 - \gamma} = \Ker (\Dsx)_{k, -2 - \epsilon}
\end{equation*}
for some $\epsilon > 0$. Let $\alpha \in \Ker (\Dsx)_{k, -2 - \epsilon}$. Then $\dsx \alpha = 0$ and $\pi_+ \dx \alpha = 0$. By Corollary \ref{nc5cor2} we have $\dx \alpha = 0$. Since $-2 - \epsilon < -2$, we deduce that $\alpha \in \mathcal H^1_{-2}$. By (LM9), in this case $\mathcal H^1_{-2} \cong H^1_{\mathrm{cs}} (X) = \{ 0 \}$, because $X \cong T^* \cS^2$. Hence, $\Coker (\Dx)_{k+1, -2 + \gamma} = \{ 0 \}$, and therefore
\begin{equation*}
\Omega^1_{k, -3 + \gamma} =  \dx (\Omega^0_{k+1, -2 + \gamma}) + \dsx ((\Omega^2_+)_{k+1, -2 + \gamma}),
\end{equation*}
and the sum is direct by (LM5). This establishes the first decomposition. The second decomposition is obtained by applying $\stx$ to both sides of the first one.

For either of the two decompositions, suppose that $\alpha = \dx \sigma + \dsx \tau$ is closed. Then $\dx \dsx \tau = 0$, and at this rate it lies in $L^2$. Thus $\dsx \tau = 0$, so $\alpha = \dx \sigma$ is exact. The final statement about smoothness follows from the elliptic regularity remark in (LM7).
\end{proof}

We close this section with a useful observation. The space $(X, \gx)$ is also hyperK\"ahler, so it is equipped with an orthonormal triple $\omega_1,\omega_2,\omega_3$ of \emph{parallel} self-dual $2$-forms. It is a standard fact that the curvature operator is anti-self-dual, since the Ricci curvature vanishes and the bundle $\Lambda^2_+ (T^* X)$ admits a parallel trivialization. This in turn implies that the curvature term in the Weitzenb\"ock formula on $2$-forms vanishes, so $\Delta_d = \nabla^* \nabla$ on $2$-forms. It then follows, because any self-dual $2$-form $\eta$ can be written as $\eta = f_1 \omega_1 + f_2 \omega_2 + f_3 \omega_3$ for some functions $f_1,f_2,f_3$ and the $\omega_i$'s are parallel, that
\begin{equation}
\begin{aligned} 
\lapx \eta & = \lapx (f_1 \omega_1 + f_2 \omega_2 + f_3 \omega_3) = (\lapx f_1) \omega_1 + (\lapx f_2) \omega_2 + (\lapx f_3) \omega_3 \\
& \qquad \text{ for any self-dual $2$-form $\eta$ on $X$.}
\end{aligned}
\label{nc5eq31}
\end{equation}
We will use this fact in the proof of Theorem \ref{nc5thm} in \S\ref{nc54}.

\subsection{Proof of the correction theorem}
\label{nc54}

We are now ready to give the proof of Theorem \ref{nc5thm}. We break the proof into three steps.
\smallskip

\noindent{\bf Step One.} Using the expression for $g_t$ in \eq{nc5eq10}, the expressions \eq{nc5eq13} for $\xi_{1,2},\chi_{1,3},\th_{3,1}$, and the estimates \eq{nc4eq35}, \eq{nc4eq37}, \eq{nc4eq38} we find that with respect to the metric $\gx$ on $X$,
\e 
\md{\nabla^k \xi_i} = O(r^{-3-k}), \qquad \md{\nabla^k \chi} = O(r^{-3-k}), \qquad \md{\nabla^k \th_i} = 0, 
\label{nc5eq32}
\e
for $r \geq 1$, for all $k \geq 0$ and $i = 1, 2, 3$. In particular, by (LM1) we deduce that
\e 
\xi_1, \xi_2, \xi_3, \chi \in \Omega^1_{k, -3 + \gamma}, \qquad \th_1, \th_2, \th_3 \in \Omega^3_{k, -3 + \gamma},
\label{nc5eq33}
\e
for any $\gamma > 0$ and any $k \geq 0$.

In order to simplify notation we define
\begin{align*}
\tilde A & = \ha (- J_1 \xi_1 - J_2 \xi_2 - J_3 \xi_3) + \chi, &
\tilde B_1 & = \ha \stx \! (\xi_1 - J_3 \xi_2 + J_2 \xi_3) + \th_1, \\
\tilde B_2 & = \ha \stx \! (\xi_2 - J_1 \xi_3 + J_3 \xi_1) + \th_2, &
\tilde B_3 & = \ha \stx \! (\xi_3 - J_2 \xi_1 + J_1 \xi_2) + \th_3,
\end{align*}
where we have used tildes to avoid confusion with the similar but different \eq{nc5eq28}. Note that $\tilde A$ is a vertical 1-form, and the $\tilde B_i$'s are vertical 3-forms on $P$. Since $\stx$ and the $J_i$'s are isometries, it follows from \eq{nc5eq33} that
\e 
\tilde A, \, \stx \tilde B_1, \, \stx \tilde B_2, \, \stx \tilde B_3 \in \Omega^1_{k, -3 + \gamma},
\label{nc5eq34}
\e
for any $\gamma > 0$ and all $k \geq 0$.

Define on $P$ the following 2-form:
\begin{equation} 
\vartheta = -\ast_t \dx \bigl[ (D_{\vp_t} \Theta) (t^2 \xi_{1,2}) - t^2 \chi_{1,3} - t^4 \th_{3,1} \bigr] + t^2 (\dx \tilde A).
\label{nc5eq35}
\end{equation}
We will rewrite expression \eq{nc5eq35} as follows. Using the expressions \eq{nc5eq13} for $\xi_{1,2},\chi_{1,3}$, and $\th_{3,1}$, and equation \eq{nc5eq23} to compute $(D_{\vp_t} \Theta)(t^2 \xi_{1,2})$, we find that
\begin{align*}
(D_{\vp_t} \Theta) (t^2 \xi_{1,2}) - t^2 \chi_{1,3} - t^4 \th_{3,1} & = \frac{t^2}{2} ( J_1 \xi_1 + J_2 \xi_2 + J_3 \xi_3 - 2 \chi) \w e_1 \w e_2 \w e_3 \\
& \qquad {}+ \frac{t^4}{2} \big( \stx \! (-\xi_1 + J_3 \xi_2 - J_2 \xi_3) - 2 \th_1\big) \w e_1 \\
& \qquad {} + \frac{t^4}{2} \big( \stx \! (- J_3 \xi_1 - \xi_2 + J_1 \xi_3) - 2 \th_2\big) \w e_2 \\
& \qquad {}+ \frac{t^4}{2} \big( \stx \! ( J_2 \xi_1 - J_1 \xi_2 - \xi_3) - 2 \th_3\big) \w e_3.
\end{align*}
Thus we have
\begin{equation*}
(D_{\vp_t} \Theta) (t^2 \xi_{1,2}) - t^2 \chi_{1,3} - t^4 \th_{3,1} = - t^2 \tilde A \w e_1 \w e_2 \w e_3 - t^4 ( \tilde B_1 \w e_1 + \tilde B_2 \w e_2 + \tilde B_3 \w e_3 )
\end{equation*}
and hence
\begin{equation*}
\begin{aligned}
& \dx \bigl[ (D_{\vp_t} \Theta) (t^2 \xi_{1,2}) - t^2 \chi_{1,3} - t^4 \th_{3,1} \bigr] \\
& = -t^2 (\dx \tilde A) \w e_1 \w e_2 \w e_3 - t^4 \big( (\dx \tilde B_1) \w e_1 + (\dx \tilde B_2) \w e_2 + (\dx \tilde B_3) \w e_3 \big).
\end{aligned}
\end{equation*}
Applying \eq{nc5eq11} again, we deduce that the expression \eq{nc5eq35} can be written as
\e 
\begin{aligned}
\vartheta & = -\ast_t \dx \bigl[ (D_{\vp_t} \Theta) (t^2 \xi_{1,2}) - t^2 \chi_{1,3} - t^4 \th_{3,1} \bigr] + t^2 (\dx \tilde A) \\
& = t^2 (\dx \tilde A + \stx \dx \tilde A) + (\stx \dx \tilde B_1) \w e_2 \w e_3 \\
& \qquad{} + (\stx \dx \tilde B_2) \w e_3 \w e_1 + (\stx \dx \tilde B_3) \w e_1 \w e_2.
\end{aligned}
\label{nc5eq36}
\e

Now on $P$ consider the following equation:
\begin{equation} 
(\dx \! \ast_t \dx \! \ast_t - \ast_t \dx \! \ast_t \dx)(t^2 \al_{0,2} + t^4 \al_{2,0}) = \vartheta. 
\label{nc5eq37}
\end{equation}
If, in equation \eq{nc5eq37}, the $\dx$'s were replaced by the full exterior derivative $\d$ on $P$, then the left hand side would be $\Delta_t = \d \ast_t \d \ast_t - \ast_t \d \ast_t \d$, the Laplacian with respect to $g_t$ on $2$-forms. Thus, we expect \eq{nc5eq37} to give Laplace equations on each fibre $P_x$. Explicitly, as before we write
\e \label{nc5eq38}
\al_{0,2} = a_1 e_2 \w e_3 + a_2 e_3 \w e_1 + a_3 e_1 \w e_2, \qquad \al_{2,0} = b,
\e
where $a_1,a_2,a_3$ are functions and $b$ is a vertical $2$-form on $P$. Using \eq{nc5eq11} repeatedly we compute
\begin{align*}
\dx \al_{0,2} & = (\dx a_1) \w e_2 \w e_3 + (\dx a_2) \w e_3 \w e_1 + (\dx a_3) \w e_1 \w e_2, \\
\dx \al_{2,0} & = \dx b, \\
\ast_t \dx \al_{0,2} & = t^2 \big( (\stx \dx a_1) \w e_1 + (\stx \dx a_2) \w e_2 + (\stx \dx a_3) \w e_3 \big), \\
\ast_t \dx \al_{2,0} & = t^{-2} (\stx \dx b) \w e_1 \w e_2 \w e_3, \\
\dx \! \ast_t \dx \al_{0,2} & = t^2 \big( (\dx \! \stx \! \dx a_1) \w e_1 + (\dx \! \stx \! \dx a_2) \w e_2 + (\dx \! \stx \! \dx a_3) \w e_3 \big), \\
\dx \! \ast_t \dx \al_{2,0} & = t^{-2} (\dx \! \stx \! \dx b) \w e_1 \w e_2 \w e_3,
\end{align*}
and hence we find
\ea
\ast_t \dx \! \ast_t \dx \al_{0,2} & = t^{-2} \big( (\stx \dx \! \stx \! \dx a_1) \w e_2 \w e_3 + (\stx \dx \! \stx \! \dx a_2) \w e_3 \w e_1 
\nonumber\\
& \qquad{} + (\stx \dx \! \stx \! \dx a_3) \w e_1 \w e_2 \big)
\nonumber\\
& = -t^{-2} (\dsx \dx a_1) \w e_2 \w e_3 -t^{-2} (\dsx \dx a_2) \w e_3 \w e_1 
\label{nc5eq39}\\
& \qquad{} -t^{-2} (\dsx \dx a_3) \w e_1 \w e_2, 
\nonumber\\
\ast_t \dx \! \ast_t \dx \al_{2,0} & = t^{-2} (\stx \dx \! \stx \! \dx b) = - t^{-2} \dsx \dx b,
\nonumber
\ea
where we have used that $\dsx = -\stx \! \dx \stx$ on forms of any degree since $X$ is even-dimensional. Similarly we find that
\begin{align*}
\ast_t \al_{0,2} & = t^4 \big( (\stx a_1) \w e_1 + (\stx a_2) \w e_2 + (\stx a_3) \w e_3 \big), \\
\ast_t \al_{2,0} & = (\stx b) \w e_1 \w e_2 \w e_3, \\
\dx \! \ast_t \al_{0,2} & = 0, \\
\dx \! \ast_t \al_{2,0} & = (\dx \! \stx \! b) \w e_1 \w e_2 \w e_3, \\
\ast_t \dx \! \ast_t \al_{0,2} & = 0, \\
\ast_t \dx \! \ast_t \al_{2,0} & = - t^{-2} (\stx \dx \! \stx \! b),
\end{align*}
and hence that
\e
\label{nc5eq40}
\begin{aligned}
\dx \! \ast_t \dx \! \ast_t \al_{0,2} & = 0, \\
\dx \! \ast_t \dx \! \ast_t \al_{2,0} & = - t^{-2} (\dx \! \stx \! \dx \! \stx \! b) = t^{-2} \dx \dsx b.
\end{aligned}
\e
The expressions in \eq{nc5eq39} and \eq{nc5eq40} yield that
\e \label{nc5eq41}
\begin{aligned}
& (\dx \! \ast_t \dx \! \ast_t - \ast_t \dx \! \ast_t \dx)(t^2 \al_{0,2} + t^4 \al_{2,0}) \\
& = (\lapx a_1) \w e_2 \w e_3 + (\lapx a_2) \w e_3 \w e_1 + (\lapx a_3) \w e_1 \w e_2 + t^2 (\lapx b).
\end{aligned}
\e
Equation \eq{nc5eq41} expresses the left hand side of \eq{nc5eq37} in terms of the fibre Laplacians, as expected.

Equating \eq{nc5eq41} and \eq{nc5eq36}, and using that $\stx^2 = -1$ on odd forms and $\dsx = - \stx \! \dx \stx$ on all forms, we deduce that equation \eq{nc5eq37} is in fact equivalent on each fibre $P_x$ to the following system:
\begin{align} 
& \lapx a_1 = \dsx (\stx \tilde B_1), \qquad \lapx a_2 = \dsx (\stx \tilde B_2), \qquad \lapx a_3 = \dsx (\stx \tilde B_3), 
\label{nc5eq42}\\ 
& \hspace{1.3in} \lapx b = (\dx \tilde A + \stx \dx \tilde A).
\label{nc5eq43}
\end{align}
By equation \eq{nc5eq34} and (LM4), the right hand sides of the four equations above are all in $\Omega^0_{k, -4 + \gamma}$ and $(\Omega^2_+)_{k, -4 + \gamma}$ for all $k \geq 0$ and all $\gamma > 0$. Moreover, the right hand side of \eq{nc5eq43} is also self-dual. 

Therefore by equation \eq{nc5eq31} and fact (LM3), for $\gamma \in (0,2)$, on each fibre $P_x$ of $\si:P\ra L$ for $x\in L$ there exist unique functions $a_1,a_2,a_3$, and a unique self-dual $2$-form $b$ solving \eq{nc5eq42} and \eq{nc5eq43} in $L^2_{k+2,-2+\gamma}$ on $P_x$ for all $k \geq 0$. Here we take $b$ self-dual to ensure that $\al_{2,0}\in\Ga^\iy(\La^2_+V^*)$, as in the last part of Theorem \ref{nc5thm}. In particular by the elliptic regularity remark in (LM2), these solutions are smooth. Moreover, by (LM1), we in fact have
\e 
\md{\nabla^k a_i} = O(r^{-2 - k + \gamma}), \qquad \md{\nabla^k b} = O(r^{-2 - k + \gamma}),
\label{nc5eq44}
\e
on $P_x$ as $r \to \infty$, for all $k \geq 0$ and~$i=1,2,3$. 

We can show that these solutions on $P_x$ for $x\in L$ depend smoothly on the base point $x$ using the Banach space implicit function theorem, by trivializing locally and observing that \eq{nc5eq42}--\eq{nc5eq43} depend smoothly on the base point $x$. Hence smooth $a_1,a_2,a_3$ and $b$ satisfying \eq{nc5eq42}--\eq{nc5eq44} exist on all of $P$, and are unique provided $b$ is self-dual on the fibres $P_x$. By \eq{nc5eq10} for $t = 1$ and \eq{nc5eq38}, we deduce that we have found unique smooth $2$-forms $\al_{2,0}$ and $\al_{0,2}$ solving \eq{nc5eq37} such that with respect to the metric $g_1$ on $P$, 
\begin{equation*}
\md{\nabla^k \al_{2,0}}_{g_1^P} = O(\check r^{-2 - k + \gamma}), \qquad \md{\nabla^k \al_{0,2}}_{g_1^P} = O(\check r^{-2 - k + \gamma})
\end{equation*}
for $\check r \geq 1$, and these are unique provided $\al_{2,0}\in\Ga^\iy(\La^2_+V^*)$. Measuring norms using the metric $g_t^P$ instead, the estimates \eq{nc5eq2}--\eq{nc5eq5} follow easily.

If we choose $\gamma > 0$ sufficiently small, then by (LM5) since $-3 + \gamma < -2$, we deduce by $L^2$-orthogonality of the images of $\dx$ and $\dsx$ that the solutions to \eq{nc5eq43} and in fact solve the uncoupled equations
\begin{equation*}
\dx \dsx b = \dx \tilde A, \qquad \dsx \dx b = \stx \dx \tilde A.
\end{equation*}
Retracing our steps backwards using \eq{nc5eq39}, \eq{nc5eq40}, and \eq{nc5eq35}, we find that the solution to \eq{nc5eq37} we have produced is actually a solution to the two uncoupled equations
\begin{align*}
\dx \! \ast_t \dx \! \ast_t (t^2 \al_{0,2} + t^4 \al_{2,0}) & = t^2 (\dx \tilde A), \\
- \ast_t \dx \! \ast_t \dx (t^2 \al_{0,2} + t^4 \al_{2,0}) & = -\ast_t \dx \bigl[ (D_{\vp_t} \Theta) (t^2 \xi_{1,2}) - t^2 \chi_{1,3} - t^4 \th_{3,1} \bigr].
\end{align*}
We throw away the first equation, and write the second equation as
\e 
\dx \! \ast_t \dx (t^2 \al_{0,2} + t^4 \al_{2,0}) = \dx \bigl[ (D_{\vp_t} \Theta) (t^2 \xi_{1,2}) - t^2 \chi_{1,3} - t^4 \th_{3,1} \bigr].
\label{nc5eq45}
\e

\noindent{\bf Step Two.} We have constructed $\al_{0,2}$ and $\al_{2,0}$ solving \eq{nc5eq45} and satisfying \eq{nc5eq2}--\eq{nc5eq5}. To simplify notation, define the 3-form
\begin{equation*}
C = t^2 \dx \al_{0,2} + t^4 \dx \al_{2,0}.
\end{equation*}
Note that from \eq{nc5eq3} and \eq{nc5eq5} for all $k \geq 0$ and $\gamma > 0$ sufficiently small we have
\e 
\bmd{\nabla^k C}_{g_t^P} = \begin{cases} O(t^{1-k}), & \check r \le 1, \\[4pt] O(t^{1-k} \check r^{-3-k+\ga}), & \check r \ge 1. \end{cases}
\label{nc5eq46}
\e
Since $C$ is a form of type $(1,2) + (3,0)$, by the proof of Corollary \ref{nc5cor1} we have 
\begin{equation*}
(D_{\vp_t} \Theta)(C) = 2 \ast_t \pi_7 C - \ast_t C.
\end{equation*}
Thus we can rewrite \eq{nc5eq45} as
\begin{equation*}
\dx \! \ast_t C = \dx \bigl[ 2 \ast_t \pi_7 C - (D_{\vp_t} \Theta) (C) \bigr] = \dx \bigl[ (D_{\vp_t} \Theta) (t^2 \xi_{1,2}) - t^2 \chi_{1,3} - t^4 \th_{3,1} \bigr],
\end{equation*}
and rearrange it as
\e \label{nc5eq47}
\dx \bigl[ (D_{\vp_t} \Theta)(t^2 \dx \al_{0,2} + t^4 \dx \al_{2,0} + t^2 \xi_{1,2}) - t^2 \chi_{1,3} - t^4 \th_{3,1} \bigr] = 2 \, \dx \! \ast_t \pi_7 C.
\e
The form $\ast_t \pi_7 C$ is in $\Omega^4_7$ with respect to $\vp_t$, and hence can be written as
\e \label{nc5eq48}
\ast_t \pi_7 C = w \w \vp_t
\e
for some unique $1$-form $w$ on $P$. Since $\ast_t$ is an isometry and wedge product with $\vp_t$ on $1$-forms is an isometry up to a constant factor, we deduce from \eq{nc5eq48} and \eq{nc5eq46} that for all $k \geq 0$ and $\gamma > 0$ sufficiently small we have
\begin{equation*}
\bmd{\nabla^k w}_{g_t^P} = \begin{cases} O(t^{1-k}), & \check r \le 1, \\[4pt] O(t^{1-k} \check r^{-3-k+\ga}), & \check r \ge 1, \end{cases}
\end{equation*}
from which it follows from \eq{nc5eq10} and (LM1) that
\e \label{nc5eq49}
w \in \Omega^1_{k, -3 + \gamma} \, \text{ for all $k \geq 0$ and $\gamma > 0$ sufficiently small}.
\e
By Lemma \ref{nc5lem2} we know $\pi_7 C$ is type $(1,2) + (3,0)$, so $\ast_t \pi_7 C$ is type $(3,1) + (1,3)$, and we deduce from the fact that $\vp_t$ is type $(0,3) + (2,1)$ that the $1$-form $w$ is in fact purely vertical. Thus, the right hand side of \eq{nc5eq47} is
\e \label{nc5eq50}
2 \, \dx \! \ast_t \pi_7 C = 2 \, \dx (w \w \vp_t) = 2 (\dx w) \w \vp_t, 
\e
where we have used the fact that $\dx \vp_t = 0$, which follows from \eq{nc5eq10} as the $\omega_i$'s are closed in the fibre direction and the $e_j$'s are pulled back from the base.

Proposition \ref{nc5prop2} precisely says that the left hand side of \eq{nc5eq47} lies in $\Omega^5_{14}$ with respect to $\vp_t$. Hence, the $\Omega^5_7$ component of the right hand side must vanish, so by \eq{nc5eq50} we conclude that $\pi_7 \bigl( (\dx w) \w \vp_t \bigr) = 0$ and thus $\pi_7 (\dx w) = 0$. But this means that $(\dx w) \w \psi_t = 0$, and from \eq{nc5eq10} and the fact that $\dx w$ is of type $(2,0)$ we deduce that $(\dx w) \w \omega_k = 0$ for $k = 1, 2, 3$. 

Thus $\dx w$ is anti-self-dual on the fibres. Since $-3 + \gamma < -2$, by \eq{nc5eq49} and Corollary \ref{nc5cor2} we find that $\dx w = 0$ and we conclude that both sides of \eq{nc5eq47} are actually zero. Thus we see from equation \eq{nc5eq24} that~$\upsilon = 0$.
\smallskip

\noindent{\bf Step Three.} In the previous step, we showed the $2$-forms $\al_{0,2}$ and $\al_{2,0}$ constructed in Step One satisfying \eq{nc5eq2}--\eq{nc5eq5} are in fact solutions to the equation
\e
\dx \bigl[ (D_{\vp_t} \Theta)(t^2 \dx \al_{0,2} + t^4 \dx \al_{2,0} + t^2 \xi_{1,2}) - t^2 \chi_{1,3} - t^4 \th_{3,1} \bigr] = 0.
\label{nc5eq51}
\e
Note that equation \eq{nc5eq51} is precisely $\dx$ of equation \eq{nc5eq1}. Rewrite \eq{nc5eq1} as
\e 
t^2 \dx \be_{0,3} \!+\! t^4 \dx \be_{2,1} \!= \!(D_{\vp_t} \Theta)(t^2 \dx \al_{0,2} \!+\! t^4 \dx \al_{2,0} \!+\! t^2 \xi_{1,2}) \!-\! t^2 \chi_{1,3} \!-\! t^4 \th_{3,1}.
\label{nc5eq52}
\e
By \eq{nc5eq51} the right hand side above is a $4$-form on $P$ in the kernel of $\dx$. In fact, in \eq{nc5eq27} we determined that
\e 
\begin{aligned}
& (D_{\vp_t} \Theta)(t^2 \dx \al_{0,2} + t^4 \dx \al_{2,0} + t^2 \xi_{1,2}) - t^2 \chi_{1,3} - t^4 \th_{3,1} \\
& = \frac{t^2}{2} A \w e_1 \w e_2 \w e_3 + \frac{t^4}{2} \sum_{k=1}^3 B_k \w e_k,
\end{aligned}
\label{nc5eq53}
\e
with $A,B_1,B_2,B_3$ given by \eq{nc5eq28}. Equation \eq{nc5eq51} therefore says that
\begin{equation*}
\dx A = 0, \qquad \dx B_1 = 0, \qquad \dx B_2 = 0 \qquad \text{and}\qquad \dx B_3 = 0.
\end{equation*}
By \eq{nc5eq44} we have
\e 
\md{\nabla^k (\dx a_i)} = O(r^{-3 - k + \gamma}), \qquad \md{\nabla^k (\dx b)} = O(r^{-3 - k + \gamma}),
\label{nc5eq54}
\e
for $i = 1, 2, 3$ and all $k \geq 0$. Combining \eq{nc5eq54} with \eq{nc5eq32}, \eq{nc5eq26}, and \eq{nc5eq28} we deduce just as in \eq{nc5eq34} that
\e 
A \in \Omega^1_{k, -3 + \gamma}, \qquad B_1, \, B_2, \, B_3 \in \Omega^3_{k, -3 + \gamma},
\label{nc5eq55}
\e
for any $\gamma > 0$ sufficiently small and all $k \geq 0$. Now write
\e 
\be_{0,3} = h e_1 \w e_2 \w e_3, \qquad \be_{2,1} = f_1 \w e_1 + f_2 \w e_2 + f_3 \w e_3,
\label{nc5eq56}
\e
where $h$ is a function and $f_1,f_2,f_3$ are vertical $2$-forms on $P$. We will take $f_1,f_2,f_3$ to be self-dual on the fibres $P_x$, so that $\be_{2,1}\in\Ga^\iy(\La^2_+V^*\ot H^*)$, as in the last part of Theorem \ref{nc5thm}. We have
\begin{align*}
\dx \be_{0,3} & = (\dx h) \w e_1 \w e_2 \w e_3, \\
\dx \be_{2,1} & = (\dx f_1) \w e_1 + (\dx f_2) \w e_2 + (\dx f_3) \w e_3.
\end{align*}
Using the above two expressions and equations \eq{nc5eq52}--\eq{nc5eq53}, we find that \eq{nc5eq1} is in fact equivalent on each fibre $P_x$ to the following system of equations:
\e 
\dx h = \ha A, \qquad \dx f_1 = \ha B_1, \qquad \dx f_2 = \ha B_2, \qquad \dx f_3 = \ha B_3.
\label{nc5eq57}
\e

By \eq{nc5eq55} and Proposition \ref{nc5prop3}, on each fibre $P_x$ there exists a smooth $h$ in $\Omega^0_{k, - 2 + \gamma}$ and smooth $f_1, f_2, f_3$ in $(\Omega^2_+)_{k, -2 + \gamma}$ satisfying \eq{nc5eq57}. Suppose $h'$ was another solution in $\Omega^0_{k, - 2 + \gamma}$ to $\dx h' = \frac{1}{2} A$. Then $h - h'$ is a constant function on $X$, and decays to zero at infinity, so $h' = h$. If $f_j'$ was another solution in $(\Omega^2_+)_{k, -2 + \gamma}$ to $\dx f' = \frac{1}{2} B_j$, then $f_j - f_j'$ is a closed self-dual $2$-form on $X$ (and thus also coclosed) which decays at rate $-2 + \gamma$ at infinity. Thus by~\eqref{nc5eq31}, we have $f_j - f_j' = c_1 \omega_1 + c_2 \omega_2 + c_3 \omega_3$ for some decaying harmonic functions $c_1, c_2, c_3$ on $X$, which must therefore vanish by the maximum principle. Thus $f_j' = f_j$, so $h,f_1,f_2,f_3$ are unique.

As for $a_1,a_2,a_3,b$ in Step One, we can show that these $h,f_1,f_2,f_3$ on $P_x$ for $x\in L$ depend smoothly on the base point $x$ using the Banach space implicit function theorem, as equations \eq{nc5eq57} depend smoothly on the base point $x$. Thus by \eq{nc5eq10} for $t=1$ and \eq{nc5eq56}, we deduce that we have found smooth 3-forms $\be_{0,3}$ and $\be_{2,1}$ solving equation \eq{nc5eq52}, and hence \eq{nc5eq1}, such that with respect to the metric $g_1$ on $P$, 
\begin{equation*}
\md{\nabla^k \be_{0,3}}_{g_1^P} = O(\check r^{-2 - k + \gamma}), \qquad \md{\nabla^k \be_{2,1}}_{g_1^P} = O(\check r^{-2 - k + \gamma})
\end{equation*}
for $\check r \geq 1$, and these $\be_{0,3},\be_{2,1}$ are unique provided $\be_{2,1}\in\Ga^\iy(\La^2_+V^*\ot H^*)$. Measuring norms using the metric $g_t^P$ instead, the estimates \eq{nc5eq6}--\eq{nc5eq8} follow easily. This completes the proof of Theorem~\ref{nc5thm}.

\section{\texorpdfstring{Torsion-free $\Gt$-structures on the resolution $N$ of $M/\an{\io}$}{Torsion-free G₂-structures on the resolution N of M/<ι>}}
\label{nc6}

Sections \ref{nc61}--\ref{nc64} prove our main theorem, Theorem \ref{nc6thm}, and \S\ref{nc65}--\S\ref{nc66} explain two generalizations of it.

\subsection{\texorpdfstring{The resolution $N$ of $M/\an{\io}$}{The resolution N of M/<ι>}}
\label{nc61}

In \S\ref{nc3}, given $(M,\vp,g),\io,L$ as in Assumption \ref{nc3ass}, we wrote $\nu\ra L$ for the normal bundle of $L$ in $M$ and $r:\nu\ra[0,\iy)$ for the radius function on $\nu$, so that $r^{-1}(0)$ is the zero section $0(L)$. We chose a tubular neighbourhood $U_R\subset\nu$ of the zero section $0(L)$ in $\nu$ and a tubular neighbourhood map $\Up:U_R\ra M$, which is a diffeomorphism onto an open subset of $M$, where $U_R=\bigl\{(x,\al)\in\nu:r(x,\al)<R\bigr\}$ for small $R>0$. For $t>0$ we also defined $\Up_t:U_{t^{-1}R}\ra M$ by $\Up_t:(x,\al)\mapsto \Up(x,t \al)$, as in \eq{nc3eq5}. These are equivariant under the actions of $\{\pm 1\}$ on $\nu$ and $\an{\io}$ on $M$, and so they descend to~$\Up_t:U_{t^{-1}R}/\{\pm 1\}\ra M/\an{\io}$.

In \S\ref{nc4} we defined a 7-manifold $P$, a 5-submanifold $Q\subset P$, and a proper, continuous map $\rho:P\ra\nu/\{\pm 1\}$, where $\rho^{-1}(0(L))=Q$, and $\rho\vert_{P\sm Q}:P\sm Q\ra(\nu/\{\pm 1\})\sm 0(L)$ is smooth and a diffeomorphism. We defined $\check r=r\ci\rho:P\ra[0,\iy)$ to be the pullback of the radius function, so that~$Q=\check r^{-1}(0)$.

For $t>0$, define a compact, smooth 7-manifold $N$ by
\e
N=\bigl[\rho^{-1}(U_{t^{-1}R}/\{\pm 1\})\amalg (M\sm L)/\an{\io}\bigr]\big/\approx.
\label{nc6eq1}
\e
Here $\rho^{-1}(U_{t^{-1}R}/\{\pm 1\})\subset P$ is open, a noncompact 7-manifold, and $(M\sm L)/\an{\io}$ is the nonsingular part of the orbifold $M/\an{\io}$, another noncompact 7-manifold. We define $\approx$ to be the equivalence relation on the disjoint union of these two 7-manifolds which identifies $x\in\rho^{-1}(U_{t^{-1}R}/\{\pm 1\})\sm Q$ with $\Up_t\ci\rho(x)$ in $(M\sm L)/\an{\io}$. Since $\Up_t\ci\rho$ is a diffeomorphism between the relevant open subsets of $\rho^{-1}(U_{t^{-1}R}/\{\pm 1\})$ and $(M\sm L)/\an{\io}$, and the quotient topological space is compact and Hausdorff, this $N$ is a smooth compact 7-manifold.

Technically $N$ depends on $t>0$, so it might be better to write it as $N_t$ rather than $N$. But there are canonical diffeomorphisms $N_t\cong N_1$ for all $t>0$ acting as the identity on the subsets $(M\sm L)/\an{\io}\subset N_t$, $(M\sm L)/\an{\io}\subset N_1$, and identifying $\rho^{-1}(U_{t^{-1}R}/\{\pm 1\})\subset N_t$ with $\rho^{-1}(U_R/\{\pm 1\})\subset N_1$ by rescaling by $t$. So by an abuse of notation we will identify the $N_t$ for $t>0$, and write them all as~$N$.

By another abuse of notation, we will regard $\rho^{-1}(U_{t^{-1}R}/\{\pm 1\})$ and $(M\sm L)/\an{\io}$ as open subsets of $N$, and functions, metrics, and exterior forms defined on $\rho^{-1}(U_{t^{-1}R}/\{\pm 1\})\subset P$ or on $(M\sm L)/\an{\io}\subset M/\an{\io}$ as being defined on the corresponding subsets of $N$, without changing notation.

There is an obvious continuous map $\pi:N\ra M/\an{\io}$ acting by $\Up_t\ci\rho$ on $\rho^{-1}(U_{t^{-1}R}/\{\pm 1\})$, and by the inclusion $(M\sm L)/\an{\io}\hookra M/\an{\io}$ on $(M\sm L)/\an{\io}$. Then $\pi\vert_{N\sm Q}:N\sm Q\ra (M/\an{\io})\sm L$ is smooth, and a diffeomorphism. However, as in Remark \ref{nc2rem2} and \S\ref{nc41}, $\pi$ is not a smooth map of orbifolds near $Q$, although $\pi\vert_Q:Q\ra L$ is smooth and an~$\cS^2$-bundle.

Readers are warned about a possible source of confusion concerning the radius functions $r:\nu/\{\pm 1\}\ra[0,\iy)$ and $\check r:P\ra[0,\iy)$. The map $\Up:U_R/\{\pm 1\}\ra M/\an{\io}$ identifies $U_R/\{\pm 1\}$ with an open neighbourhood of $L$ in $M/\an{\io}$. We think of the radius function $r:U_R/\{\pm 1\}\ra[0,R)$ as being the distance to $L$ in~$M/\an{\io}$.

However, the map $\Up_t\ci\rho:\rho^{-1}(U_{t^{-1}R}/\{\pm 1\})\ra M/\an{\io}$ includes a rescaling by $t>0$ in the fibres of $\nu/\{\pm 1\}$. So $\check r$ maps $\rho^{-1}(U_{t^{-1}R}/\{\pm 1\})\ra[0,t^{-1}R)$, and $t\check r$ on $\rho^{-1}(U_{t^{-1}R}/\{\pm 1\})$ is identified with the function $r$ on $\Up(U_R/\{\pm 1\})\subset M/\an{\io}$. That is, {\it we should identify the radius function $r$ in most of\/} \S\ref{nc3} {\it with\/ $t\check r$ in\/} \S\ref{nc4}--\S\ref{nc5}, {\it not with\/} $\check r$, and we should think of $t\check r$ (rather than $\check r$) as being the distance to $Q$ in~$\rho^{-1}(U_{t^{-1}R}/\{\pm 1\})\subset P$.

Next we compute the Betti numbers $b^k(N)=\dim H^k(N;\R)$. It is enough to compute $b^0,\ldots,b^3$, as $b^{7-k}(N)=b^k(N)$ by Poincar\'e duality.

\begin{prop} Let\/ $M/\an{\io},L$ and\/ $N$ be as above. Then the Betti numbers of\/ $N$ are given by
\e
b^k(N)=b^k(M/\an{\io})+b^{k-2}(L).
\label{nc6eq2}
\e
Moreover, the fundamental groups of\/ $N$ and\/ $M/\an{\io}$ are isomorphic.
\label{nc6prop1}	
\end{prop}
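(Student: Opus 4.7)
The statement has two independent parts---the isomorphism of fundamental groups and the Betti number formula---and my plan is to treat them in parallel, exploiting the resolution map $\pi:N\ra M/\an{\io}$ with carefully chosen open covers.

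For $\pi_1(N)\cong\pi_1(M/\an{\io})$ I would apply van Kampen. Let $U_1\subset N$ be a tubular neighbourhood of $Q$ that deformation retracts onto $Q$, let $U_2=N\sm Q$, and observe that $U_1\cap U_2$ deformation retracts onto an $\RP^3$-bundle $S$ over $L$ (the link of $Q$ in $N$). Taking the image cover $V_1=\pi(U_1)$, $V_2=\pi(U_2)$ of $M/\an{\io}$, one has $V_1$ retracting onto $L$, $V_2=U_2$, and $V_1\cap V_2=U_1\cap U_2$. Since $\pi_1(\cS^2)=0$, the long exact sequence of the $\cS^2$-bundle $Q\ra L$ gives $\pi_1(Q)\cong\pi_1(L)$. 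Under this identification, the van Kampen attaching map $\pi_1(S)\ra\pi_1(U_1)\cong\pi_1(L)$ is induced by the composition $S\ra Q\ra L$, which coincides with the attaching map $\pi_1(S)\ra\pi_1(V_1)\cong\pi_1(L)$ induced directly by $S\ra L$; the attaching maps into $\pi_1(U_2)=\pi_1(V_2)$ are literally identical. Van Kampen therefore yields isomorphic presentations of the two fundamental groups.

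For the Betti numbers I would use the Leray spectral sequence of $\pi$. Because $\pi$ is a bijection over $M/\an{\io}\sm L$ and has fibre $\CP^1$ over each point of $L$, the direct image sheaves are $R^0\pi_*\R\cong \R_{M/\an{\io}}$ and $R^2\pi_*\R\cong (i_L)_*\R_L$, with $R^q\pi_*\R=0$ otherwise. Hence the only nonzero terms on the $E_2$ page are $E_2^{p,0}=H^p(M/\an{\io})$ and $E_2^{p,2}=H^p(L)$, so $d_r=0$ for $r\ne 3$ by degree, and the only potentially nonzero differential is $d_3:H^p(L)\ra H^{p+3}(M/\an{\io})$. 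Granted $d_3=0$, the spectral sequence collapses at $E_3$, so $H^k(N)$ acquires a two-step filtration with graded pieces $H^k(M/\an{\io})$ and $H^{k-2}(L)$, immediately yielding~\eq{nc6eq2}.

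The main obstacle is therefore proving $d_3=0$. I plan to do this by exhibiting, for each $\al\in H^p(L)$, a genuine cohomology class $\tau_\al\in H^{p+2}(N)$ that maps to a nonzero multiple of $\al$ in $E_\infty^{p,2}$, thus forcing $\al$ to survive the spectral sequence. The construction uses the Thom isomorphism for the complex rank-one normal bundle $\cN$ of $Q$ in $N$, which by the local Eguchi--Hanson model restricts on each fibre $Q_x\cong\CP^1$ to $T^*\CP^1\cong \cO(-2)$. Explicitly, set $\tau_\al$ equal to the image of $\pi_Q^*\al\in H^p(Q)$ under the composition $H^p(Q)\cong H^{p+2}(N,N\sm Q)\ra H^{p+2}(N)$. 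Then $\tau_\al$ is supported cohomologically near $Q$, and its restriction to each fibre $Q_x$ equals $\pi_Q^*\al\vert_{Q_x}\cup c_1(\cN\vert_{Q_x})=-2\al\vert_x$ times the generator of $H^2(\CP^1;\R)$. Consequently $\tau_\al$ lies in the Leray filtration level $F^pH^{p+2}(N)$ and projects to $-2\al$ in $E_\infty^{p,2}$; since $\tau_\al$ is a genuine class on $N$, the class $\al$ must survive the spectral sequence, and hence $d_3\al=0$. As $\al$ was arbitrary, $d_3\equiv 0$ and \eq{nc6eq2} follows. (The fact that $\io$ acts on the fibres of $\nu$ with determinant $(-1)^4=+1$ ensures that all the relevant Thom classes descend to the orbifold $M/\an{\io}$, but this is only needed peripherally.)
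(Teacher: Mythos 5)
Your proposal is correct, and for the Betti numbers it takes a genuinely different route from the paper. The $\pi_1$ part is essentially the paper's argument (Seifert--van Kampen, using that the fibres of $P\ra L$ and of $\nu/\{\pm 1\}\ra L$ are simply-connected), so there is nothing to add there. For the cohomology, the paper avoids spectral sequences entirely: it compares the two long exact sequences of the pairs $(M/\an{\io},L)$ and $(N,Q)$, uses that collapsing $L$ in $M/\an{\io}$ and collapsing $Q$ in $N$ give the same pointed space (so $H^k(M/\an{\io},L;\R)\cong H^k(N,Q;\R)$), splits $H^*(Q)\cong H^*(L)\op H^{*-2}(L)$ via $Q\cong L\t\cS^2$ and K\"unneth, and then extracts the answer by a diagram chase. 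Your Leray spectral sequence argument instead localizes the problem to killing the single differential $d_3:H^p(L)\ra H^{p+3}(M/\an{\io})$, and does so by exhibiting explicit classes $\tau_\al$ built from the Thom class of the normal bundle $\cN$ of $Q$, using that $\cN\vert_{Q_x}\cong\cO(-2)$ has nonzero Euler number. This is sound; the one step you state rather than prove is that $\tau_\al$ detects $-2\al$ in $E_\iy^{p,2}$, which is justified by naturality of the Leray sequences under $Q\hookra N$ over $L\hookra M/\an{\io}$: since $E_2^{p,q}=0$ unless $q\in\{0,2\}$, every class of $H^{p+2}(N)$ lies in $F^p$, and its image in $E_\iy^{p,2}\subseteq H^p(L)$ is computed by restricting to $Q$ and integrating over the $\cS^2$ fibres, giving $\int_{\cS^2}\pi_Q^*\al\cup e(\cN)=-2\al$. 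What your approach buys: it needs only orientability of the $\cS^2$-bundle $Q\ra L$ and of $\cN$ (not the global triviality $Q\cong L\t\cS^2$ that the paper invokes), so it adapts verbatim to the $Z$-twisted setting of \S\ref{nc66}, where $R^2\pi_*\R\vert_L$ becomes the twisted local system and one recovers \eq{nc6eq18} directly; the paper's route is more elementary and requires no discussion of direct image sheaves on the orbifold. Your closing parenthetical about Thom classes descending to the orbifold is indeed superfluous, since $\cN$ lives over $Q$ inside the manifold $N$.
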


\begin{proof} We have the following commutative diagram of topological spaces, where the rows are embeddings:
\e
\begin{gathered}
\xymatrix@C=100pt@R=15pt{ *+[r]{Q\,\,} \ar@{^{(}->}[r]_\inc \ar[d]^{\pi\vert_Q} & *+[l]{N} \ar[d]_\pi \\
*+[r]{L\,\,} \ar@{^{(}->}[r]^\inc & *+[l]{M/\an{\io}.\!} }	
\end{gathered}
\label{nc6eq3}
\e
Consider the following diagram of (relative) cohomology groups:
\ea
\text{\begin{footnotesize}$\displaystyle
\xymatrix@!0@C=40pt@R=25pt{ 
& 0 \ar[d] && 0 \ar[d] && 0 \ar[d] && 0 \ar[d] 
\\
*+[l]{\cdots} \ar[r] & H^{k-1}(L;\R) \ar[dd]^(0.4){\pi\vert_Q^*=\begin{pmatrix} \id \\ 0 \end{pmatrix}} \ar[rr] && H^k(M/\an{\io},L;\R) \ar[dd]_\cong^{\pi^*} \ar[rr] && H^k(M/\an{\io};\R) \ar[dd]^{\pi^*} \ar[rr]^{\inc^*} && H^k(L;\R) \ar[dd]^(0.4){\pi\vert_Q^*=\begin{pmatrix} \id \\ 0 \end{pmatrix}} \ar[r] & *+[r]{\cdots}
\\
\\
*+[l]{\cdots} \ar[r] & {\begin{subarray}{l} \ts H^{k-1}(Q;\R)\cong \\
\ts H^{k-1}(L;\R)\op \\ \ts H^{k-3}(L;\R) \end{subarray}} \ar[dd]^(0.6){\begin{pmatrix} 0 & \id  \end{pmatrix}} \ar[rr] && H^k(N,Q;\R) \ar[dd] \ar[rr] && H^k(N;\R) \ar@{.>}[dd] \ar[rr]^{\inc^*} && {\begin{subarray}{l} \ts H^k(Q;\R)\cong \\
\ts H^k(L;\R)\op \\ \ts H^{k-2}(L;\R) \end{subarray}} \ar[dd]^(0.6){\begin{pmatrix} 0 & \id  \end{pmatrix}} \ar[r] & *+[r]{\cdots}
\\
\\
*+[l]{\cdots} \ar[r] & H^{k-3}(L;\R) 
 \ar[d] \ar[rr] && 0 \ar[d] \ar[rr] && H^{k-2}(L;\R) \ar[d] \ar[rr]^\id && H^{k-2}(L;\R)  \ar[d] \ar[r] & *+[r]{\cdots}
\\
& 0 && 0  && 0  && 0.\! }$	\end{footnotesize}}
\nonumber\\[-16pt]
\label{nc6eq4}
\\[-16pt]
\nonumber
\ea
Here the first and second rows are the relative cohomology exact sequences for the pairs $(M/\an{\io},L)$ and $(N,Q)$. The squares between the first and second rows commute as \eq{nc6eq3} commutes. In the second column, $\pi^*:H^k(M/\an{\io},L;\R)\ra H^k(N,Q;\R)$ is an isomorphism since collapsing $L$ to a point in $M/\an{\io}$ yields the same pointed topological space as collapsing $Q$ to a point in $N$. Thus the second column is exact.

In the first and fourth columns, $\pi\vert_Q:Q\ra L$ is an $\cS^2$-bundle, which is trivial as $\nu\ra L$ is a trivial $\C^2$-bundle (see Remark~\ref{assocrmk}). Hence $Q\cong L\t\cS^2$, so
\e
H^k(Q;\R)\cong \bigop\nolimits_{i+j=k\!\!\!\!}H^i(L;\R)\ot H^j(\cS^2;\R)\cong H^k(L;\R)\op H^{k-2}(L;\R)
\label{nc6eq5}
\e
by the K\"unneth Theorem. Thus the first and fourth columns are exact. The bottom left square commutes trivially, and the third row is exact.

We now know that the rows of \eq{nc6eq4} are exact, the first, second and fourth columns are exact, the squares between the first and second row commute, and the left hand square between the second and third row commutes. By homological algebra, it follows that there is a unique morphism `$\dashra$' as shown in \eq{nc6eq4} which makes the third column exact. Hence $H^k(N;\R)\cong H^k(M/\an{\io};\R)\op H^{k-2}(L;\R)$. Taking dimensions proves~\eq{nc6eq2}.

For the final part, we can show that $\pi_*:\pi_1(N)\ra\pi_1(M/\an{\io})$ is an isomorphism using the Seifert--van Kampen Theorem and the fact that $\rho_*:\pi_1(P)\ra\pi_1(\nu/\{\pm 1\})$ is an isomorphism, since $\pi_1(P),\pi_1(\nu/\{\pm 1\})$ are both isomorphic to $\pi_1(L)$, as the fibres of $\si:P\ra L$ and $\pi:\nu/\{\pm 1\}\ra L$ are simply-connected.
\end{proof}

\subsection{\texorpdfstring{The $\Gt$-structures $(\vp_t^N,g_t^N)$ on $N$}{The G₂-structures (φᵗᴺ,gᵗᴺ) on N}}
\label{nc62}

Let $a:[0,\iy)\ra\R$ be a smooth function with $a(x)=0$ for $x\in[0,1]$, and $a(x)\in(0,1)$ for $x\in(1,2)$, and $a(x)=1$ for $x\in[2,\iy)$. Let $\eta$ and $\ze$ be as defined in \S\ref{nc35}. For some sufficiently small $\ep>0$ and all $t\in(0,\ep]$, we define a 3-form $\vp_t^N$ on $N$ by
\e
\vp_t^N\!=\!\begin{cases} \ti\vp_t^P+\d\bigl[t^2\al_{0,2}+t^4\al_{2,0}\bigr], & {\begin{subarray}{l}\ts \text{if $\check r\le t^{-1/9}$} \\ \ts\text{in $\rho^{-1}(U_{t^{-1}R}/\{\pm 1\}),$}\end{subarray}} \\[8pt]
{\begin{subarray}{l}\ts \ti\vp_t^P\!+\!\d\bigl[t^2\al_{0,2}\!+\!t^4\al_{2,0}\!+\!a(t^{1/9}\check r)\!\cdot\!\Up_*(\eta)\bigr],
\end{subarray}}
& {\begin{subarray}{l}\ts\text{if $t^{-1/9}\le\check r\le 2t^{-1/9}$} \\ \ts\text{in $\rho^{-1}(U_{t^{-1}R}/\{\pm 1\}),$}\end{subarray}} \\[8pt]
{\begin{subarray}{l}\ts\ti\vp_t^P+\d\bigl[t^2\al_{0,2}+t^4\al_{2,0} +\Up_*(\eta)\bigr]=\\
\ts \rho^*(\ti\vp_t^\nu)\!+\!\d\bigl[t^2\tau_{1,1}\!+\!t^2\al_{0,2}\!+\!t^4\al_{2,0}\!+\!\Up_*(\eta)\bigr],\!\!\! \end{subarray}}
& {\begin{subarray}{l}\ts\text{if $2t^{-1/9}\le\check r\le t^{-4/5}$} \\ \ts \text{in $\rho^{-1}(U_{t^{-1}R}/\{\pm 1\}),$}\end{subarray}} \\[8pt]
{\begin{subarray}{l}\ts \rho^*(\ti\vp_t^\nu)+\d\bigl[(1-a(t^{4/5}\check r))\cdot{} \\
\ts (t^2\tau_{1,1}\!+\!t^2\al_{0,2}\!+\!t^4\al_{2,0})\!+\!\Up_*(\eta)\bigr], \end{subarray}} 
& {\begin{subarray}{l}\ts\text{if $t^{-4/5}\le\check r\le 2t^{-4/5}$} \\ \ts\text{in $\rho^{-1}(U_{t^{-1}R}/\{\pm 1\}),$}\end{subarray}} \\[8pt]
\rho^*(\ti\vp_t^\nu)\!+\!\d\Up_*(\eta)\!=\!\Up_*(\ti\vp^\nu\!+\!\d\eta)\!=\!\vp, & {\begin{subarray}{l}\ts\text{if $2t^{-4/5}\le\check r<t^{-1}R$} \\ \ts\text{in $\rho^{-1}(U_{t^{-1}R}/\{\pm 1\})$,}\end{subarray}} \\[8pt]
\vp, &  \text{in $(M\sm \Up(U_R))/\an{\io}$.}
\end{cases}
\label{nc6eq6}
\e
Here we require $\ep>0$ to be small enough that if $0<t\le\ep$ then
\begin{equation*}
1<t^{-1/9}<2t^{-1/9}<t^{-4/5}<2t^{-4/5}<t^{-1}R,	
\end{equation*}
so that each region in \eq{nc6eq6} is nonempty. These particular exponents of $-\tfrac{1}{9}$ and $-\tfrac{4}{5}$ have been chosen so that the torsion estimates computed in Proposition \ref{nc6prop2} below will be sufficiently small to produce estimates in Proposition \ref{nc6prop3} that will satisfy the hypotheses of Theorem \ref{nc2thm3}.

The equality in the third case in \eq{nc6eq6} comes from \eq{nc4eq32}, and the equalities in the fifth case come from \eq{nc3eq33} and the equivalence of $\rho^*(\ti\vp_t^\nu)$ and $\Up_*(\ti\vp^\nu)$ under the identification of their domains by $\approx$ in \eq{nc6eq1}. Considering the transitions between regions we see that $\vp_t^N$ is smooth on $N$, and since $\ti\vp^P_t,\ti\vp^\nu_t,\vp$ are closed, $\vp_t^N$ is also closed.

Similarly, we define a smooth closed 4-form $\psi_t^N$ on $N$ for all $t\in(0,\ep]$ by
\e
\psi_t^N\!=\!\begin{cases} \ti\psi_t^P+\d\bigl[t^2\be_{0,3}+t^4\be_{2,1}\bigr], & {\begin{subarray}{l}\ts \text{if $\check r\le t^{-1/9}$} \\ \ts\text{in $\rho^{-1}(U_{t^{-1}R}/\{\pm 1\}),$}\end{subarray}} \\[8pt]
{\begin{subarray}{l}\ts \ti\psi_t^P\!+\!\d\bigl[t^2\be_{0,3}\!+\!t^4\be_{2,1}\!+\!a(t^{1/9}\check r)\!\cdot\!\Up_*(\ze)\bigr],
\end{subarray}}
& {\begin{subarray}{l}\ts\text{if $t^{-1/9}\le\check r\le 2t^{-1/9}$} \\ \ts\text{in $\rho^{-1}(U_{t^{-1}R}/\{\pm 1\}),$}\end{subarray}} \\[8pt]
{\begin{subarray}{l}\ts\ti\psi_t^P+\d\bigl[t^2\be_{0,3}+t^4\be_{2,1} +\Up_*(\ze)\bigr]=\\
\ts \rho^*(\ti\psi_t^\nu)\!+\!\d\bigl[t^2\up_{1,2}\!+\!t^2\be_{0,3}\!+\!t^4\be_{2,1}\!+\!\Up_*(\ze)\bigr],\!\!\! \end{subarray}}
& {\begin{subarray}{l}\ts\text{if $2t^{-1/9}\le\check r\le t^{-4/5}$} \\ \ts \text{in $\rho^{-1}(U_{t^{-1}R}/\{\pm 1\}),$}\end{subarray}} \\[8pt]
{\begin{subarray}{l}\ts \rho^*(\ti\psi_t^\nu)+\d\bigl[(1-a(t^{4/5}\check r))\cdot{} \\
\ts (t^2\up_{1,2}\!+\!t^2\be_{0,3}\!+\!t^4\be_{2,1})\!+\!\Up_*(\ze)\bigr], \end{subarray}} 
& {\begin{subarray}{l}\ts\text{if $t^{-4/5}\le\check r\le 2t^{-4/5}$} \\ \ts\text{in $\rho^{-1}(U_{t^{-1}R}/\{\pm 1\}),$}\end{subarray}} \\[8pt]
\rho^*(\ti\psi_t^\nu)\!+\!\d\Up_*(\ze)\!=\!\Up_*(\ti\psi^\nu\!+\!\d\ze)\!=\!*\vp, & {\begin{subarray}{l}\ts\text{if $2t^{-4/5}\le\check r<t^{-1}R$} \\ \ts\text{in $\rho^{-1}(U_{t^{-1}R}/\{\pm 1\})$,}\end{subarray}} \\[8pt]
*\vp, &  \text{in $(M\sm \Up(U_R))/\an{\io}$.}
\end{cases}
\label{nc6eq7}
\e

We claim that for $\ep>0$ small enough, $\vp_t^N$ will be a positive 3-form on $N$ for all $t\in(0,\ep]$ in the sense of \S\ref{nc22}. To establish this, we show that if $t$ is small enough, then at all points in $N$ the $3$-form $\vp_t^N$ is close to a positive $3$-form, because then Proposition \ref{nc2prop2} implies that $\vp_t^N$ will be positive when $t$ is sufficiently small. Recall that $r = t \check r$. If we consider $\vp_t^N - \vp_t^P$, then this difference includes terms $t^2 \xi_{0,3}$ and $\d\Upsilon_*(\eta)$, which by \eq{nc4eq24}, \eq{nc3eq35}, and Proposition \ref{nc4prop}, are of size $O(r^2) = O(t^2 \check r^2)$. These are too big when $r = O(1)$, which happens when $\check r = O(t^{-1})$. So we can approximate $\vp_t^N$ by $\vp_t^P$ only when $\check r$ is not too large, and the region $\check r \leq t^{-1/2}R$ will do. On the other hand, if we consider $\vp_t^N - \vp$, then this difference includes a term $t^2 \d \tau_{1,1}$, which by \eq{nc4eq34} is of size $O(\check r^{-4})$. This is too big when $\check r = O(1)$, which happens when $r = O(t)$. So we can approximate $\vp_t^N$ by $\vp$ only when $\check r$ is not too small, and the region $\check r \geq t^{-1/2}R$ will do. Together these regions cover all points of $N$, so $\vp_t^N$ is indeed positive for all $t\in(0,\ep]$ if $\ep$ is sufficiently small.

\subsection{\texorpdfstring{Estimating the torsion of $(\vp_t^N,g_t^N)$}{Estimating the torsion of (φᵗᴺ,gᵗᴺ)}}
\label{nc63}

Since $\d\vp_t^N=0$ and $\d\psi_t^N=0$, if $\Th(\vp_t^N)-\psi_t^N=0$ then the $\Gt$-structure $(\vp_t^N,g_t^N)$ is torsion-free. Thus, we can regard the next two propositions, which bound $\Th(\vp_t^N)-\psi_t^N$, as measuring the torsion of~$(\vp_t^N,g_t^N)$.

\begin{prop} In the situation above, for all\/ $t\in(0,\ep]$ we have
\ea
\begin{split}
&\bmd{\Th(\vp_t^N)-\psi_t^N}{}_{g_t^N}=\\
&\begin{cases} O(t^2), & \text{if\/ $\check r\le 1$ in $\rho^{-1}(U_{t^{-1}R}/\{\pm 1\}),$} \\
O(t^2\check r^2), & \text{if\/ $1\le\check r\le t^{-1/9}$ in $\rho^{-1}(U_{t^{-1}R}/\{\pm 1\}),$} \\
O(t^{16/9}),
& \text{if\/ $t^{-1/9}\le\check r\le 2t^{-1/9}$ in $\rho^{-1}(U_{t^{-1}R}/\{\pm 1\}),$} \\
O(t^2\check r^{-2+\ga}),
& \text{if\/ $2t^{-1/9}\le\check r\le t^{-4/5}$ in $\rho^{-1}(U_{t^{-1}R}/\{\pm 1\}),$ $\ga>0,$} \\
O(t^{16/5}), 
& \text{if\/ $t^{-4/5}\le\check r\le 2t^{-4/5}$ in $\rho^{-1}(U_{t^{-1}R}/\{\pm 1\}),$} \\
0, & \text{if\/ $2t^{-4/5}\le\check r<t^{-1}R$ in $\rho^{-1}(U_{t^{-1}R}/\{\pm 1\}),$} \\
0, & \text{in $(M\sm \Up(U_R))/\an{\io},$}
\end{cases}
\end{split}
\label{nc6eq8}
\allowdisplaybreaks\\
\begin{split}
&\bmd{\d\bigl(\Th(\vp_t^N)-\psi_t^N\bigr)}{}_{g_t^N}=\\
&\begin{cases} O(t), & \text{if\/ $\check r\le 1$ in $\rho^{-1}(U_{t^{-1}R}/\{\pm 1\}),$} \\
O(t\check r), & \text{if\/ $1\le\check r\le t^{-1/9}$ in $\rho^{-1}(U_{t^{-1}R}/\{\pm 1\}),$} \\
O(t^{8/9}),
& \text{if\/ $t^{-1/9}\le\check r\le 2t^{-1/9}$ in $\rho^{-1}(U_{t^{-1}R}/\{\pm 1\}),$} \\
O(t\check r^{-3+\ga}),
& \text{if\/ $2t^{-1/9}\le\check r\le t^{-4/5}$ in $\rho^{-1}(U_{t^{-1}R}/\{\pm 1\}),$ $\ga>0,$} \\ 
O(t^3), 
& \text{if\/ $t^{-4/5}\le\check r\le 2t^{-4/5}$ in $\rho^{-1}(U_{t^{-1}R}/\{\pm 1\}),$} \\
0, & \text{if\/ $2t^{-4/5}\le\check r<t^{-1}R$ in $\rho^{-1}(U_{t^{-1}R}/\{\pm 1\}),$} \\
0, &  \text{in $(M\sm \Up(U_R))/\an{\io}$.}
\end{cases}
\end{split}
\label{nc6eq9}
\ea
\label{nc6prop2}	
\end{prop}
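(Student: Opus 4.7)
My approach is to estimate $\Theta(\vp_t^N)-\psi_t^N$ separately on each of the six regions in the piecewise definitions \eq{nc6eq6} and \eq{nc6eq7}, in every case invoking Proposition \ref{nc2prop2} to linearize $\Theta$ about a convenient base $\Gt$-structure and then combining the outputs of Theorem \ref{nc5thm}, Proposition \ref{nc4prop}, Corollary \ref{nc3cor}, and the asymptotic comparison bounds \eq{nc4eq34} together with \eq{nc3eq35}--\eq{nc3eq36}.

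On the innermost region $\check r\le t^{-1/9}$ I would linearize $\Theta$ about $\vp_t^P$. Writing $\vp_t^N=\vp_t^P+\xi$ with $\xi=t^2\xi_{1,2}+t^2\xi_{0,3}+\d(t^2\al_{0,2}+t^4\al_{2,0})$, the bounds in Proposition \ref{nc4prop} and Theorem \ref{nc5thm} show that $\md{\xi}_{g_t^P}=O(t)$ on $\check r\le 1$ and $O(t\check r)$ on $1\le\check r\le t^{-1/9}$, so for small $t$ Proposition \ref{nc2prop2} applies to give
\[
\Theta(\vp_t^N)-\psi_t^N=D_{\vp_t^P}\Theta(\xi)+F_{\vp_t^P}(\xi)-(\ti\psi_t^P-\psi_t^P)-\d(t^2\be_{0,3}+t^4\be_{2,1}).
\]
The key identity \eq{nc5eq1} of Theorem \ref{nc5thm} is set up precisely so that the $(1,3)$- and $(3,1)$-type parts of $D_{\vp_t^P}\Theta(\xi)$ cancel against $t^2\chi_{1,3}+t^4\th_{3,1}+\d(t^2\be_{0,3})+[\d(t^4\be_{2,1})]_{3,1}$. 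What survives is $D_{\vp_t^P}\Theta$ applied to $t^2\xi_{0,3}$ together with the remaining type components of $\d(t^2\al_{0,2}+t^4\al_{2,0})$, the unmatched $t^4\th_{2,2}$ and components of $\d(t^4\be_{2,1})$, and the nonlinear remainder $F_{\vp_t^P}(\xi)$. Each surviving piece is $O(t^2)$ on $\check r\le 1$ and $O(t^2\check r^2)$ on $1\le\check r\le t^{-1/9}$ by the cited bounds together with \eq{nc2eq5}, yielding the first two lines of \eq{nc6eq8}.

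On the middle region $2t^{-1/9}\le\check r\le t^{-4/5}$ I would instead linearize about $\rho^*(\vp_t^\nu)$, since by \eq{nc6eq6} and \eq{nc4eq32} the form $\vp_t^N$ equals $\rho^*(\ti\vp_t^\nu)+\d[t^2\tau_{1,1}+t^2\al_{0,2}+t^4\al_{2,0}+\Up_*(\eta)]$. Corollary \ref{nc3cor} controls the torsion of $\rho^*(\vp_t^\nu)$, while the decay bounds \eq{nc4eq34} for $\tau_{1,1},\up_{1,2}$, the $\check r\ge 1$ estimates in Theorem \ref{nc5thm}, and \eq{nc3eq35}--\eq{nc3eq36} combine to produce the stated $O(t^2\check r^{-2+\ga})$ bound with $\ga>0$ inherited from Theorem \ref{nc5thm}. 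In the outermost region $\check r\ge 2t^{-4/5}$ and on $(M\sm\Up(U_R))/\an{\io}$ we have $\vp_t^N=\vp$ and $\psi_t^N=*\vp$ identically, so $\Theta(\vp_t^N)-\psi_t^N\equiv 0$. In the two transition regions the cutoffs $a(t^{1/9}\check r)$ and $1-a(t^{4/5}\check r)$ are handled by Leibniz: the derivative of the cutoff contributes a factor $t^{1/9}$ or $t^{4/5}$, while the quantities being cut off, evaluated at $\check r=O(t^{-1/9})$ or $\check r=O(t^{-4/5})$, are controlled by the very same estimates used in the adjacent regions. Combining these contributions yields the claimed $O(t^{16/9})$ and $O(t^{16/5})$ bounds.

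For the derivative estimate \eq{nc6eq9} I would differentiate the region-wise expressions derived above, using the second inequality in \eq{nc2eq5} for the nonlinear remainder $\nabla F_{\vp_t^P}(\xi)$ and observing that each additional $\d$ lowers the weighted bound by one power of $\check r$ (equivalently, by one power of $t$ on the fibre-size region $\check r\le 1$, where the metric $g_t^N$ scales by $t^2$ on vertical directions). The main obstacle will be the bookkeeping in the innermost region: one must track exactly which type components of $D_{\vp_t^P}\Theta(\xi)$ survive the cancellation provided by \eq{nc5eq1}, and verify that every surviving linear term, together with $F_{\vp_t^P}(\xi)$, lies within the claimed $O(t^2)$ or $O(t^2\check r^2)$ envelope, notwithstanding that $\xi$ itself has some $O(t)$-sized pieces. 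The asymmetric role of vertical versus horizontal derivatives in the splitting $T^*P=\breve V^*\op\breve H^*$ is what makes this cancellation function correctly, so I would organize the computation following the same type-by-type bookkeeping used in the proof of Theorem \ref{nc5thm}.
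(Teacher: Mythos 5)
Your treatment of the innermost region, the two transition annuli, the outermost regions, and the heuristic for the derivative estimate \eq{nc6eq9} all match the paper's proof: in particular you correctly identify that after the cancellation supplied by \eq{nc5eq1} the surviving terms $t^2\xi_{0,3}$, $t^4\th_{2,2}$, the residual components of $\d(t^2\al_{0,2}+t^4\al_{2,0})$ and $\d(t^4\be_{2,1})$, and the quadratic remainder $F_{\vp_t^P}$ are what produce the $O(t^2)$ and $O(t^2\check r^2)$ bounds, and that the cutoff regions are controlled by \eq{nc3eq35}--\eq{nc3eq36} with $r=t\check r$.

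There is, however, a genuine gap in your middle region $2t^{-1/9}\le\check r\le t^{-4/5}$. You propose to linearize $\Th$ about $\rho^*(\vp_t^\nu)$ alone, but the cancellation identity \eq{nc5eq1} is an identity for $D_{\vp_t^P}\Th$, not for $D_{\rho^*(\vp_t^\nu)}\Th$, and it is indispensable here: without it the uncancelled quantity $D\Th(t^2\xi_{1,2}+t^2\d\al_{0,2}+t^4\d\al_{2,0})-t^2\chi_{1,3}-t^4\th_{3,1}-\cdots$ is of size $O(t\check r^{-3})$, far larger than $O(t^2\check r^{-2+\ga})$. If you try to import \eq{nc5eq1} by swapping linearization points, the error is $(D_{\vp_t^P}\Th-D_{\rho^*(\vp_t^\nu)}\Th)$ applied to forms of size $O(t\check r^{-3})$; since $\bmd{D_{\vp_t^P}\Th-D_{\rho^*(\vp_t^\nu)}\Th}=O(\check r^{-4})$ (this is \eq{nc6eq14}, a consequence of \eq{nc4eq30} and \eq{nc4eq34}), this contributes $O(t\check r^{-7})$, which exceeds the claimed $O(t^2\check r^{-2+\ga})$ whenever $\check r\lesssim t^{-1/5}$, e.g.\ it is $O(t^{16/9})$ versus the claimed $O(t^{20/9-\ga/9})$ at $\check r=2t^{-1/9}$. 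The paper's computation \eq{nc6eq13} avoids this by a mixed linearization: it subtracts $0=\Th(\vp)-\psi$ written as $\Th\bigl(\rho^*(\vp^\nu_t+t^2\dddot\vp^2_{0,3})+\Up_*(\d\eta)\bigr)-\bigl(\rho^*(\psi^\nu_t+t^4\dddot\psi^4_{2,2})+\Up_*(\d\ze)\bigr)$, applies $D_{\vp_t^P}\Th$ to exactly the combination cancelled by \eq{nc5eq1}, applies $D_{\rho^*(\vp_t^\nu)}\Th$ to $t^2[\d\tau_{1,1}]_{0,3}$, and routes the difference of linearizations only through the small forms $t^2\xi_{0,3}+\Up_*(\d\eta)=O(t^2\check r^2)$, giving $O(\check r^{-4})\cdot O(t^2\check r^2)=O(t^2\check r^{-2})$. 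You need this (or an equivalent device) to obtain the stated fourth line of \eq{nc6eq8}.
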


\begin{proof} For the first two cases in \eq{nc6eq8}, if $\check r\le t^{-1/9}$ in $\rho^{-1}(U_{t^{-1}R}/\{\pm 1\})$, we have
\ea
\Th(\vp_t^N)-\psi_t^N&=\Th\bigl(\ti\vp_t^P+\d[t^2\al_{0,2}+t^4\al_{2,0}]\bigr)-\ti\psi_t^P-\d[t^2\be_{0,3}+t^4\be_{2,1}]
\nonumber\\
&=\Th\bigl(\vp_t^P+t^2\xi_{1,2}+t^2\xi_{0,3}+t^2\d\al_{0,2}+t^4\d\al_{2,0}\bigr)
\nonumber\\
&\qquad-\psi_t^P-t^2\chi_{1,3}-t^4\th_{3,1}-t^4\th_{2,2}-t^2\d\be_{0,3}-t^4\d\be_{2,1}
\nonumber\\
&=(D_{\vp_t^P}\Th)\bigl(t^2\xi_{1,2}+t^2\xi_{0,3}+t^2\d\al_{0,2}+t^4\d\al_{2,0}\bigr)
\nonumber\\
&\qquad -t^2\chi_{1,3}-t^4\th_{3,1}-t^4\th_{2,2}-t^2\d\be_{0,3}-t^4\d\be_{2,1}
\label{nc6eq10}\\
&\qquad +F_{\vp_t^P}\bigl(t^2\xi_{1,2}+t^2\xi_{0,3}+t^2\d\al_{0,2}+t^4\d\al_{2,0}\bigr)
\nonumber\\
&=(D_{\vp_t^P}\Th)\bigl(t^2\xi_{0,3}+t^2[\d\al_{0,2}]_{0,3}+t^4[\d\al_{2,0}]_{2,1}+t^4[\d\al_{2,0}]_{1,2}\bigr)
\nonumber\\
&\qquad -t^4\th_{2,2}-t^4[\d\be_{2,1}]_{2,2}-t^4[\d\be_{2,1}]_{1,3}
\nonumber\\
&\qquad +F_{\vp_t^P}\bigl(t^2\xi_{1,2}+t^2\xi_{0,3}+t^2\d\al_{0,2}+t^4\d\al_{2,0}\bigr),
\nonumber
\ea
using \eq{nc6eq6}--\eq{nc6eq7} in the first step; \eq{nc4eq24} in the second; Proposition \ref{nc2prop2} and $*_{\vp_t^P}\vp_t^P=\psi_t^P$ in the third; and $\d\al_{0,2}=[\d\al_{0,2}]_{1,2}+[\d\al_{0,2}]_{0,3}$, etc., and equation \eq{nc5eq1} of Theorem \ref{nc5thm} to cancel seven terms in the fourth.

If $\ep>0$ is small enough then $\bmd{\vp_t^N-\vp_t^P}{}_{g_t^P}$ is small on $\check r\le t^{-1/9}$, so \eq{nc2eq5} in Proposition \ref{nc2prop2} applies for some $C>0$. Also \eq{nc2eq6} implies that $\bmd{(D_{\vp_t^P}\Th)(\al)}{}_{g_t^P}\le\frac{4}{3}\bmd{\al}{}_{g_t^P}$ for any 3-form $\al$. Thus from \eq{nc6eq10} we deduce that if $\check r\le t^{-1/9}$ we have
\ea
&\bmd{\Th(\vp_t^N)-\psi_t^N}{}_{g_t^P}\le \ts\frac{4}{3}\bigl(\bmd{t^2\xi_{0,3}}{}_{g_t^P}+\bmd{t^2[\d\al_{0,2}]_{0,3}}{}_{g_t^P}+\bmd{t^4[\d\al_{2,0}]_{2,1}}{}_{g_t^P}
\nonumber\\
&\quad +\bmd{t^4[\d\al_{2,0}]_{1,2}}{}_{g_t^P}\bigr)+\bmd{t^4\th_{2,2}}{}_{g_t^P}+\bmd{t^4[\d\be_{2,1}]_{2,2}}{}_{g_t^P}+\bmd{t^4[\d\be_{2,1}]_{1,3}}{}_{g_t^P}
\nonumber\\
&\quad +C\bigl(\bmd{t^2\xi_{1,2}}{}_{g_t^P}+\bmd{t^2\xi_{0,3}}{}_{g_t^P}+\bmd{t^2\d\al_{0,2}}{}_{g_t^P}+\bmd{t^4\d\al_{2,0}}{}_{g_t^P}\bigr)^2.
\label{nc6eq11}
\ea

All the terms on the right hand side of \eq{nc6eq11} are estimated in Proposition \ref{nc4prop} and Theorem \ref{nc5thm}. The dominant terms are $\bmd{t^2\xi_{0,3}}{}_{g_t^P}$, $\bmd{t^4\th_{2,2}}{}_{g_t^P}$, which are $O(t^2)$ if $\check r\le 1$ and $O(t^2\check r^2)$ if $1\le\check r\le t^{-1/9}$. Thus we see that
\e
\bmd{\Th(\vp_t^N)-\psi_t^N}{}_{g_t^P}=\begin{cases} O(t^2), & \text{if $\check r\le 1$ in $\rho^{-1}(U_{t^{-1}R}/\{\pm 1\}),$} \\
O(t^2\check r^2), & \text{if $1\le\check r\le t^{-1/9}$ in $\rho^{-1}(U_{t^{-1}R}/\{\pm 1\})$.} \end{cases}
\label{nc6eq12}
\e
This is not quite what is wanted for \eq{nc6eq8}, as we are taking norms using $g_t^P$ rather than $g_t^N$. However, $g_t^P$ and $g_t^N$ are $C^0$-close in the region $\check r\le t^{-1/9}$, so $\md{\cdots}_{g_t^P}$ and $\md{\cdots}_{g_t^N}$ differ by a bounded factor, and thus \eq{nc6eq12} implies the first two cases in \eq{nc6eq8}.

For the third case in \eq{nc6eq8}, when $t^{-1/9}\le\check r\le 2t^{-1/9}$, we proceed as in \eq{nc6eq10}--\eq{nc6eq12} but including the extra terms from \eq{nc6eq6}--\eq{nc6eq7}:
\begin{equation*}
(D_{\vp_t^P}\Th)\bigl(\d[a(t^{1/9}\check r)\cdot\Up_*(\eta)]\bigr) - \d[a(t^{1/9}\check r)\cdot\Up_*(\ze)].
\end{equation*}
Using \eq{nc3eq35} and \eq{nc3eq36}, and noting as in \S\ref{nc61} that $r$ in \S\ref{nc35} corresponds to $t\check r$ above, we find that these terms are $O(t^{16/9})$, agreeing with $O(t^2\check r^2)$ in the previous case when $\check r=O(t^{-1/9})$. The third case in \eq{nc6eq8} follows. 

For the fourth case, if $2t^{-1/9}\le\check r\le t^{-4/5}$ in $\rho^{-1}(U_{t^{-1}R}/\{\pm 1\})$ we have
\ea
&\Th(\vp_t^N)-\psi_t^N 
\nonumber\\
& =\Th\bigl(\ti\vp_t^P+\d[t^2\al_{0,2}+t^4\al_{2,0}+\Up_*(\eta)]\bigr) 
\nonumber\\
&\quad {}-\bigl(\ti\psi_t^P+\d[t^2\be_{0,3}+t^4\be_{2,1}+\Up_*(\ze)]\bigr)-\bigl\{\Th(\vp)-\psi\bigr\} 
\nonumber\\
&=\Th\bigl(\vp_t^P+t^2\xi_{1,2}+t^2\xi_{0,3}+t^2\d\al_{0,2}+t^4\d\al_{2,0}+\Up_*(\d\eta)\bigr) 
\nonumber\\
&\quad {}-\psi_t^P-t^2\chi_{1,3}-t^4\th_{3,1}-t^4\th_{2,2}-t^2\d\be_{0,3}-t^4\d\be_{2,1}-\Up_*(\d\ze) 
\nonumber\\
&\quad {}-\bigl\{\Th\bigl(\rho^*(\vp^\nu_t+t^2\dddot\vp^2_{0,3})+\Up_*(\d\eta)\bigr)-\bigl(\rho^*(\psi^\nu_t+t^4\dddot\psi^4_{2,2})+\Up_*(\d\ze)\bigr)\bigr\} 
\nonumber\\
&=(D_{\vp_t^P}\Th)\bigl(t^2\xi_{1,2}+t^2\xi_{0,3}+t^2\d\al_{0,2}+t^4\d\al_{2,0}+\Up_*(\d\eta)\bigr) -t^2\chi_{1,3}-t^4\th_{3,1} 
\nonumber\\
&\quad {}-t^2\d\be_{0,3}-t^4\d\be_{2,1}+F_{\vp_t^P}\bigl(t^2\xi_{1,2}+t^2\xi_{0,3}+t^2\d\al_{0,2}+t^4\d\al_{2,0}\!+\!\Up_*(\d\eta)\bigr) 
\nonumber\\
&\quad {}-(D_{\rho^*(\vp^\nu_t)}\Th)\bigl(t^2\xi_{0,3}-t^2[\d\tau_{1,1}]_{0,3}+\Up_*(\d\eta)\bigr) 
\nonumber\\
&\quad {}-F_{\rho^*(\vp^\nu_t)}\bigl(t^2\xi_{0,3}-t^2[\d\tau_{1,1}]_{0,3}+\Up_*(\d\eta)\bigr) 
\nonumber\\
&=(D_{\vp_t^P}\Th)\bigl(t^2[\d\al_{0,2}]_{0,3}+t^4[\d\al_{2,0}]_{2,1}+t^4[\d\al_{2,0}]_{1,2}\bigr)+(D_{\rho^*(\vp^\nu_t)}\Th)\bigl(t^2[\d\tau_{1,1}]_{0,3}\bigr) 
\nonumber\\
&\quad{} +\bigl(D_{\vp_t^P}\Th-D_{\rho^*(\vp^\nu_t)}\Th\bigr)\bigl(t^2\xi_{0,3}+\Up_*(\d\eta)\bigr)-t^4[\d\be_{2,1}]_{2,2}-t^4[\d\be_{2,1}]_{1,3} 
\nonumber\\
&\quad{} +F_{\vp_t^P}\bigl(t^2\xi_{1,2}+t^2\xi_{0,3}+t^2\d\al_{0,2}+t^4\d\al_{2,0}\!+\!\Up_*(\d\eta)\bigr) 
\nonumber\\
&\quad{} -F_{\rho^*(\vp^\nu_t)}\bigl(t^2\xi_{0,3}-t^2[\d\tau_{1,1}]_{0,3}+\Up_*(\d\eta)\bigr).
\label{nc6eq13}
\ea
Here we use \eq{nc6eq6}--\eq{nc6eq7} and $\Th(\vp)=\psi$ in the first step; equations \eq{nc3eq33}, \eq{nc4eq24} and $\Up_*(\ti\vp^\nu)=\rho^*(\ti\vp_t^\nu)$, $\Up_*(\ti\psi^\nu)=\rho^*(\ti\psi_t^\nu)$, and Corollary \ref{nc3cor} in the second; equation \eq{nc4eq33}, Proposition \ref{nc2prop2} and $*_{\vp_t^P}\vp_t^P=\psi_t^P$, $*_{\rho^*(\vp_t^\nu)}\rho^*(\vp_t^\nu)=\rho^*(\psi_t^\nu)$ in the third, in particular to cancel $t^4 \th_{2,2}$ with $t^4 \dddot \psi_{2,2}$ and to cancel two $\Up_* (\d \ze)$ terms; and $\d\al_{0,2}=[\d\al_{0,2}]_{1,2}+[\d\al_{0,2}]_{0,3}$, etc., and equation \eq{nc5eq1} of Theorem \ref{nc5thm} to cancel seven terms in the fourth.

Just as we did for \eq{nc6eq11}, we can now estimate all the terms in the final step of \eq{nc6eq13} when  $2t^{-1/9}\le\check r\le t^{-4/5}$, using Propositions \ref{nc2prop2} and \ref{nc4prop}, Theorem \ref{nc5thm}, equation \eq{nc4eq34}, and the estimate
\begin{equation} \label{nc6eq14}
\bmd{D_{\vp_t^P}\Th-D_{\rho^*(\vp^\nu_t)}\Th}_{g_t^P}=O(\check r^{-4}).
\end{equation}
The estimate \eq{nc6eq14} is immediate from $\bmd{\vp_t^P-\rho^*(\vp^\nu_t)}=O(\check r^{-4})$ which itself follows from \eq{nc4eq30} and \eq{nc4eq34}. In this case we find that the dominant terms come from $t^2[\d\al_{0,2}]_{0,3}$, $t^4[\d\al_{2,0}]_{2,1}$, $t^4[\d\be_{2,1}]_{2,2}$ which contribute $O(t^2\check r^{-2+\ga})$ for $\ga>0$ by Theorem \ref{nc5thm}, and all other terms are smaller. This proves the fourth case in \eq{nc6eq8}.

The fifth case of \eq{nc6eq8} is as for the fourth case with $\check r=O(t^{-4/5})$, but with additional terms involving
\begin{equation*}
\d\bigl[a(t^{4/5}\check r)\cdot(t^2\tau_{1,1}+t^2\al_{0,2}+t^4\al_{2,0})\bigr],\;\>
\d\bigl[a(t^{4/5}\check r)\cdot(t^2\up_{1,2}+t^2\be_{0,3}+t^4\be_{2,1})\bigr].
\end{equation*}
Using \eq{nc4eq34} and Theorem \ref{nc5thm} we find that these contribute errors of size $O(t^{16/5})$ and $O(t^{17/5-4\ga/5})$ for $\ga>0$. The $O(t^{16/5})$ error dominates, so the fifth case follows.

The sixth and seventh cases are immediate as $\vp_t^N=\vp$, $\psi_t^N=*\vp$ with $\Th(\vp)=*\vp$. This proves \eq{nc6eq8}.

Finally, as in \eq{nc4eq34} and Proposition \ref{nc4prop} and Theorem \ref{nc5thm}, the principle is that adding a derivative $\nabla$ multiplies the estimates by $t^{-1}$ if $\check r\le 1$ and by $t^{-1}\check r^{-1}$ if $\check r\ge 1$, so \eq{nc6eq9} follows easily in the same way as \eq{nc6eq8}.
\end{proof}

We can now estimate the norms of $\Th(\vp_t^N)-\psi_t^N$ needed in Theorem \ref{nc2thm3}.

\begin{prop} For all\/ $t\in(0,\ep]$ we have
\e
\begin{gathered}
\bnm{\Th(\vp_t^N)-\psi_t^N}_{C^0} =O(t^{16/9}),
\qquad
\bnm{\Th(\vp_t^N)-\psi_t^N}_{L^2} =O(t^{32/9}),
\\
\text{and}\qquad \bnm{\d(\Th(\vp_t^N)-\psi_t^N)}_{L^{14}} =O(t^{8/7}),
\end{gathered}
\label{nc6eq15}
\e
where the norms are computed using the metric $g_t^N$ on $N$.
\label{nc6prop3}	
\end{prop}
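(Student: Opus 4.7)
The plan is to combine the pointwise bounds of Proposition~\ref{nc6prop2} with volume estimates on each of the seven regions appearing in \eqref{nc6eq8} and \eqref{nc6eq9}. The $C^0$ bound is immediate: taking the pointwise maximum over all regions in \eqref{nc6eq8}, the largest term is $O(t^{16/9})$, arising from the transition annulus $t^{-1/9}\le\check r\le 2t^{-1/9}$.

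For the $L^2$ and $L^{14}$ bounds, the key preliminary observation is that on $\rho^{-1}(U_{t^{-1}R}/\{\pm 1\})$ the metric $g_t^N$ is bi-Lipschitz equivalent to $g_t^P=\check g_{0,2}+t^2\check g_{2,0}$ of \eqref{nc4eq23}. Consequently the volume form factors fibrewise as $dV_{g_t^N}\asymp t^4\,dV_L\,dV_{\check g_{2,0}}$, where by the ALE property of \S\ref{nc25} the fibrewise Eguchi--Hanson volume form is asymptotic to $\check r^3\,d\check r\,d\sigma$ as $\check r\to\iy$. On the complementary region $(M\sm\Up(U_R))/\an{\io}$ the integrand $\Th(\vp_t^N)-\psi_t^N$ vanishes identically by the last line of \eqref{nc6eq8}, so contributes nothing.

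I then integrate region by region. The representative computation is in Region~2, where $1\le\check r\le t^{-1/9}$ and $\md{\Th(\vp_t^N)-\psi_t^N}=O(t^2\check r^2)$:
\begin{equation*}
\int_{\{1\le\check r\le t^{-1/9}\}}\!\md{\Th(\vp_t^N)-\psi_t^N}^2\,dV_{g_t^N} = O\Bigl(\int_L\int_1^{t^{-1/9}}(t^2\check r^2)^2\cdot t^4\check r^3\,d\check r\,dV_L\Bigr) = O(t^{64/9}).
\end{equation*}
The same method applied in Region~3 (pointwise bound $O(t^{16/9})$, volume $O(t^{32/9})$) again yields $O(t^{64/9})$; Region~1 contributes $O(t^8)$; Region~4 (for $\ga>0$ small) contributes $O(t^{8-8\ga/5})$; and Region~5 contributes $O(t^{36/5})$. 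Since $64/9<8<36/5$, the dominant terms are those of Regions~2 and~3, giving $\nm{\Th(\vp_t^N)-\psi_t^N}_{L^2}^2=O(t^{64/9})$, hence the $L^2$ bound $O(t^{32/9})$ after extracting the square root. Applying the identical scheme to $\md{\d(\Th(\vp_t^N)-\psi_t^N)}^{14}$ with the pointwise estimates of \eqref{nc6eq9}, one finds that Regions~2 and~3 each contribute $O(t^{16})$ (e.g.\ Region~2 gives $\int t^{14}\check r^{14}\cdot t^4\check r^3\,d\check r\,dV_L=O(t^{18}\cdot(t^{-1/9})^{18})=O(t^{16})$), while all other regions contribute strictly smaller powers of $t$. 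Taking $14$th roots gives the $L^{14}$ bound $O(t^{8/7})$.

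The main technical point is not any individual computation but confirming that the transition exponents $-\tfrac{1}{9}$ and $-\tfrac{4}{5}$ chosen in the cutoff definitions \eqref{nc6eq6}--\eqref{nc6eq7} are precisely the ones that balance the pointwise size of the torsion across each transition annulus against the volume of that annulus, so that the contributions from Regions~2 and~3 dominate. No essential obstacle arises; the work consists of careful bookkeeping of exponents across the seven regions and verification that the three resulting exponents $\tfrac{16}{9},\tfrac{32}{9},\tfrac{8}{7}$ indeed match the requirements \textrm{(i)} of Theorem~\ref{nc2thm3}.
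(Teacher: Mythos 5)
Your proof is correct and takes essentially the same approach as the paper: both integrate the pointwise bounds of Proposition~\ref{nc6prop2} region by region against the volume form $dV_{g_t^N}\asymp t^4\,\check r^3\,d\check r\,d\sigma\,dV_L$, and your representative exponent computations reproduce the entries of the paper's Table~\ref{nc6tab1}. The only slip is the chain ``$64/9<8<36/5$'': in fact $36/5=7.2<8$, so the correct ordering is $64/9<36/5<8$, but this is cosmetic since $64/9$ is still the smallest exponent and Regions~2 and~3 still dominate.
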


\begin{proof} Table \ref{nc6tab1} gives the orders of the contributions to the norms in \eq{nc6eq15} from each region in \eq{nc6eq8}--\eq{nc6eq9}. We can compute each of these contributions using \eq{nc6eq8}--\eq{nc6eq9} and the facts, which are obvious from the construction, that the region $\check r\le 1$ in $(N,g_t^P)$ has volume $O(t^4)$, and for $1 \le \check r \le 2t^{-4/5}$, the region in $(N,g_t^P)$ with $s \le \check r \le s + \de s$ has volume $O(t^4 s^3 \de s)$ for small~$\de s>0$.

\begin{table}[htb]
\begin{center}
\begin{tabular}{ |c|c|c|c| } 
\hline
$\vphantom{{}^(_(}$ & $\!\!\bnm{\Th(\vp_t^N)\!-\!\psi_t^N}_{C^0}\!\!$ & $\!\!\bnm{\Th(\vp_t^N)\!-\!\psi_t^N}_{L^2}\!\!$ & $\!\!\bnm{\d(\Th(\vp_t^N)\!-\!\psi_t^N)}_{L^{14}}\!\!$ \\
\hline
$\check r\le 1$ & $O(t^2)$ & $O(t^4)$ & $O(t^{9/7})$ \\
\hline
$1\le\check r\le t^{-1/9}$ & $O(t^{16/9})$ & $O(t^{32/9})$ & $O(t^{8/7})$ \\
\hline
$\!\!t^{-1/9}\!\le\!\check r\!\le\! 2t^{-1/9}\!\!$ & $O(t^{16/9})$ & $O(t^{32/9})$& $O(t^{8/7})$ \\
\hline
$\!\!2t^{-1/9}\!\le\!\check r\!\le\! t^{-4/5}\!\!$ & $O(t^{20/9-\ga/9})$ & $O(t^{4-4\ga/5})$ & $O(t^{100/63-\ga/9})$ \\
\hline
$\!\!t^{-4/5}\!\le\!\check r\!\le\! 2t^{-4/5}\!\!$ & $O(t^{16/5})$ & $O(t^{18/5})$ & $O(t^{107/35})$ \\
\hline
$r\ge 2t^{-4/5}$ & 0 & 0 & 0 \\
\hline
\end{tabular}
\end{center}
\caption{Contributions of regions to norms of $\Th(\vp_t^N)-\psi_t^N$, any $\ga>0$}
\label{nc6tab1}
\end{table}

The $O(t^4)$ volume estimate in the region $\check r \leq 1$ gives the first row of Table \ref{nc6tab1}. If a quantity is $O(t^{a} \check r^b)$ for $b \neq -2$ in the region $A \leq \check r \leq B$, then its $L^2$ norm on this region is given by
\begin{equation*}
\bnm{ O(t^a \check r^b) }_{L^2} = O \left( \int_A^B (t^a s^b)^2 t^4 s^3 ds \right)^{\frac{1}{2}} = O \bigl( t^{a + 2}( A^{b + 2} + B^{b + 2}) \bigr).
\end{equation*}
Similarly if a quantity is $O(t^{a} \check r^b)$ for $b \neq -\tfrac{2}{7}$ in the region $A \leq \check r \leq B$, then its $L^{14}$ norm on this region is given by
\begin{equation*}
\bnm{ O(t^a \check r^b) }_{L^{14}} = O \left( \int_A^B (t^a s^b)^{14} t^4 s^3 ds \right)^{\frac{1}{14}} = O \bigl( t^{a + \tfrac{2}{7}}( A^{b + \tfrac{2}{7}} + B^{b + \tfrac{2}{7}}) \bigr).
\end{equation*}
Using these observations, one obtains the entries of Table \ref{nc6tab1} easily. Taking the largest term in each column in Table \ref{nc6tab1} for small $t$ proves~\eq{nc6eq15}.
\end{proof}

\subsection{Proof of the main theorem}
\label{nc64}

We can now at last prove the main theorem of this paper:

\begin{thm} Let\/ $(M,\vp,g)$ be a compact, torsion-free $\Gt$-manifold, and let\/ $\io:M\ra M$ be a nontrivial involution preserving $(\vp,g),$ so that the fixed locus $L$ of\/ $\io$ is a compact associative $3$-fold in $M$ by Proposition\/~{\rm\ref{nc2prop3}}. 

Suppose $L$ is nonempty, and that there exists a closed, coclosed, nonvanishing $1$-form $\la$ on $L$. That is, $\la\in\Om^1(L)$ with\/ $\d\la=\d^*\la=0,$ where $\d^*$ is defined using $g\vert_L,$ and\/ $\la\vert_x\ne 0$ in $T_x^*L$ for all\/~$x\in L$.

Then there exists a compact\/ $7$-manifold\/ $N$ defined as a resolution of singularities $\pi:N\ra M/\an{\io}$ of the $7$-orbifold\/ $M/\an{\io}$ along its singular locus $L\subset M/\an{\io},$ by gluing in a bundle $\si:P\ra L$ along $L$, with fibre the Eguchi--Hanson space $X$ from {\rm\S\ref{nc25},} where $P$ is constructed using $\la$. The preimage $\pi^{-1}(L)$ is a $5$-submanifold\/ $Q$ of\/ $N,$ and\/ $\pi\vert_Q:Q\ra L$ is a smooth bundle with fibre\/ $\cS^2$. The fundamental group of\/ $N$ satisfies $\pi_1(N)\!\cong\!\pi_1(M/\an{\io}),$ and the Betti numbers~are 
\e
b^k(N)=b^k(M/\an{\io})+b^{k-2}(L).
\label{nc6eq16}
\e

There exists a smooth family $(\ti\vp_t^N,\ti g_t^N)$ of torsion-free $\Gt$-structures on $N$ for $t\in(0,\ep],$ with\/ $\ep>0$ small, such that\/ $(\ti\vp_t^N,\ti g_t^N)\ra\pi^*(\vp,g)$ in $C^0$ away from $Q$ as $t\ra 0,$ and for each\/ $x\in L$ the fibre $\pi^{-1}(x)\cong\cS^2$ with metric $\ti g_t^N\vert_{\pi^{-1}(x)}$ approximates a small round\/ $2$-sphere with area $\pi t^2\bmd{\la\vert_x}$ for small\/ $t$. The metrics $\ti g_t^N$ on $N$ have holonomy $\Gt$ if and only if\/ $M/\an{\io}$ has finite fundamental group.

\label{nc6thm}	
\end{thm}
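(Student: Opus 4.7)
The topological content of the theorem --- formula \eq{nc6eq16} for the Betti numbers and the identification $\pi_1(N)\cong\pi_1(M/\an{\io})$ --- has already been settled in Proposition \ref{nc6prop1}, and the smooth manifold $N$ together with the smooth closed positive $3$-form $\vp_t^N$ and its smooth closed dual $\psi_t^N$ were defined in \S\ref{nc61}--\S\ref{nc62}. The plan is therefore to apply Theorem \ref{nc2thm3} to the pair $(\vp_t^N,\psi_t^N)$ to produce a torsion-free $\Gt$-structure $(\ti\vp_t^N,\ti g_t^N)$ on $N$ close to $(\vp_t^N,g_t^N)$, and then to extract the $C^0$-convergence, fibre-area, and holonomy statements as easy consequences.

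First I would verify the hypotheses of Theorem \ref{nc2thm3} with $\al=\tfrac{1}{18}$. Hypothesis (i) is immediate from Proposition \ref{nc6prop3}: of the three bounds $\bnm{\Th(\vp_t^N)-\psi_t^N}_{C^0}=O(t^{16/9})$, $\bnm{\Th(\vp_t^N)-\psi_t^N}_{L^2}=O(t^{32/9})$, and $\bnm{\d(\Th(\vp_t^N)-\psi_t^N)}_{L^{14}}=O(t^{8/7})$, the $L^2$ estimate is the binding one, and $\tfrac{32}{9}-\tfrac{7}{2}=\tfrac{1}{18}>0$, so all three estimates beat the required thresholds $t^\al,$ $t^{7/2+\al},$ $t^{-1/2+\al}$. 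For hypotheses (ii) and (iii) I would split into two regions. Away from $Q$ the metric $g_t^N$ converges in $C^\iy$ on compact subsets to $\pi^*(g)$, so its injectivity radius and curvature are uniformly controlled by $t$-independent constants. Near $Q$ the metric is modelled by \eq{nc4eq23} on the rescaled Eguchi--Hanson metric $t^2h_{A\ci\si}$, whose natural length scale is~$t$; the rescaling $h\mapsto t^2h$ multiplies distances by $t$ and divides sectional curvatures by $t^2$, yielding $\mathrm{inj}(g_t^N)\ge K_2 t$ and $\nm{\mathrm{Rm}(g_t^N)}_{C^0}\le K_3 t^{-2}$ as needed.

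Theorem \ref{nc2thm3} then provides $\ep>0$ and a smooth torsion-free $\Gt$-structure $(\ti\vp_t^N,\ti g_t^N)$ on $N$ for all $t\in(0,\ep]$ with $\nm{\ti\vp_t^N-\vp_t^N}_{C^0}\le K_4\,t^{1/18}$. The remaining claims then follow rapidly. For a compact $K\subset N\sm Q$ and $t$ sufficiently small, the last two cases of \eq{nc6eq6} give $\vp_t^N\vert_K=\vp\vert_K$, so $\ti\vp_t^N\ra\pi^*\vp$ in $C^0(K)$, and similarly $\ti g_t^N\ra\pi^*g$ because the metric depends continuously on the $3$-form in the sense of Definition \ref{nc2def1}. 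The fibre $\pi^{-1}(x)=Q_x\cong\cS^2$ is the bolt of the Eguchi--Hanson fibre $P_x$, which by Remark \ref{nc2rem3} has area $\pi A(x)$ in the metric $\check g_{2,0}\vert_{P_x}$, hence area $\pi t^2\md{\la\vert_x}$ in $g_t^N$ by \eq{nc4eq23}; the correction from $g_t^N$ to $\ti g_t^N$ changes this by $O(t^{2+1/18})$, so is subleading. Finally, Theorem \ref{nc2thm2} applied to $(N,\ti\vp_t^N,\ti g_t^N)$ gives $\Hol(\ti g_t^N)=\Gt$ iff $\pi_1(N)$ is finite, and by Proposition \ref{nc6prop1} this holds iff $\pi_1(M/\an{\io})$ is finite.

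The main obstacle at this final stage is scarcely an obstacle: the entire analytic burden has been absorbed into the careful construction of correction forms in \S\ref{nc3}--\S\ref{nc5} and into the cut-off exponents $-\tfrac19$ and $-\tfrac45$ in the interpolations \eq{nc6eq6}--\eq{nc6eq7}, which were engineered precisely so that the $L^2$ bound on $\Th(\vp_t^N)-\psi_t^N$ beats the critical threshold $t^{7/2}$ by a small positive power. That is the one tight spot in the argument; once it is secured, the reduction to Theorem \ref{nc2thm3} and the three subsequent consequences are essentially formal.
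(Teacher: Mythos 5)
Your proposal is correct and follows essentially the same route as the paper: verify Theorem \ref{nc2thm3}(i) from Proposition \ref{nc6prop3} with $\al=\tfrac{1}{18}$ (the $L^2$ bound being the binding one), check (ii),(iii) from the scaling of the Eguchi--Hanson fibres, and read off the convergence, bolt-area, and holonomy statements from the $C^0$ estimate, Remark \ref{nc2rem3}, Proposition \ref{nc6prop1}, and Theorem \ref{nc2thm2}. The only point you omit is the smooth dependence of $(\ti\vp_t^N,\ti g_t^N)$ on $t$, which the paper disposes of by noting that the construction in the proof of Theorem \ref{nc2thm3} depends smoothly on the input data.
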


\begin{proof} The first part of the theorem, the construction of $\pi:N\ra M/\an{\io}$ and $Q$, was done in \S\ref{nc61}, and the claims on fundamental group and Betti numbers are proved in Proposition \ref{nc6prop1}. For the second part, we will apply Theorem \ref{nc2thm3} to the family of $\Gt$-structures $(\vp_t^N,g_t^N)$ and 4-forms $\psi_t^N$ on $N$ for $t\in(0,\ep]$ defined in \S\ref{nc62}. By Proposition \ref{nc6prop3}, to ensure that Theorem \ref{nc2thm3}(i) holds for $(\vp_t^N,g_t^N)$, $\psi_t^N$ for all $t\in(0,\ep]$, we need to find $\al > 0$ such that $\al < \tfrac{16}{9}$, $\tfrac{7}{2} + \al < \tfrac{32}{9}$, and $-\tfrac{1}{2} + \al < \tfrac{8}{7}$. In particular, these all hold for $\al < \tfrac{1}{18}$.

We can see from the construction that $g_t^N$ satisfies Theorem \ref{nc2thm3}(ii),(iii) for some $K_2,K_3>0$ and all $t\in(0,\ep]$, since where $\check r\le 1$ the local injectivity radius of $g_t^N$ is $O(t)$ and the curvature is $O(t^{-2})$, and where $1\le \check r\le t^{-1}R$ the local injectivity radius of $g_t^N$ is $O(t\check r)$ and the curvature is $O(t^{-2}\check r^{-2})$, and outside $\rho^{-1}(U_{t^{-1}R}/\{\pm 1\})$ the local injectivity radius and curvature of $g_t^N$ are both~$O(1)$.

Thus Theorem \ref{nc2thm3} shows that after making $\ep>0$ smaller if necessary, for all $t\in(0,\ep]$ there exists a smooth, torsion-free $\Gt$-structure $(\ti\vp_t^N,\ti g_t^N)$ on $N$ such that $\nm{\ti\vp_t^N-\vp_t^N}_{C^0}\le K_4 \,t^{1/18}$ for some $K_4>0$ independent of $t$, computing the norm using $g_t^N$, and $[\ti\vp_t^N]=[\vp_t^N]$ in $H^3(M,\R)$. The proof of Theorem \ref{nc2thm3} in \cite{Joyc3} implies that $(\ti\vp_t^N,\ti g_t^N)$ depends smoothly on $t\in(0,\ep]$, as $(\vp_t^N,g_t^N)$ does.

The claims on convergence $(\ti\vp_t^N,\ti g_t^N)\ra\pi^*(\vp,g)$ away from $Q$ as $t\ra 0$, and $(\pi^{-1}(x),\ti g_t^N\vert_{\pi^{-1}(x)})$ approximating  a small round 2-sphere with area $\pi\bmd{\la\vert_x}t^2$ for $x\in L$ and small $t$, hold as $\nm{\ti\vp_t^N-\vp_t^N}_{C^0}\le K_4 \,t^{1/18}$ and the analogous claims hold for $(\vp_t^N,g_t^N)$ by construction, using the fact that the area of the bolt is $\pi\bmd{\la\vert_x}t^2$ as in Remark \ref{nc2rem3}. Since $\pi_1(N)\cong\pi_1(M/\an{\io})$, the last part follows from Theorem \ref{nc2thm2}. This completes the proof.
\end{proof}

\begin{rem} We explain why it is necessary that the 1-form $\la$ on $L$ used in the construction be closed and coclosed. Without assuming $\d\la=\d(*\la)=0$, suppose that using $\la$ we construct a 7-manifold $N$ and a family of $G_2$-structures $(\vp_t^N,g_t^N)$ for $t\in(0,\ep]$ as in \S\ref{nc62}, and suppose that as in the theorem there exist a family of torsion-free $G_2$-structures $(\ti\vp_t^N,\ti g_t^N)$ on $N$ with $\nm{\ti\vp_t^N-\vp_t^N}_{g_t^N,C^0}=O(t^{1/18})$ for $t\in(0,\ep]$. As in \S\ref{nc61} we have a 5-submanifold $Q\subset N$ with a submersion $\Pi:Q\ra L$, whose fibres $\Pi^{-1}(x)$ for $x\in L$ are round 2-spheres $\cS^2$ with area $\pi t^2\bmd{\la\vert_x}$ in the metric~$g_t^N$. 

Recall that if $f:X\ra Y$ is a submersion of compact, oriented manifolds with $\dim X=m$, $\dim Y=n$, and $\al$ is a $k$-form on $X$ for $k\ge m-n$, there is a unique pushforward $f_*(\al)$, a $(k-m+n)$-form on $Y$, with the property that if $\be$ is an $(m-k)$-form on $Y$ then $\int_Yf_*(\al)\w\be=\pm\int_X\al\w f^*(\be)$. Such pushforwards have the property that $\d(f_*(\al))=f_*(\d\al)$. Consider the equations
\e
\begin{split}
\Pi_*(\ti\vp_t^N\vert_Q)&=\Pi_*(\vp_t^N)+O(t^{37/18})=-\pi t^2\la+O(t^{37/18}),\\
\Pi_*(\Th(\ti\vp_t^N)\vert_Q)&=\Pi_*(\Th(\vp_t^N))+O(t^{37/18})=-\pi t^2(*\la)+O(t^{37/18}).
\end{split}
\label{nc6eq17}
\e
Here the first steps hold as $\nm{\ti\vp_t^N-\vp_t^N}_{g_t^N,C^0}=O(t^{1/18})$ and the fibres of $\Pi$ have volume $O(t^2)$, and the second steps hold by definition of $\vp_t^N$. As $\ti\vp_t^N,\Th(\ti\vp_t^N)$ are closed, the left hand sides of \eq{nc6eq17} are closed. Hence, multiplying \eq{nc6eq17} by $t^{-2}$ and taking the limit as $t\ra 0$, we see that $\la$ and $*\la$ are closed.
\label{nc6rem}	
\end{rem}

\subsection{\texorpdfstring{Replacing $M/\an{\io}$ by a $\Gt$-orbifold}{Replacing M/<ι> by a G₂-orbifold}}
\label{nc65}

The construction of \S\ref{nc3}--\S\ref{nc64} begins with a compact torsion-free $\Gt$-manifold $(M,\vp,g)$ with an involution $\io:M\ra M$ with $\io^*(\vp)=\vp$, where the fixed locus $L$ of $\io$ is a 3-submanifold in $M$. Then $M/\an{\io}$ is an orbifold, with orbifold stratum $L$, and $(\vp,g)$ descends to a torsion-free $\Gt$-structure on $M/\an{\io}$ in the orbifold sense, also written $(\vp,g)$. We define a resolution $\pi:N\ra M/\an{\io}$ and construct torsion-free $\Gt$-structures $(\ti\vp_t^N,\ti g_t^N)$ on~$N$.

In fact the whole programme also works if we start not with a quotient $M/\an{\io}$, but with a compact torsion-free $\Gt$-orbifold $(M',\vp,g)$, such that the only orbifold stratum of $M'$ is a compact 3-submanifold $L$ such that $M'$ near $L$ is locally modelled on $\R^3\t(\R^4/\{\pm 1\})$, and we are given a closed, coclosed, nonvanishing 1-form $\la$ on $(L,g\vert_L)$. It is not necessary that $M'$ should be a global quotient $M/\an{\io}$.

We chose to start from $M/\an{\io}$ in the main text for simplicity, to avoid dealing with orbifolds. But in fact the general case of $\Gt$-orbifolds $(M',\vp,g)$ is essentially identical, requiring no extra work, so we will not go through the details. In \S\ref{nc72}--\S\ref{nc74} we discuss examples involving $\Gt$-orbifolds $(M',\vp,g)$ in which $M'$ cannot be written as $M/\an{\io}$ for a 7-manifold~$M$.

\subsection{\texorpdfstring{Twisting $\la$ by a principal $\Z_2$-bundle}{Twisting λ by a principal ℤ₂-bundle}}
\label{nc66}

The construction of \S\ref{nc3}--\S\ref{nc64} involves a closed, coclosed, nonvanishing 1-form $\la$ on the fixed locus $L$ of $\io$ in $M$. This $\la$ was used in \S\ref{nc4} to define a resolution $P$ of $\nu/\{\pm 1\}$, where $\la$ is used to define a complex structure $I$ on the fibres of $\nu\ra L$, and then $\rho:P\ra\nu/\{\pm 1\}$ is the bundle blow-up of $\nu/\{\pm 1\}$ along the zero section $0(L)$ using~$I$.

Consider the effect of replacing $\la$ by $-\la$ in \S\ref{nc4}. This replaces $I$ by $-I$, but the blow-up $P$ is unchanged. It turns out that the data $\vp_t^P,\psi_t^P,g_t^P,\ti\vp_t^P,\ab\ti\psi_t^P,\ab\xi_{1,2},\ab\xi_{0,3},\ab\chi_{1,3},\ab\th_{3,1},\ab\th_{2,2},\ab\ldots$ defined in \S\ref{nc44} is unchanged as well. This holds because $\check\om^I,e_1,e_2\w e_3$ change sign, but combinations such as $\si^*(e_1)\w\check\om^I$ and $\si^*(e_2)\w\si^*(e_3)\w\check\om^I$ in \eq{nc4eq21} are unchanged. The constructions of \S\ref{nc5}--\S\ref{nc64}, and the final result $N,(\ti\vp_t^N,\ti g_t^N)$ in Theorem \ref{nc6thm}, only involve data which are unchanged by replacing $\la$ by~$-\la$. 

In fact the construction still works if $\la$ is only defined up to sign $\pm\la$ {\it locally in\/} $L$. Here is what we mean by this. Suppose $\pi:Z\ra L$ is a principal $\Z_2$-bundle over $L$. Then we can form the vector bundles $\La^kT^*L\ot_{\Z_2}Z\ra L$ of $k$-forms on $L$ {\it twisted by\/} $Z$. The $\d$ and $\d^*$ operators on $L$ extend naturally to $\d:\Ga^\iy(\La^kT^*L\ot_{\Z_2}Z)\ra \Ga^\iy(\La^{k+1}T^*L\ot_{\Z_2}Z)$ and $\d^*:\Ga^\iy(\La^kT^*L\ot_{\Z_2}Z)\ra \Ga^\iy(\La^{k-1}T^*L\ot_{\Z_2}Z)$, so it makes sense for a $Z$-twisted $k$-form $\la$ to be closed and coclosed. If locally in $L$ we choose a trivialization $Z\cong L\t\Z_2$, then we have local canonical isomorphisms $\La^kT^*L\ot_{\Z_2}Z\cong\La^kT^*L$, which identify $Z$-twisted $k$-forms with ordinary $k$-forms. These depend on the trivialization up to sign.

We can generalize the construction of \S\ref{nc3}--\S\ref{nc64} as follows: we choose a principal $\Z_2$-bundle $\pi:Z\ra L$, and take $\la$ to be a closed, coclosed, nonvanishing, $Z$-{\it twisted\/} 1-form on $L$, instead of an ordinary 1-form. Locally in $L$ we can identify $\la$ with an ordinary 1-form on $L$ up to sign, and this is enough to define $P,\vp_t^P,\psi_t^P,g_t^P,\ldots,N,\ti\vp_t^N,\ti g_t^N$ in \S\ref{nc4}--\S\ref{nc64}, since as above these are all unchanged by replacing $\la$ by $-\la$, and the analogue of Theorem \ref{nc6thm} holds.

Proposition \ref{nc6prop1} and equation \eq{nc6eq16} must be modified. In the $Z$-twisted case we no longer have $Q\cong L\t\cS^2$, so \eq{nc6eq5} does not hold. Instead it turns out that
\begin{equation*}
H^k(Q;\R)\cong H^k(L;\R)\op H^{k-2}(L,Z;\R),
\end{equation*}
where $H^*(L,Z;\R)$ is the $Z$-{\it twisted de Rham cohomology\/} of $L$, the cohomology of the complex $\bigl(\Ga^\iy(\La^kT^*L\ot_{\Z_2}Z)_{k\ge 0},\d\bigr)$. Thus in the third row of \eq{nc6eq4} we replace $H^*(L;\R)$ by $H^*(L,Z;\R)$, and \eq{nc6eq2} and \eq{nc6eq16} are replaced by
\e 
b^k(N)=b^k(M/\an{\io})+b^{k-2}(L,Z),
\label{nc6eq18}
\e
where $b^k(L;Z)=\dim H^k(L,Z;\R)$ are the $Z$-twisted Betti numbers of $L$. In Example \ref{nc7ex3} we construct compact $\Gt$-manifolds using $Z$-twisted 1-forms.

\section{Examples} 
\label{nc7}

\subsection{General discussion on how to produce examples}
\label{nc71}

To apply Theorem \ref{nc6thm}, we have to find:
\begin{itemize}
\setlength{\itemsep}{0pt}
\setlength{\parsep}{0pt}
\item[(a)] A compact torsion-free $\Gt$-manifold $(M,\vp,g)$ and an involution $\io:M\ra M$ with $\io^*(\vp)=\vp$ and fixed points a 3-submanifold $L\subset M$.

Or more generally, as in \S\ref{nc65}, a compact torsion-free $\Gt$-orbifold $(M',\vp,g)$
with one orbifold stratum $L$ locally modelled on $\R^3\t(\R^4/\{\pm 1\})$. 
\item[(b)] A closed, coclosed, nonvanishing 1-form $\la$ on~$L$.

Or more generally, as in \S\ref{nc66}, a principal $\Z_2$-bundle $\pi:Z\ra L$ and a closed, coclosed, nonvanishing, $Z$-twisted 1-form on~$L$.
\end{itemize}

There are several obvious ways to produce the data in~(a):
\begin{itemize}
\setlength{\itemsep}{0pt}
\setlength{\parsep}{0pt}
\item[(i)] We could make $(M,\vp,g),\io$ or $(M',\vp,g)$ using one of the known constructions of compact 7-manifolds with holonomy $\Gt$ due to the first author, Kovalev--Lee, or Corti--Haskins--Nordstr\"om--Pacini \cite{CHNP2,Joyc1,Joyc2,Joyc3,Joyc4,Kova,KoLe}, and either leave some orbifold singularities unresolved, or do the construction $\Z_2$-equivariantly, and divide by $\Z_2$ at the end of the construction.
\item[(ii)] We could take $M'$ of the form $(T^3\t K3)/\Ga$ for $\Ga$ a finite group (e.g. $\Ga=\Z_2^2$), where $T^3$ carries a flat metric and $K3$ a hyperK\"ahler metric. 
\item[(iii)] We could take $M=\cS^1\t Y$ for $Y$ a Calabi--Yau 3-fold, and $\io:\cS^1\t Y\ra\cS^1\t Y$ to act by $\io(x,y)=(-x,\tau(y))$ for $\tau:Y\ra Y$ an antiholomorphic involution, with fixed locus a special Lagrangian 3-fold $L'$ in $Y$. Then $L=\{0,\ha\}\t L'$. This situation was described in Examples \ref{nc2ex1} and \ref{nc2ex2}.
\end{itemize}

However, there is a difficulty in finding a suitable 1-form $\la$ in (b). As $(L,g\vert_L)$ is a compact Riemannian 3-manifold, we know by Hodge theory that the vector space $\cH^1$ of closed, coclosed 1-forms $\la$ on $L$ is isomorphic to the de Rham cohomology group $H^1(L;\R)$, which we can often compute. But unless we know something about the metric $g\vert_L$ on $L$, we do not know that any $\la\in\cH^1$ is {\it nonvanishing\/}; conceivably every $\la\in\cH^1$ might have at least one zero.

Thus, to apply Theorem \ref{nc6thm} we restrict to cases in which we can describe $(L,g\vert_L)$ as a Riemannian 3-manifold. Note in particular that in case (iii), if the Calabi--Yau metric $h$ on $Y$ comes from Yau's proof of the Calabi Conjecture, then without further information on $h$, we know nothing about $h\vert_{L'}$, so we cannot deduce the existence of closed, coclosed, nonvanishing 1-forms $\la$ on~$L'$.

Here are some useful facts for constructing such 1-forms~$\la$:
\begin{itemize}
\setlength{\itemsep}{0pt}
\setlength{\parsep}{0pt}
\item[(A)] Suppose $(L,g)$ is a 3-torus $T^3$ with a flat metric. Then there is a space $\R^3\sm\{0\}$ of closed, coclosed, nonvanishing 1-forms $\la$ on $L$.
\item[(B)] Suppose $(L,g)$ is a product $(\cS^1\t\Si,\d x^2+h)$, where $(\Si,h)$ is a compact Riemannian 2-manifold (of any genus $g\ge 0$), and $x\in\R/\Z$ is the coordinate on $\cS^1$. Then $\la=\d x$ is closed, coclosed, and nonvanishing on~$L$.
\item[(C)] Suppose $(L,g)$ is a product $(\cS^1\t\Si,\d x^2+h)$, where $\Si$ is an oriented torus $T^2$, and $h$ is a Riemannian metric on $\Si$, which need not be flat.

The notion of harmonic 1-form on $(\Si,h)$ depends only on the conformal class of $h$, and $h$ is conformal to a flat metric. Thus there is an $\R^2\sm\{0\}$ space of closed, coclosed, nonvanishing 1-forms on $(\Si,h)$, which pull back to closed, coclosed, nonvanishing 1-forms $\la$ on~$L$. 
\item[(D)] In each of (A)--(C) we can replace $L$ by $L/\Ga$, where $\Ga$ is a finite group acting freely on $L$ by isometries preserving~$\la$. 

More generally, we can allow $\Ga$ to preserve $\la$ only up to sign, and then $\la$ descends to a closed, coclosed, nonvanishing, $Z$-twisted 1-form on~$L/\Ga$.
\item[(E)] Let $(L,g)$ be a compact Riemannian 3-manifold, and $\la$ a closed, coclosed, nonvanishing 1-form on $(L,g)$. Suppose $g'$ is another Riemannian metric on $L$. Then there exists a unique closed, coclosed 1-form $\la'$ on $(L,g')$ with $[\la']=[\la]\in H^1_{\rm dR}(L;\R)$. One can show that if $\nm{g'-g}_{C^0}$ is small then $\nm{\la'-\la}_{C^0}\le C\nm{g'-g}_{C^0}$ for $C>0$ depending only on $L,g,\la$. But if $\nm{\la'-\la}_{C^0}$ is sufficiently small then $\la'$ is nonvanishing, as $\la$ is. This proves that if $(L,g)$ admits a closed, coclosed, nonvanishing 1-form $\la$, and $g'$ is any other metric on $L$ with $\nm{g'-g}_{C^0}$ sufficiently small, then $(L,g')$ also admits a closed, coclosed, nonvanishing 1-form~$\la'$.

We can apply this in (A)--(D), so that the metric $g$ need only be $C^0$-close to being flat, or a product~$\d x^2+h$.
\end{itemize}

\subsection{\texorpdfstring{Resolving orbifolds $T^7/\Ga$ in several stages}{Resolving orbifolds T⁷/Γ in several stages}}
\label{nc72}

In \cite{Joyc1,Joyc2}, \cite[\S 11--\S 12]{Joyc3} the first author constructed examples of compact 7-manifolds with holonomy $\Gt$ by resolving orbifolds $T^7/\Ga$, where $\Ga$ is a finite group acting on $T^7$ preserving a flat $\Gt$-structure $(\vp_0,g_0)$. We explain how to modify this construction. Suppose $(T^7/\Ga,\vp_0,g_0)$ is such a flat $\Gt$-orbifold, and the orbifold singularities of $T^7/\Ga$ split as $S\amalg T$, where $S$ is a union of orbifold strata that can be resolved using the methods of \cite{Joyc1,Joyc2,Joyc3}, and $T$ is a disjoint union of orbifold strata $T^3/\De$, where $\De$ is a finite group acting freely on $T^3\subset T^7$, and the singularities are locally modelled on~$(T^3\t(\R^4/\{\pm 1\}))/\De$.

Apply the methods of \cite{Joyc1,Joyc2,Joyc3} (which work for partial resolutions in orbifolds, as well as for complete resolutions in manifolds) to resolve the singularities $S$ in $T^7/\Ga$, but leave $T$ unresolved. This gives a 7-orbifold $M'$ with resolution map $\pi:M'\ra T^7/\Ga$, and a family of torsion-free $\Gt$-structures $(\vp_s,g_s)$ on $M'$ for $s\in(0,\ep]$ with $\ep>0$ small. Here $s$ is the length-scale of the resolution of the singular set $S$, so that near $\pi^{-1}(S)$ the curvature $R(g_s)$ is $O(s^{-2})$ and the injectivity radius $\de(g_s)$ is $O(s)$, and away from $\pi^{-1}(S)$ we have $\nm{g_s-\pi^*(g_0)}_{C^0}=O(s^\ka)$ for some $\ka>0$ as in Theorem \ref{nc2thm3} (we can take $\ka=\ha$ as in \cite[Th.~11.6.1]{Joyc3}).

Now $M'$ has singular set $T$. As $T$ is disjoint from $S$, we have $\nm{g_s\vert_T-\pi^*(g_0\vert_T)}_{C^0}=O(s^{1/2})$, so $g_s\vert_T$ is $C^0$-close to the flat metric $g_0\vert_T$ on $T$ for small $s>0$. Suppose each component $T^3/\De$ of $T$ has $b^1(T^3/\De)>0$ (or more generally, $T^3/\De$ has a principal $\Z_2$-bundle $Z$ with $H^1(T^3/\De,Z;\R)\ne 0$, as in \S\ref{nc66}). Then as in \S\ref{nc71}(A),(D) there exists a closed, coclosed, nonvanishing 1-form $\la_0$ on $(T,g_0\vert_T)$ (possibly $Z$-twisted as in \S\ref{nc66}), so by \S\ref{nc71}(E) there also exists a closed, coclosed, nonvanishing 1-form $\la_s$ on $(T,g_s\vert_T)$ for small enough $s>0$ (possibly $Z$-twisted as in \S\ref{nc66}). Thus we can apply Theorem \ref{nc6thm}, modified as in \S\ref{nc65}--\S\ref{nc66}, to construct a compact 7-manifold $N$ resolving $M'$, and a family $(\vp_{s,t}^N,g_{s,t}^N)$ of torsion-free $\Gt$-structures on $N$, where $t>0$ must be small in terms of $s$, that is,~$0<t\ll s\ll 1$.

Arguably these examples are not that interesting, as we could just have applied  \cite{Joyc1,Joyc2,Joyc3} to get torsion-free $\Gt$-structures $(\vp_s^N,g_s^N)$ on the same 7-man\-i\-fold $N$ in a single step, by resolving the singularities $S,T$ simultaneously, rather than first resolving $S$, then resolving $T$. However, the $\Gt$-structures $(\vp_{s,t}^N,g_{s,t}^N)$ we have constructed are different from those of \cite{Joyc1,Joyc2,Joyc3}, as they involve two length-scales $s,t$ with~$0<t\ll s\ll 1$.

In fact the authors expect that one can resolve the disjoint singular sets $S,T$ in $T^7/\Ga$ completely independently, so that there should exist torsion-free $\Gt$-structures $(\vp_{s,t}^N,g_{s,t}^N)$ on $N$ which resolve $S$ at length-scale $s>0$ and $T$ at length scale $t>0$, whenever $0<s,t\ll 1$. But this is not proved. The methods of \cite{Joyc1,Joyc2,Joyc3} give such $(\vp_{s,t}^N,g_{s,t}^N)$ when $s,t$ are about the same size, that is, $0<s,t\ll 1$ with $s\approx t$, and the modification above gives $(\vp_{s,t}^N,g_{s,t}^N)$ when $t$ is much smaller than $s$, that is, $0<t\ll s\ll 1$, but the two constructions do not overlap.

We can also generalize all this to resolve orbifolds $T^7/\Ga$ with singular set $S\amalg T_1\amalg \cdots\amalg T_k$, for $S$ as above and $T_1,\ldots,T_k$ as for $T$ above, where we iteratively resolve $S,T_1,T_2,\ldots,T_k$ in this order, at length-scales $s,\ab t_1,\ab t_2,\ab\ldots,\ab t_k$ respectively, where~$0<t_k\ll t_{k-1}\ll \cdots\ll t_1\ll s\ll 1$.

Here is an example, taken from \cite[\S 2.1]{Joyc1}. The labelling has been changed to be compatible with \eq{nc2eq1}.

\begin{ex} Let $T^7=\R^7/\Z^7$ with standard $\Gt$-structure $(\vp_0,g_0)$ given in \eq{nc2eq1}--\eq{nc2eq2}. Let $\Ga=\an{\al,\be,\ga}\cong\Z_2^3$, where $\al,\be,\ga$ are involutions acting by
\begin{align*}
\al(x_1,\ldots,x_7)&=(-x_1,-x_2,x_3,-x_4,x_5,x_6,-x_7),\\
\be(x_1,\ldots,x_7)&=(\ha-x_1,x_2,-x_3,-x_4,x_5,-x_6,x_7),\\
\ga(x_1,\ldots,x_7)&=(x_1,\ha-x_2,-x_3,\ha-x_4,-x_5,x_6,x_7).
\end{align*}
These preserve the $\Gt$-structure $(\vp_0,g_0)$. The only elements of $\Ga$ with fixed points are $1,\al,\be,\ga$, where $\al,\be,\ga$ each fix 16 $T^3$'s. The quotient $T^7/\Ga$ is simply-connected, with singular set:
\begin{itemize}
\setlength{\itemsep}{0pt}
\setlength{\parsep}{0pt}
\item[(a)] 4 copies of $T^3$ from the fixed points of $\al$.
\item[(b)] 4 copies of $T^3$ from the fixed points of $\be$.
\item[(c)] 4 copies of $T^3$ from the fixed points of $\ga$.
\end{itemize}
These 12 $T^3$'s are pairwise disjoint. Near each $T^3$, the space $T^7/\Ga$ is modelled on $T^3\t(\R^4/\{\pm 1\})$.

Split the singular set as $S\amalg T$, where $S$ is $k$ of the 12 $T^3$'s, and $T$ is the remaining $12-k$ of the $T^3$'s, for $0<k<12$. Then the construction above applies. The 7-manifold $N$ resolving $T^7/\Ga$ is described in \cite[\S 2.2]{Joyc1}, and is simply-connected with $b^2(N)=12$ and $b^3(N)=43$. The construction above yields a family of torsion-free $\Gt$-structures $(\vp_{s,t}^N,g_{s,t}^N)$ on $N$, which have holonomy~$\Gt$. 
\label{nc7ex1}	
\end{ex}

We can do the same thing for many of the examples in \cite{Joyc1,Joyc2,Joyc3}.

It is also possible to do a similar two-stage (or multiple-stage) resolution process for examples of $\Gt$-manifolds coming from the `twisted connected sum' constructions of \cite{CHNP1,CHNP2,Kova,KoLe}. These work by gluing together noncompact $\Gt$-manifolds $\cS^1\t Y_1$ and $\cS^1\t Y_2$, where $Y_1,Y_2$ are Asymptotically Cylindrical Calabi--Yau 3-folds satisfying a matching condition at their infinite ends. We could instead allow $Y_1,Y_2$ to be Calabi--Yau orbifolds, with compact singular sets $\Si_1,\Si_2$ locally modelled on~$\C\t(\C^2/\{\pm 1\})$. 

Gluing $\cS^1\t Y_1$ and $\cS^1\t Y_2$ together using \cite{CHNP1,CHNP2,Kova,KoLe} would give a family of compact $\Gt$-orbifolds $(M',\vp_s,g_s)$, with singular set $(\cS^1\t\Si_1)\amalg(\cS^1\t\Si_2)$ locally modelled on $\R^3\t(\R^4/\{\pm 1\})$. We could then resolve this using Theorem \ref{nc6thm} generalized as in \S\ref{nc65} and \S\ref{nc71}(B),(E), to get torsion-free $\Gt$-structures $(\vp_{s,t}^N,g_{s,t}^N)$ on a 7-manifold $N$. This is the same $N$ we would get by applying \cite{CHNP1,CHNP2,Kova,KoLe} to glue $\cS^1\t\ti Y_1$ and $\cS^1\t\ti Y_2$, where $\ti Y_1,\ti Y_2$ are the crepant resolutions of $Y_1,Y_2$, so we get new torsion-free $\Gt$-structures on known examples of 7-manifolds with holonomy~$\Gt$. Carrying out this procedure for some explicit examples could be a project for future study.

\subsection{\texorpdfstring{Resolving orbifolds $(T^3\t K3)/\Ga$}{Resolving orbifolds (T³×K3)/Γ}}
\label{nc73}

Next we apply our construction to resolve $\Gt$-orbifolds of the form $(T^3\t X)/\Ga$, where $X$ is a hyperK\"ahler $K3$ surface. 

Take $T^3=\R^3/\La$, where $\La$ is a lattice in $\R^3$ (e.g.\ $\La=\Z^3$). Let $X$ be a hyperK\"ahler $K3$ surface with metric $h$, complex structures $I,J,K$, and hyperK\"ahler forms $\om^I,\om^J,\om^K$. Define a torsion-free $\Gt$-structure $(\vp,g)$ on $T^3\t X$ as in \eq{nc2eq13}. Suppose we can find a finite group $\Ga$ with an action on $T^3\t X$ preserving $(\vp,g)$, so $M'=(T^3\t X)/\Ga$ is a 7-orbifold with $\Gt$-structure $(\vp,g)$, such that the singular set of $M'$ can be resolved using our construction.

As $\Ga$ acts by isometries, one can show that in this case the action of $\Ga$ on $T^3\t X$ must be a product of $\Ga$-actions on $T^3$ and on $X$. To get orbifold singularities of the right type, it is necessary that any $1\ne\ga\in\Ga$ with fixed points in $T^3\t X$ must be of order 2. There are two possibilities for such~$\ga$:
\begin{itemize}
\setlength{\itemsep}{0pt}
\setlength{\parsep}{0pt}
\item[(a)] $\ga$ acts trivially on $T^3$, and fixes $2k$ points on $X$, where $\ga$ acts on $H^2(X;\R)$ with $10+k$ eigenvalues $+1$ and $12-k$ eigenvalues $-1$. Hence $\Fix(\ga)$ is $2k$ copies of $T^3$ in $T^3\t X$.
\item[(b)] $\ga$ fixes $l$ circles $\cS^1$ in $T^3$, where $l=2$ or 4, and $\ga$ fixes a 2-submanifold $\Si$ in $X$. Hence $\Fix(\ga)$ is $l$ copies of $\cS^1\t\Si$ in~$T^3\t X$.
\end{itemize}
The corresponding orbifold stratum in $M'=(T^3\t X)/\Ga$ is $\Fix(\ga)/\De$, where~$\De=\{\de\in\Ga:\de\ga=\ga\de\}/\{1,\ga\}$.

In case (b), after applying an $\SO(3)$ transformation to $\an{I,J,K}_\R$ we can suppose that $\ga$ fixes $I$ and changes the signs of $J,K$. Hence $\ga$ is a {\it nonsymplectic involution\/} of the complex symplectic $K3$ surface $(X,I,\om^J+i\om^K)$. Nonsymplectic involutions have been completely classified by Nikulin \cite{Niku}, see also \cite{AlNi} and \cite[\S 3]{KoLe}. Nikulin shows \cite[\S 4]{Niku} that $\Si$ must be either the disjoint union of two $T^2$'s, or the disjoint union of a genus $g$ surface and $k$ $\CP^1$'s, where there are 64 possibilities for $(g,k)$ with $0\le g\le 10$ and~$0\le k\le 9$.

A huge amount is known about $K3$ surfaces, and using this we could construct many examples of hyperK\"ahler $K3$ surfaces $X$ with suitable actions of a finite group $\Ga$. Basically, everything is determined by the choice of faithful $\Ga$-representation on the $K3$ lattice $H^2(X;\Z)\cong 2(-E_8)\op 3H$, which fixes a positive definite subspace $\an{\om^I,\om^J,\om^K}_\R\subset H^2(X;\R)$ satisfying some conditions.

One possibility would be to take $X$ to be a resolution of $T^4/\De$, and the $\Ga$-action on $X$ to lift from a $\Ga$-action on $T^4/\De$. Then $M'=(T^3\t X)/\Ga$ would be a partial resolution of $(T^3\t T^4)/(\Ga\ltimes\De)$, and the $\Gt$-manifolds $N$ we eventually construct would be a complete resolution of $T^7/(\Ga\ltimes\De)$. In this case, the 7-manifold $N$ we get would be the same as if we resolved $T^7/(\Ga\ltimes\De)$ in a single step using \cite{Joyc1,Joyc2,Joyc3}, as in \S\ref{nc72}. However, not all examples are of this type.  

\begin{ex} Let $C$ be the nonsingular sextic curve in $\CP^2$ 
\begin{equation*}
C=\bigl\{[z_0,z_1,z_2]\in\CP^2:z_0^6+z_1^6+z_2^6=0\bigr\}.
\end{equation*}
Then $C$ has genus 10, by the degree-genus formula. Define $\pi:X\ra\CP^2$ to be the double cover of $\CP^2$ branched over $C$. Then $X$ is a complex $K3$ surface, with complex structure $I$. Write $\al:X\ra X$ to be the holomorphic involution swapping the sheets of the double cover, so that $\pi\ci\al=\pi$. Then $\al$ is a nonsymplectic involution, with $\Fix(\al)=C\subset X$. 

Consider the antiholomorphic involution of $\CP^2$ mapping $[z_0,z_1,z_2]\mapsto[\bar z_0,\ab\bar z_1,\ab\bar z_2]$. This lifts through $\pi$ to two antiholomorphic involutions of $X$, which we write as $\be$ and $\al\be$, where we take $\be$ to have fixed set $\Fix(\be)=\{x\in X:\pi(x)\in\RP^2\subset\CP^2\}$, and $\al\be$ to be free. Then $\Fix(\be)$ is the double cover of $\RP^2$, that is, $\Fix(\be)\cong\cS^2$, and $\Fix(\al\be)=\es$, and $\Ga=\{1,\al,\be,\al\be\}\cong\Z_2^2$ acts on~$X$.

By the Calabi Conjecture we can extend $(X,I)$ to a hyperK\"ahler $K3$ surface $(X,I,J,K,h,\om^I,\om^J,\om^K)$, where we can choose $J,K$ such that $\al,\be$ act by
\begin{align*}
\al&:\om^I\longmapsto \om^I, & \al&:\om^J\longmapsto -\om^J, & \al&:\om^K\longmapsto -\om^K, \\
\be&:\om^I\longmapsto -\om^I, & \be&:\om^J\longmapsto \om^J, & \be&:\om^K\longmapsto -\om^K.
\end{align*}
Let $T^3=\R^3/\Z^3$ with coordinates $(x_1,x_2,x_3)$, $x_i\in\R/\Z$, and let $\Ga$ act on $T^3$ by
\e
\al:(x_1,x_2,x_3)\longmapsto (x_1,-x_2,-x_3),\;\> \be:(x_1,x_2,x_3)\longmapsto (-x_1,x_2,\ha -x_3).
\label{nc7eq1}
\e
The fixed locus of $\al$ in $T^3$ is four $\cS^1$'s, given by $x_2,x_3\in\{0,\ha\}$. Also the fixed locus of $\be$ is four $\cS^1$'s, and $\al\be$ acts freely on~$T^3$.

Define a torsion-free $\Gt$-structure $(\vp,g)$ on $T^3\t X$ as in \eq{nc2eq13}. Then $\Ga$ preserves $(\vp,g)$, so $M'=(T^3\t X)/\Ga$ is a compact orbifold with torsion-free $\Gt$-structure $(\vp,g)$. The fixed set of $\al$ in $T^3\t X$ is four copies of $\cS^1\t C$, but the four are exchanged in pairs by $\be$, so these contribute two copies of $\cS^1\t C$ to the singular set of $M'$. The fixed set of $\be$ in $T^3\t X$ is four copies of $\cS^1\t\cS^2$, which are exchanged in pairs by $\al$, and contributes two copies of $\cS^1\t\cS^2$ to the singular set of $M'$. As $\al\be$ acts freely, it contributes no singular points, and the singular sets from $\al,\be$ are disjoint. Hence the singular set of $M'$ is two copies of $\cS^1\t C$, where $C$ has genus 10, and two copies of~$\cS^1\t\cS^2$.

The de Rham cohomology $H^k_{\rm dR}(M';\R)$ is the subspace of $H^k_{\rm dR}(T^3\t X;\R)$ fixed by $\Ga$. It is a general fact about $K3$ surfaces $X$ that if $\ga:X\ra X$ is an involution and the induced action of $\ga$ on $H^2(X)\cong\R^{22}$ has $a$ eigenvalues $+1$ and $22-a$ eigenvalues $-1$ then $\chi(\Fix(\al))=2a-20$. Since $\chi(\Fix(\al))=-18$, $\chi(\Fix(\be))=2$ and $\chi(\Fix(\al\be))=0$ from above, we see that $H^2(X)\cong\R\op\R^{11}\op\R^{10}$ where $\al=1$ and $\be=-1$ on $\R$, and $\al=-1$ and $\be=1$ on $\R^{11}$, and $\al=\be=-1$ on $\R^{10}$. Using this and \eq{nc7eq1} we can compute the action of $\Ga$ on
\begin{equation*}
H^k_{\rm dR}(T^3\t X;\R)\cong\bigop\nolimits_{i+j=k}H^i_{\rm dR}(T^3;\R)\ot H^j_{\rm dR}(X;\R).
\end{equation*}
By taking $\Ga$-invariant parts, we find that $b^0(M')=1$, $b^1(M')=b^2(M')=0$, and $b^3(M')=23$. Moreover, $M'$ is simply-connected.

The singular set $L$ is a product $\cS^1\t (C\amalg C\amalg \cS^2 \amalg\cS^2)$, and the  metric $g\vert_L$ on $L$ is a product metric. Hence by \S\ref{nc71}(B) we can apply Theorem \ref{nc6thm} to get a compact 7-manifold $N$ carrying torsion-free $\Gt$-structures  $(\vp_t^N,g_t^N)$. Proposition \ref{nc6prop1} shows that $b^0(N)=1$, $b^1(N)=0$, $b^2(N)=4$ and $b^3(N)=67$, using that $b^0(L)=4$ and $b^1(L)=44$ (which follows from $b^1(C)=20$), and that $N$ is simply-connected, since $M'$ is. Hence $\Hol(g_t^N)=G_2$ by Theorem \ref{nc2thm2}. 

Both the first author \cite[\S 12]{Joyc3} and Kovalev and Lee \cite[Table 1]{KoLe} construct examples of compact 7-manifolds with holonomy $\Gt$ and $b^2=4$, $b^3=67$. In fact our 7-manifold $N$ can be obtained from the Kovalev--Lee construction \cite{KoLe} by instead using the metric $\d x_1^2+\d x_2^2+R^2\d x_3^2$ on $T^3$ for $R\gg 0$, so that the $x_3$ circle is very long. As $R\ra\iy$, $N\cong N_1\#_{\rm twist}N_2$ splits as a `twisted connect sum' into a region $N_1$ from $x_3\in (-\frac{1}{4},\frac{1}{4})$ containing the resolution of the $\al$ fixed points, and a region $N_2$ from $x_3\in (\frac{1}{4},\frac{3}{4})$ containing the resolution of the $\be$ fixed points, where $N_i\cong\cS^1\t Y_i$ for $Y_i$ an Asymptotically Cylindrical Calabi--Yau 3-fold obtained as a resolution of ~$(X\t\C^*)/\Z_2$.
\label{nc7ex2}	
\end{ex}

\begin{ex} Work in the situation of Example \ref{nc7ex2}, but instead of \eq{nc7eq1}, let $\Ga$ act on $T^3$ by 
\begin{equation*}
\al:(x_1,x_2,x_3)\longmapsto (x_1,-x_2,-x_3),\;\> \be:(x_1,x_2,x_3)\longmapsto (-x_1,x_2,-x_3).
\end{equation*}
Again the fixed set of $\al$ in $T^3\t X$ is four copies of $\cS^1\t C$, but now $\an{\be}=\Z_2$ acts freely on each copy, so they contribute four copies of $(\cS^1\t C)/\Z_2$ to the singular set of $M'$. Similarly, the fixed set of $\be$ in $T^3\t X$ is four copies of $\cS^1\t\cS^2$, but $\an{\al}=\Z_2$ acts freely on each copy, so they contribute four copies of $(\cS^1\t\cS^2)/\Z_2$ to the singular set of $M'$. Thus, the singular set $L$ of $M'$ is the disjoint union of four copies of $(\cS^1\t C)/\Z_2$ and  four copies of~$(\cS^1\t\cS^2)/\Z_2$.

We resolve $M'$ using Theorem \ref{nc6thm} modified as in \S\ref{nc66}, twisting by the unique nontrivial principal $\Z_2$-bundles $Z$ on $(\cS^1\t C)/\Z_2$ and $(\cS^1\t\cS^2)/\Z_2$ induced by the $\Z_2$-actions on $\cS^1\t C$ and $\cS^1\t\cS^2$. Then we can take the $Z$-twisted 1-form to be $\la=\d x$ for $x\in\R/\Z$ the coordinate on $\cS^1$ on each component, and we can apply our construction as in \S\ref{nc71}(B),(D), to get a compact 7-manifold $N$ with torsion-free $\Gt$-structures $(\vp_t^N,g_t^N)$. Calculation shows that the $Z$-twisted Betti numbers are
\begin{align*}
b^0\bigl((\cS^1\t C)/\Z_2,Z\bigr)&=0,&
b^1\bigl((\cS^1\t C)/\Z_2,Z\bigr)&=11,\\
b^0\bigl((\cS^1\t\cS^2)/\Z_2,Z\bigr)&=0,&
b^1\bigl((\cS^1\t\cS^2)/\Z_2,Z\bigr)&=1.
\end{align*}
As before we have $b^0(M')=1$, $b^1(M')=b^2(M')=0$, and $b^3(M')=23$, and $M'$ is simply-connected, so $N$ is simply-connected, and $\Hol(g_t^N)=G_2$. Using \eq{nc6eq18} we see that the Betti numbers of $N$ are $b^2(N)=0$ and~$b^3(N)=71$.

Kovalev and Lee \cite[Table 1]{KoLe} also construct a compact 7-manifold with holonomy $\Gt$ and $b^2=0$, $b^3=71$. But in contrast to Example \ref{nc7ex2}, here we cannot relate our 7-manifold $N$ to a Kovalev--Lee example by stretching an $\cS^1$ in $T^3$ to separate the $\al$ and $\be$ fixed points, since these both lie over $(0,0,0)$ in $T^3$. So this example may be new.
\label{nc7ex3}	
\end{ex}

\subsection{Describing metrics on some Calabi--Yau 3-folds}
\label{nc74}

Suppose $Y$ is a 3-dimensional projective complex orbifold with trivial canonical bundle, all of whose singularities are locally modelled on $\C\t(\C^2/\{\pm 1\})$. Write $\Si\subset Y$ for the singular set, so that $\Si$ is a smooth projective curve. Using the Calabi Conjecture in the orbifold category gives a Calabi--Yau metric $h$ on $Y$. As in \S\ref{nc26}, $M'=\cS^1\t Y$ is a 7-orbifold with torsion-free $\Gt$-structure $(\vp,g)$, with singular set $L=\cS^1\t\Si$ locally modelled on~$\R^3\t(\R^4/\{\pm 1\})$.

Since $g=\d x^2+h$ restricts to a product metric on $L=\cS^1\t\Si$, as in \S\ref{nc71}(A) Theorem \ref{nc6thm} applies to construct torsion-free $\Gt$-structures $(\vp_t^N,g_t^N)$ on a resolution $N$ of $\cS^1\t Y$. In this case everything is equivariant under the obvious action of $\cS^1$ on $\cS^1\t Y$, and we have $N=\cS^1\t\ti Y$, for $\ti Y$ the crepant resolution of $Y$, which is the blow up of $Y$ along $\Si$, and the $\Gt$-structures $(\vp_t^N,g_t^N)$ come from Calabi--Yau metrics $\ti h_t$ on $\ti Y$ for small~$t>0$.

Of course, we already knew by the Calabi Conjecture that $\ti Y$ admits Calabi--Yau metrics. However, our construction gives a precise description of the Calabi--Yau metrics $\ti h_t$ on $\ti Y$ in the limit as the $\CP^1$ fibres of $\pi:\ti N\ra N$ over the singular set $\Si$ become small, and we believe this description is new.

\begin{ex} Define $Y$ to be the orbifold-smooth hypersurface
\begin{equation*}
Y=\bigl\{[z_0,z_1,z_2,z_3,z_4]\in\CP^4_{1,1,2,2,2}:z_0^8+z_1^8+z_2^4+z_3^4+z_4^4=0\bigr\}
\end{equation*}
in the weighted projective space $\CP^4_{1,1,2,2,2}$. Then $Y$ is a Calabi--Yau 3-orbifold. The singularities of $Y$ are inherited from the orbifold singularities of $\CP^4_{1,1,2,2,2}$, are locally modelled on $\C\t(\C^2/\{\pm 1\})$, and are the genus 6 projective curve
\begin{equation*}
\Si=\bigl\{[0,0,z_2,z_3,z_4]\in\CP^4_{1,1,2,2,2}:z_2^4+z_3^4+z_4^4=0\bigr\}.
\end{equation*}
The blow-up $\ti Y$ of $Y$ along $\Si$ is a Calabi--Yau 3-fold. As above we can describe Calabi--Yau metrics $\ti h_t$ on $\ti Y$ close to orbifold Calabi--Yau metrics $h$ on~$Y$.
\label{nc7ex4}	
\end{ex}

\subsection{\texorpdfstring{Conjectural examples from $($Calabi--Yau 3-fold$\t\cS^1)/\an{\io}$}{Conjectural examples from (Calabi--Yau 3-fold × S¹)/<ι>}}
\label{nc75}

Suppose $Y$ is a Calabi--Yau 3-fold with metric $h$, and $\tau:Y\ra Y$ is an antiholomorphic isometric involution with fixed locus $L'$, a special Lagrangian 3-fold in $Y$. Then as in \S\ref{nc71}(iii) we can take $M=\cS^1\t Y$ with the induced torsion-free $\Gt$-structure with product metric $g=\d x^2+h$, and we can define an involution $\io:M\ra M$ by $\io(x,y)=(-x,\tau(y))$, so that $\io$ has fixed locus $L=\{0,\ha\}\t L'$. To apply Theorem \ref{nc6thm}, we need there to exist a closed, coclosed, nonvanishing 1-form $\la'$ on $(L',h\vert_{L'})$. As in \S\ref{nc71}, if the Calabi--Yau metric $h$ comes from the Calabi Conjecture then we know basically nothing about $h\vert_{L'}$, so we do not know whether any of the harmonic 1-forms $\la$ on $L'$ are nonvanishing.

However, if the Calabi--Yau manifold $(Y,h)$ is close to some kind of singular limit, then we may have at least a conjectural description of $h$, which may be enough to say something useful about $h\vert_{L'}$. In particular, in some special cases we may expect $L'$ to be a disjoint union of $T^3$'s with $h\vert_{L'}$ $C^0$-close to a flat metric, and then there exist closed, coclosed, nonvanishing 1-forms $\la'$ on $L'$ as in \S\ref{nc71}(A),(E), so we could apply Theorem \ref{nc6thm} to resolve $M/\an{\io}$, giving a compact 7-manifold $N$ with torsion-free $\Gt$-structures~$(\vp_t^N,g_t^N)$.

The famous `SYZ Conjecture' aims to explain Mirror Symmetry of Calabi--Yau 3-folds $Y,\check Y$. The original formulation by Strominger, Yau and Zaslow \cite{SYZ} concerned dual special Lagrangian fibrations $\pi:Y\ra B$, $\check\pi:\check Y\ra B$, with generic fibre $T^3$. Later Kontsevich and Soibelman \cite[\S 3]{KoSo}, Morrison \cite{Morr}, and others extended the conjecture to describe how the Calabi--Yau metrics $h,\check h$ on $Y,\check Y$ degenerate as one approaches the `large complex structure limit'. Here is a partial statement of this conjectural picture, focussing on one side of the mirror:

\begin{conj}[Strominger--Yau--Zaslow \cite{SYZ}, Kontsevich--Soibelman \cite{KoSo}, and Morrison \cite{Morr}] Suppose $(Y,J,\Om,h)$ is a Calabi--Yau $3$-fold which is close to the `large complex structure limit'. Then there should exist a compact real\/ $3$-manifold\/ $B$ and a continuous map $\pi:Y\ra B,$ called the `SYZ fibration'. 

There is a closed subset\/ $\De\subset B$ called the `discriminant', a trivalent graph in $B$. Outside a small open neighbourhood of\/ $\pi^{-1}(\De)$ in $Y,$ the map $\pi:Y\ra B$ is smooth, with fibres $Y_b=\pi^{-1}(b)$ for $b\in B\sm\De$ special Lagrangian tori\/ $T^3$ in $Y,$ with approximately flat metrics $h\vert_{Y_b},$ of diameters $\mathop{\rm diam}(Y_b)\ll\mathop{\rm diam}(Y)$. Close to $\pi^{-1}(\De),$ the map $\pi$ need not be smooth, and the $Y_b$ may be singular.
\label{nc7conj}
\end{conj}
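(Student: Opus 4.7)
The plan is to combine the algebro-geometric construction of the base $B$ and discriminant $\De$ from the Gross--Siebert program with differential-geometric estimates on the Calabi--Yau metric $h$ near the large complex structure limit. First, one would realize $(Y,J)$ as the general fibre of a \emph{maximal unipotent degeneration} of polarized Calabi--Yau $3$-folds $\mathcal Y\ra D$ over a small disk $D$, and construct the base $B$ as the \emph{dual intersection complex} of the central fibre $\mathcal Y_0$, equipped with an integral affine structure on $B\sm\De$ having singularities along a trivalent graph $\De\subset B$, following Kontsevich--Soibelman and Gross--Siebert. This step is by now reasonably well understood in the algebro-geometric category.

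Next, the SYZ fibration $\pi:Y\ra B$ would be built in two stages. Away from a tubular neighbourhood of $\pi^{-1}(\De)$, I would use toric moment maps associated to the irreducible components of $\mathcal Y_0$, together with the Gross--Siebert reconstruction theorem, to produce a smooth $T^3$-bundle whose fibres are Lagrangian for the K\"ahler form of $Y$. The special Lagrangian condition would then be imposed by a deformation argument: starting from the nearly-Lagrangian torus fibres, one would apply McLean's unobstructed deformation theory of special Lagrangian $T^3$'s fibrewise to perturb each fibre to an honest special Lagrangian, exploiting that the obstruction to the special Lagrangian equation vanishes to leading order in the degeneration parameter.

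The required smallness of the fibre diameters and approximate flatness of $h\vert_{Y_b}$ would then follow from showing that, away from $\pi^{-1}(\De)$, the Calabi--Yau metric $h$ is $C^0$-close (after rescaling) to the \emph{semiflat metric} determined by the integral affine structure on $B\sm\De$ together with a solution $u$ of the real Monge--Amp\`ere equation $\det(\pd^2 u/\pd x_i\pd x_j)=\text{const}$ on $B\sm\De$. In complex dimension $2$ this convergence was proved for elliptic $K3$'s by Gross--Wilson; in complex dimension $3$, partial results of Li, Tosatti--Zhang, and others establish the analogous convergence on compact subsets of~$B\sm\De$.

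The hard part, and the principal obstacle, is the metric behaviour near $\pi^{-1}(\De)$. One needs local model metrics generalizing the Ooguri--Vafa metric (which models an $I_1$ singular fibre in the dimension $2$ case) at the vertices and edges of the graph $\De$, together with gluing estimates showing that $h$ is asymptotically modelled on these as one approaches the large complex structure limit. Producing these $3$-dimensional local models, and controlling the failure of the semiflat approximation in a neighbourhood of $\De$ precisely enough to conclude that the fibres $Y_b$ for $b$ near $\De$ still admit descriptions as (possibly singular) special Lagrangians, is at present completely open; it is widely regarded as one of the central unresolved problems in the differential geometry of Calabi--Yau $3$-folds, and the reason why the statement is formulated as a conjecture rather than a theorem.
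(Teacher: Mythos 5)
The statement you were asked about is Conjecture \ref{nc7conj}: the paper does not prove it, explicitly labels it a conjecture, and only uses it conditionally (the examples in \S\ref{nc75} are ``conjectural examples'' precisely because this statement is unproven). So there is no proof in the paper to compare against, and your submission is, by your own admission in the final paragraph, not a proof either --- it is a survey of the standard attack strategy (Gross--Siebert dual intersection complex for $B$ and $\De$, semiflat metrics from the real Monge--Amp\`ere equation, Ooguri--Vafa-type local models near $\De$) together with a correct identification of where it breaks down. That assessment of the state of the art is accurate and essentially matches the authors' own framing.

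Two small cautions on the outline itself, in case you pursue it. First, the gap is not confined to the neighbourhood of $\pi^{-1}(\De)$: even away from $\De$, passing from Lagrangian torus fibres to \emph{special} Lagrangian ones via McLean's deformation theory requires uniform control, as the degeneration parameter tends to zero, of both the error in the special Lagrangian equation and the relevant linearized operator on collapsing fibres; the known metric convergence results (Gross--Wilson in dimension $2$; Y.~Li, Tosatti--Zhang and others in higher dimension, on compact subsets of $B\sm\De$) do not yet yield the special Lagrangian fibration itself in dimension $3$. Second, even the qualitative shape of the conjecture --- that $\De$ is a trivalent graph rather than a codimension-one set, and that $\pi$ is merely continuous near $\De$ --- is itself debated, which is part of why the authors state it in the hedged form they do. None of this is a criticism of your write-up; it is simply confirmation that the statement remains a conjecture and cannot be credited as proved.
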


The analogue for $K3$ surfaces was proved by Gross and Wilson \cite{GrWi}.

We are particularly interested in the claim that the nonsingular SYZ fibres $Y_b$ are tori $T^3$ with metrics $h\vert_{Y_b}$ close to flat. Suppose that in the situation of Conjecture \ref{nc7conj}, we can find an antiholomorphic involution $\tau:Y\ra Y$ which preserves the SYZ fibration $\pi:Y\ra B$, in the sense that there is an involution $\si:B\ra B$ with $\pi\ci\tau=\si\ci\pi$. Suppose too that $\si$ has only finitely many fixed points $b_1,\ldots,b_k$ in $B$, all of which lie in $B\sm\De$, and the fixed locus $L'$ of $\tau$ is the disjoint union of fibres~$Y_{b_1}\amalg \cdots\amalg Y_{b_k}$.   

If all this holds, and Conjecture \ref{nc7conj} is true, and $h$ is sufficiently close to the `large complex structure limit', then $L'$ would be $k$ copies of $T^3$ with metric $h\vert_{L'}$ close to flat, so we could apply Theorem \ref{nc6thm} as in \S\ref{nc71}(A),(E). But this is not rigorous, as Conjecture \ref{nc7conj} is not yet proved.

Here is an example in which we expect this picture to hold:

\begin{ex} Let $Y_\ep$ be a hypersurface in $\CP^1\t\CP^1\t\CP^1\t\CP^1$ of the form
\begin{align*}
Y_\ep=\bigl\{&([w_0,w_1],[x_0,x_1],[y_0,y_1],[z_0,z_1])\in(\CP^1)^4:\\
&(w_0x_0y_0z_0)^2+(w_1x_1y_1z_1)^2+\ep P(w_0,w_1,x_0,x_1,y_0,y_1,z_0,z_1)=0\bigr\},
\end{align*}
where $\ep>0$ is small, and $P(w_0,w_1,x_0,x_1,y_0,y_1,z_0,z_1)$ is a complex polynomial which is polyhomogeneous of degree $(2,2,2,2)$ in $(w_0,w_1),\ab (x_0,x_1),\ab(y_0,y_1),\ab(z_0,z_1)$, and satisfies the complex conjugate equation
\e
P(\bar w_1,\bar w_0,\bar x_1,\bar x_0,\bar y_1,\bar y_0,\bar z_1,\bar z_0)=\overline{P(w_0,w_1,x_0,x_1,y_0,y_1,z_0,z_1)}
\label{nc7eq2}
\e
for all $w_0,\ldots,z_1$. If $P$ is generic under these conditions then $Y_\ep$ is nonsingular for all small $\ep>0$, and is a Calabi--Yau 3-fold, which approaches the `large complex structure limit' as $\ep\ra 0$. Define an antiholomorphic involution $\tau_\ep:Y_\ep\ra Y_\ep$ by
\begin{equation*}
\tau_\ep:([w_0,w_1],[x_0,x_1],[y_0,y_1],[z_0,z_1])\longmapsto([\bar w_1,\bar w_0],[\bar x_1,\bar x_0],[\bar y_1,\bar y_0],[\bar z_1,\bar z_0]).
\end{equation*}
This is well defined by \eq{nc7eq2}. The fixed locus $L'_\ep$ of $\tau_\ep$ in $Y_\ep$ may be written
\begin{align*}
L'_\ep=\bigl\{&([1,e^{i\th_1}],[1,e^{i\th_2}],[1,e^{i\th_3}],[1,e^{i\th_4}]):\th_1,\ldots,\th_4\in \R/2\pi\Z,\\
&1+e^{2i(\th_1+\th_2+\th_3+\th_4)}+\ep P(1,e^{i\th_1},1,e^{i\th_2},1,e^{i\th_3},1,e^{i\th_4})=0\bigr\}.
\end{align*}
When $\ep$ is small, the second line approximates $1+e^{2i(\th_1+\th_2+\th_3+\th_4)}=0$, so that $e^{i(\th_1+\th_2+\th_3+\th_4)}=i=e^{i\pi/2}$ or $e^{i(\th_1+\th_2+\th_3+\th_4)}=-i=e^{-i\pi/2}$. Hence we have
\begin{align*}
L'_\ep \approx & \bigl\{([1,e^{i\th_1}],\ldots,[1,e^{i\th_4}]):\th_1,\ldots,\th_4\in \R/2\pi\Z,\; \th_1+\cdots+\th_4=\ts\frac{\pi}{2}\bigr\}\\
\
&\amalg\,\bigl\{([1,e^{i\th_1}],\ldots,[1,e^{i\th_4}]):\th_1,\ldots,\th_4\in \R/2\pi\Z,\; \th_1+\cdots+\th_4=-\ts\frac{\pi}{2}\bigr\},
\end{align*}
which is the disjoint union of two copies of~$T^3$. 

The authors expect that $Y_\ep$ has an SYZ fibration $\pi_\ep:Y_\ep\ra\cS^3$ for small $\ep>0$, as in Conjecture \ref{nc7conj}, and that $\tau_\ep$ is compatible with the involution $\si:\cS^3\ra\cS^3$, $\si:(x_1,x_2,x_3,x_4)\mapsto (x_1,-x_2,-x_3,-x_4)$ fixing two points $(\pm 1,0,0,0)$ in $\cS^3\sm\De$, and the two $T^3$'s are the fibres of $\pi_\ep$ over~$(\pm 1,0,0,0)$.

Set $M=\cS^1\t Y_\ep$ for small $\ep>0$, with torsion-free $\Gt$-structure $(\vp,g)$ as in \S\ref{nc25}. Define an involution $\io:M\ra M$ by $\io(x,y)=(-x,\tau_\ep(y))$. Then $\io$ preserves $(\vp,g)$, and has fixed locus $\{0,\ha\}\t L'_\ep$, which is four copies of $T^3$. We can show that $Y$ has Betti numbers $b^0(Y)=1$, $b^1(Y)=0$, $b^2(Y)=4$, $b^3(Y)=138$, so that $M$ has Betti numbers $b^0(M)=1$, $b^1(M)=1$, $b^2(M)=4$, and $b^3(M)=142$. We can compute the action of $\io$ on $H^*(M;\R)$, and so show that $M/\an{\io}$ has Betti numbers $b^0(M/\an{\io})=1$, $b^1(M/\an{\io})=0$, $b^2(M/\an{\io})=0$, and $b^3(M/\an{\io})=73$. Thus Proposition \ref{nc6prop1} shows that the resolution $N$ of $M/\an{\io}$ has Betti numbers $b^0(N)=1$, $b^1(N)=0$, $b^2(N)=2$, and $b^3(N)=79$. Kovalev and Lee \cite[Table 1]{KoLe} construct a compact 7-manifold with holonomy $\Gt$ and $b^2=2$, $b^3=79$.
\label{nc7ex5}	
\end{ex}

If Conjecture \ref{nc7conj} could be proved, it is likely that this construction would yield many examples of compact 7-manifolds with holonomy~$\Gt$.

\section{Directions for future study} 
\label{nc8}

There are now several natural directions of study to pursue following our main theorem. Some of these are:
\begin{enumerate}[(i)]
\item One could try to further generalize the theorem to the case of singularities of the form $\R^3 \times (\C^2 / \Ga)$ for other discrete subgroups $\Ga$ of $\mathrm{Sp}(1)$, not just $\Z_2$. It is possible that most of the analysis can be adapted to this setting. There are ALE hyperK\"ahler $4$-manifolds that are asymptotic at infinity to $\C^2 / \Ga$ for many discrete groups $\Ga$. See \cite{Joyc3, Kron} for details. The difficulty in applying such a generalization of our main theorem would be in finding initial $\Gt$-orbifolds with the correct types of singularities.
\item It is likely that a version of our main theorem can be proved in the setting of $\mathrm{Spin}(7)$-manifolds. This is currently being investigated by a student of the second author.
\item The notion of a {\it $\Gt$-instanton} was introduced by Donaldson--Thomas \cite{DoTh} and elaborated on in \cite{DoSe}. These are special connections on vector bundles over $\Gt$-manifolds, which are analogues of ASD connections on $4$-manifolds. A natural problem is to prove existence of such connections on the $\Gt$-manifolds produced by our theorem, using a gluing construction. Precisely this problem was considered in the context of the first author's original construction of compact $\Gt$-manifolds by Walpuski~\cite{Walp}. 
\item New topological and analytic invariants of $\Gt$-structures have been introduced by Crowley--Nordstr\"om \cite{CrNo} and Crowley--Goette--Nordstr\"om \cite{CGN}. In particular these invariants can sometimes be used to distinguish distinct $\Gt$-structures on the same underlying manifold. It would be interesting to compute these invariants for the $\Gt$-manifolds produced by our theorem. 
\item The setup for the analysis used to prove our theorem seems to be related to some recent work of Donaldson \cite{Dona}. This relation deserves closer study. It is possible that such a further study could improve or replace part of the work we did in \S\ref{nc3},\ref{nc4},\ref{nc5} of the present paper.
\item When the closed, coclosed $1$-form $\lambda$ on $L$ is \emph{not} nonvanishing, our construction fails. Generically, $\lambda$ should have finitely many isolated zeroes. We cannot resolve the space at these points. In this case there is some evidence that we are instead \emph{almost} producing objects that have isolated conical singularities. This is because one can show that \emph{topologically}, near an isolated zero of $\lambda$ our resolved manifold looks like a cone over $\CP^3$. There is a $\Gt$-cone over $\CP^3$, due to Bryant and Salamon~\cite{BrSa}. 

If we could further modify the closed forms $\vp^N_t$, $\psi^N_t$ near the zeroes of $\lambda$ to interpolate toward the Bryant--Salamon cone, then one could hope to produce the first known compact examples of \emph{conically singular (CS)\/ $\Gt$-manifolds}. This is currently being investigated by the second author and Jason Lotay. They have proved an analogue of Theorem \ref{nc2thm3} in the category of CS manifolds, using weighted Sobolev spaces. However, constructing the appropriate closed 3- and 4-forms is likely to be quite difficult. 

The analysis of $\Gt$-conifolds has been studied by the second author and Lotay \cite{Kari2,KaLo}. In particular, if they succeed in producing CS $\Gt$-manifolds by generalizing the main theorem of the present paper, then these CS $\Gt$-manifolds could then be \emph{further resolved\/} to produce compact smooth $\Gt$-manifolds using the main result of \cite{Kari2}.
\end{enumerate}

\appendix

\section{Appendix: Two calculations needed in \S\ref{nc34}} 
\label{ncA}

In this appendix we present the details of two calculations that were needed in \S\ref{nc34}, for the proofs of Lemma \ref{nc3lem2} and \ref{nc3lem3}. They are actually general results about the linear algebra of a $4$-dimensional real vector space $V$ equipped with the standard hyperK\"ahler package. In the lemmas this space $V$ is the fibre of $\nu$ at some $x \in L$. Let $V$ be an oriented $4$-dimensional real vector space, equipped with metric $g$, a volume form $\vol$, three orthogonal complex structures $J_1,J_2,J_3$ satisfying the quaternionic relations, and a triple $\om_1,\om_2,\om_3$ of $2$-forms satisfying $g(u,v) = \om_k (J_k u, v)$. We will also need the relation \eq{nc2eq12} that
\e 
\ga \w \om_k = \ast( J_k \ga) \quad \text{ for any $1$-form $\ga$}.
\label{ncAeq1}
\e
Let $f^1,f^2,f^3,f^4$ be an oriented orthonormal basis for $V^*$ and let $e_1,e_2,e_3$ denote the standard basis of $\R^3$.

\begin{prop} Consider the linear maps
\begin{equation*}
A : V^* \otimes \Lambda^3 V^* \longra \Lambda^4 V^*, \qquad A (\ga\ot  \eta) = \ga \w \eta,
\end{equation*}
and
\begin{align*}
B &: S^2 V^* \otimes \R^3 \longra V^* \otimes \Lambda^3 V^*, \\
B &: h^k_{pq} f^p f^q \otimes e_k \longmapsto h^k_{pq} [ f^p \otimes (f^q \w \om_k) + f^q \otimes (f^p \w \om_k) ].
\end{align*}
The map $B$ surjects onto the kernel of\/ $A$.
\label{ncAprop1}
\end{prop}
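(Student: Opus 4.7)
The plan is to reduce the surjectivity claim to a finite-dimensional representation-theoretic computation on $V^* \otimes V^*$ via the Hodge star, verify the containment $\operatorname{Im}(B) \subseteq \ker A$ directly, and then establish surjectivity using $SO(4)$-equivariance together with a check that three Schur scalars are nonzero.

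First I would translate the setup using the Hodge star $*: \Lambda^3 V^* \cong V^*$ induced by the metric. Applying the identity $f^q \wedge \omega_k = *(J_k f^q)$ from \eqref{ncAeq1}, and using the symmetry of $h^{pq}$ to combine the two summands, the map $B$ becomes
\begin{equation*}
S^2 V^* \otimes \R^3 \longrightarrow V^* \otimes V^*, \qquad h \otimes e_k \longmapsto 2\, h^{pq}\, f^p \otimes J_k f^q,
\end{equation*}
while $A$ becomes (up to sign) the trace pairing $V^* \otimes V^* \to \R$ whose kernel is the $15$-dimensional space of trace-free tensors. The containment $\operatorname{Im}(B) \subseteq \ker A$ is then immediate: $A \circ B$ applied to a generator $h^k_{pq} f^p f^q \otimes e_k$ gives $h^k_{pq}(f^p \wedge f^q + f^q \wedge f^p) \wedge \omega_k = 0$ by antisymmetry of $\wedge$ combined with the symmetry of $h^k_{pq}$.

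Next I would exploit $SO(4)$-equivariance to analyze the rank. The triple $(\omega_1,\omega_2,\omega_3)$ identifies $\R^3 \cong \Lambda^2_+ V^*$, on which $SO(4)$ acts through the natural projection $SO(4) \twoheadrightarrow SO(\Lambda^2_+) \cong SO(3)$, making $B$ into an $SO(4)$-equivariant map. Writing $\operatorname{Spin}(4) = SU(2)_L \times SU(2)_R$ and letting $V_{a,b}$ denote the real irreducible with highest weights $(a,b)$ and real dimension $(a+1)(b+1)$, the standard identifications $V = V_{1,1}$, $S^2 V^* = V_{0,0} \oplus V_{2,2}$, and $\Lambda^2_+ \cong V_{2,0}$ yield via Clebsch--Gordan
\begin{equation*}
S^2 V^* \otimes \R^3 \cong V_{2,0} \oplus V_{0,2} \oplus V_{2,2} \oplus V_{4,2}, \quad \ker A \cong V_{2,0} \oplus V_{0,2} \oplus V_{2,2},
\end{equation*}
of dimensions $30$ and $15$ respectively. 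Since $V_{4,2}$ does not appear in $\ker A$, Schur's lemma forces $B$ to annihilate $V_{4,2}$---contributing exactly the $15$ dimensions of kernel required for surjectivity---and to act by a single scalar on each of the other three summands.

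Finally, it suffices to verify that each of these three Schur scalars is nonzero. For the $V_{2,0}$ summand, taking $h = g$ (the metric) yields $B(g \otimes e_k) = 2\sum_p f^p \otimes (f^p \wedge \omega_k)$, which under the Hodge identification equals a nonzero multiple of $\omega_k$, confirming nonvanishing on $V_{2,0}$. For the $V_{0,2}$ and $V_{2,2}$ summands I would test $B$ on the traceless symmetric tensor $\tilde h = \operatorname{diag}(1,-1,0,0)$ in the orthonormal basis dual to $f^1,\ldots,f^4$: a short calculation using the explicit matrices for $J_k$ from \eqref{nc2eq11} identifies the symmetric part of $B(\tilde h \otimes e_k)$ with the commutator $[\tilde h, J_k]$, which is nonzero for $k=1$, and its antisymmetric part with the anticommutator $\{\tilde h, J_k\}$, which is nonzero and anti-self-dual for $k=2$; these provide the required contributions to $V_{2,2}$ and $V_{0,2}$. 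I expect this last nonvanishing step to be the main obstacle, though each case reduces to a short explicit calculation in the basis once the setup is in place.
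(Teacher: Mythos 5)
Your proof is correct, but it takes a genuinely different route from the paper's. Both arguments begin identically: use the Hodge star to identify $V^*\otimes\La^3V^*$ with $V^*\otimes V^*$, recognize $\Ker A$ as the $15$-dimensional space of trace-free $2$-tensors, and check $\Im B\subseteq\Ker A$ from the symmetry of $h^k_{pq}$ against the skewness of $\om_k$. After that the paper proceeds by bare hands: it writes down an explicit spanning set for $\Ker A$ (the elements $f^{p_0}\otimes\ast f^{q_0}$ for $p_0\ne q_0$ and $f^{p_0}\otimes\ast f^{p_0}-f^{q_0}\otimes\ast f^{q_0}$) and, using the observation that for each pair $p_0\ne q_0$ there is a unique $k_0$ with $J_{k_0}f^{q_0}=\pm f^{p_0}$, exhibits an explicit preimage under $B$ for each spanning element. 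You instead decompose source and target into $\Spin(4)=\SU(2)_L\t\SU(2)_R$ irreducibles and invoke Schur's lemma; your decompositions $S^2V^*\otimes\R^3\cong V_{2,0}\op V_{0,2}\op V_{2,2}\op V_{4,2}$ and $\Ker A\cong V_{2,0}\op V_{0,2}\op V_{2,2}$ are right (each with multiplicity one, so Schur does reduce the problem to three scalars), the equivariance of $B$ is legitimate since it is built from wedge products and the $\SO(4)$-equivariant identification $\R^3\cong\La^2_+V^*$, and your three test computations ($h=g$ hitting $\om_k$ in $V_{2,0}$; $\ti h=f^1f^1-f^2f^2$ with $k=1$ giving a nonzero symmetric trace-free output and with $k=2$ giving a nonzero anti-self-dual antisymmetric part) do verify nonvanishing of each scalar, since a nonzero output component in an isotypic summand forces the corresponding Schur scalar to be nonzero. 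The paper's argument is shorter and more self-contained; yours buys extra information, namely that $\Ker B\cong V_{4,2}$ is exactly $15$-dimensional, which gives a conceptual explanation of Remark \ref{nc3rem2}'s observation that only half of the freedom in $\bar\al$ is used.
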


\begin{proof} Note that $V^* \cong \Lambda^3 V^*$ via the isomorphism $\ga \mapsto \ast \gamma$. Thus we have $A : V^* \otimes \Lambda^3 V^* \to \Lambda^4 V^*$ with $A ( s_{pm} f^p \otimes \ast f^m ) = s_{pm} f^p \w \ast f^m = s_{pm} g^{pm} \vol$, so $s_{pm} f^p \otimes \ast f^m \in \Ker A$ if and only if $s_{pm} g^{pm}= 0$. Using \eq{ncAeq1}, we compute
\e
\begin{split} 
B( h^k_{pq}& f^p f^q \otimes e_k ) = h^k_{pq} [ f^p \otimes (f^q \w \om_k) + f^q \otimes (f^p \w \om_k) ] 
\\ 
& = h^k_{pq} [f^p \otimes \ast(J_k f^q) + f^q \otimes \ast (J_k f^p) ] = 2 h^k_{pq} f^p \otimes \ast(J_k f^q).
\end{split}
\label{ncAeq2}
\e
Thus $A [ B( h^k_{pq} f^p f^q \otimes e_k ) ] = 2 h^k_{pq} g( f^p, J_k f^q) \vol = 2 h^k_{pq} \om_k (f^p, f^q) \vol = 0$, since $\om_k$ is skew and $h^k_{pq}$ is symmetric in $p,q$. So the image of $B$ always lies in the kernel of $A$.

In terms of the basis $f^1, \ldots, f^4$, the kernel of $A$ is spanned by two types of elements of $V^* \otimes \Lambda^3 V^*$: those of the form $f^{p_0} \otimes \ast f^{q_0}$ for $p_0 \neq q_0$ and those of the form $f^{p_0} \otimes \ast f^{p_0} - f^{q_0} \otimes \ast f^{q_0}$ for $p_0 \neq q_0$. We can see from \eq{nc2eq11} that there exists a unique $k_0 \in \{1, 2, 3 \}$ such that $J_{k_0} f^{q_0} = \pm f^{p_0}$. In the first case, set $h^{k_0}_{p_0 p_0} = \pm \frac{1}{2}$, and all other $h^k_{pq} = 0$. It follows from \eq{ncAeq2} that for this choice, $B(h^k_{pq} f^p f^q \otimes e_k) = f^{p_0} \otimes \ast f^{q_0}$. In the second case, set $h^{k_0}_{p_0 q_0} = h^{k_0}_{q_0 p_0} = \pm \frac{1}{2}$, and all other $h^k_{pq} = 0$. Noting that $J_{k_0} f^{p_0} = \mp f^{q_0}$, it is each to check that for this choice, equation \eq{ncAeq2} gives $B(h^k_{pq} f^p f^q \otimes e_k) = f^{p_0} \otimes \ast f^{p_0} - f^{q_0} \otimes \ast f^{q_0}$. Thus $B$ indeed surjects onto the kernel of $A$.
\end{proof}

\begin{rem} Proposition \ref{ncAprop1} is used in the proof of Lemma \ref{nc3lem2}, where $B (\bar \al) = -\bar \al \cdot \bar{\dot\vp}{}^2_{2,1}$ and $\bar{\dot\vp}{}^2_{2,1} = - (\om_1 \otimes e_1 + \om_2 \otimes e_2 + \om_3 \otimes e_3)$.
\label{ncArem1}
\end{rem}

\begin{prop} 
\label{ncAprop2}
Consider the linear map
\begin{align*}
C &:  V \otimes V^* \otimes \R^3 \longra V^* \otimes V^* \otimes \Lambda^2 \R^3, \\
C &: s^{pk}_q f_p \otimes f^q \otimes e_k \longmapsto \sum_{j=1}^3 s^{pk}_q (f_p \cdot \om_j) \otimes f^q \otimes (e_k \w e_j).
\end{align*}
The map $C$ is an isomorphism.
\end{prop}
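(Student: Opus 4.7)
\medskip

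\noindent\textbf{Proof plan for Proposition \ref{ncAprop2}.} The first observation is that $C$ acts as the identity on the $V^*$-factor $f^q$, so it decomposes as $C = \mathrm{id}_{V^*} \otimes C'$, where
\begin{equation*}
C' : V \otimes \R^3 \longra V^* \otimes \Lambda^2 \R^3, \qquad C'(v \otimes e_k) = \sum_{j=1}^3 (v \cdot \omega_j) \otimes (e_k \wedge e_j).
\end{equation*}
Thus it is enough to show $C'$ is an isomorphism. Since $\dim (V \otimes \R^3) = \dim (V^* \otimes \Lambda^2 \R^3) = 12$, by rank-nullity it suffices to prove injectivity of $C'$.

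The plan is to reduce everything to the quaternionic relations \eq{nc2eq9} via the formula $v \cdot \omega_j = (J_j v)^\flat$, which follows immediately from $\omega_j(u,w) = g(J_j u, w)$. First I would use the basis $\{ e_2 \w e_3,\, e_3 \w e_1,\, e_1 \w e_2 \}$ of $\Lambda^2 \R^3$ and expand. Writing a general element of $V \otimes \R^3$ as $v_1 \otimes e_1 + v_2 \otimes e_2 + v_3 \otimes e_3$, the three components of $C'(v_1 \otimes e_1 + v_2 \otimes e_2 + v_3 \otimes e_3)$ along this basis are (up to the musical isomorphism) $J_3 v_2 - J_2 v_3$, $J_1 v_3 - J_3 v_1$, and $J_2 v_1 - J_1 v_2$, respectively.

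Setting $C'(\sum_k v_k \otimes e_k) = 0$ therefore yields the three equations
\begin{equation*}
J_3 v_2 = J_2 v_3, \qquad J_3 v_1 = J_1 v_3, \qquad J_2 v_1 = J_1 v_2.
\end{equation*}
Using $J_i^{-1} = - J_i$ together with the quaternion relations $J_3 J_2 = - J_1$ and $J_3 J_1 = J_2$, the first two equations give $v_2 = -J_3 J_2 v_3 = J_1 v_3$ and $v_1 = -J_3 J_1 v_3 = -J_2 v_3$. Substituting these into the third equation gives $J_2(-J_2 v_3) = J_1(J_1 v_3)$, i.e.\ $v_3 = -v_3$, so $v_3 = 0$ and hence $v_1 = v_2 = 0$. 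This shows $C'$ is injective, hence an isomorphism, and therefore so is $C$.

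The only potential subtlety is checking the signs correctly in the expansion along the basis of $\Lambda^2\R^3$ and then applying the quaternion relations in the right order, but this is a direct calculation with no real obstruction.
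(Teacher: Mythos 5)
Your proof is correct and follows essentially the same route as the paper: both factor out the spectator $V^*$-factor, reduce to injectivity of $C'$ by matching dimensions, translate $v \cdot \om_j$ to $\pm(J_j v)^\flat$, and then invoke the quaternion relations to kill the kernel. Your direct substitution in the last step (expressing $v_1,v_2$ in terms of $v_3$ and feeding into the third equation) is a tidier, coordinate-free version of the paper's index computation with the auxiliary quantities $t_{im}$; also note the paper's normalization yields $f_p \cdot \om_j = -(J_j f_p)^\flat$ rather than $+(J_j f_p)^\flat$, but this global sign is immaterial for injectivity.
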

\begin{proof}
The middle factor of $V^*$ is just coming along for the ride, so we need to check that the map $V \otimes \R^3 \to V^* \otimes \Lambda^2 \R^3$ which on decomposable elements is given by $f_p \otimes e_k \mapsto \sum_{j=1}^3 (f_p \cdot \om_j) \otimes (e_j \w e_k)$ is an isomorphism. The dimensions agree, so we need only show the kernel is trivial. Let $\sum_{k=1}^3 s^p_k f_p \otimes e_k \in V \otimes \R^3$ be in the kernel. Then $\sum_{j,k=1}^3 s^p_k (f_p \cdot \om_j) \otimes (e_j \w e_k) = 0$. It is easy to check that in an orthonormal frame we have $f_p \cdot \om_j = - J_j f^p = - (J_j)^p_q f^q$, so we have
\begin{equation*}
0  \!=\! \sum_{p,q=1}^4 \sum_{j,k=1}^3 s^p_k (J_j)^p_q f^q \!\otimes\! (e_j \!\w\! e_k)  \!= \!\sum_{p,q=1}^4 \sum_{j < k} (s^p_k (J_j)^p_q \!-\! s^p_j (J_k)^p_q) f^q \!\otimes\! (e_j \!\w\! e_k).
\end{equation*}
We deduce that for any fixed pair $j \neq k$, we have
\begin{equation*}
0 = \sum_{p=1}^4 (s^p_k (J_j)^p_q - s^p_j (J_k)^p_q).
\end{equation*}
Let $i$ be the remaining index in $\{1, 2, 3\}$ and without loss of generality order them so that $i,j,k$ is a cyclic permutation of $1,2,3$. Multiplying the above equation by $(J_i)^q_m$ and summing over $q$, we find
\e 
\begin{aligned}
0 & = \sum_{p=1}^4 (s^p_k (J_i)^q_m (J_j)^p_q - s^p_j (J_i)^q_m (J_k)^p_q) \\
& = \sum_{p=1}^4 (s^p_k (J_i J_j)^p_m - s^p_j (J_i J_k)^p_m) = \sum_{p=1}^4 (s^p_k (J_k)^p_m + s^p_j (J_j)^p_m).
\end{aligned}
\label{ncAeq3}
\e
For $i \in \{1, 2, 3\}$ and $m \in \{1, 2, 3, 4\}$, define $t_{im} = \sum_{p=1}^4 s^p_i (J_i)^p_m$. In \eq{ncAeq3} we deduced that $t_{jm} + t_{km} = 0$ if $j \neq k$. Thus $t_{1m} = - t_{2m} = t_{3m} = - t_{1m}$, so $t_{1m} = 0$ and similarly $t_{im} = 0$ for all $i, m$. Now multiply $t_{im} = 0$ by $(J_i)^m_q$ and sum over $m$. From $(J_i)^2 = - I$, we find
\begin{equation*}
0 = (J_i)^m_q t_{im} = \sum_{p=1}^4 s^p_i (J_i)^m_q (J_i)^p_m = -\sum_{p=1}^4 s^p_i \de^p_q = -s^q_i.
\end{equation*}
Thus the kernel is indeed trivial and $C$ is an isomorphism.
\end{proof}

\begin{rem} Proposition \ref{ncAprop2} is used in the proof of Lemma \ref{nc3lem3}, where $C (\Ga) = - \Ga \cdot \bar{\dot\vp}{}^2_{2,1}$ and $\bar{\dot\vp}{}^2_{2,1} = - (\om_1 \otimes e_1 + \om_2 \otimes e_2 + \om_3 \otimes e_3)$.
\label{ncArem2}
\end{rem}

\end{document}